\title{{\bf Curve counting invariants around the \\
conifold point}}
\date{}
\author{Yukinobu Toda}
\DeclareFontFamily{U}{rsfs}{%
\skewchar\font127}
\DeclareFontShape{U}{rsfs}{m}{n}{%
<-6>rsfs5<6-8.5>rsfs7<8.5->rsfs10}{}
\DeclareSymbolFont{rsfs}{U}{rsfs}{m}{n}
\DeclareRobustCommand*\rsfs{%
\@fontswitch\relax\mathrsfs}
\theoremstyle{plain}
\newtheorem{thm}{Theorem}[section]
\newtheorem{prop}[thm]{Proposition}
\newtheorem{lem}[thm]{Lemma}
\newtheorem{defi}[thm]{Definition}
\newtheorem{rmk}[thm]{Remark}
\newtheorem{step}{Step}
\newtheorem{prop-defi}[thm]{Proposition-Definition}
\newtheorem{thm-defi}[thm]{Theorem-Definition}
\newtheorem{lem-defi}[thm]{Lemma-Definition}
\newtheorem{question}[thm]{Question}
\newtheorem{conj}[thm]{Conjecture}
\newdimen\argwidth
\def\db[#1\db]{
 \setbox0=\hbox{$#1$}\argwidth=\wd0
 \setbox0=\hbox{$\left[\box0\right]$}
  \advance\argwidth by -\wd0
 \left[\kern.3\argwidth\box0 \kern.3\argwidth\right]}
\newcommand{\aA}{\mathcal{A}}
\newcommand{\bB}{\mathcal{B}}
\newcommand{\cC}{\mathcal{C}}
\newcommand{\dD}{\mathcal{D}}
\newcommand{\eE}{\mathcal{E}}
\newcommand{\fF}{\mathcal{F}}
\newcommand{\hH}{\mathcal{H}}
\newcommand{\mM}{\mathcal{M}}
\newcommand{\oO}{\mathcal{O}}
\newcommand{\pP}{\mathcal{P}}
\newcommand{\qQ}{\mathcal{Q}}
\newcommand{\sS}{\mathcal{S}}
\newcommand{\tT}{\mathcal{T}}
\newcommand{\uU}{\mathcal{U}}
\newcommand{\xX}{\mathcal{X}}
\newcommand{\yY}{\mathcal{Y}}
\newcommand{\zZ}{\mathcal{Z}}
\newcommand{\Supp}{\mathop{\rm Supp}\nolimits}
\newcommand{\Hom}{\mathop{\rm Hom}\nolimits}
\newcommand{\dR}{\mathbf{R}}
\newcommand{\Pic}{\mathop{\rm Pic}\nolimits}
\newcommand{\id}{\textrm{id}}
\newcommand{\ch}{\mathop{\rm ch}\nolimits}
\newcommand{\Ext}{\mathop{\rm Ext}\nolimits}
\newcommand{\Spec}{\mathop{\rm Spec}\nolimits}
\newcommand{\Coh}{\mathop{\rm Coh}\nolimits}
\newcommand{\cneq}{\mathrel{\raise.095ex\hbox{:}\mkern-4.2mu=}}
\newcommand{\eqcn}{\mathrel{=\mkern-4.5mu\raise.095ex\hbox{:}}}
\newcommand{\gr}{\mathop{\rm gr}\nolimits}
\newcommand{\Aut}{\mathop{\rm Aut}\nolimits}
\newcommand{\Stab}{\mathop{\rm Stab}\nolimits}
\newcommand{\DT}{\mathop{\rm DT}\nolimits}
\newcommand{\Eu}{\mathop{\rm Eu}\nolimits}
\newcommand{\Imm}{\mathop{\rm Im}\nolimits}
\newcommand{\Ree}{\mathop{\rm Re}\nolimits}
\newcommand{\GL}{\mathop{\rm GL}\nolimits}
\newcommand{\tr}{\mathop{\rm tr}\nolimits}
\newcommand{\ex}{\mathop{\rm ex}\nolimits}
\newcommand{\cl}{\mathop{\rm cl}\nolimits}
\begin{document}
\maketitle
\begin{abstract}
In this paper, we investigate the space of
certain 
weak stability 
conditions on the triangulated
category of D0-D2-D6 bound states
on a smooth projective Calabi-Yau 3-fold. 
In the case of a quintic 3-fold, 
the resulting space is interpreted as 
a universal covering space of 
an infinitesimal neighborhood
of the conifold point in the stringy
K$\ddot{\textrm{a}}$hler moduli space. 
We then construct the DT type invariants 
counting semistable objects in 
our triangulated category, which are 
new curve counting invariants 
on a Calabi-Yau 3-fold. 
We also 
investigate 
the wall-crossing formula
of our invariants 
 and their
interplay with the Seidel-Thomas twist. 
\end{abstract}
\section{Introduction}
\subsection{Motivation}
Let $X$ be a smooth projective Calabi-Yau 
3-fold over $\mathbb{C}$, i.e. 
\begin{align*}
\bigwedge^{3}T_X^{\vee} \cong \oO_X, \quad
H^1(X, \oO_X)=0. 
\end{align*}
So far several 
curve counting theories on $X$
have been introduced and studied:
\begin{itemize}
\item {\bf Gromov-Witten (GW) theory~\cite{KonMa}:} counting 
stable maps $f\colon C\to X$
from projective nodal curves $C$. 
\item {\bf Donaldson-Thomas (DT) theory~\cite{Thom}:}
counting 1-dimensional 
subschemes $C\subset X$. 
\item {\bf Pandharipande-Thomas (PT) theory~\cite{PT}:} 
counting stable pairs $(F, s)$. 
Here $F$ is a 1-dimensional pure
sheaf and $s$ is a morphism 
$s\colon \oO_X \to F$ with 0-dimensional 
cokernel.  
\end{itemize}
The above theories are 
conjecturally equivalent
in terms of generating functions. 
The GW/DT correspondence is 
proved for local toric Calabi-Yau 3-folds~\cite{MNOP},
local curves~\cite{BrPa}, \cite{OkPa}, and 
the DT/PT correspondence
(including the Euler characteristic version) is 
available in~\cite{StTh}, \cite{BrH}, \cite{Tcurve1}. 

The idea of DT/PT correspondence discussed
by Pandharipande-Thomas~\cite{PT} is to use the wall-crossing formula 
of DT type invariants 
in the space of Bridgeland's stability conditions~\cite{Brs1}
on the category $D^b(\Coh(X))$, the
bounded derived category of coherent sheaves on $X$.
Namely it is expected that there are 
two stability conditions $\sigma$, $\tau$
on $D^b(\Coh(X))$ such that 
the moduli space of $\sigma$-stable 
objects and that of $\tau$-stable 
objects 
with a certain numerical condition
coincide with the moduli spaces
which define DT and PT theories respectively. 
Then DT/PT correspondence should follow 
by investigating the behavior of 
the invariants under the change of 
stability conditions. 
A general framework
for such a study, known as a 
\textit{wall-crossing formula of DT type invariants}, 
is now established by the work of Joyce-Song~\cite{JS}
and Kontsevich-Soibelman~\cite{K-S}. 

However there have been difficulties in constructing 
stability conditions on $D^b(\Coh(X))$, 
and even a single example is not available yet. 
Instead of working with Bridgeland's stability conditions,   
Bayer~\cite{Bay} and the 
author~\cite{Tolim}, \cite{Tcurve1} independently 
introduce \textit{polynomial stability}, \textit{limit stability}
and \textit{weak stability} respectively. These notions are 
interpreted as `limiting degenerations' of Bridgeland's
stability conditions. 
By using the above degenerated stability conditions, 
it is turned out in~\cite{Bay}, \cite{Tcurve1}
 that DT/PT correspondence 
is realized near a particular point, 
called the \textit{large volume limit}. 
By analyzing 
weak stability conditions
near the large volume limit 
and the relevant wall-crossing formula, 
the author
proves the
Euler characteristic 
version of
DT/PT correspondence~\cite{Tcurve1}
and the rationality conjecture of the
 generating series of DT and PT invariants~\cite{Tolim2}. 

The space of stability conditions
on $D^b(\Coh(X))$ is
expected to be related to the stringy 
 K$\ddot{\textrm{a}}$hler moduli space of $X$, 
which is the moduli space of complex structures 
of a mirror manifold. 
For instance if $X$ is a quintic Calabi-Yau 
3-fold in $\mathbb{P}^4$, 
the mirror family is a simultaneous 
crepant resolution $\widehat{Y}_{\psi}$
of the following 
one parameter family, 
\begin{align*}
Y_{\psi}=\left\{\sum_{i=0}^{4}y_i^5 -5\psi \prod_{i=0}^4 y_i =0
\right\}/G
\subset \mathbb{P}^4/G, 
\end{align*}
where $G=(\mathbb{Z}/5\mathbb{Z})^{3}$. 
The stringy K$\ddot{\textrm{a}}$hler moduli 
space is a parameter space of $\psi^5$, and 
the large volume limit corresponds to $\psi^5=\infty$. 
(See Figure~\ref{fig:one}.)

So far the above studies on curve counting 
theories appear near the large volume limit. Now it is natural 
to address the following question. 
\begin{question}\label{Quest}
{\bf What kinds of curve counting invariants
(or DT type invariants) appear at 
other limiting points?}
\end{question}
In this paper, we study the above
question for another limiting point, 
called \textit{conifold point}. 
In the case of a quintic 3-fold, 
this point corresponds to $\psi^5=1$.
There is a  Lagrangian sphere 
in a mirror manifold 
$\widehat{Y}_{\psi}$
which vanishes at the conifold point,  
and it corresponds to the object $\oO_X$ 
under the mirror symmetry.
In physic terminology,
the \textit{mass}
of the object $\oO_X$, denoted by 
$m(\oO_X)$, behaves as 
\begin{align}\label{mO}
m(\oO_X) \to \left\{ \begin{array}{ll}
\infty, & \mbox{ at large volume limit, }\\
0, & \mbox{ at conifold point. }
\end{array} \right. 
\end{align} 
Namely the conifold point 
is a point where the object $\oO_X$
becomes massless, and its effect
to the stability should be 
infinitesimally small. 
On the other hand, there is an 
autoequivalence associated with $\oO_X$, 
called \textit{Seidel-Thomas twist}~\cite{ST}, 
\begin{align*}
\Phi_{\oO_X}\colon D^b(\Coh(X)) \stackrel{\sim}{\to}
D^b(\Coh(X)).
\end{align*}
The above equivalence should
correspond to the Dehn twist of $\widehat{Y}_{\psi}$
along the Lagrangian vanishing
cycle under the mirror symmetry, 
and should be a monodromy on 
$D^b(\Coh(X))$ 
around the conifold point.
Therefore the Seidel-Thomas
twist must be relevant in
studying Question~\ref{Quest}
at the conifold point, 
and it seems interesting to see
how the twist functor is related to the wall-crossing formula.  
\begin{figure}[htbp]
 \begin{center}
  \includegraphics[width=75mm]{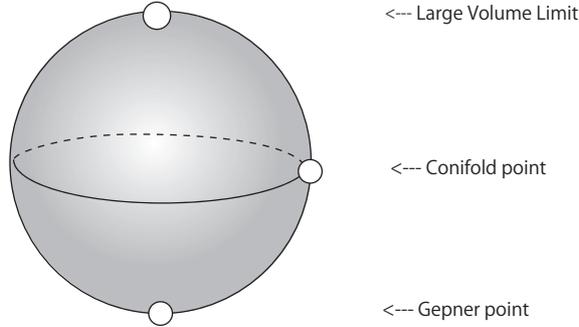}
 \end{center}
 \caption{Stringy K$\ddot{\textrm{a}}$hler moduli space
 of a quintic 3-fold}
 \label{fig:one}
\end{figure}

\subsection{Weak stability conditions on 
D0-D2-D6 bound states}
In this paper, we focus on 
 the triangulated
category
called \textit{D0-D2-D6 bound states}
\begin{align}\label{026}
\dD_X =\langle \oO_X, \Coh_{\le 1}(X) \rangle_{\tr}
\subset D^b(\Coh(X)). 
\end{align}
This is the smallest triangulated subcategory 
of $D^b(\Coh(X))$ which contains
$\oO_X$ and the objects in $\Coh_{\le 1}(X)$, 
\begin{align*}
\Coh_{\le 1}(X)=\{E\in \Coh(X):
\dim \Supp(E) \le 1\}. 
\end{align*} 
The category $\dD_X$ is especially important in studying 
curve counting invariants on $X$. For instance it contains 
ideal sheaves of curves,
 two term complexes associated with stable pairs, 
and DT/PT correspondence is realized there~\cite{Tcurve1}.

We use the following finitely 
generated abelian group $\Gamma$, 
 \begin{align*}
\Gamma=H^0(X, \mathbb{Z}) \oplus H_2(X, \mathbb{Z}) \oplus 
H_0(X, \mathbb{Z}), 
\end{align*}
which is the image of the Chern 
character map from the category $\dD_X$. 
Roughly speaking, a Bridgeland's stability 
condition on $\dD_X$ consists of 
data $(Z, \pP)$, 
\begin{align*}
Z \colon \Gamma \to \mathbb{C}, \quad 
\pP(\phi) \subset \dD_X, 
\end{align*}
where $Z$ is a group homomorphism and $\pP(\phi)$
is a full subcategory for $\phi\in \mathbb{R}$, 
which satisfy some axiom. 

The notion of weak stability conditions on $\dD_X$
is determined after we specify a filtration
$\Gamma_{\bullet}$ of $\Gamma$, 
(cf.~Definition~\ref{defi:weak},)
which is to do with the limiting direction of
Bridgeland stability. 
The set of weak stability conditions is denoted by,
\begin{align}\label{setwe}
\Stab_{\Gamma_{\bullet}}(\dD_X),
\end{align}
and it has a natural structure of a complex manifold. 
(cf.~Theorem~\ref{thm:top}.)

In this paper, we are interested in the following filtration, 
\begin{align}\label{filt1}
\Gamma_{0} =H^0(X, \mathbb{Z}) \subset \Gamma_1=\Gamma.
\end{align}
A point in the space (\ref{setwe})
w.r.t. the filtration (\ref{filt1}) 
corresponds to  
 a limiting degeneration of Bridgeland stability $(Z, \pP)$
on $\dD_X$, 
whose limiting direction is given by the constraint, 
\begin{align*}
\lvert Z(\ch(\oO_X)) \rvert  \ll \lvert Z(\ch(F)) \rvert,
\end{align*} 
for any
non-zero object $F\in \Coh_{\le 1}(X)$. 
This means that the effect of $\oO_X$
is infinitesimally small
relative to the objects in $\Coh_{\le 1}(X)$, hence 
the space (\ref{setwe})
seems to be related to 
an infinitesimal neighborhood 
of the conifold point
in Figure~\ref{fig:one}. 
In fact we have the following result. 
\begin{thm}\label{main:stab}\emph{\bf{[Theorem~\ref{obtain}]}}
Suppose that $H^2(X, \mathbb{Z}) \cong \mathbb{Z}$. 
(e.g. quintic 3-fold.)
Then there is a connected component 
\begin{align*}
\Stab_{\Gamma_{\bullet}}^{\circ}(\dD_X)\subset 
\Stab_{\Gamma_{\bullet}}(\dD_X)
\end{align*}
such that there is an isomorphism, 
\begin{align}\label{isom0}
\Stab_{\Gamma_{\bullet}}^{\circ}(\dD_X)
\cong \mathbb{C}\times \widetilde{\GL}_{+}(2, \mathbb{R}),
\end{align}
where $\widetilde{\GL}_{+}(2, \mathbb{R})$
is the universal cover of $\GL_{+}(2, \mathbb{R})$. 
The Seidel-Thomas twist $\Phi_{\oO_X}$ acts on 
the space (\ref{isom0}), and we have the isomorphism, 
\begin{align*}
\langle \Phi_{\oO_X} \rangle \backslash 
\Stab_{\Gamma_{\bullet}}^{\circ}(\dD_X) /
\mathbb{C} \cong  \mathbb{C}^{\ast} \times \mathbb{H}^{\circ}.
\end{align*}
Here $\langle \Phi_{\oO_X} \rangle$ is the subgroup of the 
group of autoequivalences of $\dD_X$ generated by $\Phi_{\oO_X}$, 
and $\mathbb{H}^{\circ}=\{z\in \mathbb{C} : \Imm z>0 \}$.  
\end{thm}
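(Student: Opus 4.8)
The plan is to construct the isomorphism (\ref{isom0}) explicitly and then quotient. First I would set up coordinates on $\Stab_{\Gamma_{\bullet}}(\dD_X)$ adapted to the filtration (\ref{filt1}). Since $H^2(X,\mathbb{Z})\cong\mathbb{Z}$, the group $\Gamma$ has rank $3$: a rank generator from $H^0$, a curve-class generator $\beta$ from $H_2$, and a point class from $H_0$. A weak stability condition $(Z,\pP)$ with limiting direction (\ref{filt1}) is governed by two pieces of data: the restriction of the central charge to $\Gamma_0=H^0$, which is a single nonzero complex number and accounts for the $\mathbb{C}$ factor (a choice of "phase and scale" for $\oO_X$), and the induced genuine stability condition on the quotient data $\Gamma/\Gamma_0 \cong H_2\oplus H_0 \cong \mathbb{Z}^2$, governing the objects of $\Coh_{\le 1}(X)$. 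The key point is that on $\Coh_{\le 1}(X)$ with a two-dimensional numerical group, the stability data is equivalent to a real $2\times 2$ matrix of maximal rank together with a compatible choice of slicing; by the standard argument (as in Bridgeland's original analysis of stability conditions parametrized by $\GL_+(2,\mathbb{R})$ acting on a fixed reference condition) the connected component containing the natural reference point is a $\widetilde{\GL}_+(2,\mathbb{R})$-torsor, and one shows it is in fact isomorphic to $\widetilde{\GL}_+(2,\mathbb{R})$ by exhibiting a global section (the reference weak stability condition built from slope stability on $\Coh_{\le 1}(X)$). Combining the two factors gives (\ref{isom0}); the content of Theorem~\ref{obtain}, which I am allowed to assume, is precisely this identification.

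Next I would analyze the action of $\Phi_{\oO_X}$ on $\Stab_{\Gamma_{\bullet}}^{\circ}(\dD_X)\cong \mathbb{C}\times\widetilde{\GL}_+(2,\mathbb{R})$. On the level of $K$-theory, the Seidel--Thomas twist acts by the reflection $v\mapsto v - \chi(\oO_X,v)\,[\oO_X]$ on $\Gamma$; since $\oO_X\in\Gamma_0$ and the twist fixes $\Coh_{\le 1}(X)$ numerically modulo $\Gamma_0$ (the Euler pairing of $\oO_X$ with a $\Coh_{\le 1}$ class lands in the point-class direction), its effect on the central charge is to leave the induced data on $\Gamma/\Gamma_0$ unchanged while shifting the "$\oO_X$-part". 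Concretely, $\Phi_{\oO_X}$ should act trivially on the $\widetilde{\GL}_+(2,\mathbb{R})$ factor and by a deck-type translation — really a shift in the $\mathbb{C}$ coordinate by a fixed vector, say $2\pi i$ after normalization, reflecting that the twist is the monodromy around the conifold point. Here I must be careful: the naive $K$-theoretic reflection is an involution, but $\Phi_{\oO_X}$ itself has infinite order as an autoequivalence, so the relevant statement is that on the universal cover $\widetilde{\GL}_+(2,\mathbb{R})$ (equivalently, once we track slicings rather than just central charges) the twist becomes a free $\mathbb{Z}$-action by translation along a single $\mathbb{R}$-direction inside the center-like part of $\widetilde{\GL}_+(2,\mathbb{R})$.

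Finally I would carry out the two quotients. Quotienting by the $\mathbb{C}$-action (the "forget the normalization of $\oO_X$" action, which on the right-hand side of (\ref{isom0}) is translation on the $\mathbb{C}$ factor) collapses that factor and leaves $\widetilde{\GL}_+(2,\mathbb{R})$. Then I would use the standard identification $\widetilde{\GL}_+(2,\mathbb{R})\cong \mathbb{C}\times\mathbb{H}$: writing a matrix as a rotation-scaling times an upper-triangular piece, or dividing out by the $\widetilde{\mathrm{SO}}(2)$-related coordinate, one gets a $\mathbb{C}^\ast$ (or additive $\mathbb{C}$, depending on whether one exponentiates) times $\mathbb{H}$; the $\widetilde{\GL}_+(2,\mathbb{R})$ acts on $\mathbb{H}$ through $\PGL_+(2,\mathbb{R})=\GL_+(2,\mathbb{R})/\mathbb{R}^{>0}$ by Möbius transformations, with the kernel giving the $\mathbb{C}$-like factor. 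Under this, the $\mathbb{Z}$-action of $\Phi_{\oO_X}$, which I identified above as a translation in the universal-cover direction, becomes the covering transformation $z\mapsto z+1$ of $\mathbb{H}\to\mathbb{H}^\circ$ in the $\mathbb{H}$-coordinate (equivalently $w\mapsto e^{2\pi i}w$, trivial on $\mathbb{H}$ but nontrivial on the would-be $\mathbb{C}^\ast$-coordinate) — the point being that $\mathbb{H}^\circ$ is the image of $\mathbb{H}$ under this quotient, and the surviving $\mathbb{C}^\ast$ is exactly the remaining rotation-scaling factor. Tracking the two factors carefully yields $\mathbb{C}^\ast\times\mathbb{H}^\circ$.

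The main obstacle I expect is pinning down exactly how $\Phi_{\oO_X}$ acts on the slicing part, not just on central charges: the $K$-theoretic reflection alone is an involution and would give a $\mathbb{Z}/2$, so the infinite-order behavior must come from a genuine derived-categorical computation showing that iterating the twist moves the heart through infinitely many tilts without returning, i.e. that the induced map on $\widetilde{\GL}_+(2,\mathbb{R})$ (with its full homotopy-theoretic information) is a nontrivial deck transformation. Establishing this — essentially that $\Phi_{\oO_X}$ realizes the generator of $\pi_1(\GL_+(2,\mathbb{R}))\cong\mathbb{Z}$ under the torsor identification — is the technical heart, and it is presumably where the interplay with the Seidel--Thomas twist promised in the abstract is really used.
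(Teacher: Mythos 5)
Your plan has a genuine gap, and in fact two. First, it is circular at the decisive point: the statement you are asked to prove \emph{is} Theorem~\ref{obtain} (Theorem~\ref{main:stab} is just its announcement in the introduction), so you cannot ``assume'' it to get the identification (\ref{isom0}). Setting that aside, the substitute you offer --- that the connected component is ``by the standard argument'' a $\widetilde{\GL}_{+}(2, \mathbb{R})$-torsor over a reference weak stability condition --- is not a proof and is not how this space is controlled. There is no natural $\GL_{+}(2,\mathbb{R})$-action on $\Stab_{\Gamma_{\bullet}}(\dD_X)$ playing the role it does in Bridgeland's setting; only the $\mathbb{C}$-action is available. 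The actual content of the paper is Theorem~\ref{thm:cov}: one constructs explicit hearts $\aA$, $\bB_{\pm}$ by tilting at $\oO_X$, obtains the regions $\uU_k$ as their $\Phi_{\oO_X\ast}$-translates, and proves that the connected component of the \emph{normalized} space is exactly $\coprod_{k}\uU_k$. The hard step is closedness of this union, which requires ruling out boundary points with degenerate $\omega$; this is where the hypothesis $H^2(X,\mathbb{Z})\cong\mathbb{Z}$ and the density statement of Lemma~\ref{lem:dense} (density of phases of $(B,\omega)$-semistable sheaves) are used. Only after this does one know that $\Pi_{\rm n}$ is a universal covering of $\mathbb{C}^{\ast}\times\mathbb{H}^{\circ}$ with deck group generated by $\Phi_{\oO_X\ast}$, which is exactly the statement you defer as ``the technical heart'' without proving it. Your sketch, as it stands, only shows that the component \emph{contains} a piece of the claimed shape, not that it equals it.

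Second, the passage from the normalized to the non-normalized space is not the formality you treat it as. The paper proves (Proposition~\ref{prop:re}) that the map $\Stab^{\circ}_{\Gamma_{\bullet},\rm{n}}(\dD_X)\times\mathbb{C}\to\Stab^{\circ}_{\Gamma_{\bullet}}(\dD_X)$ induced by the $\mathbb{C}$-action is bijective; surjectivity needs Lemma~\ref{nonzero} ($Z(\oO_x)\neq 0$ on the whole component), and injectivity needs to exclude that some even shift composed with a power of the twist fixes a point, which is done by identifying $(\Phi_{\oO_X}[-2])^{(m)}$ with $f^{\ast}$ for an automorphism $f$ of $X$ and comparing actions on $\Gamma$. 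Your proposal does not address this at all. Related to this, your remark that ``the naive $K$-theoretic reflection is an involution'' is wrong in the CY3 setting: by Lemma~\ref{com2} the twist acts on $\Gamma$ by the shear $(r,\beta,n)\mapsto(r-n,\beta,n)$, which has infinite order (it is trivial only on the graded pieces $\Gamma_0$ and $\Gamma_1/\Gamma_0$, which is why it acts on the fibers of $\Pi$); and this infinite order on $\Gamma$ is precisely what the paper uses to conclude $m=0$ in the injectivity argument. So the two points you wave at --- freeness/nontriviality of the deck action and the product decomposition with the $\mathbb{C}$-factor --- are exactly the places where the proof lives, and neither is supplied.
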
 
Applying Theorem~\ref{main:stab}, we 
will construct a commutative diagram, 
(cf.~Subsection~\ref{sub:loop},)
\begin{align}\label{com:dia}
\xymatrix{
 \mathbb{R} \ar[r] \ar[d]_{\exp(\pi i \ast)} & 
\Stab_{\Gamma_{\bullet}}^{\circ}(\dD_X) \ar[d], \\
S^1 \ar[r]^{\iota} &  \mathbb{C}^{\ast} \times \mathbb{H}^{\circ},
}
\end{align}
where 
$\iota$ is an embedding
of $S^1$ to $(\mbox{unit circle}) \times 
\{\sqrt{-1} \}$. 
When $X$ is a quintic 3-fold, the image of $\iota$
may be interpreted as a loop around the conifold point
in Figure~\ref{fig:one}, 
since the monodromy around it is given by 
$\Phi_{\oO_X}$.

\subsection{DT theory around the conifold point}
Similarly to~\cite{Tcurve1}, \cite{Tcurve2}, 
we construct DT type invariants
counting semistable objects in $\dD_X$, 
 and investigate their wall-crossing phenomena. 
 In order to formulate the result, 
we denote the top arrow in the diagram (\ref{com:dia})
by $\gamma$, 
\begin{align*}
\gamma \colon \mathbb{R} \ni t \mapsto (Z_t, \pP_t) \in 
\Stab_{\Gamma_{\bullet}}^{\circ}(\dD_X). 
\end{align*}
For a data, 
\begin{align*}
(r, \beta, n)\in H^{0} \oplus H_2 \oplus H_0, \quad 
t \in \mathbb{R}, \quad \phi \in \mathbb{R}, 
\end{align*}
we will construct the generalized 
DT invariant, (cf.~Definition~\ref{def:around},)
\begin{align}\label{DTin}
\DT_{t}(r, \beta, n, \phi) \in \mathbb{Q},
\end{align}
following the construction by Joyce-Song~\cite{JS}. 
The invariant (\ref{DTin})
 counts objects $E\in \pP_t(\phi)$
satisfying the numerical condition, 
\begin{align*}
(\ch_0(E), \ch_2(E), \ch_3(E))
=(r, -\beta, -n). 
\end{align*}
The generating series $\DT_{t}(\phi)$ is defined by 
\begin{align}\label{DTt}
\DT_{t}(\phi)=
\sum_{(r, n, \beta)\in \Gamma}
\DT_{t}(r, \beta, n, \phi)x^r y^{\beta}z^n. 
\end{align}
The wall-crossing formula by Joyce-Song~\cite{JS}
and Kontsevich-Soibelman~\cite{K-S}
enables us to see how $\DT_t(\phi)$
changes under the change of $t$. 
Assuming a technical result 
announced by Behrend-Getzler~\cite{BG}, 
(cf.~Conjecture~\ref{conjBG},)
we will show the following result. 
\begin{thm}\emph{\bf{[Lemma~\ref{sense}, Theorem~\ref{thm:wall}]}}
\label{intro:thm}

(i) For a given $k\in \mathbb{Z}$, the series
$\DT_{t}(\phi)$ does not depend on a choice 
of $t\in (\phi+k, \phi+k+1)$. 
In particular, we may write it as $\DT^{k}(\phi)$. 

(ii) The series
$\DT^{k}(\phi)$ is obtained from $\DT^{k-1}(\phi)$
by the following transformation, 
\emph{\begin{align*}
z^n \mapsto \left\{ \begin{array}{cc}
(1-(-1)^{n}x)^{n}z^n, & \mbox{ if }k \mbox{ is even.} \\
x^n z^n/(1-(-1)^n x)^{n}, & \mbox{ if }k\mbox{ is odd.}
\end{array} \right. 
\end{align*}}
\end{thm}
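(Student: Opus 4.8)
The plan is to identify the walls in the path $\gamma$ and apply the Joyce--Song/Kontsevich--Soibelman wall-crossing formula across each of them. First I would analyze the family $(Z_t,\pP_t)$ explicitly using the isomorphism of Theorem~\ref{main:stab}: under $\Stab_{\Gamma_\bullet}^\circ(\dD_X)\cong\mathbb{C}\times\widetilde{\GL}_+(2,\mathbb{R})$, the path $\gamma$ corresponds (via the diagram~(\ref{com:dia})) to the lift $t\mapsto \exp(\pi i t)$ of the unit circle in $\mathbb{C}^\ast\times\mathbb{H}^\circ$. Because the central charge is determined by $(r,\beta,n)\mapsto Z_t$ whose $\oO_X$-component rotates at unit angular speed while the $\Coh_{\le 1}$-component stays essentially fixed (the defining constraint $|Z(\ch\oO_X)|\ll|Z(\ch F)|$), the phase $\phi_t(\oO_X)$ increases by $1$ as $t$ increases by $1$, while the phases of objects supported in dimension $\le 1$ are constant. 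Hence an object $E\in\dD_X$ of class $(r,\beta,n)$ with $r\neq 0$ has its phase equal to that of a fixed shift of $\oO_X$, and the only walls where the set of semistable objects of a given class and phase can change are at $t\in\phi+\mathbb{Z}$: this gives part~(i), that $\DT_t(\phi)$ is constant on each open interval $(\phi+k,\phi+k+1)$.

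For part~(ii), I would set up the Lie-algebra/Hall-algebra formalism of~\cite{JS}, \cite{K-S}: one forms the generating elements $\epsilon_t(r,\beta,n,\phi)$ in the relevant Poisson torus (or the associated quantum torus), whose "integrals" are the $\DT_t(r,\beta,n,\phi)$, and the wall-crossing formula expresses $\epsilon_{t_+}$ in terms of $\epsilon_{t_-}$ conjugated by automorphisms built from the Euler pairing. The key input is the structure of the wall at $t=\phi+k$: there the phase of $\oO_X[\ell]$ (for the appropriate shift $\ell=\ell(k)$) crosses the phase of the subcategory $\Coh_{\le 1}(X)$, so across the wall the extensions one can form are exactly those between $\oO_X[\ell]$ (with its known Joyce--Song invariant, essentially $\pm 1$ by rigidity of $\oO_X$ and the Behrend-function/no-pole assumption Conjecture~\ref{conjBG}) and 1-dimensional sheaves. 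I would compute the relevant Euler pairing $\chi(\oO_X,F)=\ch_3(F)+\tfrac12 c_1(X)\cdot\ch_2(F)=n$ for $F\in\Coh_{\le 1}$ of class $(0,\beta,n)$ (using $c_1(X)=0$), so that the wall-crossing factor attached to adding a copy of $\oO_X$ (or $\oO_X[1]$) to $F$ contributes a factor $(1-(-1)^n x)^{\pm n}$ to the coefficient of $y^\beta z^n$, with the sign of the exponent and the extra $x^n$ depending on the parity of $k$ (i.e. on whether we are crossing $\oO_X$ going "up" or its shift going "down"). Summing the binomial series that results from allowing arbitrarily many copies of $\oO_X$-type objects, and tracking the shift $\oO_X\leadsto\oO_X[1]$ that occurs every time $k$ changes parity, yields precisely the stated substitution $z^n\mapsto(1-(-1)^nx)^n z^n$ for $k$ even and $z^n\mapsto x^n z^n/(1-(-1)^n x)^n$ for $k$ odd.

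The main obstacle, I expect, is the bookkeeping in the wall-crossing formula: one must verify that at each wall $t=\phi+k$ only walls of the "simplest" type occur — namely $\oO_X[\ell]$ (up to shift) together with an object of $\Coh_{\le 1}(X)$, with no contributions from more complicated decompositions involving several $\Coh_{\le 1}$ pieces of different phases or from objects of rank $\ge 2$. This requires knowing the finiteness and the precise form of the semistable factors of $\oO_X[\ell]$-containing objects near the wall, and the vanishing $\Hom(\oO_X,\oO_X[i])=0$ for $i\neq 0,3$ together with Serre duality to pin down the Euler pairing contributions; the subtlety is that $\Ext$-groups between $\oO_X$ and a 1-dimensional sheaf can be nonzero in two degrees, and one must check that only the degree-$0$ (resp. degree-$1$ after shift) part contributes to the wall-crossing in the way recorded by the exponent $n$. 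Once this local structure of each wall is pinned down, the infinite product over all $k$ telescopes into the two stated transformations by a direct generating-function manipulation, using Conjecture~\ref{conjBG} to justify that the Behrend-weighted counts behave like the naive Euler-characteristic counts so that the formula~(\ref{DTt}) makes sense and transforms as claimed.
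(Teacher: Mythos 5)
Your overall route is the paper's own: the only walls along $\gamma$ are at $t\in\phi+\mathbb{Z}$, and across each such wall one applies the Joyce--Song/Kontsevich--Soibelman formula, conjugating by the $\oO_X$-generated part of the Hall algebra, with the numerical pairing $\chi((1,0,0),(0,-\beta,-n))=-n$ responsible for the exponent $n$. But two steps, as written, are off. For (i), you claim that an object of class $(r,\beta,n)$ with $r\neq 0$ has phase tied to a shift of $\oO_X$. In the weak stability framework this is false: by Definition~\ref{defi:weak}, whenever $(\beta,n)\neq(0,0)$ the central charge of $E$ is $Z_1(\beta,n)$ regardless of $r$, so such objects have \emph{constant} phase in $t$; only objects with class in $\Gamma_0$, i.e.\ objects of $\langle\oO_X[-k]\rangle_{\ex}$, have phase rotating with $t$. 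The correct mechanism (the paper's Lemma~\ref{fixed} and Proposition~\ref{prop1}) is that any destabilizing sequence appearing as $t$ varies must have one factor whose class lies in $\Gamma_0$, hence that factor is a sum of shifts of $\oO_X$ of phase $t-k$, and this crosses $\phi$ only at $t=\phi+k$. Your conclusion is right, but the stated reason is not, and (i) is precisely where this distinction matters.

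For (ii), you take the wall-crossing input to be ``$\oO_X[\ell]$ with Joyce--Song invariant essentially $\pm1$ by rigidity.'' What actually enters is the whole family of classes $r\cdot(\pm1,0,0)$, $r\ge1$: the semistable objects of these classes are $\oO_X[-k]^{\oplus r}$ with automorphism group $\GL_r(\mathbb{C})$, and the generalized DT invariant is $1/r^2$ (the paper's Lemma~\ref{already}). It is these $1/r^2$ weights, pushed through the Baker--Campbell--Hausdorff/adjoint expansion and the Lie algebra homomorphism of Theorem~\ref{thm:Lie}, that make the sum over arbitrarily many $\oO_X$-factors close up to $(1-(-1)^nx)^{\pm n}$ (the computation of Lemma~\ref{ass}); if the invariant were supported only in rank one with value $\pm1$, the same expansion would produce an exponential factor in $x$ and the stated transformation would fail. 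So the ``binomial series'' you invoke cannot be summed without this input, and it must be made explicit. By contrast, your worry about the two nonvanishing $\Ext$ groups between $\oO_X$ and a $1$-dimensional sheaf is not an issue: only the antisymmetrized numerical pairing $\chi(v,v')=rn'-r'n$ enters through the homomorphism $\Psi$, so no separate analysis of the individual $\Ext$ degrees is needed.
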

The above theorem implies that 
the series $\DT^{k}(\phi)$ is obtained from 
$\DT^{k-2}(\phi)$
by the variable change 
$z\mapsto xz$, which coincides with 
the variable change by the Seidel-Thomas
twist $\Phi_{\oO_X}$. This means that, 
unfortunately, 
the wall-crossing formula 
does not provide any information on the invariant (\ref{DTin}), 
e.g. modularity. (cf.~Remark~\ref{modul}.)

On the other hand, the above theorem 
can be use to 
compute the series (\ref{DTt})
for a general $t$
if we know it for one point $t\in \mathbb{R}$
with $t\notin \mathbb{Z}+\phi$. 
For instance we will see that 
\begin{align*}
\DT_{t}(1)=-\chi(X)\sum_{\begin{subarray}{c}
n\ge 1, m\ge 1, \\
m|n. 
\end{subarray}}
\frac{1}{m^2}z^n, 
\end{align*}
when $0<t<1$
in Subsection~\ref{subsec:D0D6}. 
Applying Theorem~\ref{intro:thm}, 
we can write down the series (\ref{DTt})
for $\phi=1$ and a general $t$.
(cf.~Theorem~\ref{thm:DT}.) 
\subsection{Invariants on a local $(-1, -1)$-curve}
In Subsection~\ref{-1-1}, we 
focus on the invariants on a 
local $(-1, -1)$ curve, 
and especially investigate what kinds of objects
the invariants (\ref{DTin}) count.  
Let $C\subset X$ be an exceptional 
locus of a crepant small 
resolution of an ordinary double point, 
$f\colon X \to Y$. 
It satisfies that  
\begin{align*}
\mathbb{P}^1 \cong 
C\subset X, \quad N_{C/X} \cong \oO_C(-1) \oplus \oO_C(-1). 
\end{align*}
For instance we will see that   
the invariant 
\begin{align}\label{DTin2}
\DT_{t=1}(r, m[C], n, \phi) \in \mathbb{Q}, \quad
1/2<\phi<1,
\end{align} 
is non-zero only if $n=ma$ for some $a\in \mathbb{Z}_{\ge 1}$. 
In this case, the invariant (\ref{DTin2}) counts two term complexes, 
\begin{align*}
\oO_X^{\oplus r} \stackrel{s}{\to} \oO_C(a-1)^{\oplus m}, 
\end{align*}
such that the induced morphism,
\begin{align*}
H^0(s) \colon \mathbb{C}^{r} \to H^0(C, \oO_C(a-1))^{\oplus m},
\end{align*}
is injective. 
Applying Theorem~\ref{intro:thm}, we will 
compute the generating series of our invariants
in this situation. 
(cf.~Theorem~\ref{thm:ODP}.)
It is turned out that
there is a curious phenomena for the rank 
one generating series:
it coincides with the logarithm of  
the generating series of stable 
pairs on a local $(-1, -1)$-curve.
(cf.~Equation~(\ref{curious}).) 
Under GW/DT/PT correspondence, this 
implies that our invariants relate 
to \textit{connected} GW theory, while 
stable pair theory is related to 
non-connected GW theory. 
It seems interesting to give a geometric 
understanding of this phenomena.

\subsection{Relation to existing works}
Several examples of stability conditions 
have been studied in the 
literature, for instance~\cite{Brs2}, \cite{Brs3}, 
\cite{Brs4}, \cite{Tho}, \cite{Mac}, \cite{HMS}, \cite{BaMa}, 
\cite{IUU}, \cite{Tst}, \cite{Tst2}. 
However global descriptions of the spaces of degenerated
stability conditions introduced in~\cite{Bay}, \cite{Tolim}, 
\cite{Tcurve1} have not been 
studied so far. 
The result of Theorem~\ref{main:stab} is a first
example of such a study. 

The weak stability conditions on $\dD_X$ are also 
studied in~\cite{Tcurve1}, \cite{Tcurve2}
without giving any global descriptions of the 
spaces of weak stability conditions. 
We note that 
the filtrations taken in these works 
are different from (\ref{filt1}), 
and the
the resulting spaces should 
correspond to infinitesimal neighborhoods
of the large volume limit. 
It is worth mentioning that the 
equivalence $\Phi_{\oO_X}$
does \textit{not} act on 
the spaces discussed in~\cite{Tcurve1}, \cite{Tcurve2}. 

In~\cite{Tcurve1}, \cite{Tcurve2}, we also 
investigate the wall-crossing formula of 
DT type invariants with respect to 
certain weak stability conditions on $\dD_X$.
 However we focus
only on the rank one case in these works. 
Since the Seidel-Thomas twist is relevant 
in the study of our invariants, and 
the twist functor changes the rank, it is 
natural to consider higher rank invariants
in our situation. In fact we 
observe in Theorem~\ref{intro:thm} that the 
wall-crossing formula can be described 
only when we consider the generating 
series of all rank. 

Recently there have been studies on 
higher rank DT type 
invariants~\cite{Stop}, 
\cite{Trk2}, \cite{CDP}, \cite{Nhig}, \cite{WeiQin}. 
Since our study naturally involves higher 
rank invariants, it seems interesting to 
see a relationship to these works.

\subsection{Notation and convention}
For a variety $X$, the category of 
coherent sheaves on $X$ is denoted by $\Coh(X)$, 
and its derived category is denoted
by $D^b(\Coh(X))$. 
We use the following abelian subcategories of 
$\Coh(X)$, 
\begin{align*}
\Coh_{\le 1}(X) &=\{ E\in \Coh(X) : \dim \Supp(E) \le 1\}, \\
\Coh_{0}(X) &=\{ E\in \Coh(X) : \dim \Supp(E)=0\}. 
\end{align*}
For a triangulated category $\dD$ and a 
set of objects $S\subset \dD$, the 
subcategory $\langle S \rangle_{\tr} \subset \dD$
is the smallest triangulated subcategory 
of $\dD$ which contains objects in $S\cup \{0\}$. 
Also the category $\langle S \rangle_{\ex} \subset \dD$
is the smallest extension-closed subcategory 
which contains objects in $S\cup \{0\}$. 
If $S$ is a set of objects in an abelian category $\aA$, 
the subcategory $\langle S \rangle_{\ex} \subset \aA$
is also defined in a similar way. 
The varieties in this paper are defined 
over $\mathbb{C}$. 
For a variety $X$, we occasionally write 
$H^i(X, \mathbb{Z})$, $H_i(X, \mathbb{Z})$
as $H^i$, $H_i$ for simplicity.

\subsection{Acknowledgement}
The author thanks Kentaro Hori and 
Tom Bridgeland for 
valuable discussions. 
He also thanks Hokuto Uehara for 
the comments on the manuscript. 
This work is supported by World Premier 
International Research Center Initiative
(WPI initiative), MEXT, Japan. This work is also supported by Grant-in Aid
for Scientific Research grant (22684002), 
and partly (S-19104002),
from the Ministry of Education, Culture,
Sports, Science and Technology, Japan.

\section{The space of weak stability conditions}
The notion of weak stability conditions 
on triangulated categories is introduced 
in~\cite{Tcurve1} to give limiting degenerations 
of Bridgeland's stability conditions~\cite{Brs1}. 
In this section, we investigate 
the space of weak stability conditions on 
the triangulated category of D0-D2-D6 bound states
on a smooth projective Calabi-Yau 3-fold. 
\subsection{Weak stability conditions on triangulated 
categories}
In this subsection, we recall the 
notion of weak stability conditions on 
triangulated categories, and collect 
some results we need in the latter subsections.  
For the detail, see~\cite[Section~2]{Tcurve1}.
 
Let $\dD$ be a triangulated category, and $K(\dD)$
the Grothendieck group of $\dD$. 
We fix a finitely generated free abelian group $\Gamma$
together with a group homomorphism, 
\begin{align*}
\cl \colon K(\dD) \to \Gamma. 
\end{align*}
We also fix a filtration, 
\begin{align*}
\Gamma_{0} \subset \Gamma_{1} \subset \cdots 
\subset \Gamma_{N}=\Gamma, 
\end{align*}
such that each subquotient $\Gamma_{i}/\Gamma_{i-1}$
is a free abelian group. 
\begin{defi}\label{defi:weak}\emph{
A \textit{weak stability condition} on $\dD$ consists of 
data $(Z=\{Z_i\}_{i=0}^{N}, \aA)$, 
\begin{align*}
Z_i \colon \Gamma_{i}/\Gamma_{i-1} \to \mathbb{C}, \quad 
\aA \subset \dD, 
\end{align*}
where $Z_i$ are group homomorphisms and 
$\aA$ is the heart of a bounded t-structure 
on $\dD$, which satisfy the following. }
\begin{itemize}
\item \emph{For any non-zero $E\in \aA$
with $\cl(E) \in \Gamma_{i} \setminus \Gamma_{i-1}$, 
we have 
\begin{align}\label{cond}
Z(E)\cneq Z_i([\cl(E)]) \in \mathbb{H}. 
\end{align}
Here
$[\cl(E)] \in \Gamma_i/\Gamma_{i-1}$ is the class of 
$\cl(E) \in \Gamma_i \setminus \Gamma_{i-1}$ and 
 \begin{align*}
\mathbb{H}=\{r\exp(i\pi \phi) : r>0, 0<\phi \le 1\}.
\end{align*}
We say $E\in \aA$ is \textit{$Z$-(semi)stable} if for any 
exact sequence $0 \to F \to E \to G \to 0$
in $\aA$, 
we have} 
\begin{align*}
\arg Z(F) < (\le) \arg Z(G). 
\end{align*} 
\item  \emph{For any $E\in \aA$, there is a filtration
in $\aA$, (Harder-Narasimhan filtration,)
\begin{align*}
0=E_0 \subset E_1 \subset \cdots \subset E_n=E, 
\end{align*}
such that each subquotient $F_i=E_i/E_{i-1}$
is $Z$-semistable with
\begin{align*}
\arg Z(F_i)> \arg Z(F_{i+1}),
\end{align*}
for all $i$.} 
\end{itemize}
\end{defi}
Here we remark that for $N=0$, the pair 
$(Z, \aA)$ determines a stability condition by
Bridgeland~\cite{Brs1}. 

Let $(Z, \aA)$ be a weak stability condition on $\dD$. 
For $0<\phi \le 1$,
 the subcategory 
$\pP(\phi)\subset \dD$ is defined
 to be the category of $Z$-semistable
objects $E\in \aA$
satisfying 
\begin{align}\label{exp}
Z(E) \in \mathbb{R}_{>0}\exp(i\pi \phi). 
\end{align}
For other $\phi \in \mathbb{R}$, the subcategory 
$\pP(\phi)$ is determined by the rule, 
\begin{align*}
\pP(\phi+1)=\pP(\phi)[1]. 
\end{align*}
The family of subcategories 
$\pP(\phi)$ for $\phi \in \mathbb{R}$
determines a \textit{slicing} introduced in~\cite[Definition~3.3]{Brs1}. 
As in~\cite[Proposition~2.13]{Tcurve1}, 
giving a weak stability condition is equivalent 
to giving a data, 
\begin{align}\label{pasli}
\sigma=(Z=\{Z_i\}_{i=0}^N, \pP),
\end{align}
where $Z$ is as above and $\pP$ is a slicing, 
satisfying the condition (\ref{exp})
for any non-zero $E\in \pP(\phi)$.
In what follows, we occasionally 
write a weak stability 
condition as a pair of group homomorphisms 
$\{Z_i\}_{i=0}^{N}$ and 
a slicing $\pP$, as in (\ref{pasli}). 
The subcategory $\pP(\phi) \subset \dD$
is called the category of \textit{$\sigma$-semistable objects
of phase $\phi$.} 
The category $\pP(\phi)$ is easily 
seen to be an abelian category, and
we denote by $\pP_s(\phi) \subset \pP(\phi)$
the subcategory of simple objects. 
An object in $\pP_s(\phi)$ is called 
 a \textit{$\sigma$-stable object of phase $\phi$. }

 For an interval 
$I\subset \mathbb{R}$, we set 
\begin{align*}
\pP(I)\cneq \langle \pP(\phi) : \phi \in I \rangle_{\ex}.
\end{align*}
We also need the following technical conditions. 
\begin{itemize}
\item {\bf (Support property):}
There is a constant $C>0$ such that 
for any $E\in \pP(\phi)$
with $\cl(E) \in \Gamma_i \setminus \Gamma_{i-1}$, we have 
\begin{align*}
\lVert [\cl(E)] \rVert_{i} \le C\cdot \lvert Z(E) \rvert. 
\end{align*}
Here $\lVert \ast \rVert_{i}$ is a fixed 
norm on $(\Gamma_i/\Gamma_{i-1}) \otimes_{\mathbb{Z}} \mathbb{R}$. 
\item {\bf (Local finiteness):} 
There is $\varepsilon>0$ such that the quasi-abelian category 
$\pP((\phi-\varepsilon, \phi+\varepsilon))$ is of finite length
for any $\phi \in \mathbb{R}$. 
\end{itemize}
Here we refer~\cite[Definition~4.1, Definition~5.7]{Brs1}
for the detail on the notion of quasi-abelian categories and 
their finite length property. 
The set of weak stability conditions satisfying 
the above two properties is denoted by 
$\Stab_{\Gamma_{\bullet}}(\dD)$. 
The following result is an analogue
of~\cite[Theorem~7.1]{Brs1} and proved in~\cite[Theorem~2.15]{Tcurve1}. 
\begin{thm}\label{thm:top}
There is a natural topology on $\Stab_{\Gamma_{\bullet}}(\dD)$
such that the forgetting map 
\begin{align*}
\Pi \colon 
\Stab_{\Gamma_{\bullet}}(\dD) \ni (Z, \aA) \mapsto 
Z \in \prod_{i=0}^{N}\Hom_{\mathbb{Z}}(\Gamma_{i}/\Gamma_{i-1}, \mathbb{C}),
\end{align*}
is a local homeomorphism. In particular each connected component 
of $\Stab_{\Gamma_{\bullet}}(\dD)$ is a complex manifold. 
\end{thm}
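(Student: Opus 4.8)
This is Bridgeland's deformation argument~\cite[Theorem~7.1]{Brs1} carried over to the weak setting, so I would follow the strategy of~\cite[Theorem~2.15]{Tcurve1} closely. The statement has two parts: (a) produce a natural topology on $\Stab_{\Gamma_{\bullet}}(\dD)$, and (b) show that $\Pi$ is a local homeomorphism onto an open subset of the finite-dimensional complex vector space $\prod_{i=0}^{N}\Hom_{\mathbb{Z}}(\Gamma_i/\Gamma_{i-1}, \mathbb{C})$. The local homeomorphism immediately forces each connected component to be a complex manifold, since it inherits the complex structure of the target via the charts $\Pi$.

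\textbf{Step 1: the metric/topology.} First I would equip the set of slicings with the generalized metric
\[
d(\pP, \qQ)=\sup_{0\neq E\in \dD}\bigl\{\, \lvert \phi^-_{\pP}(E)-\phi^-_{\qQ}(E)\rvert,\ \lvert \phi^+_{\pP}(E)-\phi^+_{\qQ}(E)\rvert \,\bigr\}\in[0,\infty],
\]
exactly as in~\cite[Section~6]{Brs1}, where $\phi^{\pm}$ denote the maximal/minimal phases appearing in the Harder--Narasimhan filtration. On $\Stab_{\Gamma_{\bullet}}(\dD)$ I would then use the topology induced by the generalized metric
\[
d\bigl((Z,\pP),(W,\qQ)\bigr)=\sup_{i}\,\lVert Z_i-W_i\rVert + d(\pP,\qQ),
\]
where $\lVert \ast\rVert$ is the operator norm coming from the fixed norms on $\Gamma_i/\Gamma_{i-1}$. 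This is the natural topology referred to in the statement; that it is well-defined and that $\Stab_{\Gamma_{\bullet}}(\dD)$ is Hausdorff in it are formal.

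\textbf{Step 2: continuity and injectivity of $\Pi$ locally.} Continuity of $\Pi$ is immediate from the definition of $d$. For the local homeomorphism one shows: given $\sigma=(Z,\pP)\in\Stab_{\Gamma_{\bullet}}(\dD)$, there is $\varepsilon_0>0$ such that if $W=\{W_i\}$ satisfies $\lVert W_i-Z_i\rVert<\varepsilon_0$ for all $i$, then there is a \emph{unique} slicing $\qQ$ with $d(\pP,\qQ)<1/2$ and with $(W,\qQ)$ a weak stability condition in $\Stab_{\Gamma_{\bullet}}(\dD)$. The key input is the support property: it guarantees a uniform lower bound $\lvert Z(E)\rvert\ge \lVert[\cl(E)]\rVert_i/C$ on semistable $E$, so that a small perturbation of the central charges rotates the phase of each semistable object by a controlled small amount depending only on $\varepsilon_0$ and $C$, not on $E$. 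One then \emph{constructs} $\qQ$ by declaring $\qQ((a,b))$ for short intervals to be an appropriate tilt/refinement of $\pP$, using the local finiteness hypothesis to guarantee that the required Harder--Narasimhan filtrations exist and terminate; this is the technical heart and is done in~\cite{Brs1} via the notion of ``stability function on a quasi-abelian category'' and the finite-length property of $\pP((\phi-\varepsilon,\phi+\varepsilon))$. The weak-stability modification is that the central charge is now the tuple $\{Z_i\}$ respecting the filtration $\Gamma_\bullet$, and one must check that the perturbed data still lands in $\mathbb{H}$ on each graded piece $\Gamma_i/\Gamma_{i-1}$ (shrink $\varepsilon_0$ again) and still satisfies the support property and local finiteness (these are open conditions under the metric, with the same constant $C$ up to a factor).

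\textbf{Step 3: conclude.} Continuity of the inverse $W\mapsto (W,\qQ)$ follows because $d(\pP,\qQ)\to 0$ as $\lVert W_i-Z_i\rVert\to 0$, again by the support property. Hence $\Pi$ restricts to a homeomorphism from a neighborhood of $\sigma$ onto an open ball in $\prod_i\Hom_{\mathbb{Z}}(\Gamma_i/\Gamma_{i-1},\mathbb{C})$. Since the target is a complex vector space, these charts endow each connected component of $\Stab_{\Gamma_{\bullet}}(\dD)$ with the structure of a complex manifold, with $\Pi$ holomorphic. I expect the main obstacle to be Step~2: the careful bookkeeping of how the filtration $\Gamma_\bullet$ interacts with the perturbation argument — in particular verifying that objects whose class lies in $\Gamma_i\setminus\Gamma_{i-1}$ do not jump filtration level under small deformations and that the Harder--Narasimhan property is preserved — but this is already handled in~\cite[Section~2]{Tcurve1}, so in the write-up I would simply cite~\cite[Theorem~2.15]{Tcurve1} after indicating that the proof is a routine adaptation of~\cite[Theorem~7.1]{Brs1}.
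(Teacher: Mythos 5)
Your proposal is correct and follows essentially the same route as the paper: the paper gives no independent proof but simply cites \cite[Theorem~2.15]{Tcurve1}, which is precisely the adaptation of Bridgeland's deformation argument \cite[Theorem~7.1]{Brs1} that you outline (generalized metric on slicings, support property controlling the phase rotation, local finiteness supplying the Harder--Narasimhan filtrations for the perturbed slicing). Your sketch correctly identifies the technical heart and defers it to the cited references, exactly as the paper does.
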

\begin{rmk}\label{rmk216}
As mentioned in~\cite[Remark~2.16]{Tcurve1}, 
the set of $\sigma \in \Stab_{\Gamma_{\bullet}}(\dD)$
in which a fixed object $E\in \dD$ is $\sigma$-semistable 
is a closed subset. 
\end{rmk}
There is a continuous $\mathbb{C}$-action on the 
space $\Stab_{\Gamma_{\bullet}}(\dD)$ in
the following way. For a pair $\sigma=(Z, \pP)$
as in (\ref{pasli}) and $\lambda \in \mathbb{C}$, we 
set 
\begin{align*}
\lambda \cdot \sigma=(\exp(-i \pi \lambda)Z, \pP'), 
\end{align*}
where $\pP'$ is a slicing 
given by $\pP'(\phi)=\pP(\phi +\Ree \lambda)$
for all $\phi \in \mathbb{R}$.

For the heart of a bounded t-structure $\aA\subset \dD$, 
we denote by 
\begin{align*}
\hH_{\aA}^{i} \colon \dD \to \aA,
\end{align*} 
the $i$-th cohomology functor with respect to the 
t-structure with heart $\aA$. 
We will use the following notions of torsion 
pair and tilting to construct weak stability conditions. 
\begin{defi}\emph{
Let $\aA$ be the heart of a bounded t-structure 
on a triangulated category $\dD$. A pair of 
subcategories $(\tT, \fF)$ in $\aA$ is called 
a \textit{torsion pair} if the following conditions hold. }
\begin{itemize}
\item \emph{For any $T\in \tT$ and $F\in \fF$, we have
$\Hom(T, F)=0$.}
\item \emph{For any $E\in \aA$, there is an 
exact sequence 
\begin{align*}
0 \to T \to E \to F \to 0,
\end{align*}
for $T\in \tT$ and $F\in \fF$.}
\end{itemize}
\end{defi}
Given a torsion pair $(\tT, \fF)$ as above, its 
\textit{tilting} is defined by 
\begin{align*}
\aA^{\dag}&\cneq \left\{E \in \dD : \begin{array}{l}
 \hH_{\aA}^{0}(E) \in \fF, \ 
\hH_{\aA}^1(E) \in \tT,  \\
\hH^{i}(E)=0 \mbox{ for all }i\neq 0, 1. 
\end{array}
 \right\}, \\
&=\langle \fF, \tT[-1] \rangle_{\ex}.
\end{align*}
The category $\aA^{\dag}$ is also the 
heart of a bounded t-structure on $\dD$.
(cf.~\cite[Proposition~2.1]{HRS}.) 
\subsection{Construction of weak stability conditions}
Let $X$ be a smooth 
projective Calabi-Yau 3-fold satisfying 
\begin{align}\label{assum:van}
H^1(X, \oO_X)=0. 
\end{align}
We define the triangulated category $\dD_X$ to be 
\begin{align*}
\dD_X \cneq \langle \oO_X, \Coh_{\le 1}(X) \rangle_{\tr}
\subset D^b(\Coh(X)).
\end{align*}
We set the finitely generated abelian group $\Gamma$ to  
be 
\begin{align*}
\Gamma\cneq H^0(X, \mathbb{Z}) \oplus H_2(X, \mathbb{Z}) \oplus 
H_{0}(X, \mathbb{Z}). 
\end{align*}
By the Poincar\'e duality, 
the Chern characters of $E$ define a 
group homomorphism
$\cl \colon K(\dD_X) \to \Gamma$,
\begin{align*}
\cl(E)=(\ch_0(E), \ch_2(E), \ch_3(E)). 
\end{align*}
We set the two step filtration of $\Gamma$ to be 
\begin{align*}
\Gamma_{0}\cneq H^0(X, \mathbb{Z}) \subset \Gamma_{1}\cneq \Gamma. 
\end{align*}
We are going to study the space of weak stability conditions
$\Stab_{\Gamma_{\bullet}}(\dD_X)$. 
Note that 
\begin{align*}
\Hom(\Gamma_{0}, \mathbb{C}) &\cong \mathbb{C}, \\
\Hom(\Gamma_{1}/\Gamma_0, \mathbb{C})
&\cong H^2(X, \mathbb{C}) \oplus \mathbb{C}.
\end{align*}
The forgetting map $(Z, \aA) \mapsto Z$ is 
as follows, 
\begin{align}\label{forget}
\Pi \colon 
\Stab_{\Gamma_{\bullet}}(\dD_X) \to \mathbb{C}\times 
H^2(X, \mathbb{C}) \times \mathbb{C}. 
\end{align}
\begin{rmk}
As mentioned in~\cite[Remark~]{Tcurve1}, 
a weak stability condition in this situation
 may be interpreted to be 
a limiting point $m\to \infty$ of some sequence 
of stability conditions,  
\begin{align*}
\sigma^{(m)}=(Z^{(m)}, \cC^{(m)}),
\end{align*}
where $\cC^{(m)} \subset \dD_X$ is the heart of a bounded
t-structure and $Z^{(m)}\colon \Gamma \to \mathbb{C}$
is written as 
\begin{align*}
Z^{(m)}(r, \beta, n)=Z_0(r) +mZ_1(\beta, n). 
\end{align*}
Here $Z_i \colon \Gamma_i/\Gamma_{i-1} \to \mathbb{C}$
are group homomorphisms for $i=0, 1$. 
Note that we have 
\begin{align*}
\lvert Z^{(m)}(\cl(\oO_X))\rvert \ll 
\lvert Z^{(m)}(\cl(F)) \rvert, \quad 
m\gg 0,
\end{align*}
where $F\in \dD_X$ satisfies 
$\cl(F) \in \Gamma_1 \setminus \Gamma_0$.
This implies that the mass of 
the object $\oO_X$ is infinitesimally small 
w.r.t. our weak stability conditions. 
\end{rmk}
Here we construct three types of weak stability conditions
on $\dD_X$. 
\begin{lem}\label{types}
(i) There is the heart of a bounded t-structure
 $\aA\subset\dD_X$, written as 
\begin{align*}
\aA=\langle \oO_X, \Coh_{\le 1}(X)[-1] \rangle_{\ex}. 
\end{align*}

(ii) There is the heart of a bounded t-structure 
$\bB_{+} \subset \dD_X$, written as 
\begin{align*}
\bB_{+}=\langle \aA_{+}, \oO_X[-1] \rangle_{\ex}. 
\end{align*}
Here $\aA_{+}=\{E\in \aA : \Hom(\oO_X, E)=0\}$. 

(iii) There is the heart of a bounded t-structure 
$\bB_{-} \subset \dD_X$, written
as \begin{align*}
\bB_{-}=\langle \oO_X[1], \aA_{-} \rangle_{\ex}.
\end{align*}
Here $\aA_{-}=\{E\in \aA : \Hom(E, \oO_X)=0\}$. 
\end{lem}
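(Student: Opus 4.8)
The plan is to produce each of the three hearts by the torsion-pair/tilting machinery recalled in the previous subsection, starting from a bounded t-structure on $\dD_X$ that we already understand, and verifying in each case that the relevant pair of subcategories is genuinely a torsion pair. For part (i), the natural starting point is the standard t-structure, whose heart is $\Coh(X) \cap \dD_X$; but it is cleaner to work directly inside $\dD_X$ and exhibit $\aA = \langle \oO_X, \Coh_{\le 1}(X)[-1] \rangle_{\ex}$ as the tilt of the t-structure with heart $\Coh_{\le 1}(X)$ — wait, that heart is not the heart of a bounded t-structure on all of $\dD_X$, since $\oO_X \notin \langle \Coh_{\le 1}(X) \rangle_{\tr}$. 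So instead I would argue as follows. The generators $\oO_X$ and the objects of $\Coh_{\le 1}(X)$ generate $\dD_X$ as a triangulated category by definition. One checks $\Hom^{\le 0}(\oO_X, F[-1]) = \Hom^{\le -1}(\oO_X, F) = 0$ for $F \in \Coh_{\le 1}(X)$ (using that $F$ is supported in dimension $\le 1$ and $\oO_X$ is locally free, so $\Ext^{<0}$ vanishes), and $\Hom^{\le 0}(F[-1], \oO_X) = \Hom^{\le 1}(F, \oO_X)$, which by Serre duality on the Calabi–Yau 3-fold equals $\Hom^{\ge 2}(\oO_X, F)^\vee = H^{\ge 2}(X, F)^\vee = 0$ since $F$ has 1-dimensional support. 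Hence $(\oO_X, \Coh_{\le 1}(X)[-1])$ is an exceptional-type pair with the correct Ext-vanishing, and the extension closure $\aA = \langle \oO_X, \Coh_{\le 1}(X)[-1]\rangle_{\ex}$ is the heart of a bounded t-structure; boundedness follows because every object of $\dD_X$ is built from finitely many shifts of the generators and the cohomology filtration terminates. This is essentially the construction already used in~\cite{Tcurve1}, so I would cite it for the routine verifications.

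For parts (ii) and (iii) I would apply the tilting construction of~\cite{HRS} recalled above, with $\aA$ as the ambient heart. For (ii): set $\tT = \langle \oO_X \rangle_{\ex} \subset \aA$ (the subcategory of objects admitting a filtration with subquotients $\oO_X$, equivalently the image of finite direct sums $\oO_X^{\oplus k}$ since $\End(\oO_X) = \mathbb{C}$ and $\Ext^1_{\aA}(\oO_X,\oO_X)$ controls self-extensions) and $\fF = \aA_+ = \{E \in \aA : \Hom(\oO_X, E) = 0\}$. The orthogonality $\Hom(T,F) = 0$ for $T \in \tT$, $F \in \fF$ is immediate from the definition of $\aA_+$ together with $\tT$ being generated by $\oO_X$ under extensions. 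The decomposition axiom — every $E \in \aA$ sits in $0 \to T \to E \to F \to 0$ with $T \in \tT$, $F \in \fF$ — is the crux: I would take $T$ to be the maximal subobject of $E$ in $\tT$, i.e. the image of the evaluation $\Hom(\oO_X, E) \otimes \oO_X \to E$, and need $F = E/T$ to lie in $\aA_+$; this requires the key finiteness input that $\Hom(\oO_X, E)$ is finite-dimensional and that $\aA$ is Noetherian enough that such a maximal $\tT$-subobject exists. Granting that, $\bB_+ = \langle \fF, \tT[-1]\rangle_{\ex} = \langle \aA_+, \oO_X[-1]\rangle_{\ex}$ is the tilted heart, which is exactly the claimed form. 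Part (iii) is formally dual: take $\tT' = \aA_- = \{E \in \aA : \Hom(E,\oO_X) = 0\}$ and $\fF' = \langle \oO_X \rangle_{\ex}$, check $(\tT',\fF')$ is a torsion pair, and tilt to get $\bB_- = \langle \fF', \tT'[-1]\rangle_{\ex} = \langle \oO_X[1]', \aA_-\rangle_{\ex}$ after the shift bookkeeping — here one uses Serre duality to convert the finiteness and maximality statements for $\Hom(-,\oO_X)$ into the ones just used for $\Hom(\oO_X,-)$.

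The step I expect to be the main obstacle is verifying the torsion-pair decomposition axiom in parts (ii) and (iii), specifically the existence of a maximal $\oO_X$-generated subobject of an arbitrary $E \in \aA$ with the quotient landing in $\aA_{\pm}$. This is not a formal consequence of $\aA$ being a heart; it needs that the category $\aA$ has enough finiteness — for instance that $\Hom(\oO_X, E)$ is finite-dimensional for every $E \in \aA$ and that descending chains of $\tT$-subobjects stabilize. Both follow from the structure of $\aA$ as an iterated extension of $\oO_X$ (with $\End = \mathbb{C}$) and $\Coh_{\le 1}(X)[-1]$ together with finiteness of the relevant $\Ext$ groups on the projective variety $X$, but assembling this cleanly is where the real work lies; once it is in place, the rest is the formal $\langle \fF, \tT[-1]\rangle_{\ex}$ description from~\cite[Proposition~2.1]{HRS} and the analogous dual statement.
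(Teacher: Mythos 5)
Your proposal takes essentially the same route as the paper: part (i) is delegated to~\cite[Lemma~3.5]{Tcurve1}, and parts (ii) and (iii) are obtained by tilting $\aA$ at the torsion pairs $(\langle \oO_X \rangle_{\ex}, \aA_{+})$ and $(\aA_{-}, \langle \oO_X \rangle_{\ex})$ (up to the shift you mention in (iii)), with the torsion-pair decomposition axiom resting, exactly as you flag, on the noetherian property of $\aA$, which the paper quotes from~\cite[Lemma~6.2]{Tcurve1}. The only minor imprecisions in your write-up (one may need to iterate the evaluation map rather than take its image once, using that $\oO_X$ is a simple object of $\aA$, and it is the ascending rather than descending chain condition on subobjects that produces the maximal $\langle \oO_X \rangle_{\ex}$-subobject) do not affect the approach.
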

\begin{proof}
The proof of (i) is given in~\cite[Lemma~3.5]{Tcurve1}. 
For the proof of (ii), note that the pair 
\begin{align*}
\langle \langle \oO_X \rangle_{\ex}, \aA_{+} \rangle_{\ex},
\end{align*}
is a torsion pair. This is easily checked 
by the fact that $\aA$ is a noetherian abelian 
category~\cite[Lemma~6.2]{Tcurve1}. 
The tilting with respect to the above torsion 
pair yields the heart $\bB_{+}$. 
The proof of (iii) is similar. 
\end{proof}
For a given data,
\begin{align*}
u=(z, B+i\omega) \in \mathbb{C}\times H^2(X, \mathbb{C}), 
\end{align*}
we associate the element,
\begin{align}\label{asso}
Z_{u}=\{Z_{u, i}\}_{i=0}^{1} \in \prod_{i=0}^{1}
\Hom_{\mathbb{Z}}(\Gamma_i/\Gamma_{i-1}, \mathbb{C}),
\end{align}
 as follows,
\begin{align}\label{Z1}
Z_{u, 0} &\colon \Gamma_{0}=H^0(X, \mathbb{Z}) \ni r \mapsto rz, \\
\notag
Z_{u, 1} &\colon \Gamma_{1}/\Gamma_{0} =H_2(X, \mathbb{Z}) \oplus
 H_0(X, \mathbb{Z}) \\ 
\label{Z2}
& \qquad \qquad 
\ni (\beta, n) \mapsto n-(B+i\omega)\cdot \beta. 
\end{align}
Let $A(X)_{\mathbb{C}} \subset H^2(X, \mathbb{C})$ be
the complexified ample cone, 
\begin{align*}
A(X)_{\mathbb{C}}=\{B+i\omega \in H^2(X, \mathbb{C}) : \omega
\mbox{ is an ample }\mathbb{R} \mbox{ divisor. } \}. 
\end{align*}
We have the following lemma. 
\begin{lem}\label{lem:cons}
(i) The pairs  
\begin{align}\label{pairA}
\sigma_u=(Z_{u}, \aA), \quad 
 u\in \mathbb{H}\times A(X)_{\mathbb{C}},
\end{align}
determine points in $\Stab_{\Gamma_{\bullet}}(\dD_X)$. 

(ii) The pairs 
\begin{align*}
\tau_{u\pm}=(Z_{u}, \bB_{\pm}), \quad 
 u\in (-\mathbb{H})\times A(X)_{\mathbb{C}}, 
\end{align*}
determine points in $\Stab_{\Gamma_{\bullet}}(\dD_X)$. 
\end{lem}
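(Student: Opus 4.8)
The plan is to verify, for both families of pairs, the two axioms of Definition~\ref{defi:weak} --- the positivity (\ref{cond}) and the existence of Harder--Narasimhan filtrations --- together with the support property and local finiteness; that $\aA$ and $\bB_{\pm}$ are hearts of bounded t-structures is already supplied by Lemma~\ref{types}.

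The substantive point is positivity, and it rests on a description of $\cl(E)$ for $E\in\aA$. Since $\aA=\langle\oO_X,\Coh_{\le 1}(X)[-1]\rangle_{\ex}$, every $E\in\aA$ has a finite filtration whose subquotients are copies of $\oO_X$ or objects $F[-1]$ with $F\in\Coh_{\le 1}(X)$; as $\ch(\oO_X)=1$ and $\ch_0=\ch_1=0$ on $\Coh_{\le 1}(X)$, additivity of $\ch$ gives $\cl(E)=(a,-\ch_2(G),-\ch_3(G))$ for some $a\ge 0$ and some $G\in\Coh_{\le 1}(X)$ (a direct sum of the sheaves $F$ occurring). If $\cl(E)\in\Gamma_0$ then $\ch_2(G)=\ch_3(G)=0$: since $\ch_2(G)$ is a sum of effective curve classes and the effective cone is pointed, $G$ is zero-dimensional, and then $\ch_3(G)=\length(G)=0$ forces $G=0$, whence $E$ is an iterated self-extension of $\oO_X$, so $E\cong\oO_X^{\oplus a}$ because $\Ext^1(\oO_X,\oO_X)=H^1(X,\oO_X)=0$. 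Thus a nonzero $E\in\aA$ with $\cl(E)\in\Gamma_0$ has $a\ge 1$ and $Z_u(E)=az\in\mathbb{H}$ precisely because $z\in\mathbb{H}$. If instead $\cl(E)\in\Gamma_1\setminus\Gamma_0$, then $G\neq 0$ and
\begin{align*}
Z_{u,1}([\cl(E)])=-\ch_3(G)+(B+i\omega)\cdot\ch_2(G),
\end{align*}
whose imaginary part $\omega\cdot\ch_2(G)$ is $>0$ when $G$ has one-dimensional support (because $\omega$ is ample and $\ch_2(G)$ is then a nonzero effective class), and otherwise $G$ is a nonzero zero-dimensional sheaf so $Z_{u,1}([\cl(E)])=-\length(G)\in\mathbb{R}_{<0}\subset\mathbb{H}$. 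This proves (\ref{cond}) for $\sigma_u$.

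For $\tau_{u\pm}$ one uses that $\bB_{\pm}$ is the tilt of $\aA$ at the torsion pair $(\langle\oO_X\rangle_{\ex},\aA_{+})$ (resp. $(\aA_{-},\langle\oO_X\rangle_{\ex})$) appearing in the proof of Lemma~\ref{types}. Writing $\hH_{\aA}^{\bullet}$ for the cohomology functors of $\aA$, an object $E\in\bB_{+}$ (resp. $E\in\bB_{-}$) has $\hH_{\aA}^{1}(E)\cong\oO_X^{\oplus s}$ (resp. $\hH_{\aA}^{-1}(E)\cong\oO_X^{\oplus s}$), $\hH_{\aA}^{0}(E)\in\aA_{\pm}$, and all other $\aA$-cohomologies zero, so $\cl(E)$ equals $\cl(\hH_{\aA}^{0}(E))$ plus an integer multiple of $(-1,0,0)$. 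If $\cl(E)\in\Gamma_0$, the description above forces $\hH_{\aA}^{0}(E)\cong\oO_X^{\oplus t}$; but $\Hom(\oO_X,\oO_X^{\oplus t})$ (resp. $\Hom(\oO_X^{\oplus t},\oO_X)$) is nonzero for $t\ge 1$, so $\hH_{\aA}^{0}(E)\in\aA_{\pm}$ forces $\hH_{\aA}^{0}(E)=0$ and $E\cong\oO_X^{\oplus s}[\mp 1]$ with $\ch_0(E)=-s$; for nonzero $E$ this is negative, so $Z_u(E)=-sz\in\mathbb{H}$ exactly when $z\in-\mathbb{H}$, which is the standing hypothesis. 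If $\cl(E)\in\Gamma_1\setminus\Gamma_0$, then $Z_u(E)=Z_{u,1}([\cl(\hH_{\aA}^{0}(E))])\in\mathbb{H}$ by the case already treated, with no condition on the sign of $z$. This is the step requiring the most care: tilting moves $\oO_X$ from cohomological degree $0$ in $\aA$ to degree $\pm 1$ in $\bB_{\pm}$, reversing the sign of $\ch_0$, and this is exactly why the admissible region for $z$ flips from $\mathbb{H}$ to $-\mathbb{H}$.

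The remaining properties are checked as in~\cite{Tcurve1}. For Harder--Narasimhan filtrations: $\aA$ is noetherian by~\cite[Lemma~6.2]{Tcurve1}, and for each $M>0$ only finitely many effective curve classes $\gamma$ satisfy $\omega\cdot\gamma\le M$, so $\Imm Z_u$ assumes only finitely many values on classes of bounded $\lvert Z_u\rvert$; the usual argument then yields the HN property for $(Z_u,\aA)$, while for $(Z_u,\bB_{\pm})$ one adds that $\langle\oO_X\rangle_{\ex}$, being equivalent to the category of finite-dimensional vector spaces, is both noetherian and artinian, so the tilt retains the required finiteness. For the support property: if $\cl(E)=(a,0,0)$ then $\lVert[\cl(E)]\rVert_0=\lvert z\rvert^{-1}\lVert(1,0,0)\rVert_0\cdot\lvert Z_u(E)\rvert$, whereas if $\cl(E)\in\Gamma_1\setminus\Gamma_0$, ampleness of $\omega$ gives $c_0>0$ with $\omega\cdot\gamma\ge c_0\lVert\gamma\rVert$ on the Mori cone, so $\lVert\ch_2(E)\rVert\le c_0^{-1}\lvert\Imm Z_u(E)\rvert$, and then $\lvert\ch_3(E)-B\cdot\ch_2(E)\rvert\le\lvert Z_u(E)\rvert$ bounds $\lvert\ch_3(E)\rvert$; this yields a uniform $C$ with $\lVert[\cl(E)]\rVert_1\le C\lvert Z_u(E)\rvert$. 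Local finiteness then follows from the support property together with these noetherian/artinian remarks, as in~\cite{Tcurve1}.
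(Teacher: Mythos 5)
Your verification of the positivity axiom (\ref{cond}) and of the support property is correct and follows essentially the same route as the paper: you classify classes of objects of $\aA$ via the generators $\oO_X$ and $\Coh_{\le 1}(X)[-1]$, show that $\cl(E)\in\Gamma_0$ forces $E$ to be a direct sum of copies of $\oO_X$ (resp.\ of $\oO_X[\mp 1]$ in $\bB_{\pm}$), and reduce the case $\cl(E)\in\Gamma_1\setminus\Gamma_0$ for $\bB_{\pm}$ to the case of $\aA$ through the torsion pair; your support-property estimate via $\omega\cdot\gamma\ge c_{0}\lVert\gamma\rVert$ on effective classes is the same bound the paper uses in Section~\ref{sec:tech} (and is phrased a bit more cleanly). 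This part is fine, and in fact you justify the claim ``$\cl(E)\in\Gamma_0$ implies $E\in\langle\oO_X\rangle_{\ex}$'' in more detail than the paper does at this point.

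The gap is in the Harder--Narasimhan property and the local finiteness, which are exactly the conditions the paper postpones to Section~\ref{sec:tech} and proves by hand; they are not ``the usual argument''. The criterion actually invoked, \cite[Proposition~2.12]{Tcurve1}, requires (a) that the heart be noetherian and (b) that there be no infinite chain of subobjects $E_{j+1}\subset E_j$ with $\arg Z_u(E_{j+1})>\arg Z_u(E_j/E_{j+1})$, and neither follows from your remarks. Noetherianity of a tilt is not automatic from noetherianity of $\aA$ together with finite length of the torsion class: the paper proves $\bB_{+}$ noetherian by pushing an ascending chain through $\hH_{\aA}^{0}$, reducing to a chain inside $\langle\oO_X[-1]\rangle_{\ex}$, and bounding it by $\dim\Hom(\oO_X[-1],E)<\infty$, the key point being the vanishing $\Hom(\oO_X,Q)=0$ for every $Q\in\bB_{+}$. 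More importantly, discreteness of $\Imm Z_u$ does not control the chains in (b), because in the weak setting $Z_u$ is \emph{not} additive on short exact sequences whose sub or quotient has class in $\Gamma_0$: every $\oO_X^{\oplus r}[-1]$ sits at the single phase determined by $z$, independently of $r$, so the standard mass--phase bookkeeping breaks down and one needs the paper's monotonicity arguments ($\omega\cdot\ch_2\le 0$, resp.\ $\ch_3\le 0$, stabilizing along the chain) plus finite-dimensionality of the relevant Hom spaces. For the same reason local finiteness is not a consequence of the support property here; the paper checks directly that $\pP((0,1))$ and $\pP((1/2,3/2))$ are of finite length. Your outline names the right ingredients, but the assertions ``the usual argument yields HN'', ``the tilt retains the required finiteness'' and ``local finiteness follows from the support property'' are precisely the statements that still require proof.
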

\begin{proof}
The proofs of some technical conditions (Harder-Narasimhan 
property, support property, local finiteness) are
postponed until Section~\ref{sec:tech}. 
Here we only check that the condition (\ref{cond}) 
is satisfied. 

(i)  For $E\in \aA$, 
let us write 
\begin{align*}
\cl(E)=(r, -\beta, -n) \in H^0 \oplus H_2 \oplus H_0. 
\end{align*}
Suppose that $\cl(E) \in \Gamma_{1} \setminus \Gamma_{0}$. 
Then the description (\ref{pairA}) shows that 
$\beta$
is an effective curve class
 or $\beta=0, n>0$. 
Hence $Z_{u}(E) \in \mathbb{H}$ follows. 
If $\cl(E) \in \Gamma_{0}$, then 
$E\in \langle \oO_X \rangle_{\ex}$, hence 
we have 
$Z_{u}(E)=rz \in \mathbb{H}$. 

(ii) For simplicity we check the case of 
$(Z_{u}, \bB_{+})$. 
For an object 
$E\in \bB_{+}$, there is an exact 
sequence in $\bB_{+}$, 
\begin{align*}
0 \to T \to E \to F \to 0, 
\end{align*}
with $T\in \aA_{+}$ and $F \in \langle \oO_X[-1] \rangle_{\ex}$. 
If $\cl(E) \in \Gamma_{1}\setminus \Gamma_{0}$, 
then $T \neq 0$ and we have 
\begin{align*}
Z_u(E)=Z_{u}(T) \in \mathbb{H}, 
\end{align*}
by
the same argument of (i). 
If $\cl(E) \in \Gamma_0$, then 
 we have $E \in \langle \oO_X[-1] \rangle_{\ex}$,
hence 
we have $Z_{u}(E)= rz \in \mathbb{H}$.  
\end{proof}
\subsection{Standard regions in the space $\Stab_{\Gamma_{\bullet}}(\dD_X)$.}
The constructions of weak stability conditions in the 
last subsection yield some standard regions in the 
space $\Stab_{\Gamma_{\bullet}}(\dD_X)$. 
We set $\uU$ and $\uU_{\pm 1}$ to be
\begin{align}\label{region1}
\uU &\cneq \{\sigma_{u} \in \Stab_{\Gamma_{\bullet}}(\dD_X) : 
u\in \mathbb{H}\times A(X)_{\mathbb{C}}\}, \\
\label{region2}
\uU_{\pm 1} &\cneq \{\tau_{u\pm} \in \Stab_{\Gamma_{\bullet}}(\dD_X) : 
u\in (-\mathbb{H})\times A(X)_{\mathbb{C}}\}.
\end{align}
The above subspaces lie in the 
 space of normalized weak 
stability conditions,
\begin{align*}
\Stab_{\Gamma_{\bullet}, \rm{n}}(\dD_X)
\cneq \{(Z, \aA) \in \Stab_{\Gamma_{\bullet}}(\dD_X):
Z(\oO_x)=-1\},
\end{align*}
where $x\in X$ is a closed point. 
The forgetting map (\ref{forget}) restricts to the local 
homeomorphism, 
\begin{align*}
\Pi_{\rm n}\colon 
\Stab_{\Gamma_{\bullet}, \rm{n}}(\dD_X)
\to \mathbb{C}\times H^2(X, \mathbb{C}). 
\end{align*}
We have the following lemma. 
\begin{lem}\label{conne}
(i) 
The map $\Pi_{\rm n}$ 
restrict to the homeomorphisms, 
\begin{align*}
\Pi_{\rm n} &\colon \uU \stackrel{\sim}{\to}
\mathbb{H}\times A(X)_{\mathbb{C}}, \\
\Pi_{\rm n} &\colon 
\uU_{\pm 1} \stackrel{\sim}{\to}
(-\mathbb{H})\times A(X)_{\mathbb{C}}. 
\end{align*}
(ii) 
The map $\Pi_{\rm n}$
restricts to the homeomorphisms, 
\begin{align*}
\Pi_{\rm n} \colon \uU \cap \overline{\uU}_{+1}
\stackrel{\sim}{\to} \mathbb{R}_{<0} \times A(X)_{\mathbb{C}}, \\
\Pi_{\rm n} \colon \overline{\uU} \cap \uU_{-1}
\stackrel{\sim}{\to} \mathbb{R}_{>0} \times A(X)_{\mathbb{C}}.
\end{align*}
\end{lem}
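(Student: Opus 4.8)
The plan is to prove Lemma~\ref{conne} in two stages, first establishing (i) and then using the explicit structure of the tiltings to deduce the boundary statements in (ii). For part (i), the key point is that the forgetting map $\Pi_{\rm n}$ is already known to be a local homeomorphism by Theorem~\ref{thm:top}, so it suffices to check that on the specified subsets it is a \emph{bijection} onto the claimed open sets. Injectivity follows from the fact that a weak stability condition is determined by its central charge together with its heart (the pair $(Z,\aA)$ in Definition~\ref{defi:weak}): on $\uU$ the heart is always $\aA$, and on $\uU_{\pm 1}$ the heart is always $\bB_{\pm}$, so two distinct points of $\uU$ (resp.\ $\uU_{\pm 1}$) with the same image under $\Pi_{\rm n}$ would be equal. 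Here I would use the normalization $Z(\oO_x)=-1$, which via \eqref{Z1}--\eqref{Z2} forces the second $\mathbb{C}$-factor in \eqref{forget} to be pinned down, so that $\Pi_{\rm n}$ records exactly the pair $u=(z,B+i\omega)$; Lemma~\ref{lem:cons} then provides the surjectivity, since it says precisely that every $u\in \mathbb{H}\times A(X)_{\mathbb{C}}$ (resp.\ $(-\mathbb{H})\times A(X)_{\mathbb{C}}$) does arise. Continuity of the inverse follows because $\Pi_{\rm n}$ is an open map, being a local homeomorphism.

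For part (ii), the plan is to understand the closures $\overline{\uU}$ and $\overline{\uU}_{\pm 1}$ in $\Stab_{\Gamma_{\bullet},\rm n}(\dD_X)$ by continuity of $\Pi_{\rm n}$ and by analyzing what happens to the hearts along boundary paths. The region $\uU$ is carried homeomorphically onto $\mathbb{H}\times A(X)_{\mathbb{C}}$, whose relevant boundary stratum (within $\mathbb{C}\times A(X)_{\mathbb{C}}$, fixing the ample cone factor) is $\mathbb{R}_{<0}\times A(X)_{\mathbb{C}}$, coming from $z\to$ negative real axis; similarly the boundary of $(-\mathbb{H})\times A(X)_{\mathbb{C}}$ meeting the positive real axis is $\mathbb{R}_{>0}\times A(X)_{\mathbb{C}}$. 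So I would show that as $z$ in $\sigma_u$ tends to a negative real number $z_0$, with $B+i\omega$ fixed in $A(X)_{\mathbb{C}}$, the object $\oO_X$ becomes semistable of phase $1$ (since $Z_{u,0}(\oO_X[\text{appropriate shift}])$ approaches the positive real axis) and the limiting weak stability condition has heart $\bB_{+}$, i.e.\ the limit lies in $\uU_{+1}$; this identifies $\uU\cap\overline{\uU}_{+1}$ with the locus $\Pi_{\rm n}^{-1}(\mathbb{R}_{<0}\times A(X)_{\mathbb{C}})$, and $\Pi_{\rm n}$ restricted there is a homeomorphism onto $\mathbb{R}_{<0}\times A(X)_{\mathbb{C}}$ by the same bijectivity-plus-openness argument as in (i). The case of $\overline{\uU}\cap\uU_{-1}$ and $\mathbb{R}_{>0}$ is symmetric, using $\bB_{-}$ in place of $\bB_{+}$ and the shift $\oO_X[1]$.

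The main obstacle I anticipate is the precise description of the boundary hearts: one needs to verify that the only way $\aA$ and $\bB_{+}$ can arise as hearts of weak stability conditions over the same central charge is along the wall $z\in\mathbb{R}_{<0}$, and that no \emph{other} component of $\Stab_{\Gamma_{\bullet},\rm n}(\dD_X)$ with heart $\bB_{+}$ accumulates onto $\uU$. This is controlled by the explicit tilting structure in Lemma~\ref{types}: $\bB_{+}$ is the tilt of $\aA$ at the torsion pair $\langle\langle\oO_X\rangle_{\ex},\aA_{+}\rangle_{\ex}$, and $\aA_{+}$ consists of objects $E$ with $\Hom(\oO_X,E)=0$, so $\oO_X$ and $\oO_X[-1]$ are interchanged precisely when the phase of $\oO_X$ crosses $1$. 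I would make this rigorous by a short continuity argument: take $\sigma_n\in\uU$ with $\Pi_{\rm n}(\sigma_n)\to(z_0,B+i\omega)$, $z_0\in\mathbb{R}_{<0}$, deform $\sigma_n$ slightly so that $\oO_X$ has phase just below $1$, and invoke the behaviour of semistable objects under small deformation (Remark~\ref{rmk216}) together with the uniqueness of Harder--Narasimhan filtrations to identify the limiting heart with $\bB_{+}$. Once the boundary heart is pinned down, the homeomorphism statement is immediate from the local-homeomorphism property, exactly as in part (i).
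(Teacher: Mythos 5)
Your part (i) has the right skeleton (injectivity because the heart is fixed, surjectivity from Lemma~\ref{lem:cons}), but the last step is a non sequitur: ``continuity of the inverse follows because $\Pi_{\rm n}$ is an open map'' does not work, because openness of the ambient local homeomorphism does not restrict to the subset $\uU$, and $\uU$ is \emph{not} open in $\Stab_{\Gamma_{\bullet}, \rm{n}}(\dD_X)$ (its points over $\mathbb{R}_{<0}$ are limits of points of $\uU_{+1}$ --- that is exactly part (ii)). What has to be proved is precisely that the sections $u\mapsto \sigma_u$ and $u\mapsto \tau_{u\pm}$ are continuous, i.e.\ that deforming the central charge while holding the heart fixed moves the slicing continuously; a set-theoretic section of a local homeomorphism need not be continuous, so general topology cannot supply this. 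In the paper this is exactly what is imported from Lemma~\ref{cC} (i.e.\ \cite[Lemma~7.1]{Tcurve1}): the entire proof of the lemma is the observation that $\bB_{\pm}$ and $\aA$ are related by tilting together with that limit lemma, and your proposal provides no substitute for it.

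In part (ii) the wall-crossing picture is reversed, and this is a genuine error rather than a presentational one. Over $z_0\in\mathbb{R}_{<0}$ the pair $(Z_{u_0},\bB_{+})$ is not a weak stability condition at all, since $Z_{u_0}(\oO_X[-1])=-z_0\in\mathbb{R}_{>0}\notin\mathbb{H}$; hence $\uU_{+1}$ has no points over $\mathbb{R}_{<0}$, and the limit of $\sigma_{u_t}\in\uU$ as $z_t\to z_0\in\mathbb{R}_{<0}$ from the upper half-plane is simply $\sigma_{u_0}\in\uU$ with heart $\aA$ (the phase of $\oO_X$ tends to $1$ and $\oO_X$ stays in the heart), not a point with heart $\bB_{+}$ as you claim --- if your claim were true, $\uU\cap\overline{\uU}_{+1}$ would in fact be empty over $\mathbb{R}_{<0}$. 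What the statement requires is the limit from the other side: $\tau_{u_t+}\in\uU_{+1}$ with $z_t$ in the open lower half-plane converging to $\sigma_{u_0}=(Z_{u_0},\aA)$, the heart changing from $\bB_{+}$ to $\aA$ as the phase of $\oO_X[-1]$ decreases to $0$; symmetrically, $\overline{\uU}\cap\uU_{-1}$ requires $\sigma_{u_t}\in\uU$ converging to $\tau_{u_0-}$ (heart $\bB_{-}$, with $\oO_X[1]$ of phase $1$) over $\mathbb{R}_{>0}$, and it is $\bB_{-}$, not $\bB_{+}$, that appears there. Likewise your anticipated ``obstacle'' is misstated: $\aA$ and $\bB_{+}$ never occur over the same central charge, since the positivity axiom forces $z\in\mathbb{H}$ for $\aA$ and $z\in-\mathbb{H}$ for $\bB_{\pm}$. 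Finally, your closing continuity argument (Remark~\ref{rmk216} plus uniqueness of HN filtrations) is aimed at the wrong limiting heart and in any case does not establish convergence of the weak stability conditions; that convergence is exactly the content of Lemma~\ref{cC}, which, combined with the trivial observation that $\Pi_{\rm n}(\overline{\uU}_{\pm1})$ lies in the closure of $(-\mathbb{H})\times A(X)_{\mathbb{C}}$ and that $\mathbb{H}\cap\overline{(-\mathbb{H})}=\mathbb{R}_{<0}$, $\overline{\mathbb{H}}\cap(-\mathbb{H})=\mathbb{R}_{>0}$, is how (ii) is actually obtained.
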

\begin{proof}
Since $\bB_{\pm}$ is obtained from $\aA$ 
by tilting, both of (i) and (ii) follow by applying
Lemma~\ref{cC} below.  
\end{proof}
We have used the following lemma, 
whose proof is given in~\cite[Lemma~7.1]{Tcurve1}. 
\begin{lem}\emph{\bf{\cite[Lemma~7.1]{Tcurve1}}}
\label{cC}
Let $\cC$ be the heart of a bounded t-structure on 
$\dD_X$ and $(\tT, \fF)$ a torsion pair on $\cC$. 
Let $\cC'=\langle \fF, \tT[-1]\rangle_{\ex}$
be the associated tilting. Let 
\begin{align*}
[0, 1) \ni t \mapsto Z_t \in \prod_{i=0}^{1} \Hom_{\mathbb{Z}}
(\Gamma_i/\Gamma_{i-1}, \mathbb{C}),
\end{align*}
be a continuous map such that $\sigma_t=(Z_t, \cC)$
for $0<t<1$ and $\sigma_0=(Z_{0}, \cC')$
determine points in $\Stab_{\Gamma_{\bullet}}(\dD_X)$. 
Then we have $\lim_{t\to 0}\sigma_t =\sigma_0$. 
\end{lem}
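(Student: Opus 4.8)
The plan is to reduce the asserted convergence to a statement about slicings, and then to control the slicing via the tilt $\cC'=\langle\fF,\tT[-1]\rangle_{\ex}$ of $\cC$.

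First, the reduction. By the continuity hypothesis we have $Z_t\to Z_0$ in $\prod_{i=0}^{1}\Hom_{\mathbb{Z}}(\Gamma_i/\Gamma_{i-1},\mathbb{C})$ as $t\to 0$. Since the topology on $\Stab_{\Gamma_{\bullet}}(\dD_X)$ is the one for which $\Pi$ is a local homeomorphism (Theorem~\ref{thm:top})---equivalently, the one induced by Bridgeland's generalized metric on the pairs $(Z,\pP)$ of a central charge and a slicing, as in~\cite[Section~6]{Brs1}---it is enough to prove that the slicing $\pP_t$ of $\sigma_t$ converges to the slicing $\pP_0$ of $\sigma_0$: for every $\eta>0$ there should be $t_\eta>0$ so that, for $0<t<t_\eta$ and all $\phi\in\mathbb{R}$, every $\sigma_t$-semistable object of phase $\phi$ lies in $\pP_0((\phi-\eta,\phi+\eta))$ and every $\sigma_0$-semistable object of phase $\phi$ lies in $\pP_t((\phi-\eta,\phi+\eta))$. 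Together with $\lVert Z_t-Z_0\rVert\to 0$ this yields $\lim_{t\to 0}\sigma_t=\sigma_0$.

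Next, the slicing comparison. The hearts $\cC$ and $\cC'$ differ only by the tilt at the torsion pair $(\tT,\fF)$: the subcategory $\fF$ lies in both, while objects built from $\tT$ in $\cC$ are replaced in $\cC'$ by their shifts in $\tT[-1]$. On the classes of objects of $\fF$ the central charges $Z_t$ converge to $Z_0$, so the portion of the two slicings governed by $\fF$ varies continuously. For the $\tT$-part, the hypotheses themselves dictate the limiting behaviour: $\sigma_t$ being a weak stability condition with heart $\cC$ forces $Z_t(T)\in\mathbb{H}$ for $0\ne T\in\tT$ and $t\in(0,1)$ by the positivity condition~(\ref{cond}), while $\sigma_0$ being one with heart $\cC'$ forces $Z_0(T[-1])=-Z_0(T)\in\mathbb{H}$; continuity then pins $Z_0(T)$ down, and this is precisely the condition which makes the $\tT$-strata of the slicing of $\sigma_t$ limit onto the $\tT[-1]$-strata of the slicing of $\sigma_0$. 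Concretely I would take an arbitrary semistable object, run its torsion-pair decomposition $0\to T\to E\to F\to 0$ in $\cC$ (and the analogous one in $\cC'$), and compare its $\sigma_t$- and $\sigma_0$-Harder--Narasimhan filtrations factor by factor: the $\fF$-factors are matched by continuity of the central charge, the $\tT$-factors by the constraint just described. The reverse inclusion is symmetric.

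The step I expect to be the main obstacle is making this comparison \emph{uniform}---having the factor-by-factor matching hold with a single $\eta=\eta(t)\to 0$, simultaneously over all classes occurring in $\tT$ and $\fF$ and over all semistable objects, and in particular ruling out semistable objects whose phase drifts faster than predicted. This is exactly where the support property is indispensable: it prevents a class of bounded norm from having $\lvert Z_t\rvert$ collapse, and so keeps the phases of all semistable objects under uniform control. Converting the torsion-pair bookkeeping into a uniform Harder--Narasimhan comparison valid for all sufficiently small $t$ is the technical heart of the argument.
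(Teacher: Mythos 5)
Your write-up is a plan rather than a proof, and you say so yourself: the final paragraph defers ``converting the torsion-pair bookkeeping into a uniform Harder--Narasimhan comparison valid for all sufficiently small $t$'', which is not a technical afterthought but the entire content of the statement. (Note the paper does not reprove this lemma either; it simply cites \cite[Lemma~7.1]{Tcurve1}, so a blind attempt has to supply exactly the argument you postpone.) Moreover, the one substantive assertion you do make is the place where the real difficulty sits, and as stated it does not go through. For $0\neq T\in\tT$, positivity for $\sigma_t=(Z_t,\cC)$ gives $Z_t(T)\in\mathbb{H}$ for $t>0$, while positivity for $\sigma_0=(Z_0,\cC')$ applied to $T[-1]\in\cC'$ gives $Z_0(T)\in-\mathbb{H}$; with the paper's convention $\mathbb{H}=\{r e^{i\pi\phi}:0<\phi\le 1\}$, continuity then forces $Z_0(T)$ onto the ray $\mathbb{R}_{>0}$. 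Consequently the $\sigma_t$-phases of objects of $\tT$ tend to $0$, whereas the objects of $\tT[-1]\subset\cC'$ have $\sigma_0$-phase $1$ (their central charges lie on $\mathbb{R}_{<0}$), i.e.\ $\tT$ sits in $\pP_0((1,2])$. So the ``$\tT$-strata of $\pP_t$ limiting onto the $\tT[-1]$-strata of $\pP_0$'' is not something continuity pins down; taken at face value it is off by a shift, and one must carefully decide on which side of the critical ray the charges of $\tT$ approach and which of the two tilted hearts lying over the same central charge $Z_0$ is the genuine limit. (Compare how the lemma is actually used in Lemma~\ref{conne}(ii): there the converging family carries the tilted heart $\bB_{+}$, resp.\ $\aA$, and the limit point carries $\aA$, resp.\ $\bB_{-}$, precisely so that the phase of $\oO_X$, resp.\ $\oO_X[-1]$, varies continuously.) Your sketch never confronts this orientation bookkeeping, which is exactly where the proof lives.

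Two further steps are asserted without justification. First, ``the $\fF$-factors are matched by continuity of the central charge'' is not an argument: semistability and HN filtrations depend on the heart, and an object of $\fF$ has different subobjects and quotients in $\cC$ and in $\cC'$ (in $\cC'$ the class $\fF$ is a torsion class, so new quotients appear), so closeness of $Z_t$ to $Z_0$ alone does not match HN factors; one has to compare the two HN filtrations of an arbitrary object across the tilt and bound the resulting phase discrepancies, uniformly in the object. Second, the reduction itself needs the quantitative form of the topology (a Bridgeland-type estimate such as: if every $\sigma_t$-semistable object of phase $\phi$ lies in $\pP_0((\phi-\eta,\phi+\eta))$ and $\lVert Z_t-Z_0\rVert$ is small, then $\sigma_t$ is in the $\eta$-neighbourhood of $\sigma_0$), and the uniformity over all classes is where the support property enters; you name these ingredients but do not combine them into an estimate. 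As it stands the proposal identifies a plausible strategy but proves neither the key phase comparison nor its uniformity, so it does not establish the lemma.
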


By Lemma~\ref{conne}, 
the subspaces $\uU$, $\uU_{\pm 1}$ are 
contained in the same connected component, 
which we denote by 
\begin{align*}
\Stab_{\Gamma_{\bullet}, \rm{n}}^{\circ}(\dD_X)
\subset \Stab_{\Gamma_{\bullet}, \rm{n}}(\dD_X). 
\end{align*}
\subsection{Weak stability conditions and 
Seidel-Thomas twist}
By our assumption (\ref{assum:van}), the object 
$\oO_X$ is a \textit{spherical object}, i.e. 
\begin{align*}
\Ext_{X}^{i}(\oO_X, \oO_X)=\left\{ 
\begin{array}{cc}
\mathbb{C}, & i=0, 3, \\
0, & \mbox{ otherwise. }
\end{array} \right. 
\end{align*} 
We have the associated derived equivalence, 
called \textit{Seidel-Thomas twist}~\cite{ST},
\begin{align}\label{equ:Se}
\Phi_{\oO_X} \colon D^b(\Coh(X)) \stackrel{\sim}{\to}
D^b(\Coh(X)). 
\end{align}
The above equivalence has the property that 
there is a 
 distinguished triangle, 
\begin{align}\label{dist}
\dR \Hom(\oO_X, E) \otimes_{\mathbb{C}}\oO_X 
\to E \to \Phi_{\oO_X}(E),
\end{align}
for any object $E\in D^b(\Coh(X))$. 
\begin{lem}\label{com2}
The equivalence $\Phi_{\oO_X}$ preserves 
the subcategory $\dD_X$, and we have the 
commutative diagram, 
\begin{align}\label{com:sei}
\xymatrix{
K(\dD_X) \ar[r]^{\Phi_{\oO_X}} \ar[d]_{\cl} & 
K(\dD_X) \ar[d]^{\cl} \\
\Gamma \ar[r]^{\phi_{\oO_X}}& \Gamma.
}
\end{align}
Here $\phi_{\oO_X}$ is given by 
\begin{align*}
\phi_{\oO_X}(r, \beta, n)=(r-n, \beta, n), 
\end{align*}
for $(r, \beta, n)\in H^0 \oplus H_2 \oplus H_0$.
\end{lem}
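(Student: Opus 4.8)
The plan is to prove the two assertions of Lemma~\ref{com2} separately: first that $\Phi_{\oO_X}$ restricts to an autoequivalence of $\dD_X$, and then that the induced map on $\Gamma$ has the stated form. For the first part, I would use the description $\dD_X=\langle \oO_X, \Coh_{\le 1}(X)\rangle_{\tr}$ together with the defining triangle (\ref{dist}). Since $\Phi_{\oO_X}$ is an exact functor, it suffices to check that $\Phi_{\oO_X}(\oO_X)$ and $\Phi_{\oO_X}(F)$ for $F\in \Coh_{\le 1}(X)$ lie in $\dD_X$, and likewise for the inverse functor $\Phi_{\oO_X}^{-1}$. For $E=\oO_X$, the triangle (\ref{dist}) together with $\dR\Hom(\oO_X,\oO_X)\cong \mathbb{C}\oplus\mathbb{C}[-3]$ (from the spherical condition) gives $\Phi_{\oO_X}(\oO_X)\cong \oO_X[2]$ up to a shift, which is certainly in $\dD_X$; one should track the shift carefully since it enters the Chern character computation. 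For $F\in\Coh_{\le 1}(X)$, the triangle reads $\dR\Hom(\oO_X,F)\otimes\oO_X\to F\to \Phi_{\oO_X}(F)$, and since $\dR\Hom(\oO_X,F)\otimes\oO_X$ is a bounded complex of copies of $\oO_X$ (finitely many since $F$ has proper support), it lies in $\langle\oO_X\rangle_{\tr}\subset\dD_X$; hence $\Phi_{\oO_X}(F)$, sitting in a triangle between two objects of $\dD_X$, is in $\dD_X$. The same argument with $\Phi_{\oO_X}^{-1}$ (which has an analogous defining triangle $\Phi_{\oO_X}^{-1}(E)\to E\to \dR\Hom(E,\oO_X)^{\vee}\otimes\oO_X$, or the evident dual version) shows the restriction is an equivalence $\dD_X\simto\dD_X$.

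For the second part, the commutativity of (\ref{com:sei}) is a Chern character computation. On $K$-theory, the triangle (\ref{dist}) gives $[\Phi_{\oO_X}(E)]=[E]-\chi(\oO_X,E)[\oO_X]$ in $K(\dD_X)$, where $\chi(\oO_X,E)=\sum_i(-1)^i\dim\Ext^i(\oO_X,E)$. By Riemann--Roch on the Calabi--Yau $3$-fold $X$, with $\td(X)$ and $\ch(\oO_X)=1$, one computes $\chi(\oO_X,E)=\int_X\ch(E)\td(X)$; writing $\cl(E)=(r,\beta,n)=(\ch_0(E),\ch_2(E),\ch_3(E))$ and using that $X$ is Calabi--Yau (so the degree-one and degree-five pieces of $\td(X)$ vanish, and $\int_X\ch_3(E)=n$ while the cross terms $\ch_1(E)$, $\ch_2(E)$ pair against the wrong-degree pieces of $\td(X)$), the only surviving contribution is $\chi(\oO_X,E)=n$. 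Hence $\cl(\Phi_{\oO_X}(E))=(r,\beta,n)-n\cdot(1,0,0)=(r-n,\beta,n)$, which is exactly $\phi_{\oO_X}(r,\beta,n)$ as claimed.

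The main obstacle I anticipate is the bookkeeping of shifts and signs in the Riemann--Roch computation, and in particular being careful about whether one wants $\ch_2$, $\ch_3$ or their negatives (the paper's convention is $\cl(E)=(\ch_0,\ch_2,\ch_3)$ while elsewhere curve classes appear as $-\beta$, $-n$). One must also confirm that $\Coh_{\le 1}(X)$ is generated under extensions and shifts in a way compatible with the triangulated closure — but this is immediate since $\Coh_{\le 1}(X)$ is already an abelian subcategory of $\Coh(X)$, so $\langle\Coh_{\le 1}(X)\rangle_{\tr}$ is just the bounded derived category of that subcategory. A minor point worth verifying is that $\dR\Hom(\oO_X,F)$ for $F\in\Coh_{\le1}(X)$ is indeed a perfect complex of $\mathbb{C}$-vector spaces (automatic since $X$ is smooth projective and $F$ is coherent), so that $\dR\Hom(\oO_X,F)\otimes_{\mathbb{C}}\oO_X$ really is a finite complex of copies of $\oO_X$ and hence lands in $\langle\oO_X\rangle_{\tr}$.
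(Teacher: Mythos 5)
Your argument is correct and essentially the same as the paper's: the paper likewise deduces preservation of $\dD_X$ directly from the triangle (\ref{dist}) and obtains the map on $\Gamma$ from Riemann--Roch, which (using $\ch_1(E)=0$ for every $E\in\dD_X$) gives $\chi(\oO_X,E)=\ch_3(E)=n$ and hence $\cl(\Phi_{\oO_X}(E))=(r-n,\beta,n)$. Two harmless slips to correct: the cross term $\ch_1(E)\cdot\td_2(X)$ is of top degree, so it drops only because $\ch_1(E)=0$ on $\dD_X$ (while the terms against $\td_1(X)$ and $\td_3(X)$ vanish since $c_1(X)=0$), not because of a degree mismatch; and $\Phi_{\oO_X}(\oO_X)\cong\oO_X[-2]$ rather than $\oO_X[2]$, an even shift either way, so the $K$-theory computation is unaffected.
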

\begin{proof}
By the distinguished triangle (\ref{dist}), 
it is obvious that the equivalence $\Phi_{\oO_X}$
preserves $\dD_X$. 
Since $\ch_1(E)=0$ for any $E\in \dD_X$, 
the Riemann-Roch theorem yields, 
\begin{align*}
\sum_{i}(-1)^i \dim \Hom(\oO_X, E[i])=\ch_3(E). 
\end{align*}
Then the distinguished triangle (\ref{dist}) 
implies that the diagram (\ref{com:sei}) is commutative. 
\end{proof}
Note that $\phi_{\oO_X}$
preserves the filtration $\Gamma_{\bullet}$
and the induced map on $\gr (\Gamma_{\bullet})$ is identity. 
Hence by Lemma~\ref{com2} and~\cite[Lemma~2.9]{Tcurve2},
we have the commutative diagram, 
\begin{align*}
\xymatrix{
\Stab_{\Gamma_{\bullet}}(\dD_X)\ar[r]^{\Phi_{\oO_X \ast}} \ar[d]_{\Pi}
& \Stab_{\Gamma_{\bullet}}(\dD_X) \ar[d]^{\Pi} \\
\gr(\Gamma_{\bullet})^{\vee}\otimes \mathbb{C} \ar[r]^{\id} & 
\gr (\Gamma_{\bullet})^{\vee}\otimes \mathbb{C}. }
\end{align*} 
Here $\Phi_{\oO_X \ast}$ is given by,
\begin{align*}
\Phi_{\oO_X \ast}(Z, \aA)=(Z, \Phi_{\oO_X}(\aA)),
\end{align*}
where $\aA \subset \dD_X$ is the heart of a bounded t-structure.
It is obvious that $\Phi_{\oO_X \ast}$ preserves
the normalized weak stability conditions, 
so there is a commutative diagram,
\begin{align*}
\xymatrix{
\Stab_{\Gamma_{\bullet}, \rm{n}}(\dD_X) \ar[d]_{\Pi_{\rm n}} 
 \ar[r]^{\Phi_{\oO_X \ast}} & 
\ar[d]^{\Pi_{\rm n}} \Stab_{\Gamma_{\bullet}, \rm{n}}(\dD_X) \\
 \mathbb{C}\times H^2(X)_{\mathbb{C}} \ar[r]^{\id} &
\mathbb{C}\times H^2(X)_{\mathbb{C}}.}
\end{align*}
Under the Seidel-Thomas twist (\ref{equ:Se}), 
the regions (\ref{region2}) are related as follows. 
\begin{lem}\label{phist}
We have 
\begin{align*}
\Phi_{\oO_X \ast}\uU_{-1}=\uU_{+1}. 
\end{align*}
In particular, $\Phi_{\oO_X \ast}$ preserves
the connected component $\Stab_{\Gamma_{\bullet}, \rm{n}}^{\circ}(\dD_X)$. 
\end{lem}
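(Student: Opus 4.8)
The plan is to compute the image $\Phi_{\oO_X \ast}\uU_{-1}$ by translating the derived equivalence into an operation on hearts and then identifying the resulting heart with $\bB_{+}$. First I would fix $u\in(-\mathbb{H})\times A(X)_{\mathbb{C}}$ and consider the point $\tau_{u-}=(Z_u,\bB_{-})$, where by Lemma~\ref{types}(iii) one has $\bB_{-}=\langle \oO_X[1],\aA_{-}\rangle_{\ex}$ with $\aA_{-}=\{E\in\aA:\Hom(E,\oO_X)=0\}$. By definition $\Phi_{\oO_X \ast}\tau_{u-}=(Z_u,\Phi_{\oO_X}(\bB_{-}))$, so the whole statement reduces to the identity of hearts $\Phi_{\oO_X}(\bB_{-})=\bB_{+}$, together with checking that $Z_u$ is unchanged, which is immediate since $\Phi_{\oO_X \ast}$ does not move the central charge (as recorded in the commutative diagram preceding the lemma), and the normalization $Z_u(\oO_x)=-1$ is preserved because $\phi_{\oO_X}$ fixes the class of a point.

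Next I would analyze the action of $\Phi_{\oO_X}$ on generators. On $\oO_X[1]$: from the distinguished triangle (\ref{dist}) applied to $E=\oO_X$, using $\dR\Hom(\oO_X,\oO_X)=\mathbb{C}\oplus\mathbb{C}[-3]$ and that $\oO_X$ is spherical, one gets $\Phi_{\oO_X}(\oO_X)=\oO_X[-2]$, hence $\Phi_{\oO_X}(\oO_X[1])=\oO_X[-1]$. On objects $F\in\Coh_{\le 1}(X)$, note $\dR\Hom(\oO_X,F)$ is concentrated so that (\ref{dist}) realizes $\Phi_{\oO_X}(F)$ as an extension of $F$ by shifts of $\oO_X$; more precisely I would show that $\Phi_{\oO_X}$ sends $\aA$ to $\aA$-complexes in a controlled way, and in particular sends $\aA_{-}$ into $\aA_{+}$. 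The cleanest route: use that $\aA_{-}$ is characterized inside $\aA$ by vanishing of $\Hom(-,\oO_X)$, apply the equivalence, and use the triangle (\ref{dist}) to see $\Hom(\oO_X,\Phi_{\oO_X}(E))\cong\Hom(\oO_X,E)$ shifted appropriately, so that $\Phi_{\oO_X}(\aA_{-})=\aA_{+}$; then $\Phi_{\oO_X}(\bB_{-})=\langle\oO_X[-1],\aA_{+}\rangle_{\ex}=\langle\aA_{+},\oO_X[-1]\rangle_{\ex}=\bB_{+}$, the reordering being permissible because $\bB_{+}$ is extension-closed and the $\Hom$-vanishing defining the torsion pair in Lemma~\ref{types}(ii) holds.

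Finally, having established $\Phi_{\oO_X \ast}\tau_{u-}=\tau_{u+}$ for every $u$, taking the union over $u\in(-\mathbb{H})\times A(X)_{\mathbb{C}}$ gives $\Phi_{\oO_X \ast}\uU_{-1}=\uU_{+1}$ by (\ref{region2}). For the last sentence: $\Phi_{\oO_X \ast}$ is a homeomorphism of $\Stab_{\Gamma_{\bullet},\rm{n}}(\dD_X)$, so it sends connected components to connected components; since it carries $\uU_{-1}$ into $\uU_{+1}$, and both $\uU_{-1}$ and $\uU_{+1}$ lie in $\Stab_{\Gamma_{\bullet},\rm{n}}^{\circ}(\dD_X)$ by the discussion after Lemma~\ref{conne}, the image component must be $\Stab_{\Gamma_{\bullet},\rm{n}}^{\circ}(\dD_X)$ itself. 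I expect the main obstacle to be the bookkeeping in the middle step—pinning down exactly how $\Phi_{\oO_X}$ acts on objects of $\aA$ and verifying the torsion-pair orderings so that $\Phi_{\oO_X}(\aA_{-})=\aA_{+}$ on the nose rather than up to some tilt; the $\oO_X[1]\mapsto\oO_X[-1]$ computation and the central-charge invariance are routine by comparison.
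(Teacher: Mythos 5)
Your overall skeleton agrees with the paper's: reduce to the identity of hearts $\Phi_{\oO_X}(\bB_{-})=\bB_{+}$, note that $\Phi_{\oO_X\ast}$ leaves the central charge untouched, compute $\Phi_{\oO_X}(\oO_X[1])=\oO_X[-1]$ from (\ref{dist}), and finish with the connectedness argument. But the step you yourself single out as the main obstacle is where the proposal genuinely fails: the claim $\Phi_{\oO_X}(\aA_{-})=\aA_{+}$ is false. Take a smooth curve $C\subset X$ of genus $g\ge 1$ and $E=\oO_C[-1]\in\aA$. Then $\Hom(E,\oO_X)=\Ext^1(\oO_C,\oO_X)\cong H^2(X,\oO_C)^{\vee}=0$, so $E\in\aA_{-}$; yet applying $\hH_{\aA}^{\bullet}$ to the triangle (\ref{dist}) gives $\hH_{\aA}^{1}(\Phi_{\oO_X}(E))\cong\oO_X^{\oplus r_2}$ with $r_2=\dim\Ext^2(\oO_X,E)=\dim H^1(C,\oO_C)=g\neq 0$, so $\Phi_{\oO_X}(E)$ is not even an object of $\aA$, let alone of $\aA_{+}$. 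Your supporting computation, that $\Hom(\oO_X,\Phi_{\oO_X}(E))$ is $\Hom(\oO_X,E)$ ``shifted appropriately,'' only yields $\Hom(\oO_X,\Phi_{\oO_X}(E))\cong\Hom(\oO_X[2],E)=0$ (since $\Phi_{\oO_X}^{-1}(\oO_X)=\oO_X[2]$); this vanishing is necessary but far from sufficient, because the real obstruction is the $\Ext^2$-term, which is exactly what produces the $\oO_X[-1]$-piece. The ``up to some tilt'' caveat you mention is not a bookkeeping nuisance to be argued away: it is what actually happens.

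The repair, which is the paper's argument, is to prove only the inclusion $\Phi_{\oO_X}(\bB_{-})\subset\bB_{+}$; this suffices because an inclusion of hearts of bounded t-structures is automatically an equality, so the generator-by-generator identification you attempted is not needed. Concretely, for $E\in\aA_{-}$ Serre duality gives $r_3=\dim\Hom(\oO_X,E[3])=\dim\Hom(E,\oO_X)=0$, and the long exact sequence of $\hH_{\aA}^{\bullet}$ applied to (\ref{dist}), using that $\oO_X$ is simple in $\aA$, shows $\hH_{\aA}^{i}(\Phi_{\oO_X}(E))=0$ for $i\neq 0,1$ and $\hH_{\aA}^{1}(\Phi_{\oO_X}(E))\cong\oO_X^{\oplus r_2}\in\langle\oO_X\rangle_{\ex}$; moreover $\Hom(\oO_X,\hH_{\aA}^{0}(\Phi_{\oO_X}(E)))\cong\Hom(\oO_X,\Phi_{\oO_X}(E))\cong\Hom(\oO_X[2],E)=0$, so the $\hH^0$-part lies in $\aA_{+}$. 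By the definition of the tilt in Lemma~\ref{types}(ii) this places $\Phi_{\oO_X}(E)$ in $\bB_{+}$, and together with $\Phi_{\oO_X}(\oO_X[1])=\oO_X[-1]\in\bB_{+}$ one gets the inclusion and hence the equality. (A minor additional slip: $\phi_{\oO_X}$ does not fix $\cl(\oO_x)=(0,0,1)$, which it sends to $(-1,0,1)$; what saves the normalization is that $\phi_{\oO_X}$ induces the identity on $\gr(\Gamma_{\bullet})$, so $Z$ is literally unchanged under $\Phi_{\oO_X\ast}$.)
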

\begin{proof}
By the construction of $\uU_{\pm 1}$, it 
is enough to show that 
\begin{align*}
\Phi_{\oO_X}(\bB_{-})=\bB_{+}. 
\end{align*}
Since both sides are hearts of bounded t-structures, 
it is enough to see that the LHS is contained in the RHS.
By Lemma~\ref{types} (ii), this follows by showing that 
\begin{align*}
\Phi_{\oO_X}(\oO_X[1]) \in \bB_{+}, \quad 
\Phi_{\oO_X}(\aA_{-}) \subset \bB_{+}. 
\end{align*}
First 
it is easy to see that
\begin{align}\label{PhiO}
\Phi_{\oO_X}(\oO_X[1])=\oO_X[-1] \in \bB_{+},
\end{align}
using the distinguished triangle (\ref{dist}). 
Next let us take $E\in \aA_{-}$, and 
show that $\Phi_{\oO_X}(E) \in \bB_{+}$. We
set 
\begin{align*}
r_i=\dim \Hom(\oO_X, E[i]). 
\end{align*}
The Serre duality 
implies that $r_3=0$. 
Applying $\hH_{\aA}^{\bullet}$ to the 
distinguished triangle (\ref{dist})
and noting that $\oO_X \in \aA$ is a simple object, 
it is easy to see that  
\begin{align*}
\hH_{\aA}^{i}(\Phi_{\oO_X}(E))=0, \ i\neq 0, 1, 
\quad 
\hH_{\aA}^{1}(\Phi_{\oO_X}(E))\cong \oO_X^{\oplus r_2}. 
\end{align*}
Also this implies that 
\begin{align*}
\Hom(\oO_X, \hH_{\aA}^{0}(\Phi_{\oO_X}(E)))
&\cong \Hom(\oO_X, \Phi_{\oO_X}(E)) \\
&\cong \Hom(\oO_X[2], E) \\
&\cong 0. 
\end{align*}
Therefore $\hH_{\aA}^{0}(\Phi_{\oO_X}(E)) \in \aA_{+}$.  
By the construction of $\bB_{+}$, we conclude that   
$\Phi_{\oO_X}(E) \in \bB_{+}$.
\end{proof}
Applying the twist functor $\Phi_{\oO_X}$
to the regions (\ref{region1}), (\ref{region2}), 
we can construct other regions in 
the space $\Stab_{\Gamma_{\bullet}, \rm{n}}^{\circ}(\dD_X)$. 
For $k\in \mathbb{Z}$, they are defined by the following way, 
\begin{align}\label{stand1}
\uU_{2k} &\cneq \Phi_{\oO_X \ast}^{(k)}(\uU), \\
\label{stand2}
\uU_{2k+1} &\cneq \Phi_{\oO_X \ast}^{(k)}(\uU_{+1}). 
\end{align}
We have the following lemma. 
\begin{lem}\label{Ost}
For $\sigma=(Z, \pP) \in \uU_k$, 
we have 
\begin{align*}
\oO_X \in \pP_{s}(\phi), \quad k < \phi \le k+1. 
\end{align*}
\end{lem}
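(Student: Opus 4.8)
**The plan is to verify the claim $\oO_X \in \pP_s(\phi)$ with $k<\phi\le k+1$ separately for the base regions $\uU_0=\uU$ and $\uU_1=\uU_{+1}$, and then propagate to all $\uU_k$ by applying the Seidel-Thomas twist.** For $\uU$, a point is $\sigma_u=(Z_u,\aA)$ with $u\in \mathbb{H}\times A(X)_{\mathbb{C}}$, and $\oO_X\in \aA$. By the normalization $Z(\oO_x)=-1$ we have $z=Z_{u,0}(1)\in \mathbb{H}$, so $\arg Z_u(\oO_X)=\arg z \in (0,1]$, placing the phase in $(0,1]=(k,k+1]$ for $k=0$. For $\uU_{+1}$, a point is $\tau_{u+}=(Z_u,\bB_+)$ with $u\in(-\mathbb{H})\times A(X)_{\mathbb{C}}$, and $\oO_X[-1]\in \bB_+$ by construction; here $z\in -\mathbb{H}$, so $\arg Z_u(\oO_X[-1]) = \arg(-z)\in(-1,0]$ shifted by $1$ lies in $(0,1]$... more precisely $\oO_X\in \pP(\phi+1)$ where $\oO_X[-1]\in\pP(\phi)$, and one checks the resulting phase of $\oO_X$ falls in $(1,2]=(k,k+1]$ for $k=1$. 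So the first job is this arithmetic bookkeeping of arguments against the definition of $\mathbb{H}$ and the shift conventions $\pP(\phi+1)=\pP(\phi)[1]$.

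Next I would establish \emph{stability} (not just that $\oO_X$ lies in some $\pP(\phi)$, but that it lies in $\pP_s(\phi)$). In region $\uU$: since $\cl(\oO_X)=(1,0,0)\in\Gamma_0$, any subobject or quotient $F$ of $\oO_X$ in $\aA$ with $\cl(F)\in\Gamma_0$ forces $F\in\langle\oO_X\rangle_{\ex}$ by the structure of $\aA$ in Lemma~\ref{types}; and $\oO_X\in\aA$ is simple because $\Hom(\oO_X,\oO_X)=\mathbb{C}$ and $\Coh_{\le 1}(X)[-1]$ contributes no sub/quotients of $\Gamma_0$-class. If instead a quotient $G$ has $\cl(G)\in\Gamma_1\setminus\Gamma_0$, then $G$ gets an $mZ_1$-contribution dominating, but a short exact sequence $0\to F\to \oO_X\to G\to 0$ with $\cl(G)\notin\Gamma_0$ is impossible since $\cl$ is additive and $\cl(\oO_X)\in\Gamma_0$. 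So $\oO_X$ is $Z$-stable in $\aA$, i.e. $\oO_X\in\pP_s(\phi)$. The same argument works in $\bB_\pm$ using that $\oO_X[\mp1]$ is simple in $\bB_\pm$ by the tilting construction (the torsion pair puts $\langle\oO_X\rangle_{\ex}$ in one factor).

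**Then the propagation step:** for general $k$, write $k=2j$ or $k=2j+1$ and use $\uU_{2j}=\Phi_{\oO_X\ast}^{(j)}(\uU)$, $\uU_{2j+1}=\Phi_{\oO_X\ast}^{(j)}(\uU_{+1})$. An autoequivalence acting on a weak stability condition sends $\sigma$-semistable objects to $\Phi_{\oO_X\ast}(\sigma)$-semistable objects of the same phase, and $\Phi_{\oO_X}(\oO_X)\cong\oO_X[-2]$ by (\ref{PhiO}) (since $\Phi_{\oO_X}(\oO_X[1])=\oO_X[-1]$). Thus if $\oO_X\in\pP_s(\phi)$ for $\sigma\in\uU_{k-2}$ with $k-2<\phi\le k-1$, then for $\Phi_{\oO_X\ast}(\sigma)\in\uU_k$ we get $\Phi_{\oO_X}(\oO_X)=\oO_X[-2]$ is $\Phi_{\oO_X\ast}(\sigma)$-stable of phase $\phi$, hence $\oO_X$ is stable of phase $\phi+2\in(k,k+1]$. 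Induction on $j$ in both parities (with base cases $k=0,1$ above) completes the argument. I expect the main obstacle to be purely notational: keeping the phase-shift conventions consistent between the $[1]$-translation on slicings, the sign in the $\mathbb{C}$-action $\exp(-i\pi\lambda)Z$, and the effect of $\Phi_{\oO_X}$ on phases—getting the interval $(k,k+1]$ exactly right rather than off by one or by a half.
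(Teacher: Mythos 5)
Your route is the paper's route: establish the statement for $k=0,1$ by showing $\oO_X \in \aA$ and $\oO_X[-1] \in \bB_{+}$ are simple objects of those hearts (with the phase landing in $(0,1]$, resp.\ $(1,2]$, which your bookkeeping gets right in the end, despite the slip ``$\arg Z_u(\oO_X[-1]) \in (-1,0]$'' --- since $-z \in \mathbb{H}$ this argument lies in $(0,\pi]$), and then transport to every $\uU_k$ by $\Phi_{\oO_X \ast}$ using $\Phi_{\oO_X}(\oO_X) \cong \oO_X[-2]$. This is exactly the published proof, which states the simplicity as ``easy to check'' and then applies the twist.

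The one place where you go beyond the paper --- actually justifying simplicity --- contains an invalid step. You dismiss a short exact sequence $0 \to F \to \oO_X \to G \to 0$ in $\aA$ with $\cl(G) \in \Gamma_1 \setminus \Gamma_0$ ``since $\cl$ is additive and $\cl(\oO_X) \in \Gamma_0$''; but additivity only gives $\cl(F)+\cl(G) \in \Gamma_0$ and does not prevent both classes from lying outside $\Gamma_0$, e.g.\ $\cl(F)=(1,\beta,n)$ and $\cl(G)=(0,-\beta,-n)$. That is precisely the nontrivial case, so your argument does not yet prove simplicity; one genuinely has to use the structure of $\aA$, for instance: from the long exact sequence of cohomology sheaves of $0 \to E_1 \to \oO_X \to E_2 \to 0$, the sheaf $\hH^0(E_2)$ is torsion free (it is a kernel of $\oO_X^{\oplus r}\to F$ with $F \in \Coh_{\le 1}(X)$), which forces either $\hH^0(E_1)\cong \oO_X$ and $E_2=0$, or $\hH^0(E_1)=0$, i.e.\ $E_1 \cong F_1[-1]$ with $F_1 \in \Coh_{\le 1}(X)$; the latter is excluded because $\Hom_{\aA}(F_1[-1],\oO_X)=\Ext^1_X(F_1,\oO_X)\cong H^2(X,F_1)^{\vee}=0$ by Serre duality. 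Likewise, ``the torsion pair puts $\langle \oO_X \rangle_{\ex}$ in one factor'' is not by itself a proof that $\oO_X[-1]$ is simple in $\bB_{+}$ (a torsion pair only kills $\Hom(\tT,\fF)$, not maps the other way); a short cohomological check in $\aA$ analogous to the above is needed. These are fixable, and once repaired your argument coincides with the paper's.
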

\begin{proof}
It is easy to check that the 
objects 
\begin{align*}
\oO_X \in \aA, \quad \oO_X[-1] \in \bB_{+},
\end{align*}
are simple objects in $\aA$, $\bB_{+}$ respectively. 
Therefore the statement 
follows for $k=0, 1$. 
Since $\Phi_{\oO_X}(\oO_X)=\oO_X[-2]$, 
the result also follows for all $k\in \mathbb{Z}$. 
\end{proof}

\subsection{Semistable sheaves and semistable objects}
In this subsection, we recall the 
classical notion of (semi)stability on the 
category $\Coh_{\le 1}(X)$, and 
compare it with our weak stability conditions. 
For $B+i\omega \in A(X)_{\mathbb{C}}$ and 
$F\in \Coh_{\le 1}(X)$, we set
\begin{align*}
\mu_{(B, \omega)}(F)=\frac{\ch_3(E)+B\cdot \ch_2(E)}{\omega \cdot \ch_2(E)}.
\end{align*}
\begin{defi}\emph{
We say $F$ is a \textit{$(B, \omega)$-(semi)stable sheaf} 
if for any non-zero proper subsheaf $F'\subset F$, we
have 
\begin{align*}
\mu_{(B, \omega)}(F')<(\le) \mu_{(B, \omega)}(F).
\end{align*}}
\end{defi}
If $B=0$, we simply write $\mu_{(0, \omega)}(\ast)=\mu_{\omega}(\ast)$
and call a $(0, \omega)$-(semi)stable sheaf
just an $\omega$-(semi)stable sheaf. 
Note that we have the inclusions, 
\begin{align*}
\Coh_{\le 1}(X)[-1] \subset \aA, \ \bB_{\pm}, 
\end{align*}
hence it is natural to relate $(B, \omega)$-stability 
with our weak stability conditions. 
The following lemma will not be 
needed except in showing Lemma~\ref{lem:dense} below, 
but it helps us to see what kind of objects 
appear as semistable objects w.r.t. our 
weak stability conditions. 
The proof will be given in Section~\ref{sec:tech}. 
\begin{lem}\label{omit}
(i) Take $u=(z, B+i\omega) \in \mathbb{H} \times A(X)_{\mathbb{C}}$
and a $(B, \omega)$-semistable sheaf $F \in \Coh_{\le 1}(X)$ 
Then the object 
$F[-1] \in \aA$ is a $Z_{u}$-semistable
object. 

(ii) 
Take $u=(z, B+i\omega) \in (-\mathbb{H})\times A(X)_{\mathbb{C}}$
and a $(B, \omega)$-semistable 
sheaf $F\in \Coh_{\le 1}(X)$.
Then we have the following. 
\begin{itemize}
\item Suppose that $\arg Z_u(F[-1]) > \arg (-z)$. 
Then the object $F[-1] \in \bB_{+}$ is $Z_u$-semistable. 
\item Suppose that $\arg Z_u(F[-1])< \arg (-z)$. 
Then we have $\Phi_{\oO_X}(F[-1]) \in \bB_{+}$ and it is 
$Z_u$-semistable. 
\end{itemize}
\end{lem}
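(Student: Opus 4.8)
The plan is to reduce the statement to a comparison of Harder--Narasimhan filtrations, exploiting that $(B,\omega)$-stability on $\Coh_{\le 1}(X)$ is essentially governed by $Z_{u,1}$ restricted to $\Gamma_1/\Gamma_0$, while the only subtlety comes from the one extra generator $\oO_X$ (or its shift). For part (i), an object $F[-1]\in\aA$ with $F\in\Coh_{\le1}(X)$ has $\cl(F[-1])\in\Gamma_1\setminus\Gamma_0$, so its phase is read off from $Z_{u,1}$. I would take a destabilizing exact sequence $0\to E'\to F[-1]\to E''\to 0$ in $\aA$ and analyze the long exact cohomology sequence with respect to the standard t-structure on $D^b(\Coh(X))$; since $\aA=\langle\oO_X,\Coh_{\le1}(X)[-1]\rangle_{\ex}$, each of $E'$, $E''$ is an extension of some $\oO_X^{\oplus a}$ by an object of $\Coh_{\le1}(X)[-1]$. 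The point is that a subobject of $F[-1]$ in $\aA$ either sits inside $\Coh_{\le1}(X)[-1]$ (hence comes from a subsheaf of $F$, controlled by $(B,\omega)$-semistability) or involves a copy of $\oO_X$, but $\oO_X$ has phase $=\arg z<1$ while $F[-1]$ has phase close to $1$ when $\omega$ is large; a short estimate on arguments shows such subobjects cannot destabilize. So (i) follows by combining $(B,\omega)$-semistability of $F$ with the phase bound $\arg Z_u(\oO_X)=\arg z$ forced by $u\in\mathbb{H}$.

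For part (ii), the sign change $u\in(-\mathbb{H})$ means $\arg(-z)$ is now the relevant reference phase, and $\oO_X$ lives in $\bB_\pm$ only after a shift. I would treat the two bullets via the torsion-pair description $\bB_+=\langle\aA_+,\oO_X[-1]\rangle_{\ex}$ from Lemma~\ref{types}(ii). In the first bullet, $\arg Z_u(F[-1])>\arg(-z)=\arg Z_u(\oO_X[-1])$ says that $F[-1]$ has strictly larger phase than the torsion-free part $\oO_X[-1]$, so the inclusion $F[-1]\in\aA$ actually lands in $\aA_+\subset\bB_+$ (one checks $\Hom(\oO_X,F[-1])=0$ since $F[-1]$ is concentrated in degree $1$ and $\oO_X$ in degree $0$), and the same HN argument as in (i) gives $Z_u$-semistability inside $\bB_+$: a destabilizing subobject in $\bB_+$ would either be an $\aA_+$-subobject (handled by $(B,\omega)$-semistability) or contain $\oO_X[-1]$, but the latter has phase $\arg(-z)<\arg Z_u(F[-1])$, contradiction. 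In the second bullet, $\arg Z_u(F[-1])<\arg(-z)$, so $F[-1]$ is \emph{not} in $\aA_+$; instead I claim $\Phi_{\oO_X}(F[-1])\in\bB_+$. To see this, recall from the proof of Lemma~\ref{phist} that $\Phi_{\oO_X}(\aA_-)\subset\bB_+$ and that $\Phi_{\oO_X}(\oO_X[1])=\oO_X[-1]\in\bB_+$. One shows $F[-1]$, while not in $\aA_+$, sits in the tilted heart appropriate to $\Phi_{\oO_X}$; concretely, applying the triangle~(\ref{dist}) to $E=F[-1]$ and using $\dR\Hom(\oO_X,F[-1])$, one identifies $\Phi_{\oO_X}(F[-1])$ as an extension of $\oO_X[-1]^{\oplus ?}$ and $F[-1]$-type pieces, all of which lie in $\bB_+$, and then the semistability is inherited from that of $F$ together with the phase inequality.

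The key steps in order: (1) set up the standard t-structure long exact sequence for any sub/quotient in $\aA$ resp.\ $\bB_\pm$, and record that the $\Coh_{\le1}(X)[-1]$-part of a subobject is a subsheaf of $F$ shifted; (2) prove the phase estimate $\arg Z_u(\oO_X^{(\text{shift})})$ versus $\arg Z_u(F[-1])$ in each of the three cases, using $u\in\pm\mathbb{H}\times A(X)_{\mathbb C}$ and ampleness of $\omega$; (3) conclude that no $\oO_X$-involving subobject can destabilize, so destabilization would have to occur already among subsheaves of $F$, contradicting $(B,\omega)$-semistability; (4) for the last bullet, verify $\Phi_{\oO_X}(F[-1])\in\bB_+$ via triangle~(\ref{dist}) and the computations in Lemma~\ref{phist}, then transport the semistability across the equivalence (which preserves $Z_u$ by Lemma~\ref{com2} and the normalization, so phases of the relevant classes match). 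The main obstacle I expect is step (4): checking that $\Phi_{\oO_X}(F[-1])$ genuinely lies in the heart $\bB_+$ when $F[-1]\notin\aA_+$ requires carefully tracking the standard cohomology of $\Phi_{\oO_X}(F[-1])$, because now $\hH^0_\aA$ may pick up a copy of $\oO_X$ and one must confirm it is killed by the tilting defining $\bB_+$ rather than landing in the wrong component of the torsion pair; the argument is a close analogue of the computation in the proof of Lemma~\ref{phist}, but with $F[-1]$ in place of a general $E\in\aA_-$, and the phase hypothesis $\arg Z_u(F[-1])<\arg(-z)$ is exactly what guarantees the outcome.
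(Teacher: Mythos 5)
Your overall architecture (reduce to subsheaf comparisons, treat the $\oO_X$-pieces separately, use $\Phi_{\oO_X}$ for the last bullet) is close to the paper's, and your first bullet of (ii) is essentially the paper's argument; but two steps as written do not work. In (i) you dismiss sub/quotient objects ``involving a copy of $\oO_X$'' by the estimate that $\oO_X$ has phase $\arg z$ while $F[-1]$ has phase close to $1$ ``when $\omega$ is large''. There is no largeness hypothesis: $u$ ranges over all of $\mathbb{H}\times A(X)_{\mathbb{C}}$, and the comparison can go either way (take $z$ with phase near $1$ and $F=\oO_C(k)$ with $k\ll 0$, so that $Z_u(F[-1])$ has large positive real part and phase near $0$); since (i) carries no phase hypothesis at all, no argument comparing $\arg z$ with $\arg Z_u(F[-1])$ can close the case. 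The correct point, and the paper's one-line proof, is that such subobjects and quotients simply do not occur: $\Coh_{\le 1}(X)[-1]$ is closed under subobjects and quotients in $\aA$ (for $0\to E'\to F[-1]\to E''\to 0$ the standard cohomology sequence gives $\hH^{0}(E')\hookrightarrow \hH^{0}(F[-1])=0$, and $\hH^{0}(E'')$, which is torsion-free for every object of $\aA$, injects into the torsion sheaf $\hH^{1}(E')$, hence vanishes). Relatedly, for the mixed subobjects that do occur in (ii) you never invoke the key feature of weak stability that $Z_u$ of an object with class in $\Gamma\setminus\Gamma_0$ depends only on the class modulo $\Gamma_0$, so the $\oO_X[-1]$-pieces never contribute to the phase; this is what the paper uses to reduce to $(B,\omega)$-semistability.

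For the second bullet of (ii) the membership $\Phi_{\oO_X}(F[-1])\in\bB_+$ is in fact the easy half: $F[-1]\in\aA_-$ because $\Hom(F[-1],\oO_X)=\Ext^1(F,\oO_X)\cong H^2(X,F)^{\vee}=0$, so Lemma~\ref{phist} applies verbatim; note also that your assertion that $F[-1]$ is ``not in $\aA_+$'' is false, since $\Hom(\oO_X,F[-1])=0$ always, independently of the phase hypothesis. The genuine gap is the semistability of $\Phi_{\oO_X}(F[-1])$: ``inherited from that of $F$ together with the phase inequality'' is not an argument. Transporting semistability along $\Phi_{\oO_X\ast}$ requires the object to be semistable for a weak stability condition that is carried to $(Z_u,\bB_+)$, namely $(Z_u,\bB_-)$ (Lemma~\ref{phist} and Lemma~\ref{com2}, the induced map on $\gr(\Gamma_{\bullet})$ being the identity); it cannot be transported from $(Z_u,\aA)$ of part (i), since $\Phi_{\oO_X}(\aA)\neq\bB_+$. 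The paper first proves, by the same analysis as in your first bullet but carried out in $\bB_-$ with the reversed inequality $\arg Z_u(F[-1])<\arg(-z)$ (there the dangerous purely-spherical pieces sit in $\langle\oO_X[1]\rangle_{\ex}$, of phase $\arg(-z)$, and appear on the quotient side because $\Hom(F[-1],\oO_X[1])\cong H^1(X,F)^{\vee}$ need not vanish), that $F[-1]$ is $Z_u$-semistable in $\bB_-$, and only then applies the twist. You would have to supply this $\bB_-$ step, or else directly classify subobjects of $\Phi_{\oO_X}(F[-1])$ in $\bB_+$; as written the second bullet is not proved.
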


\subsection{The space of normalized 
weak stability conditions}
Let $\{\uU_i\}_{i\in \mathbb{Z}}$
be the family of regions 
constructed in (\ref{stand1}), (\ref{stand2}). 
We have the following 
description of the space of
 normalized weak stability conditions. 
\begin{thm}\label{thm:cov}
Assume that
\begin{align}\label{add}
H^2(X, \mathbb{Z}) \cong \mathbb{Z}.
\end{align}
Then we have the following,
\begin{align}\label{copro}
\Stab_{\Gamma_{\bullet}, \rm{n}}^{\circ}(\dD_X)=
\coprod_{i \in \mathbb{Z}}\uU_i.
\end{align}
In particular, the forgetting map is a 
universal covering map, 
\begin{align*}
\Pi_{\rm{n}}\colon \Stab_{\Gamma_{\bullet}, \rm{n}}^{\circ}(\dD_X)
\to \mathbb{C}^{\ast}\times \mathbb{H}^{\circ},
\end{align*}
with Galois group generated by $\Phi_{\oO_X \ast}$. 
\end{thm}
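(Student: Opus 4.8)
The plan is to show that the open cover $\{\uU_i\}_{i\in\mathbb{Z}}$ of the connected component $\Stab_{\Gamma_{\bullet},\mathrm{n}}^{\circ}(\dD_X)$ exhausts it, i.e. that $\bigcup_i \uU_i$ is both open and closed in $\Stab_{\Gamma_{\bullet},\mathrm{n}}^{\circ}(\dD_X)$, and that the $\uU_i$ are mutually disjoint, so that the union is actually a disjoint union. Openness of each $\uU_i$ is immediate from Lemma~\ref{conne}(i) and the fact that $\Phi_{\oO_X\ast}$ is a homeomorphism. For disjointness, I would use Lemma~\ref{Ost}: on $\uU_k$ the object $\oO_X$ is $\sigma$-stable of phase $\phi$ with $k<\phi\le k+1$, so if $\sigma\in\uU_k\cap\uU_{k'}$ with $k\ne k'$ then the phase of $\oO_X$ would have to lie in two disjoint half-open intervals of length $1$, a contradiction once one checks the endpoints carefully (the value $Z(\oO_x)=-1$ pins down the normalization, and the phase of $\oO_X$ relative to that of $\oO_x[3]$ is what distinguishes consecutive regions). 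So the real content is closedness, equivalently: any $\sigma=(Z,\aA)$ in the closure of $\bigcup_i\uU_i$ lies in some $\uU_i$.

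To prove closedness, first reduce to a single "fundamental domain": since $\Phi_{\oO_X\ast}$ acts on $\Stab_{\Gamma_{\bullet},\mathrm{n}}^{\circ}(\dD_X)$ permuting the $\uU_i$ by $\uU_i\mapsto\uU_{i+2}$, and since $\uU\cup\uU_{+1}\cup\uU_{-1}$ together with their closures already covers a neighborhood of the "wall" $\Pi_{\mathrm n}^{-1}(\mathbb{R}_{<0}\times A(X)_{\mathbb{C}})$ and $\Pi_{\mathrm n}^{-1}(\mathbb{R}_{>0}\times A(X)_{\mathbb{C}})$ by Lemma~\ref{conne}(ii), it suffices to show that $\overline{\uU}\subset \uU\cup\uU_{+1}\cup\uU_{-1}$ inside the normalized space. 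Concretely: take $\sigma_n=\sigma_{u_n}\in\uU$ converging to $\sigma_{\infty}=(Z_{\infty},\aA_{\infty})$. Because $\Pi_{\mathrm n}$ is a local homeomorphism (Theorem~\ref{thm:top}) and, under the hypothesis $H^2(X,\mathbb{Z})\cong\mathbb{Z}$, $\Pi_{\mathrm n}$ restricted to $\uU$ is a homeomorphism onto $\mathbb{H}^{\circ}\times A(X)_{\mathbb{C}}\subset\mathbb{C}^{\ast}\times\mathbb{H}^{\circ}$ (here $A(X)_{\mathbb{C}}$ becomes an upper half plane $\mathbb{H}^{\circ}$), the only way to leave $\uU$ is to have $u_n$ approach the boundary: either $z_n\to z_{\infty}\in\mathbb{R}_{<0}$ (the real axis, where $z$ leaves $\mathbb{H}$), or $\omega_n$ degenerates. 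The key point is that the stability manifold is complete-ish in the directions transverse to these walls in the sense that if the central charge limit $Z_{\infty}$ still lies in $\mathbb{C}^{\ast}\times\mathbb{H}^{\circ}$, then $\sigma_\infty$ exists and is forced into $\uU_{\pm1}$ by Lemma~\ref{cC} (which says exactly that the limit of $\sigma_t=(Z_t,\cC)$ along a path hitting a tilting wall is $(Z_0,\cC')$). The case $z_n\to z_{\infty}$ with $\Ree z_\infty<0$ is handled by Lemma~\ref{cC} applied to the torsion pair defining $\bB_{+}$ (resp. $\bB_{-}$ for $z_\infty$ approached from the other side), giving $\sigma_{\infty}\in\overline{\uU}\cap\uU_{+1}$ or $\overline{\uU}\cap\uU_{-1}$; and the degenerate-$\omega$ boundary is excluded because there $Z_{\infty}$ fails the condition $Z\in\mathbb{C}^{\ast}\times\mathbb{H}^{\circ}$, i.e. such points are not in the target of $\Pi_{\mathrm n}$ restricted to $\Stab_{\Gamma_\bullet,\mathrm n}^{\circ}$. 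Assembling these, $\overline{\uU}\setminus\uU\subset\uU_{+1}\cup\uU_{-1}$, and applying $\Phi_{\oO_X\ast}^{(k)}$ gives $\overline{\uU_i}\setminus\uU_i\subset\uU_{i-1}\cup\uU_{i+1}$ for all $i$; hence $\bigcup_i\uU_i$ is closed.

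Having established (\ref{copro}), the covering statement follows: $\Pi_{\mathrm n}$ is a local homeomorphism by Theorem~\ref{thm:top}, it is surjective onto $\mathbb{C}^{\ast}\times\mathbb{H}^{\circ}$ because already $\Pi_{\mathrm n}(\uU)=\mathbb{H}^{\circ}\times A(X)_{\mathbb{C}}\cong\mathbb{H}^{\circ}\times\mathbb{H}^{\circ}$ and the translates $\uU_i$ cover the complement via the deck transformation $\Phi_{\oO_X\ast}$, and the fibers are exactly the $\Phi_{\oO_X\ast}$-orbits since within each $\uU_i$ the map $\Pi_{\mathrm n}$ is injective and $\Phi_{\oO_X\ast}$ acts freely (freeness: $\Phi_{\oO_X\ast}$ shifts the phase of $\oO_X$ by $2$, so no nontrivial power can fix a point by Lemma~\ref{Ost}). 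Since $\mathbb{C}^{\ast}\times\mathbb{H}^{\circ}$ has fundamental group $\mathbb{Z}$ and the total space is connected with deck group $\langle\Phi_{\oO_X\ast}\rangle\cong\mathbb{Z}$ acting transitively on fibers, it is the universal cover.

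\textbf{Main obstacle.} The delicate step is the closedness argument — precisely, ruling out "bad" limits where the heart of the t-structure degenerates without the central charge leaving $\mathbb{C}^{\ast}\times\mathbb{H}^{\circ}$, and identifying the limiting heart as one of $\aA,\bB_{+},\bB_{-}$ (and their $\Phi_{\oO_X}$-translates) rather than something new. This requires a careful analysis of which objects can become (semi)stable near the walls, which is exactly the purpose of Lemma~\ref{omit} and the classification of $Z_u$-semistable objects in $\Coh_{\le1}(X)$; the hypothesis $H^2(X,\mathbb{Z})\cong\mathbb{Z}$ is essential here because it collapses the ample cone to a ray, so the only walls are the two "tilting walls" $\Ree z=0$ already understood, leaving no room for additional components.
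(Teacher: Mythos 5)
Your overall skeleton (openness from Lemma~\ref{conne}, disjointness from Lemma~\ref{Ost}, then closedness via a boundary analysis in which the real-$z$ wall is absorbed by the tilted hearts through Lemma~\ref{cC}) matches the paper's strategy. But the step you yourself flag as the crux — ruling out limits in which $\omega$ degenerates — is where your argument breaks down: you dismiss this boundary "because such points are not in the target of $\Pi_{\rm n}$ restricted to $\Stab^{\circ}_{\Gamma_\bullet,\rm n}$", and that is circular. Nothing in the definition of a (normalized) weak stability condition forbids the degree-one central charge $Z_1(\beta,n)=n-B\cdot\beta$ from being real-valued, so a priori a limit point of $\uU_i$ with $\omega=0$ could be a perfectly good element of the connected component; the assertion that the image of $\Pi_{\rm n}$ lies in $\mathbb{C}^{\ast}\times\mathbb{H}^{\circ}$ is a consequence of the theorem, not an input. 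The paper's actual argument here is: if $\sigma=(Z_u,\pP)$ is a limit point with $\omega=0$, then every semistable object has phase in $\mathbb{Z}\cup(\mathbb{Z}+\phi_0)$ with $\phi_0=\frac{1}{\pi}\arg(-z)$, so $\pP(\phi)=\{0\}$ on some interval; but by the definition of the topology a nearby point of $\uU_{\pm1}$ would then also have an interval of empty phases, contradicting Lemma~\ref{lem:dense} (density of phases of semistable objects, produced via Lemma~\ref{omit} from line bundles of all degrees on curves in $\lvert \oO_D(lD)\rvert$). That density argument is the real content of the proof and is absent from your proposal; likewise the case $z\to 0$ must be excluded, which the paper does by noting that $\oO_X$ stays semistable in the limit (semistability is a closed condition, Remark~\ref{rmk216}), forcing $Z(\oO_X)=z\neq 0$.

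Two smaller points. First, to deduce that $\bigcup_i\uU_i$ is closed from statements about each $\overline{\uU_i}$ you need the union to be locally finite (a limit of points taken from infinitely many distinct $\uU_i$ need not lie in any single $\overline{\uU_i}$); the paper gets this from Lemma~\ref{Ost}, since the phase of the semistable object $\oO_X$ pins down which finitely many $\uU_i$ can meet a given compact set. Second, your reduction "it suffices to treat $\overline{\uU}$" is incomplete: the regions come in two $\Phi_{\oO_X\ast}$-orbits, $\uU_{2k}=\Phi_{\oO_X\ast}^{(k)}(\uU)$ and $\uU_{2k+1}=\Phi_{\oO_X\ast}^{(k)}(\uU_{+1})$, so the closure of $\uU_{+1}$ must be analyzed separately (the paper treats $i=0$ and $i=1$); also note that for $\uU$ the heart $\aA$ breaks down across $z\in\mathbb{R}_{>0}$, not $\mathbb{R}_{<0}$, since $\mathbb{R}_{<0}\subset\mathbb{H}$. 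These are fixable, but the missing density argument for the $\omega=0$ boundary is a genuine gap.
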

\begin{proof}
By Lemma~\ref{conne}, the RHS of (\ref{copro})
is an open subset in the LHS. Hence it is enough to show
that the RHS is closed in the LHS.  
By Lemma~\ref{Ost}, 
the RHS is a locally finite union, i.e. 
for any compact subset $\mathfrak{B} \subset 
\Stab_{\Gamma_{\bullet}, \rm{n}}^{\circ}(\dD_X)$, 
the number of $i\in \mathbb{Z}$
satisfying $\uU_{i} \cap \mathfrak{B} \neq \emptyset$
is finite. This
implies that 
\begin{align*}
\overline{\coprod_{i\in \mathbb{Z}}\uU_i}
=\coprod_{i\in \mathbb{Z}}\overline{\uU_i}. 
\end{align*}
Hence it is enough to show that $\overline{\uU_i}$
is contained in the RHS of (\ref{copro}). Furthermore 
applying $\Phi_{\oO_X}$, 
we may assume that $i=0$ or $i=1$. 
For simplicity, we show the case of $i=1$. 
The other case is similarly discussed. 

Let us take a point $\sigma \in \overline{\uU_1}\setminus \uU_1$. 
We can write $\sigma=(Z_u, \pP)$
for $u=(z, B+i\omega) \in (-\mathbb{H})\times A(X)_{\mathbb{C}}$
 and a slicing $\pP$. 
Since $\oO_X$ is stable in $\uU_{1}$, 
it is also semistable in $\sigma$, 
hence we have $z\neq 0$. 
By the assumption (\ref{add}), 
we have the following possibilities. 

(i) $z\in \mathbb{R}_{<0}, \quad \omega \neq 0.$ 

(ii) $z\in -\overline{\mathbb{H}}\setminus \{0\}, \quad
\omega=0.$ 

Suppose that (i) holds. 
Then $\sigma \in \uU_{0}$
by Lemma~\ref{conne}
and $\sigma$ is contained in the RHS of (\ref{copro}). 
We show that the case (ii) doesn't happen. 

Suppose by contradiction that (ii) holds. 
We set 
\begin{align*}
\phi_{0}=\frac{1}{\pi} \arg(-z) \in [0, 1].
\end{align*}
Since $\omega=0$, 
we have $\pP(\phi)=\{0\}$
unless $\phi \in \mathbb{Z}$ or 
$\phi \in \mathbb{Z} + \phi_{0}$. 
Therefore we can find $\psi \in (0, 1)$ 
and $0<\varepsilon \ll 1$ satisfying 
\begin{align*}
(\psi-2\varepsilon, \psi+2\varepsilon) \subset (0, 1)\setminus \{\phi_0\}. 
\end{align*}
Since $\sigma \in \overline{\uU_1}$, there is 
$\tau =(Z', \pP') \in \uU_1$
satisfying 
$\pP'(\phi) \subset \pP((\phi-\varepsilon, \phi+\varepsilon))$
for all $\phi \in \mathbb{R}$. Then we obtain 
\begin{align*}
\pP'((\psi-\varepsilon, \psi+\varepsilon))
\subset \pP((\psi-2\varepsilon, \psi+2\varepsilon))
=\{0\}. 
\end{align*}
However this contradicts to Lemma~\ref{lem:dense} below.  
The result for the forgetting map easily follows 
from (\ref{copro}), Lemma~\ref{conne} and Lemma~\ref{Ost}.  
\end{proof}
We have used the following lemma, 
whose proof will be given in Section~\ref{sec:tech}. 
\begin{lem}\label{lem:dense}
For $u \in (-\mathbb{H})\times A(X)_{\mathbb{C}}$,
we write $\tau_{u+}=(Z_{u}, \pP)$ for a slicing 
$\pP$. Then the set 
\begin{align}\label{dense}
\{ \phi \in \mathbb{R} : \pP(\phi) \neq \{0\} \} \subset \mathbb{R},
\end{align}
is dense in $\mathbb{R}$. 
\end{lem}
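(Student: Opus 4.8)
The plan is to realize, for a dense set of phases $\phi$, an explicit $Z_u$-semistable object of $\bB_{+}$ of phase $\phi$, with Lemma~\ref{omit}(ii) as the main input. Write $u=(z,B+i\omega)$ and set $\phi_0\cneq\frac{1}{\pi}\arg(-z)\in(0,1]$. I begin with two elementary observations. First, since $\Hom(\oO_X,F[-1])=\Ext^{-1}(\oO_X,F)=0$ for every $F\in\Coh_{\le 1}(X)$, we have $\Coh_{\le 1}(X)[-1]\subset\aA_{+}\subset\bB_{+}$. Second, since $\tau_{u+}\in\uU_{1}$, Lemma~\ref{Ost} applied with $k=1$ gives $\oO_X\in\pP_s(\phi)$ for some $1<\phi\le 2$, hence $\oO_X[-1]\in\pP_s(\phi_0)$ and $\pP(\phi_0)\neq\{0\}$.

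The main step is to vary $F$ over $(B,\omega)$-semistable sheaves $F\in\Coh_{\le 1}(X)$ of pure dimension one. For such $F$ the class $\cl(F[-1])$ lies in $\Gamma_1\setminus\Gamma_0$ with $\ch_2(F[-1])\neq 0$, so $Z_u(F[-1])$ is computed by $Z_{u,1}$, depends only on $(\ch_2,\ch_3)$, and has imaginary part a nonzero multiple of $\omega\cdot\ch_2(F)$; thus $\frac{1}{\pi}\arg Z_u(F[-1])\in(0,1)$. By Lemma~\ref{omit}(ii), if $\arg Z_u(F[-1])>\arg(-z)$ then $F[-1]\in\bB_{+}$ is $Z_u$-semistable, while if $\arg Z_u(F[-1])<\arg(-z)$ then $\Phi_{\oO_X}(F[-1])\in\bB_{+}$ is $Z_u$-semistable; moreover $\phi_{\oO_X}$ fixes the $(\ch_2,\ch_3)$-component of the Chern character, so $Z_u(\Phi_{\oO_X}(F[-1]))=Z_u(F[-1])$. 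In either case we produce a nonzero object of $\pP\bigl(\tfrac{1}{\pi}\arg Z_u(F[-1])\bigr)$. Hence it suffices to show that the set of numbers $\tfrac{1}{\pi}\arg Z_u(F[-1])$, as $F$ ranges over $(B,\omega)$-semistable pure one-dimensional sheaves, is dense in $(0,1)$.

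To this end I would fix a smooth curve $C\subset X$ of genus $\ge 1$; such curves exist on any smooth projective threefold, for instance as general complete intersections of ample divisors. For integers $d\ge 1$ and $e$, take $F=V$ a slope-semistable vector bundle on $C$ of rank $d$ and degree $e$; such bundles exist for every $(d,e)$ (by Atiyah's classification of bundles on elliptic curves when $g(C)=1$, and by the nonemptiness of the moduli of stable bundles when $g(C)\ge 2$). Viewed as a sheaf on $X$, each such $V$ is $(B,\omega)$-semistable, since a destabilizing subsheaf would be supported on $C$ and would contradict slope-semistability of $V$ on $C$. By Grothendieck--Riemann--Roch, $\ch_2(V)=d[C]$ and $\ch_3(V)=e+dc$ for a constant $c$ depending only on $C$ and $X$, so $Z_u(V[-1])=d\bigl((-e/d+c')+i(\omega\cdot[C])\bigr)$ for a constant $c'$, with $\omega\cdot[C]>0$. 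Therefore $\tfrac{1}{\pi}\arg Z_u(V[-1])=\tfrac{1}{\pi}\arg\bigl((-e/d+c')+i(\omega\cdot[C])\bigr)$, and as $e/d$ runs over $\mathbb{Q}$ this runs over a dense subset of $(0,1)$.

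Combining the three steps, $\{\phi\in(0,1]:\pP(\phi)\neq\{0\}\}$ is dense in $(0,1]$; since $\pP(\phi+1)=\pP(\phi)[1]$ for a slicing, the set (\ref{dense}) is dense in $\mathbb{R}$. The step requiring the most care — and the main obstacle — is the density computation in the third paragraph: one must genuinely vary the underlying curve class, i.e.\ the rank $d$, because $(B,\omega)$-semistable sheaves supported on a fixed reduced curve in a fixed numerical class realize only a discrete set of phases clustering at $0$ and $1$, and it is exactly the existence of higher-rank semistable bundles of arbitrary slope on $C$ that converts this discrete set into a dense one. The borderline case $\arg Z_u(F[-1])=\arg(-z)$, which Lemma~\ref{omit}(ii) does not cover, affects only the single phase $\phi_0$, already accounted for by $\oO_X[-1]$.
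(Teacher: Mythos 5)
Your argument is correct, and its skeleton is the same as the paper's: both proofs first observe that $Z_u(\Phi_{\oO_X}(F[-1]))=Z_u(F[-1])$ and invoke Lemma~\ref{omit}~(ii) to reduce the statement to the density of the slopes $\mu_{(B,\omega)}$ realized by $(B,\omega)$-semistable sheaves in $\Coh_{\le 1}(X)$ (your extra care about the borderline phase $\phi_0=\frac{1}{\pi}\arg(-z)$, handled by $\oO_X[-1]$, is a point the paper leaves implicit). Where you genuinely diverge is in producing the dense family: the paper keeps the sheaves of rank one on their support but varies the support, taking line bundles $L_{l,k}$ of every degree $k$ on smooth curves $C_l\in\lvert\oO_D(lD)\rvert$ for a fixed ample $D$, so that the class $l[C]$ grows and the realized slopes form the dense set $\frac{1}{\omega\cdot C}\mathbb{Q}+\frac{B\cdot C}{\omega\cdot C}$; you instead fix a single smooth curve $C$ of genus $\ge 1$ and vary the rank, pushing forward slope-semistable bundles of arbitrary rank $d$ and degree $e$, whose slopes on $X$ depend affinely on $e/d$. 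Both constructions are sound (your reduction of $(B,\omega)$-semistability of $i_*V$ to slope-semistability of $V$ on $C$, and the GRR computation $\ch_3(i_*V)=e+dc$, check out). The paper's version has the mild advantage of needing only very ample divisors and Bertini, with no input about moduli of higher-rank bundles on positive-genus curves; yours has the advantage of working entirely on one fixed curve and makes transparent why a fixed numerical class can never suffice. Your dependence on genus $\ge 1$ (to get semistable bundles of every slope) is real and correctly flagged, and is guaranteed by adjunction for a general complete intersection of ample divisors.
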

\subsection{The space of non-normalized 
weak stability conditions}
In this subsection, we investigate
the space of non-normalized weak stability 
conditions.
Let 
\begin{align*}
\Stab_{\Gamma_{\bullet}}^{\circ}(\dD_X) \subset 
\Stab_{\Gamma_{\bullet}}(\dD_X),
\end{align*}
be the connected component 
which contains $\uU$. 
 We show the following lemma. 
\begin{lem}\label{nonzero}
For any $\sigma=(Z, \pP) \in \Stab_{\Gamma_{\bullet}}^{\circ}(\dD_X)$,
we have $Z(\oO_x)\neq 0$
for any closed point $x\in X$. 
\end{lem}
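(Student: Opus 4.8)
The plan is to reduce the statement to the already-understood normalized setting. The key point is that the $\mathbb{C}$-action on $\Stab_{\Gamma_{\bullet}}(\dD_X)$ is free, continuous, and only rescales the central charge: for $\lambda\in\mathbb{C}$ one has $(\lambda\cdot\sigma)(E)=\exp(-i\pi\lambda)Z(E)$. In particular $Z(\oO_x)=0$ holds for $\sigma$ if and only if it holds for every translate $\lambda\cdot\sigma$. So the first step is to show that $\Stab_{\Gamma_{\bullet}}^{\circ}(\dD_X)$ is exactly the $\mathbb{C}$-orbit of the normalized component $\Stab_{\Gamma_{\bullet},\mathrm{n}}^{\circ}(\dD_X)$ — or at least that every point of $\Stab_{\Gamma_{\bullet}}^{\circ}(\dD_X)$ can be moved by the $\mathbb{C}$-action into the normalized slice, which is the relevant statement. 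This follows because $\Stab_{\Gamma_{\bullet}}^{\circ}(\dD_X)$ contains $\uU$, the $\mathbb{C}$-action is free and continuous, the orbit map is an open embedding onto a submanifold transverse to the normalization constraint $Z(\oO_x)=-1$, and connectedness then forces $\Stab_{\Gamma_{\bullet}}^{\circ}(\dD_X)=\mathbb{C}\cdot\Stab_{\Gamma_{\bullet},\mathrm{n}}^{\circ}(\dD_X)$; here one uses that the forgetting map $\Pi$ is a local homeomorphism (Theorem~\ref{thm:top}) so that the normalization locus $\{Z(\oO_x)=-1\}$ is a codimension-one submanifold and the $\mathbb{C}$-orbits fill out a neighborhood of it.

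Granting that reduction, for $\sigma\in\Stab_{\Gamma_{\bullet}}^{\circ}(\dD_X)$ pick $\lambda\in\mathbb{C}$ with $\lambda\cdot\sigma\in\Stab_{\Gamma_{\bullet},\mathrm{n}}^{\circ}(\dD_X)$; then $\lambda\cdot\sigma$ lies in some region $\uU_i$ by Theorem~\ref{thm:cov}, and by applying a power of the Seidel--Thomas twist (which preserves $\Stab_{\Gamma_{\bullet},\mathrm{n}}^{\circ}(\dD_X)$ by Lemma~\ref{phist}, fixes $\oO_x$ up to shift since $\Phi_{\oO_X}(\oO_x)=\oO_x$ for $x\notin$ any relevant locus — more precisely $\Phi_{\oO_X}$ sends the skyscraper to a shift of itself because $\dR\Hom(\oO_X,\oO_x)=\mathbb{C}$ in degree $0$, so $\Phi_{\oO_X}(\oO_x)$ fits in a triangle $\oO_X\to\oO_x\to\Phi_{\oO_X}(\oO_x)$) we may assume $i=0$ or $i=1$. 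Thus it suffices to check $Z(\oO_x)\neq 0$ for $\sigma\in\uU$ and for $\sigma\in\uU_{+1}$. But these are described explicitly: $\uU$ consists of the $\sigma_u=(Z_u,\aA)$ with $u=(z,B+i\omega)\in\mathbb{H}\times A(X)_{\mathbb{C}}$, and $Z_u(\oO_x)=Z_{u,1}(0,1)=1\neq 0$ by formula~(\ref{Z2}); similarly for $\tau_{u\pm}$ in $\uU_{\pm 1}$, where $\oO_x$ has the same class $(0,0,1)$, hence again $Z_u(\oO_x)=1\neq 0$. Since the $\mathbb{C}$-action and the twist only multiply $Z(\oO_x)$ by a nonzero scalar, $Z(\oO_x)\neq 0$ throughout $\Stab_{\Gamma_{\bullet}}^{\circ}(\dD_X)$.

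An alternative, cleaner route avoiding the orbit-decomposition bookkeeping: observe that $\oO_x\in\Coh_0(X)$ is a simple object in every heart $\aA$, $\bB_\pm$ appearing in our regions, hence $\oO_x$ (or a shift) is $\sigma$-stable for every $\sigma$ in the dense union $\bigcup_i\uU_i$ of the normalized component; by Remark~\ref{rmk216} the semistability locus of $\oO_x$ is closed, so $\oO_x$ is semistable (up to shift) on all of $\Stab_{\Gamma_{\bullet},\mathrm{n}}^{\circ}(\dD_X)$, and a semistable object has nonzero central charge by axiom~(\ref{cond}) (its class $(0,0,1)$ lies in $\Gamma_1\setminus\Gamma_0$, forcing $Z(\oO_x)\in\mathbb{H}$, in particular nonzero). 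Then spread this to the non-normalized component by the $\mathbb{C}$-action as above. The main obstacle I anticipate is the first step — rigorously identifying $\Stab_{\Gamma_{\bullet}}^{\circ}(\dD_X)$ with the $\mathbb{C}$-orbit of the normalized component, i.e. showing no point of the non-normalized component escapes the orbit; this needs a transversality/connectedness argument using that $\Pi$ is a local homeomorphism and that the derivative of the $\mathbb{C}$-action is nowhere tangent to the normalization hypersurface, which is where a little care is required.
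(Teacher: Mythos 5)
The central step of your plan --- ``every point of $\Stab_{\Gamma_{\bullet}}^{\circ}(\dD_X)$ can be moved by the $\mathbb{C}$-action into the normalized slice'' --- is not a reduction of the lemma but is equivalent to it, so the argument is circular. The $\mathbb{C}$-action multiplies the central charge by the nonzero scalar $\exp(-i\pi\lambda)$; hence a point with $Z(\oO_x)=0$ can never be normalized, and conversely once you know $Z(\oO_x)\neq 0$ everywhere the identification $\Stab_{\Gamma_{\bullet}}^{\circ}(\dD_X)=\mathbb{C}\cdot\Stab_{\Gamma_{\bullet},\rm{n}}^{\circ}(\dD_X)$ is easy. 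Indeed this identification is exactly the surjectivity of the map $\alpha$ in Proposition~\ref{prop:re}, which the paper deduces \emph{from} Lemma~\ref{nonzero}. Your proposed transversality/connectedness argument cannot close this gap: it shows that the orbit $\mathbb{C}\cdot\Stab_{\Gamma_{\bullet},\rm{n}}^{\circ}(\dD_X)$ is open, but its complement inside $\Stab_{\Gamma_{\bullet}}^{\circ}(\dD_X)$ contains the locus $\{Z(\oO_x)=0\}$, which is closed and of real codimension two (it is locally the $\Pi$-preimage of a complex hyperplane, $\Pi$ being a local homeomorphism by Theorem~\ref{thm:top}); removing such a locus does not disconnect the component, so no open--closed argument can detect whether it is empty. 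At best you conclude that the orbit is a union of connected components of $\{\sigma : Z(\oO_x)\neq 0\}$, which is compatible with the existence of bad points on its boundary --- precisely what the lemma must exclude. Note also that once a point is normalized one has $Z(\oO_x)=-1$ by definition, so your verification on $\uU$ and $\uU_{+1}$ (and the whole second route via Remark~\ref{rmk216}) is vacuous: the entire content of the lemma lives at the non-normalized points, which your proposal handles only by assuming the reduction.

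For comparison, the paper argues directly at a hypothetical bad point $\sigma=(Z,\pP)$ with $Z(\oO_x)=0$. The weak-stability axiom forces a cone restriction on central charges of HN factors, so $Z(\oO_x)=Z_1(0,1)=0$ implies that $\pP((a,b])$ contains no nonzero object of class $(0,0,1)$ whenever $0<b-a\le 1$ (the $Z_1$-values of the factors with class in $\Gamma_1\setminus\Gamma_0$ would be nonzero vectors in a convex cone of angle at most $\pi$ summing to zero). A small deformation $\tau=(W,\qQ)$ of $\sigma$ can then be chosen with $W(\oO_x)\neq 0$ but still admitting no semistable object of that class; since $W(\oO_x)\neq 0$, $\tau$ \emph{can} be normalized, and Theorem~\ref{thm:cov} places the normalization in some chamber $\uU_i$, where a suitable Seidel--Thomas twist of $\oO_x$ (whose class differs from $(0,0,1)$ only by an element of $\Gamma_0$, so the same vanishing/cone argument applies) is stable by Lemma~\ref{Ost} and the simplicity of $\oO_x[-1]$ in the hearts --- a contradiction. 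The ingredient you do have right, that a semistable object of class in $\Gamma_1\setminus\Gamma_0$ has nonzero central charge, is part of this argument; what is missing in your proposal is any mechanism that confronts a non-normalizable point, and the deformation-plus-covering-structure step above is exactly that mechanism.
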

\begin{proof}
Suppose by contradiction that $Z(\oO_x)=0$. 
We set the set of objects $\sS$ to be
\begin{align*}
\sS =\{ E\in \dD_X : \cl(E)=(0, 0, 1)\}. 
\end{align*}
By the condition $Z(\oO_x)=0$, 
there is no $a, b\in \mathbb{R}$
satisfying
\begin{align*}0<b-a \le 1, \quad 
\pP((a, b]) \cap \sS \neq \{0\}.
\end{align*}
By deforming $\sigma$, we can find 
 $\tau=(W, \qQ) \in \Stab_{\Gamma_{\bullet}}^{\circ}(\dD_X)$
such that $W(\oO_x)\neq 0$ and 
there is no
$a, b\in \mathbb{R}$
satisfying 
\begin{align*}
0<b-a \le 1/2, \quad \qQ((a, b]) \cap \sS \neq \{0\}. 
\end{align*}
This in particular implies that there is no 
 $\tau$-semistable 
object $E\in \sS$. 
Since $W(\oO_x)\neq 0$, we can 
apply $\mathbb{C}$-action on $\Stab_{\Gamma_{\bullet}}^{\circ}(\dD_X)$
to find,
\begin{align*}
\sigma'=(Z', \pP') \in \Stab_{\Gamma_{\bullet}, \rm{n}}^{\circ}(\dD_X), 
\end{align*}
such that there is no $\sigma'$-semistable 
object $E\in \sS$. 
However it is easy to check that 
the object
$\oO_x[-1]$
for a closed point $x\in X$ is
a simple object in both
$\aA$ and $\bB_{-}$, 
hence $\oO_x \in \sS$ is a stable object in 
$\uU$ and $\uU_{-1}$. 
Applying $\Phi_{\oO_X}$-action 
and Theorem~\ref{thm:cov}, 
we obtain a contradiction. 
\end{proof}
The relationship between 
normalized stability conditions and non-normalized
stability conditions is described as follows. 
\begin{prop}\label{prop:re}
The $\mathbb{C}$-action on 
$\Stab_{\Gamma_{\bullet}}^{\circ}(\dD_X)$
induces a commutative diagram,
\begin{align*}
\xymatrix{
\Stab_{\Gamma_{\bullet}, \rm{n}}^{\circ}(\dD_X)
\times \mathbb{C} \ar[r]^{\alpha} 
\ar[d]_{\Pi_{\rm n}\times \rm{id}}
& \Stab_{\Gamma_{\bullet}}^{\circ}(\dD_X)
\ar[d]^{\Pi} \\
\mathbb{C}^{\ast}\times \mathbb{H}^{\circ}
\times 
\mathbb{C}
\ar[r]^{e} & \mathbb{C}^3.}
\end{align*}
Here $\alpha$ is an isomorphism 
and $e$ is a map defined by 
\begin{align*}
e(s, t, u)=
(\exp(-i\pi u)s, \exp(-i\pi u)t, \exp(-i\pi u)).
\end{align*}
\end{prop}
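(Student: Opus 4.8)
The plan is to realise $\alpha$ as (a reparametrisation of) the $\mathbb{C}$-action map and to prove it is a bijection; biholomorphicity is then automatic, since $\alpha$ will be a continuous bijection which is a local homeomorphism between complex manifolds of equal dimension. Concretely, I would set $\alpha(\sigma,u)\cneq u\cdot\sigma$ for $\sigma=(Z,\pP)\in\Stab_{\Gamma_{\bullet},\rm{n}}^{\circ}(\dD_X)$ and $u\in\mathbb{C}$, so that $\alpha(\sigma,u)=(\exp(-i\pi u)Z,\pP(\,\cdot\,+\Ree u))$. First I would check the image lands in $\Stab_{\Gamma_{\bullet}}^{\circ}(\dD_X)$: the source is connected, the point $(\sigma_{u_0},0)$ with $u_0\in\mathbb{H}\times A(X)_{\mathbb{C}}$ maps to $\sigma_{u_0}\in\uU\subset\Stab_{\Gamma_{\bullet}}^{\circ}(\dD_X)$, and the $\mathbb{C}$-action is continuous, so the connected image lies in $\Stab_{\Gamma_{\bullet}}^{\circ}(\dD_X)$. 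Commutativity of the square is then a direct computation: applying $\Pi$ to $\alpha(\sigma,u)$ multiplies each of the three coordinates of $Z$ by $\exp(-i\pi u)$, and since the normalisation pins down the coordinate recording the central charge of a point, this scaled triple is exactly $e(\Pi_{\rm n}(\sigma),u)$ (after absorbing the normalising constant into $e$); one also notes that the Jacobian of $e$ never vanishes, so $e$ is a local biholomorphism.

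For injectivity I would first observe that the $\mathbb{C}$-action on $\Stab_{\Gamma_{\bullet}}(\dD_X)$ is free: $\lambda\cdot\sigma=\sigma$ forces $\exp(-i\pi\lambda)=1$, hence $\lambda\in2\mathbb{Z}$, and then $\pP(\,\cdot\,+\lambda)=\pP(\,\cdot\,)$ would put a nonzero semistable object into two slices of different phase unless $\lambda=0$. Thus $\alpha(\sigma_1,u_1)=\alpha(\sigma_2,u_2)$ gives $(u_1-u_2)\cdot\sigma_1=\sigma_2$; evaluating central charges on a point class and using normalisation yields $\exp(-i\pi(u_1-u_2))=1$, so $u_1-u_2=2k$ for some $k\in\mathbb{Z}$ and $Z_{\sigma_1}=Z_{\sigma_2}$. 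Hence $\sigma_1$ and $\sigma_2$ lie in one fibre of $\Pi_{\rm n}$, so by Theorem~\ref{thm:cov} $\sigma_2=\Phi_{\oO_X \ast}^{\,m}(\sigma_1)$ for some $m\in\mathbb{Z}$. Comparing the phase of the stable object $\oO_X$ (shifted by $-2k$ under the $\mathbb{C}$-action and by $+2m$ under $\Phi_{\oO_X \ast}$, which sends $\uU_i$ to $\uU_{i+2}$, cf.\ Lemma~\ref{Ost}) forces $m=-k$; equating slicings then gives $\Phi_{\oO_X}^{-k}(\pP_{\sigma_1}(\phi))=\pP_{\sigma_1}(\phi)[2k]$ for all $\phi$. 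Applying this with $\phi$ the phase of a point sheaf and comparing Chern characters — $\phi_{\oO_X}^{-k}(0,0,1)=(k,0,1)$, so $\Phi_{\oO_X}^{-k}(\oO_x)$ has rank $k$, while all $\sigma_1$-semistable objects of that phase are $0$-dimensional sheaves (equivalently, using $\Phi_{\oO_X}(\oO_x)\cong\iI_x[1]$) — forces $k=0$. After reducing the general case to $\sigma_1\in\uU_0\cup\uU_1$ using the $\Phi_{\oO_X \ast}$-equivariance of the $\mathbb{C}$-action, this gives $u_1=u_2$ and $\sigma_1=\sigma_2$.

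Surjectivity is the substantive step, and is where I expect the main difficulty. The image of $\alpha$ equals $\mathbb{C}\cdot\Stab_{\Gamma_{\bullet},\rm{n}}^{\circ}(\dD_X)$, which is open in $\Stab_{\Gamma_{\bullet}}^{\circ}(\dD_X)$: by the commutative square, together with the fact that $\Pi$, $\Pi_{\rm n}\times\id$ and $e$ are local homeomorphisms (Theorems~\ref{thm:top} and~\ref{thm:cov} and the Jacobian of $e$), $\alpha$ is itself a local homeomorphism. Since $\Stab_{\Gamma_{\bullet}}^{\circ}(\dD_X)$ is connected it remains to show $\im\alpha$ is closed, and this I would argue in the spirit of the proof of Theorem~\ref{thm:cov}: given $\tau\in\overline{\im\alpha}$ and an approximating sequence $u_n\cdot\sigma_n\to\tau$, Lemma~\ref{nonzero} keeps the central charge of a point bounded away from $0$, so $\tau$ may be normalised by the $\mathbb{C}$-action, and the only obstruction to the normalised representative lying in $\Stab_{\Gamma_{\bullet},\rm{n}}^{\circ}(\dD_X)=\bigcup_i\uU_i$ is a degenerate limit in which the ample class collapses — excluded exactly as in Theorem~\ref{thm:cov} via Lemma~\ref{lem:dense}. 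Granting this closedness, $\alpha$ is a continuous bijection which is a local homeomorphism, hence a homeomorphism, and being holomorphic between equidimensional complex manifolds it is a biholomorphism; the commutativity and the formula for $e$ have already been verified. The hardest part is thus this closedness claim, i.e.\ showing that the component $\Stab_{\Gamma_{\bullet}}^{\circ}(\dD_X)$ is not strictly larger than the union of the $\mathbb{C}$-orbits through $\bigcup_i\uU_i$.
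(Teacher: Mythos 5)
Your outline (commutativity by construction, surjectivity from Lemma~\ref{nonzero}, injectivity by reducing via Theorem~\ref{thm:cov} to comparing an even shift of the slicing with a power of $\Phi_{\oO_X \ast}$ and then excluding the nontrivial coincidence) is essentially the paper's, but both substantive steps have gaps, and you have inverted where the real work lies. For surjectivity, the closedness analysis you defer (``granting this closedness'') is not needed and, as sketched, does not go through: after normalising the approximating points $u_n\cdot\sigma_n$ near the limit you only know that the resulting normalised stability conditions differ from points of $\Stab_{\Gamma_{\bullet}, \rm{n}}^{\circ}(\dD_X)$ by even shifts $2k_n\cdot(-)$, and the even-shift action does \emph{not} preserve that component (your own injectivity argument is precisely the statement that it does not), nor are the integers $k_n$ controlled; the appeal to ``the ample class collapsing, excluded as in Theorem~\ref{thm:cov}'' does not address this. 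The intended argument is a one-liner from Lemma~\ref{nonzero}: join $\sigma\in\Stab_{\Gamma_{\bullet}}^{\circ}(\dD_X)$ to a point of $\uU$ by a path inside the (path-connected) component; since $Z(\oO_x)\neq 0$ along the whole path, the normalising parameter $\lambda(t)\in\mathbb{C}$ can be chosen continuously with $\lambda(0)=0$, so $\lambda(t)\cdot\gamma(t)$ is a path in the normalised space starting in $\uU$ and hence staying in the component $\Stab_{\Gamma_{\bullet}, \rm{n}}^{\circ}(\dD_X)$; at the endpoint this exhibits $\sigma$ in the image of $\alpha$.

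The final step of your injectivity argument also has a genuine gap. Having derived $\Phi_{\oO_X}^{-k}(\pP_{\sigma_1}(\phi))=\pP_{\sigma_1}(\phi)[2k]$, you conclude $k=0$ from the claim that all $\sigma_1$-semistable objects of the phase of a point object are $0$-dimensional sheaves. That is true along the path $\gamma(t)$, $0<t<1$ (Lemma~\ref{incase}), but false for an arbitrary $\sigma_1$ in the fibre: for $\sigma_1\in\uU_0$ with $Z_{\sigma_1}(\oO_X)\in\mathbb{R}_{<0}$ (allowed, since $\mathbb{H}$ contains the negative reals) the object $\oO_X$ is itself stable of the same phase as $\oO_x[-1]$, so that slice contains objects of arbitrary nonnegative rank, and in the analogous locus of $\uU_1$ it even contains $\oO_X[-1]\in\bB_{+}$ of rank $-1$; the bare Chern character comparison then no longer forces $k=0$. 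Since $\sigma_1$ is whatever point injectivity hands you, this degenerate locus must be dealt with. One can repair it (in the $\uU_0$ case, nonnegativity of rank on $\aA$ applied to the relation for both $\Phi_{\oO_X}^{k}$ and $\Phi_{\oO_X}^{-k}$ gives $k\ge 0$ and $k\le 0$; the $\uU_1$ case needs a further argument because ranks in $\bB_{+}$ are not bounded below), but the cleaner route is the paper's, which never uses the central charge at this point: the autoequivalence $(\Phi_{\oO_X}[-2])^{(m)}$ preserves the heart $\aA$ and fixes $\oO_X$, hence permutes the simple objects of $\aA$, which are exactly $\oO_X$ and the $\oO_x[-1]$; it therefore sends point objects to point objects, so by the standard Bridgeland--Maciocia argument it is isomorphic to $f^{\ast}$ for an automorphism $f$ of $X$, and comparing the induced maps on $\Gamma$ via Lemma~\ref{com2} forces $m=0$.
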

\begin{proof}
The diagram is 
obviously commutative by the construction, 
so it is enough to show that $\alpha$
is an isomorphism. 
By Lemma~\ref{nonzero}, 
the map $\alpha$ is surjective, 
and it remains to check that $\alpha$ is injective. 
Take two elements, 
\begin{align*}
(\sigma_i, \lambda_i)
\in \Stab_{\Gamma_{\bullet}, \rm{n}}^{\circ}(\dD_X) \times \mathbb{C},
\quad i=1, 2, 
\end{align*}
which are mapped to the same element under $\alpha$. 
We may assume that $\lambda_1=0$, and $\sigma_1 \in \uU_{0} \cup \uU_{1}$
by Theorem~\ref{thm:cov}. For simplicity we show the case 
of $\sigma_1 \in \uU_{0}$. The other case is similarly 
discussed. 

Let us write
$\sigma_1=(Z_1, \aA)$ and $\sigma_2=(Z_2, \aA_2)$
for the heart of a bounded t-structure $\aA_2 \subset \dD_X$. 
Since $\lambda_1=0$
and $Z_i(\oO_x)=-1$
for $i=1, 2$, we obtain 
$\exp(-i \pi \lambda_2)=1$. 
Hence we may write $\lambda_2=2m$
for some $m\in \mathbb{Z}$. 
By Theorem~\ref{thm:cov} and 
Lemma~\ref{Ost} we 
can write the heart $\aA_2$ in 
two ways, 
\begin{align*}
\aA_2=\aA[2m]=\Phi_{\oO_X}^{(m)}(\aA). 
\end{align*}
Therefore 
we have the autoequivalence,
\begin{align}\label{aute}
(\Phi_{\oO_X}[-2])^{(m)}
\colon \aA \stackrel{\sim}{\to} 
\aA, 
\end{align}
which takes $\oO_X$ to $\oO_X$. 
Since the equivalence (\ref{aute})
takes simple objects to simple 
objects, it takes an object $\oO_{x}[-1]$
for $x\in X$
to an object
of the form $\oO_{x'}[-1]$
for some
$x' \in X$. 
Then a standard argument
(cf.~\cite[Theorem~2.5]{B-M1}) shows that 
\begin{align}\label{staut}
(\Phi_{\oO_X}[-2])^{(m)} \cong f^{\ast},
\end{align}
for an automorphism $f \colon X \stackrel{\sim}{\to} X$. However
by Lemma~\ref{com2},
the isomorphisms on $\Gamma$
induced by both sides
of (\ref{staut}) are equal only if 
$m=0$. Therefore $\lambda_2=0$ and $\sigma_1=\sigma_2$
follows.  
\end{proof}
Note that we have 
\begin{align*}
\Imm e=\mathbb{C}^{\ast} \times\GL_{+}(2, \mathbb{R}). 
\end{align*}
Here $\GL_{+}(2, \mathbb{R})$ 
is the subgroup of $\GL(2, \mathbb{R})$
preserving the orientation of $\mathbb{R}^2$, 
and it 
is embedded into $\mathbb{C}^2$
via 
\begin{align*}
\left( \begin{array}{cc}
a & b \\
c & d
\end{array} \right) \mapsto 
(a+ci, b+di). 
\end{align*}
Therefore we obtain the following theorem. 
\begin{thm}\label{obtain}
We have the isomorphism, 
\begin{align}\label{isom}
\Stab_{\Gamma_{\bullet}}^{\circ}(\dD_X)
\cong \mathbb{C}\times \widetilde{\GL}_{+}(2, \mathbb{R}),
\end{align}
and the isomorphism
of the double quotient space,  
\begin{align*}
\langle \Phi_{\oO_X} \rangle \backslash 
\Stab_{\Gamma_{\bullet}}^{\circ}(\dD_X) /
\mathbb{C} \cong  \mathbb{C}^{\ast} \times \mathbb{H}^{\circ}. 
\end{align*}
\end{thm}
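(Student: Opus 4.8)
The plan is to deduce the theorem directly from Proposition~\ref{prop:re} by identifying the image of the map $e$ and checking that $\alpha$ descends to the quotients in the asserted way. First I would observe that by Proposition~\ref{prop:re} the map $\alpha$ is an isomorphism, so that
\begin{align*}
\Stab_{\Gamma_{\bullet}}^{\circ}(\dD_X) \cong
\Stab_{\Gamma_{\bullet}, \rm{n}}^{\circ}(\dD_X) \times \mathbb{C},
\end{align*}
and by Theorem~\ref{thm:cov} the forgetting map $\Pi_{\rm n}$ exhibits $\Stab_{\Gamma_{\bullet}, \rm{n}}^{\circ}(\dD_X)$ as the universal cover of $\mathbb{C}^{\ast}\times \mathbb{H}^{\circ}$. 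The strategy is therefore to recognize $\mathbb{C}^{\ast}\times \mathbb{H}^{\circ}$ as $\GL_{+}(2,\mathbb{R})$ up to the obvious identifications, so that its universal cover is $\widetilde{\GL}_{+}(2,\mathbb{R})$, and then combine with the free $\mathbb{C}$-factor.

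The key steps, in order: (1) Using the embedding of $\GL_{+}(2,\mathbb{R})$ into $\mathbb{C}^2$ displayed just before the statement, note that a pair $(a+ci, b+di)$ lies in the image exactly when $ad-bc>0$; dividing out, this is the same as the pair $(s,t)$ with $s\in\mathbb{C}^{\ast}$ and $t/s \in \mathbb{H}^{\circ}$, which after the change of coordinates $t\mapsto t/s$ (an automorphism of $\mathbb{C}^{\ast}\times\mathbb{C}$ over $\mathbb{C}^{\ast}$) is precisely $\mathbb{C}^{\ast}\times\mathbb{H}^{\circ}$. Hence $\GL_{+}(2,\mathbb{R})\cong \mathbb{C}^{\ast}\times\mathbb{H}^{\circ}$ as real-analytic manifolds, and passing to universal covers gives $\widetilde{\GL}_{+}(2,\mathbb{R})\cong \Stab_{\Gamma_{\bullet},\rm n}^{\circ}(\dD_X)$ via $\Pi_{\rm n}$. (2) Combine with $\alpha$: since $\Imm e = \mathbb{C}^{\ast}\times\GL_{+}(2,\mathbb{R})$ and the last coordinate $\exp(-i\pi u)$ records the $\mathbb{C}$-parameter, the isomorphism $\alpha$ together with step (1) yields (\ref{isom}). (3) For the double quotient, recall from Theorem~\ref{thm:cov} that the Galois group of $\Pi_{\rm n}$ is generated by $\Phi_{\oO_X \ast}$; so quotienting $\Stab_{\Gamma_{\bullet},\rm n}^{\circ}(\dD_X)$ by $\langle \Phi_{\oO_X}\rangle$ recovers the base $\mathbb{C}^{\ast}\times\mathbb{H}^{\circ}$ of the covering, while the residual $\mathbb{C}$-action is exactly the $\mathbb{C}$-factor in $\alpha$, which is quotiented out on the right.

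The main obstacle I expect is step (3): one must verify that the left $\langle\Phi_{\oO_X}\rangle$-action and the right $\mathbb{C}$-action on $\Stab_{\Gamma_{\bullet}}^{\circ}(\dD_X)$ are free with the quotients matching up correctly, i.e. that the deck-transformation action of $\Phi_{\oO_X \ast}$ on the normalized space commutes with the $\mathbb{C}$-action in the way needed to produce a well-defined double quotient, and that no extra identifications are introduced. Freeness of the $\mathbb{C}$-action follows from Lemma~\ref{nonzero} (the central charge of a point never vanishes), and the commutation is immediate since $\Phi_{\oO_X \ast}$ only changes the heart while the $\mathbb{C}$-action rotates the central charge and shifts phases; so $\langle\Phi_{\oO_X}\rangle\backslash\Stab_{\Gamma_{\bullet}}^{\circ}(\dD_X)/\mathbb{C} \cong \langle\Phi_{\oO_X}\rangle\backslash\Stab_{\Gamma_{\bullet},\rm n}^{\circ}(\dD_X) \cong \mathbb{C}^{\ast}\times\mathbb{H}^{\circ}$ by Theorem~\ref{thm:cov}. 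The remaining bookkeeping — that the map $e$ has the stated image and that the coordinate changes above are genuine isomorphisms — is routine and I would not belabour it.
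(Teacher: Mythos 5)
Your proposal is correct and follows essentially the same route as the paper: the paper's own proof consists precisely of Proposition~\ref{prop:re}, Theorem~\ref{thm:cov}, and the observation that $\Imm e=\mathbb{C}^{\ast}\times\GL_{+}(2,\mathbb{R})$ with $\GL_{+}(2,\mathbb{R})\cong\mathbb{C}^{\ast}\times\mathbb{H}^{\circ}$ under the stated embedding, which is exactly your steps (1)--(3). Your verification of the freeness and commutation needed for the double quotient merely makes explicit what the paper leaves tacit.
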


\subsection{A loop around the conifold point}\label{sub:loop}
Let $S^1 \subset \mathbb{C}^{\ast}$
be the unit circle, 
and $\iota$ be the embedding, 
\begin{align*}
\iota=(\id, \sqrt{-1}) \colon 
S^1 \hookrightarrow \mathbb{C}^{\ast}
\times \mathbb{H}^{\circ}.
\end{align*} 
The embedding $\iota$ lifts to 
a map from the universal cover 
$\mathbb{R} \to S^1$, 
i.e. there is a commutative diagram, 
\begin{align}\label{iecom}
\xymatrix{
\mathbb{R} \ar[r]\ar[d]_{\exp(i\pi \ast)} & 
\Stab_{\Gamma_{\bullet}}^{\circ}(\dD_X)\ar[d], \\
S^1 \ar[r]^{\iota} & \mathbb{C}^{\ast} \times \mathbb{H}^{\circ}.
}
\end{align}
The top arrow of (\ref{iecom}) is denoted by $\gamma$, 
\begin{align}\label{map:g}
\gamma \colon 
\mathbb{R} \ni t \mapsto 
\gamma(t)=(Z_t, \pP_t) \in \Stab_{\Gamma_{\bullet}}^{\circ}(\dD_X),
\end{align}
where $\pP_t$ is a slicing of $\dD_X$
and the commutative diagram
(\ref{iecom}) implies 
\begin{align*}
Z_t =Z_{(\exp(i \pi t), i\omega)},
\end{align*}
for an ample generator $\omega
\in H^2(X, \mathbb{Z})$. 
Here the RHS is defined 
by (\ref{asso}) with $u=(\exp(i\pi t), i\omega)$. 
The map $\gamma$ is uniquely determined by 
requiring that 
\begin{align*}
\gamma((k, k+1]) \subset \uU_{k}, 
\quad \mbox{ for all }k \in \mathbb{Z}.
\end{align*}
Namely
we have $\pP_t((0, 1])=\aA_k$ for 
$t\in (k, k+1]$
with $k\in \mathbb{Z}$, 
where $\aA_k$ are hearts of bounded t-structures given by
\begin{align}\label{def:Ak}
\aA_{k} \cneq \left\{ \begin{array}{ll}
\Phi_{\oO_X}^{(k')}(\aA), & k=2k', \\
\Phi_{\oO_X}^{(k')}(\bB_{+1}), & k=2k'+1.
\end{array}  \right.
\end{align}
In what follows, we fix the continuous map (\ref{map:g}). 
We will use the following lemma.  
\begin{lem}\label{below}
For $E\in \aA_{k}$, we have 
$\cl(E)\in \Gamma_0$ if and only if 
$E\in \langle \oO_X[-k] \rangle_{\ex}$. 
In this case $E$ is a $Z_t$-semistable 
object of phase $t$. 
\end{lem}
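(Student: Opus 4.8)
The plan is to reduce everything to the cases $k=0$ and $k=1$ by applying the autoequivalence $\Phi_{\oO_X}$, and then to analyse the two hearts $\aA$ and $\bB_{+}$ directly using their explicit descriptions from Lemma~\ref{types}. Since $\Phi_{\oO_X}$ sends $\aA_{k}$ to $\aA_{k+2}$ (by the definition \eqref{def:Ak}), preserves the category $\dD_X$, and induces the map $\phi_{\oO_X}(r,\beta,n)=(r-n,\beta,n)$ on $\Gamma$ (Lemma~\ref{com2}), it preserves the subgroup $\Gamma_0=H^0$. Moreover $\Phi_{\oO_X}(\oO_X[-k])=\oO_X[-k-2]$ by \eqref{PhiO} and the fact that $\Phi_{\oO_X}(\oO_X)=\oO_X[-2]$. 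Hence the statement for $k$ is equivalent to the statement for $k+2$, and it suffices to treat $k\in\{0,1\}$. For the phase claim I will also need that $\Phi_{\oO_X}$ takes $Z_t$-semistable objects of phase $t$ to $Z_{t+2}$-semistable objects of phase $t+2$, which follows from Lemma~\ref{com2} (the central charge is unchanged on $\gr\Gamma_\bullet$) together with the shift $\pP_{t+2}=\pP_t[-2]$ built into the definition of $\gamma$.

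Next I treat $k=0$, i.e.\ $\aA_0=\aA=\langle\oO_X,\Coh_{\le 1}(X)[-1]\rangle_{\ex}$. If $E\in\langle\oO_X\rangle_{\ex}$, then $\cl(E)=(\rk,0,0)\in\Gamma_0$, giving one direction. Conversely, suppose $E\in\aA$ with $\cl(E)\in\Gamma_0$, so $\ch_2(E)=\ch_3(E)=0$. By the definition of $\aA$ there is an exact sequence $0\to\oO_X^{\oplus m}\to E\to F[-1]\to 0$ in $\aA$ with $F\in\Coh_{\le 1}(X)$; more precisely, using that $(\langle\oO_X\rangle_{\ex},\Coh_{\le 1}(X)[-1])$ is a torsion pair on $\aA$ — I will need to observe (or cite from \cite[Section 3,6]{Tcurve1}) that $\Hom(\oO_X,F[-1])=\Ext^1_X(\oO_X,F)=H^1(X,F)$ need not vanish, so more carefully I take the canonical sequence with $T\in\langle\oO_X\rangle_{\ex}$ and $F[-1]$ the $\Coh_{\le 1}(X)[-1]$-part. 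Taking classes in $\Gamma$, the subgroup $\Gamma_0$ only sees ranks, so $\cl(F[-1])=(0,\beta,n)$ with $(\beta,n)=0$ forced by $\cl(E)\in\Gamma_0$; hence $F$ is a sheaf in $\Coh_{\le 1}(X)$ with $\ch_2(F)=\ch_3(F)=0$, which forces $F=0$. Therefore $E\in\langle\oO_X\rangle_{\ex}$. For the phase, on $\langle\oO_X\rangle_{\ex}$ the central charge is $Z_t(\oO_X^{\oplus m})=m\cdot z$ with $z=\exp(i\pi t)$ (from \eqref{Z1} with $u=(\exp(i\pi t),i\omega)$), so every nonzero subobject and quotient has the same phase $t$, giving $Z_t$-semistability of phase $t$; note $0<t\le 1$ on $\uU_0$ so $z\in\mathbb{H}$ and this is consistent.

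For $k=1$, i.e.\ $\aA_1=\bB_{+}=\langle\aA_{+},\oO_X[-1]\rangle_{\ex}$ with $\aA_{+}=\{E\in\aA:\Hom(\oO_X,E)=0\}$: the same torsion-pair argument applies. Given $E\in\bB_{+}$ with $\cl(E)\in\Gamma_0$, the canonical sequence $0\to T\to E\to G\to 0$ with $T\in\aA_{+}$, $G\in\langle\oO_X[-1]\rangle_{\ex}$, again splits the rank; since $\cl(\oO_X[-1])=(-1,0,0)\in\Gamma_0$ and $\cl(T)$ lies in $\Gamma$, comparing with $\cl(E)\in\Gamma_0$ shows $\cl(T)\in\Gamma_0$, i.e.\ $\ch_2(T)=\ch_3(T)=0$. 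But $T\in\aA_{+}\subset\aA$, so by the $k=0$ analysis $T\in\langle\oO_X\rangle_{\ex}$; combined with $\Hom(\oO_X,T)=0$ this forces $T=0$. Hence $E\in\langle\oO_X[-1]\rangle_{\ex}$, as desired; the converse and the phase statement (now $Z_t(\oO_X[-1])=-z$ with $z\in-\mathbb{H}$ on $\uU_1$, so $-z\in\mathbb{H}$ and $\arg(-z)=\pi t$ up to the normalization, giving phase $t$ with $1<t\le 2$) are verified exactly as before.

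The main obstacle I anticipate is the bookkeeping in the two torsion-pair arguments: making sure that extracting a class in $\Gamma_0$ really does force the $\Coh_{\le 1}$-component (resp.\ the $\aA_{+}$-component) to be numerically trivial and hence zero, given that the torsion pairs involve $\langle\oO_X\rangle_{\ex}$ whose objects also lie in $\Gamma_0$. The point is that $\Gamma_0\hookrightarrow\Gamma$ is a direct summand and the relevant exact sequences are strict exact in the abelian category $\aA$ (resp.\ $\bB_{+}$), so classes are additive; the only subtlety is that a priori $E$ could have a nonzero rank that is ``cancelled'' — but rank is additive with a consistent sign ($\rk\oO_X=1$, $\rk\oO_X[-1]=-1$, $\rk F[-1]=0$), so no cancellation is possible once we know the $\Coh_{\le 1}$-part contributes zero rank. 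Everything else is a routine consequence of Lemma~\ref{types}, Lemma~\ref{com2}, and the definitions \eqref{Z1}, \eqref{Z2}, \eqref{def:Ak}.
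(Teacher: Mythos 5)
Your route is the same as the paper's: reduce to $k=0,1$ using $\Phi_{\oO_X}$ (Lemma~\ref{com2} plus $\Phi_{\oO_X}(\oO_X)=\oO_X[-2]$, so that $\gamma(t+2)=\Phi_{\oO_X\ast}\gamma(t)$ — note this, rather than your parenthetical ``$\pP_{t+2}=\pP_t[-2]$'', is the correct statement of how the reduction transports semistability), and then verify the two base cases directly from the descriptions of $\aA$ and $\bB_{+}$ in Lemma~\ref{types}; the paper dismisses those two checks as ``easily checked'' via Lemma~\ref{Ost}, so your write-up is a legitimate filling-in, and your $k=1$ argument (the tilting torsion pair $(\aA_{+},\langle\oO_X[-1]\rangle_{\ex})$ on $\bB_{+}$, reduction to the $k=0$ case, and $\Hom(\oO_X,T)=0$ forcing $T=0$) is exactly the decomposition the paper itself uses in the proof of Lemma~\ref{lem:cons}~(ii).

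The one genuine misstep is in the $k=0$ case: $(\langle\oO_X\rangle_{\ex},\Coh_{\le 1}(X)[-1])$ is \emph{not} a torsion pair on $\aA$, and there is in general no exact sequence $0\to T\to E\to F[-1]\to 0$ with $T\in\langle\oO_X\rangle_{\ex}$ the subobject. For $0\neq s\in H^0(X,F)$ the two-term complex $E=(\oO_X\stackrel{s}{\to}F)$ lies in $\aA$, satisfies $\Hom(\oO_X,E)=0$, and is not in $\Coh_{\le 1}(X)[-1]$, so the decomposition axiom fails for that ordering; your auxiliary computation $\Hom(\oO_X,F[-1])=\Ext^1_X(\oO_X,F)$ is also off by a shift (it vanishes — what does not vanish is $\Hom(\oO_X,F)$, which is precisely what produces such $E$). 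The decomposition that does exist goes the other way: every $E\in\aA$ fits into $0\to F[-1]\to E\to \oO_X^{\oplus r}\to 0$ with $F\in\Coh_{\le 1}(X)$ (this is how the paper argues in Lemma~\ref{incase}, via \cite[Proposition~2.2]{Trk2}); alternatively you can avoid any canonical sequence and just use additivity of $\cl$ over the extension factors defining $E\in\langle\oO_X,\Coh_{\le 1}(X)[-1]\rangle_{\ex}$, since $\cl(E)\in\Gamma_0$ forces the sum of the (effective) classes $\ch_2(F_j)$ to vanish, hence each $F_j\in\Coh_0(X)$, and then the vanishing of the total length kills each $F_j$. Because your numerical argument never uses on which side $\langle\oO_X\rangle_{\ex}$ sits, the repair is immediate and the rest goes through. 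Two cosmetic points: the semistability of $\oO_X^{\oplus m}$ is cleaner from simplicity of $\oO_X$ in $\aA$ (Lemma~\ref{Ost}) together with extension-closedness of the slice than from ``every subobject has the same phase''; and the phase of $\oO_X[-k]$ under $Z_t$ is $t-k$ inside the heart $\aA_k$ (this is how Lemma~\ref{already} and Proposition~\ref{prop1} later use the present lemma), so your ``up to the normalization'' remark for $k=1$ should be made explicit in those terms.
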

\begin{proof}
Since $\Phi_{\oO_X}$ preserves the filtration $\Gamma_{\bullet}$
and we have (\ref{PhiO}), 
we may assume that $k=0$ or $k=1$.
In both cases, the assertion is easily checked by 
the construction of weak stability conditions
and Lemma~\ref{Ost}. 
\end{proof}

For $t\in \mathbb{R}$, 
the slicing $\pP_t$ 
defined in (\ref{map:g}) satisfies the following. 
\begin{lem}\label{fixed}
For fixed $\phi \in \mathbb{R}$ 
and $k \in \mathbb{Z}$, 
the subcategory $\pP_t(\phi) \subset \dD_X$
does not depend on $t\in (\phi+k, \phi+k+1)$. 
\end{lem}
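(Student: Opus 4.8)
The plan is to isolate the single source of $t$-dependence of the central charge and to check that it never affects phase-$\phi$ semistability on the interval $(\phi+k,\phi+k+1)$. Since $\pP_{t}(\phi+1)=\pP_{t}(\phi)[1]$ uniformly in $t$, I may replace $\phi$ by the representative $\phi_{0}\in(0,1]$ with $\phi_{0}\equiv\phi\pmod{\mathbb{Z}}$ and $k$ by the integer $k'$ with $\phi_{0}+k'=\phi+k$, so I assume from now on $\phi\in(0,1]$. The key point is that $Z_{t}=Z_{(\exp(i\pi t),i\omega)}$ depends on $t$ only through its component $Z_{t,0}\colon r\mapsto r\exp(i\pi t)$ on $\Gamma_{0}=H^{0}$; its component on $\Gamma_{1}/\Gamma_{0}$ is the fixed map $(\beta,n)\mapsto n-i\omega\cdot\beta$. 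Hence $Z_{t}(E)$ is independent of $t$ whenever $\cl(E)\notin\Gamma_{0}$, while by Lemma~\ref{below} the only $\sigma_{t}$-semistable objects with class in $\Gamma_{0}$ are direct sums of a fixed shift of $\oO_{X}$, whose phase is $\equiv t\pmod{\mathbb{Z}}$ by Lemma~\ref{Ost}. As $t$ runs over $(\phi+k,\phi+k+1)$ it stays strictly between two consecutive points of $\phi+\mathbb{Z}$, so no such object has phase $\phi$ and $\pP_{t}(\phi)$ has no summand supported on $\oO_{X}$.

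Next I would treat the two subintervals on which the heart $\aA_{t}:=\pP_{t}((0,1])$ is constant. For $t\in(\phi+k,k+1]$ one has $\aA_{t}=\aA_{k}$, and $\oO_{X}[-k]$ is $\sigma_{t}$-stable of phase $t-k>\phi$, so a nonzero map $\oO_{X}[-k]\to E$ with $E\in\pP_{t}(\phi)$ gives a destabilising subobject; therefore $\pP_{t}(\phi)\subset\cC:=\aA_{k,+}$, where $\aA_{k,+}=\{E\in\aA_{k}:\Hom(\oO_{X}[-k],E)=0\}$ is the torsion-free part of the torsion pair $(\langle\oO_{X}[-k]\rangle_{\ex},\aA_{k,+})$ (Lemma~\ref{types}(ii) transported by $\Phi_{\oO_{X}}$), whose tilt is $\aA_{k+1}$. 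Every subobject in $\aA_{k}$ of an object of $\cC$ again lies in $\cC$, hence has class outside $\Gamma_{0}$ and $t$-independent central charge; so $\pP_{t}(\phi)$ equals the set of $E\in\cC$ with $Z_{1}([\cl(E)])\in\mathbb{R}_{>0}e^{i\pi\phi}$ such that $\arg Z_{1}([\cl(F)])\le\pi\phi$ for every subobject $F\subset E$ in $\aA_{k}$, a description with no $t$ in it. Dually, for $t\in(k+1,\phi+k+1)$ one has $\aA_{t}=\aA_{k+1}$, the object $\oO_{X}[-(k+1)]$ has phase $t-(k+1)<\phi$ and is a potential quotient-destabiliser, so $\pP_{t}(\phi)\subset\aA_{k+1,-}$, and the same argument with quotients in place of subobjects gives a $t$-free description there. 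By the Happel--Reiten--Smal\o{} description of the reversed torsion pair (\cite[Proposition~2.1]{HRS}), $\aA_{k,+}=\aA_{k+1,-}=\cC$ inside $\dD_{X}$, so both descriptions live in the same category.

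It remains to match these across the wall $t=k+1$, which is the main obstacle. For $E\in\cC$ with $Z_{1}([\cl(E)])\in\mathbb{R}_{>0}e^{i\pi\phi}$ I would prove directly that $E$ is $\sigma_{t}$-semistable for $t\in(\phi+k,k+1]$ if and only if it is for $t\in(k+1,\phi+k+1)$. Given a destabilising subobject $F\subset E$ in $\aA_{k}$ (automatically in $\cC$, so $\cl(F)\notin\Gamma_{0}$ and $\arg Z_{1}([\cl(F)])>\pi\phi$), the long exact sequence of $\aA_{k+1}$-cohomology applied to the triangle $F\to E\to E/F$ shows that the kernel of $F\to E$ in $\aA_{k+1}$ is a direct sum of copies of $\oO_{X}[-(k+1)]$; hence its image $\bar F$ is a nonzero proper subobject of $E$ in $\aA_{k+1}$ with $[\cl(\bar F)]=[\cl(F)]$ in $\Gamma_{1}/\Gamma_{0}$, still destabilising. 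The converse is symmetric, exchanging subobjects and quotients and $\oO_{X}[-(k+1)]$ with $\oO_{X}[-k]$. The delicate points are that $\bar F$ is nonzero and proper — proper because a subobject $F$ with $E/F\in\langle\oO_{X}[-k]\rangle_{\ex}$ cannot destabilise, since then $[\cl(E)]=[\cl(F)]$ and $\arg Z_{1}([\cl(F)])=\pi\phi$ — and the shift/parity bookkeeping for $\oO_{X}$. Once this is in place $\pP_{t}(\phi)$ is constant on all of $(\phi+k,\phi+k+1)$; the remaining case $\phi=1$ is easier, since then the interval is $(k+1,k+2)$, contained in the single heart $\aA_{k+1}$, so no wall is crossed.
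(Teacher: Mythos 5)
Your proposal is correct, but it follows a genuinely different route from the paper. The paper's proof is a short continuity argument: for a fixed $E\in\pP_{t_1}(\phi)$ it considers $I=\{t\in[t_1,t_2]:E\in\pP_t(\phi)\}$, uses that semistability is a closed condition (Remark~\ref{rmk216}) to get $t_0=\sup I\in I$, and then observes that any destabilizing triangle for $t$ slightly past $t_0$ either has both pieces with classes outside $\Gamma_0$ (so their charges, hence the destabilizing inequality, are $t$-independent, contradicting semistability at $t_0$) or involves an object of $\langle\oO_X[k']\rangle_{\ex}$, whose phase $t+k'$ would have to cross $\phi$ inside the open interval $(\phi+k,\phi+k+1)$, which is impossible; this treats the whole interval uniformly and never explicitly confronts the change of heart at $t=k+1$. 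You instead avoid the closedness input altogether: you give an explicit $t$-free characterization of $\pP_t(\phi)$ on each of the two subintervals where the heart is constant (reducing semistability to the sign of $\arg Z_1([\cl(F)])-\pi\phi$ after disposing of the $\Gamma_0$-classes, which is exactly the right case analysis since $Z$ is only additive modulo $\Gamma_0$), and then match the two descriptions across $t=k+1$ by transporting destabilizers between $\aA_k$ and $\aA_{k+1}$ through the torsion pair $(\langle\oO_X[-k]\rangle_{\ex},\aA_{k,+})$; your long-exact-sequence computation (kernel of $F\to E$ in $\aA_{k+1}$ lies in $\langle\oO_X[-(k+1)]\rangle_{\ex}$, image has the same class modulo $\Gamma_0$, properness because a quotient in $\langle\oO_X[-k]\rangle_{\ex}$ forces $[\cl(F)]=[\cl(E)]$) is sound, and the converse direction via $\hH^0_{\aA_k}$ works symmetrically because the torsion class is closed under quotients. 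What each approach buys: the paper's argument is shorter but leans on the cited closedness fact and is terse about the existence of the limiting destabilizer; yours is longer but self-contained at the level of torsion-pair manipulations and yields as a by-product an explicit description of $\pP_t(\phi)$ that parallels Proposition~\ref{prop1}. Two points you should write out if you formalize this: the identification of $\aA_{k+1}$ as the tilt of $\aA_k$ at $(\langle\oO_X[-k]\rangle_{\ex},\aA_{k,+})$ is immediate from Lemma~\ref{types}(ii) only for even $k$, and for odd $k$ needs Lemma~\ref{types}(iii) together with Lemma~\ref{phist} (via $\bB_{+}=\Phi_{\oO_X}(\bB_{-})$); and the passage from the paper's defining inequality $\arg Z(F)\le\arg Z(G)$ to your subobject/quotient criteria should record the (easy, automatic) cases where one of the classes lies in $\Gamma_0$, since on those pieces $Z$ is not computed from $Z_1$.
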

\begin{proof}
Let us take 
\begin{align*}\phi+k<t_1<t_2<\phi+k+1,
\end{align*} 
and show that
$\pP_{t_1}(\phi) \subset \pP_{t_2}(\phi)$. 
The other inclusion $\pP_{t_2}(\phi) \subset \pP_{t_1}(\phi)$
is similarly discussed. 
We take an object $E\in \pP_{t_1}(\phi)$ 
and set 
\begin{align*}
I=\{ t\in [t_1, t_2] : E \in \pP_t(\phi)\}. 
\end{align*}
Since $I$ is a closed subset, 
(cf.~Remark~\ref{rmk216},) it
is enough to see that 
$t_0 \cneq \sup I$
satisfies $t_0=t_2$. 
Suppose by contradiction that $t_0<t_2$. 
Then there is a distinguished triangle, 
\begin{align}\label{disttr}
E' \to E \to E'',
\end{align}
which destabilizes $E$ 
w.r.t. the weak stability condition $\gamma(t)$
for $0<t-t_0 \ll 1$. 
If both of $\cl(E')$ and $\cl(E'')$
are not contained in $\Gamma_0$, we have 
\begin{align*}
\arg Z_{t_0}(E')=\arg Z_{t}(E')> \arg Z_{t}(E'') =\arg Z_{t_0}(E''). 
\end{align*}
This contradicts to that $E\in \pP_{t_0}(\phi)$, therefore 
either $\cl(E')$ or $\cl(E'')$ is contained 
in $\Gamma_0$. 
Then by Lemma~\ref{below}, 
there is $k' \in \mathbb{Z}$
such that $E_1$ or $E_2$ is contained in 
$\langle \oO_X[k'] \rangle_{\ex}$. This implies that  
\begin{align*}
t+k'>\phi\ge t_0 +k' \ \mbox{ or } \ t_0 +k' \ge \phi >t+k', 
\end{align*}
respectively. Obviously both 
cases do not happen. 
\end{proof}
Let us take $0<\phi <1$, 
$k\in \mathbb{Z}$ and set $t_{0}=\phi+k$. 
We take $t_{-}<t_0<t_{+}$ satisfying 
\begin{align}\label{tsat}
[t_{-}, t_{+}] \subset (k, k+1). 
\end{align}
Note that $\gamma(t) \in \uU_k$
for $t\in [t_{-}, t_{+}]$. 
We have the following proposition. 
\begin{prop}\label{prop1}
For an object $E\in \pP_{t_0}(\phi)$, 
the HN filtrations
with respect to $\gamma(t_{\pm})$
yield 
short exact sequences in $\aA_{k}$
respectively,  
\begin{align}\label{ex1}
0\to E_1 \to E \to E_2 \to 0, \\
\label{ex2}
0 \to E_1' \to E \to E_2' \to 0,
\end{align}
satisfying the following. 
\begin{itemize}
\item $E_1 \in \langle \oO_X[-k] \rangle_{\ex}$ and 
$E_2' \in \langle \oO_X[-k] \rangle_{\ex}$. 
\item $E_2 \in \pP_{t_{+}}(\phi)$
 and $E_{1}' \in \pP_{t_{-}}(\phi)$. 
\end{itemize}
Conversely if an object $E\in \aA_{k}$ 
fits into an exact sequence (\ref{ex1}) or (\ref{ex2})
satisfying the above conditions, then $E\in \pP_{t_0}(\phi)$. 
\end{prop}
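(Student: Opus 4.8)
The plan is to analyze the Harder–Narasimhan filtrations of $E$ at the two stability conditions $\gamma(t_\pm)$, using the fact that the only way stability can change between $t_0$ and $t_\pm$ is through the phase of objects with class in $\Gamma_0$, i.e.\ through copies of $\oO_X[-k]$ (by Lemma~\ref{below}). Fix $E\in\pP_{t_0}(\phi)$; since $\gamma(t)\in\uU_k$ for $t\in[t_-,t_+]$, we have $\pP_t((0,1])=\aA_k$ for all such $t$, and $\cl(E)\in\Gamma_1\setminus\Gamma_0$ (otherwise by Lemma~\ref{below} $E$ would have phase $t_0$ rather than $\phi$, contradicting $0<\phi<1$ and $t_0=\phi+k$ with $k\in\mathbb Z$ forcing $t_0\notin(0,1]$ in general). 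So $E$ genuinely lives in $\aA_k$ with $Z_{t_0}(E)\in\mathbb R_{>0}\exp(i\pi\phi)$.

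First I would treat $\gamma(t_+)$. Consider the HN filtration of $E$ with respect to $\gamma(t_+)$. I claim it has at most two factors, and the top (maximal-phase) factor $E_1$ lies in $\langle\oO_X[-k]\rangle_{\ex}$. The key computation: for an object $F\in\aA_k$ with $\cl(F)\notin\Gamma_0$, the central charge $Z_t(F)=Z_{(\exp(i\pi t),i\omega)}(F)$ depends on $t$ only through the $\Gamma_0$-component $\exp(i\pi t)\cdot\ch_0(F)$, so $\arg Z_t(F)$ moves continuously and, crucially, monotonically in a controlled way; meanwhile $\oO_X[-k]$ has phase exactly $t$, which increases with $t$. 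Since $E$ is semistable at $t_0$ but (possibly) not at $t_+>t_0$, any destabilizing subobject must be the one whose phase overtook $\arg Z_{t_0}(E)=\pi\phi$ as $t$ crossed $t_0$, and the only objects whose phase passes through this value in the interval are extensions of $\oO_X[-k]$'s (phase $t$, passing through $\phi+k$... wait, $\oO_X[-k]$ has phase $t$ modulo the shift — more precisely $\oO_X[-k]\in\pP_t(t)$, and $t+$(no shift)$>\phi$ exactly when $t>\phi$, i.e.\ when $k=0$; the general statement is handled by the $\Phi_{\oO_X}$-equivariance reducing to $k=0,1$). So the HN filtration gives $0\to E_1\to E\to E_2\to 0$ with $E_1\in\langle\oO_X[-k]\rangle_{\ex}$ and $E_2$ semistable of phase $\phi$ at $t_+$, i.e.\ $E_2\in\pP_{t_+}(\phi)$. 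Symmetrically, at $t_-<t_0$ the roles reverse: now it is a quotient of $E$ that becomes destabilizing, and the HN filtration yields $0\to E_1'\to E\to E_2'\to 0$ with $E_2'\in\langle\oO_X[-k]\rangle_{\ex}$ and $E_1'\in\pP_{t_-}(\phi)$.

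For the converse, suppose $E\in\aA_k$ fits into (\ref{ex1}) with $E_1\in\langle\oO_X[-k]\rangle_{\ex}$ (hence $Z_{t}(E_1)\in\mathbb R_{>0}\exp(i\pi(t+$shift$))$, of phase $t$ up to the slicing convention) and $E_2\in\pP_{t_+}(\phi)$. Then at $t=t_0$ one computes $\arg Z_{t_0}(E_1)=\pi t_0=\pi(\phi+k)$ and $\arg Z_{t_0}(E_2)=\pi\phi$ (up to the shift by $k$ which is common to both): I would check that these are equal modulo the slicing's phase bookkeeping — indeed both $E_1$ and $E_2$ then have phase $\phi$ with respect to $\gamma(t_0)$ once the shift is accounted for, so $E$, being an extension of two objects of the same $\gamma(t_0)$-phase $\phi$, lies in $\pP_{t_0}(\phi)$ (using that $\pP_{t_0}(\phi)$ is closed under extensions). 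The argument from (\ref{ex2}) is identical with $t_-$ in place of $t_+$.

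The main obstacle is the careful bookkeeping of phases across the shift $[-k]$ and the precise verification that ``the only class-$\Gamma_0$ objects whose $\gamma(t)$-phase crosses $\pi\phi$ as $t$ passes $t_0$ are the copies of $\oO_X[-k]$'' — this requires combining the explicit formula for $Z_t$ from (\ref{asso}), Lemma~\ref{below}, and the monotonicity of $t\mapsto\arg Z_t(F)$ for $\cl(F)\notin\Gamma_0$, and then invoking $\Phi_{\oO_X}$-equivariance (via (\ref{def:Ak}) and (\ref{PhiO})) to reduce to the two base cases $k=0$ and $k=1$ where $\aA_k=\aA$ or $\bB_{+1}$ and the structure of simple objects is explicit. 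I expect the $k=1$ case to need a separate short check because $\bB_{+1}$ contains $\oO_X[-1]$ rather than $\oO_X$, so the direction in which phases move is opposite.
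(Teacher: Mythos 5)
Your outline has the right global shape (the destabilizing factor comes from $\oO_X[-k]$, it is a subobject at $t_{+}$ and a quotient at $t_{-}$, and the converse uses extension-closedness), but the central mechanism is misstated and two steps are genuinely missing. By Definition~\ref{defi:weak}, for $F$ with $\cl(F)\in\Gamma_1\setminus\Gamma_0$ the value $Z_t(F)$ is $Z_1$ evaluated on the class in $\Gamma_1/\Gamma_0$; along $\gamma$ only the $\Gamma_0$-component $\exp(i\pi t)$ moves, so $Z_t(F)$ is \emph{constant} in $t$ — it does not ``depend on $t$ through $\exp(i\pi t)\cdot\ch_0(F)$'' and does not move monotonically. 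The actual argument (the one from the proof of Lemma~\ref{fixed}) is: if $0\to E'\to E\to E''\to 0$ destabilizes $E$ at $t_{+}$ and neither $\cl(E')$ nor $\cl(E'')$ lies in $\Gamma_0$, then the same inequality of arguments already holds at $t_0$, contradicting $E\in\pP_{t_0}(\phi)$; hence one factor lies in $\Gamma_0$, i.e. in $\langle \oO_X[-k]\rangle_{\ex}$ by Lemma~\ref{below}, and the explicit comparison $\arg Z_{t_{+}}(\oO_X[-k])=\pi(t_{+}-k)>\pi\phi=\arg Z_{t_{+}}(E)$ forces that factor to be the subobject (and, symmetrically, the quotient at $t_{-}$). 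Your ``phase overtook $\pi\phi$'' heuristic rests on the wrong picture and in particular does not justify that the HN filtration has only two steps. Note also that your opening exclusion of $\cl(E)\in\Gamma_0$ is incorrect: at $t_0=\phi+k$ one has $\arg Z_{t_0}(\oO_X[-k])=\pi(t_0-k)=\pi\phi$, so such objects do lie in $\pP_{t_0}(\phi)$ (this is precisely what the converse needs); that case is trivial, not impossible. No reduction to $k=0,1$ via $\Phi_{\oO_X}$ is needed, since $\arg Z_t(\oO_X[-k])=\pi(t-k)$ is uniform in $k$.

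The converse also has a gap: from $E_2\in\pP_{t_{+}}(\phi)$ you only verify that $\arg Z_{t_0}(E_2)=\pi\phi$, but you never show that $E_2$ is \emph{semistable} with respect to $\gamma(t_0)$; equality of arguments is not semistability, and extension-closedness of $\pP_{t_0}(\phi)$ cannot be invoked before both pieces are known to lie in it. The paper supplies this via Lemma~\ref{fixed} (constancy of $\pP_t(\phi)$ on $(\phi+k,\phi+k+1)$) together with Remark~\ref{rmk216} (the locus where a fixed object is semistable is closed), giving $\pP_{t_{+}}(\phi)\subset\pP_{t_0}(\phi)$, and uses Lemma~\ref{below} for $\oO_X[-k]\in\pP_{t_0}(\phi)$. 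Alternatively one can argue directly: a destabilizing sequence for $E_2$ at $t_0$ with both classes outside $\Gamma_0$ would destabilize at $t_{+}$, while a factor with class in $\Gamma_0$ has argument exactly $\pi\phi$ at $t_0$ and so cannot destabilize. Some such step must be added; as written the converse is incomplete.
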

\begin{proof}
For simplicity we only see the sequence (\ref{ex1}). 
The result follows from Lemma~\ref{below}
if $\cl(E) \in \Gamma_0$, therefore we assume  
 that $\cl(E) \in \Gamma \setminus \Gamma_{0}$. 
Also we may assume that $E$ is not $Z_{t_{+}}$-semistable,
hence there is an exact sequence in $\aA_{k}$,
\begin{align*}
0 \to E' \to E \to E'' \to 0,
\end{align*}
such that $\arg Z_{t_{+}}(E')> \arg Z_{t_{+}}(E'')$.
Then the same argument in the proof of 
Lemma~\ref{fixed} shows that $\cl(E')$ or $\cl(E'')$
is contained in
$\langle \oO_X[-k] \rangle_{\ex}$. 
This implies that 
the HN filtration w.r.t. $\gamma(t_{+})$ consists 
of a two step filtration in $\aA_k$, which 
we denote by (\ref{ex1}), 
and either $E_1$ or $E_2$ is contained in 
$\langle \oO_X[-k] \rangle_{\ex}$. 
 Since $t_{+}>t_{0}$, 
we have 
\begin{align*}
\arg Z_{t_{+}}(\oO_X[-k])
&=\pi(t_{+}-k) \\
&>\pi(t_0-k) \\
&= \arg Z_{t_0}(E) \\
&=\arg Z_{t_{+}}(E).
\end{align*}
Therefore 
we must have $\cl(E_1) \in \langle \oO_X[-k] \rangle_{\ex}$
and $E_2 \in \pP_{t_{+}}(\phi)$.  

Conversely suppose that $E\in \aA_{k}$ fits into an exact sequence (\ref{ex1}). 
By Lemma~\ref{below}, Lemma~\ref{fixed}
and noting Remark~\ref{rmk216}, we have 
\begin{align*}
\pP_{t_{+}}(\phi) \subset \pP_{t_{0}}(\phi), \quad 
\oO_X[-k] \in \pP_{t_{0}}(\phi). 
\end{align*}
Therefore we have $E\in \pP_{t_{0}}(\phi)$. 
\end{proof}

\begin{rmk}
As we discussed in the introduction, 
the image of $\iota$ in the diagram (\ref{iecom}) 
may be interpreted as a loop 
around the conifold point in Figure~\ref{fig:one}
if $X$ is a quintic 3-fold, 
since the covering transformation of the left arrow of (\ref{iecom})
is induced by the action of $\Phi_{\oO_X}$. 
\end{rmk}

\begin{rmk}
Although the
result of Theorem~\ref{obtain} holds 
under the assumption (\ref{add}), 
the continuous map $\gamma$ exists 
without that assumption
once we fix an ample divisor $\omega$. 
The results of Lemma~\ref{fixed} and Proposition~\ref{prop1} 
hold as well without (\ref{add}). 
\end{rmk}

\section{Donaldson-Thomas theory}
In this section, we introduce generalized 
DT invariants counting semistable objects in $\dD_X$
with respect to our weak stability conditions, 
and establish their wall-crossing formula.  
Originally DT theory is introduced in~\cite{Thom} 
as counting stable coherent sheaves
on Calabi-Yau 3-folds, 
and defined only when semistable 
sheaves and stable sheaves coincide. 
The generalized DT theory 
introduced by Joyce-Song~\cite{JS}
is also defined when there is a
semistable but not stable sheaf, and 
the notion of Hall algebra
is used for the definition.   
The same construction 
is also applied in our situation, 
and we first give some 
notions needed for the definition. 
\subsection{Hall algebra}
In this subsection, we recall 
the notion of Hall algebra
via moduli stacks. 
See~\cite{GL} for the introduction of 
stacks and~\cite{Joy2} for the detail on the 
Hall algebra. 

Let $\mM$ be the moduli stack of 
objects $E\in D^b(\Coh(X))$ satisfying 
\begin{align}\label{neg:ex}
\Ext^i_X(E, E)=0, \quad i<0. 
\end{align}
By the result of Lieblich~\cite{LIE}, 
$\mM$ is an algebraic stack 
locally of finite type over $\mathbb{C}$. 
For each element
\begin{align*}
\sigma=(Z, \cC) \in \Stab_{\Gamma_{\bullet}}^{\circ}(\dD_X),
\end{align*}
where $\cC \subset \dD_X$
is the heart of a bounded t-structure, 
we have the (abstract) substack, 
\begin{align}\notag
\oO bj(\cC) \subset \mM,
\end{align}
which parameterizes objects $E\in \cC$.
The stack $\oO bj(\cC)$
decomposes as follows, 
\begin{align*}
\oO bj(\cC)=\coprod_{v\in \Gamma}
\oO bj^v(\cC),
\end{align*}
where $\oO bj^{v}(\cC)$
is the stack of objects $E\in \cC$
with $\cl(E)=v$. 

Suppose for instance that $\oO bj(\cC)$
is an algebraic stack 
locally of finite type over
$\mathbb{C}$. 
The $\mathbb{Q}$-vector space
$\hH(\cC)$ is generated by the 
isomorphism classes of symbols, 
\begin{align*}
[\xX \stackrel{f}{\to} \oO bj(\cC)],
\end{align*}
where $\xX$ is an algebraic stack 
of finite type over $\mathbb{C}$, and 
$f$ is a 1-morphism of stacks. 
Here two symbols 
$[\xX_i \stackrel{f_i}{\to} \oO bj(\cC)]$
for $i=1, 2$
are \textit{isomorphic} if there is 
an isomorphism of stacks, 
\begin{align*}
g\colon \xX_1 \stackrel{\sim}{\to} \xX_2,
\end{align*}
such that $f_2 \circ g \cong f_1$. 
The relations are generated by 
\begin{align*}
[\xX \stackrel{f}{\to} \oO bj(\cC)]
\sim [\uU \stackrel{f|_{\uU}}{\to} \oO bj(\cC)]
+ [\zZ \stackrel{f|_{\zZ}}{\to} \oO bj(\cC)],
\end{align*}
where $\uU \subset \xX$ is an open substack 
and $\zZ =\xX \setminus \uU$. 

Let $\eE x(\cC)$ be the stack of 
short exact sequence in $\cC$, 
\begin{align}\label{short}
0 \to A_1 \to A_2 \to A_3 \to 0. 
\end{align}
By sending the exact sequence (\ref{short})
to the object $A_i$, we obtain morphisms, 
\begin{align*}
p_i \colon \eE x(\cC) \to \oO bj(\cC), \quad 
i=1, 2, 3. 
\end{align*}
For two elements 
\begin{align*}
\rho_i=[\xX_i \stackrel{f_i}{\to} \oO bj(\cC)] \in \hH(\cC), 
\quad i=1, 2, 
\end{align*}
we have the diagram, 
\begin{align*}
\xymatrix{
\zZ \ar[r]^{h} \ar[d] & \eE x(\cC) \ar[r]^{p_2} \ar[d]^{(p_1, p_3)} & 
\oO bj(\cC) \\
\xX_1 \times \xX_2 \ar[r]^{(f_1, f_2)} & \oO bj(\cC)^{\times 2}.
& }
\end{align*}
Here the left diagram is a Cartesian square. 
We define the $\ast$-product
$\rho_1 \ast \rho_2$
to be
\begin{align*}
\rho_1 \ast \rho_2 \cneq [\zZ \stackrel{p_2 \circ h}{\to} \oO bj(\cC)]
\in \hH(\cC). 
\end{align*}
It is proved in~\cite[Theorem~5.2]{Joy2} that
$\ast$ is an associative product on $\hH(\cC)$
with unit given by 
\begin{align*}
1=[\Spec \mathbb{C} \to \oO bj(\cC)],
\end{align*}
whose image corresponds to $0\in \cC$. 
The algebra $\hH(\cC)$ is $\Gamma$-graded,
\begin{align*}
\hH(\cC)=\bigoplus_{v\in \Gamma} \hH^v(\cC),
\end{align*}
where $\hH^v(\cC)$
is spanned by symbols 
$[\xX \stackrel{f}{\to}\oO bj(\cC)]$
such that $f$ factors through 
the substack $\oO bj^{v}(\cC) \subset \oO bj(\cC)$. 

We will use certain completions of the 
algebra $\hH(\cC)$. 
Let $V\subset \mathbb{H}$ be a subset
written as 
\begin{align}\label{defV}
V=\{ r\exp(i\pi \phi) : r>0, \ \phi_1 \le \phi \le \phi_2\}. 
\end{align}
for some $\phi_1, \phi_2 \in \mathbb{R}$
with $0\le \phi_2 -\phi_1 <1$. 
We define $\widehat{\hH}(\cC)_{Z, V}$
to be
\begin{align*}
\widehat{\hH}(\cC)_{Z, V}
\cneq \prod_{v\in \Gamma, \ Z(v) \in V}\hH^v(\cC). 
\end{align*}

\subsection{Elements $\delta^v{(Z)}$ and $\epsilon^v(Z)$}
For $\sigma=(Z, \cC) \in \Stab_{\Gamma_{\bullet}}^{\circ}(\dD_X)$
and $v\in \Gamma$, 
the stack of $Z$-semistable 
objects $E\in \cC$ with $\cl(E)=v$ is denoted by, 
\begin{align*}
\mM^v(Z) \subset \oO bj^{v}(\cC).
\end{align*}
Suppose for instance that $\mM^v(Z)$ is an 
algebraic stack of finite type 
over $\mathbb{C}$. 
Then the above stack defines the element, 
\begin{align*}
\delta^{v}(Z)\cneq [\mM^v(Z) \hookrightarrow \oO bj^v(\cC)]
\in \hH^v(\cC).
\end{align*}
We say a subset $l\subset \mathbb{H}$
as a \textit{ray} if there is $\phi \in (0, 1]$
such that $l=\mathbb{R}_{>0}\exp(i\pi \phi)$. 
For each ray $l$, we define $\delta^{l}(Z)$ 
to be 
\begin{align}\label{dray}
\delta^{l}(Z)\cneq 1+\sum_{Z(v)\in l} \delta^{v}(Z)
\in \widehat{\hH}(\cC)_{Z, l}. 
\end{align}
Then we define the element $\epsilon^{l}(Z)$ to be 
\begin{align*}
\epsilon^{l}(Z)=\log \delta^{l}(Z) \in \widehat{\hH}(\cC)_{Z, l}.
\end{align*}
Namely $\epsilon^{l}(Z)$ is given by 
\begin{align*}
\epsilon^{l}(Z) \cneq \sum_{Z(v) \in l}\epsilon^{v}(Z),
\end{align*}
where $\epsilon^v(Z)$ is written as 
\begin{align}\label{sum}
\epsilon^{v}(Z)=\sum_{\begin{subarray}{c}
m \ge 1, \
v_1, \cdots, v_m \in \Gamma, \\
Z(v_i) \in \mathbb{R}_{>0}Z(v), \\
v_1+\cdots +v_m=v. 
\end{subarray}}\frac{(-1)^{m-1}}{l}
\delta^{v_1}(Z) \ast \cdots \ast \delta^{v_m}(Z).
\end{align}
The above definition makes sense by the following 
lemma, whose proof will be given in Section~\ref{sec:tech}.  
\begin{lem}\label{finsum}
The sum (\ref{sum}) is a finite sum. 
\end{lem}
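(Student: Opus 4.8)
The plan is to reduce the finiteness to a boundedness statement for semistable objects and then finish with the support property and an elementary pigeonhole argument. First, note that a term of the sum (\ref{sum}) indexed by a tuple $(m; v_1, \dots, v_m)$ vanishes unless $\delta^{v_j}(Z) \ne 0$ for every $j$, i.e.\ unless each $v_j$ is the class of some $Z$-semistable object of $\cC$; call such a class \emph{effective}. Hence it suffices to prove that there are only finitely many tuples $(m; v_1, \dots, v_m)$ with all $v_j$ effective, $Z(v_j) \in \mathbb{R}_{>0}Z(v)$, and $v_1 + \cdots + v_m = v$. I would begin by reducing to the standard hearts: the $\mathbb{C}$-action of Proposition~\ref{prop:re} and the action of the autoequivalence $\Phi_{\oO_X}$ preserve the ray condition and, up to relabelling of phases, the class of $Z$-semistable objects (for $\Phi_{\oO_X}$ the class moves by the lattice automorphism $\phi_{\oO_X}$, which is the identity on $\gr(\Gamma_{\bullet})$, so $Z$ and hence effectiveness on a given ray are unchanged), so by Proposition~\ref{prop:re} and Theorem~\ref{thm:cov} we may assume $\cC = \aA$ or $\cC = \bB_{+}$.

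Next I would bound the effective classes lying on the ray $\ell \cneq \mathbb{R}_{>0}Z(v)$. Write $v = (r, -\beta, -n)$ and $v_j = (r_j, -\beta_j, -n_j)$. Since every $Z(v_j)$ is a positive real multiple of $Z(v)$ we have $\lvert Z(v_j) \rvert \le \lvert Z(v) \rvert$, so the support property gives $\lVert(\beta_j, n_j)\rVert \le C\lvert Z(v)\rvert$ whenever $v_j \notin \Gamma_{0}$; thus the pair $(\beta_j, n_j)$ ranges over a finite set. It remains to bound the ranks $r_j$. For $\cC = \aA = \langle \oO_X, \Coh_{\le 1}(X)[-1]\rangle_{\ex}$ (Lemma~\ref{types}) this is immediate: every object of $\aA$ is an iterated extension of copies of $\oO_X$, of rank $1$, and of shifts of sheaves in $\Coh_{\le 1}(X)$, of rank $0$, so $\ch_0 \ge 0$ on $\aA$; combined with $\sum_j r_j = r$ this forces $0 \le r_j \le r$. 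For $\cC = \bB_{+} = \langle \aA_{+}, \oO_X[-1]\rangle_{\ex}$ I would run the analogous argument through the defining torsion decomposition $0 \to T \to E \to \oO_X[-1]^{\oplus a} \to 0$ of a $Z$-semistable $E$ of class $v_j$: here $T \in \aA_{+} \subset \aA$ has $\ch_0(T) \ge 0$, so $\ch_0(E) = \ch_0(T) - a$, and a boundedness argument for $a$ (and for $\ch_0(T)$ in terms of the fixed class of $T$ in $\Gamma_1/\Gamma_0$), in which the $\sigma$-stability of $\oO_X[-1]$ of a fixed phase (Lemma~\ref{Ost}) and the boundedness techniques of \cite{Tcurve1} enter, again confines $r_j$ to a finite set.

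With the previous step in hand, the effective classes $w$ with $Z(w) \in \ell$ form a finite set $S \subset \Gamma \setminus \{0\}$. For a contributing tuple write $Z(v_j) = t_j Z(v)$ with $t_j > 0$; then $\sum_j t_j = 1$ while each $t_j$ belongs to the finite set $\{\, \lvert Z(w)\rvert/\lvert Z(v)\rvert : w \in S\,\}$, hence $t_j \ge t_0$ for some $t_0 > 0$ independent of the tuple. Therefore $m \le 1/t_0$, and since each $v_j \in S$ there are only finitely many such tuples, so the sum (\ref{sum}) is finite.

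The step I expect to be the main obstacle is the rank bound for the heart $\bB_{+}$ in the second paragraph (and, through the reduction, for all the tilted hearts $\aA_k$ appearing along the loop of Subsection~\ref{sub:loop}): the support property of a weak stability condition controls the class of a semistable object only modulo $\Gamma_{0} = H^0(X, \mathbb{Z})$, so bounding the $H^0$-component of effective classes is not a formal consequence of the axioms and must be extracted from the explicit geometry of $\dD_X$ and the concrete shape of the hearts.
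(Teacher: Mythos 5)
Your overall skeleton agrees with the paper's: reduce via Proposition~\ref{prop:re} and Theorem~\ref{thm:cov} to $\sigma \in \uU_0$ or $\uU_1$, observe that the classes $(\beta_j, n_j)$ modulo $\Gamma_0$ are constrained to a finite set, and isolate the rank of the factors as the real issue. But the decisive step is exactly the one you defer: bounding the $H^0$-components of the classes of $Z_u$-semistable objects of $\bB_{+}$ with fixed $(\ch_2,\ch_3)$. In the paper this is Lemma~\ref{lem:bound}, and it occupies most of the argument: one takes the torsion-pair decomposition $0 \to E' \to E \to \oO_X[-1]^{\oplus a} \to 0$ with $E' \in \aA_{+}$, refines $E'$ by its defining filtration in $\aA$ with factors $F_i[-1]$, $F_i \in \Coh_{\le 1}(X)$, and $\oO_X^{\oplus r_i}$, uses the $Z_u$-semistability of $E$ to pin down the Chern characters of the $F_i$ (and of their Harder--Narasimhan factors), invokes boundedness of $\omega$-semistable sheaves of fixed class, and then bounds the ranks $r_i$ by induction using the finite-dimensionality of $\Hom(\oO_X, E_{i-1}[1])$ together with $\Hom(\oO_X, E')=0$; finally $a$ is bounded since $E'' \in \langle \oO_X[-1]\rangle_{\ex}$ and $E'$ is bounded. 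Your proposal names this as ``the main obstacle'' and gestures at ``boundedness techniques of \cite{Tcurve1}'', but does not carry it out; since this is the substantive content of the lemma, the proposal is incomplete at its core.

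There is also a genuine error in your concluding pigeonhole. In a weak stability condition $Z$ is only defined on the graded pieces of $\Gamma_{\bullet}$, so it is not additive on $\Gamma$: for $v_j \in \Gamma_0$ one has $Z(v_j) = r_j z$, which contributes nothing to $Z(v)$ when $v \notin \Gamma_0$. Hence neither ``$\lvert Z(v_j)\rvert \le \lvert Z(v)\rvert$'' (being a positive multiple of $Z(v)$ gives no bound on the modulus) nor ``$\sum_j t_j = 1$'' holds; what is true is only $Z(v) = \sum_{v_j \notin \Gamma_0} Z(v_j)$ by additivity of $Z_1$ on $\Gamma_1/\Gamma_0$. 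Worse, your set $S$ of effective classes on the ray is infinite precisely in the interesting case: if $-z$ lies on the ray, then every $\oO_X[-1]^{\oplus a}$ is a semistable object of $\bB_{+}$ of class $(-a,0,0)$, so $S$ contains infinitely many $\Gamma_0$-classes (this is the wall $t=\phi+k$ where the lemma is actually used). The correct way to finish, as in the paper, is: positivity of $\omega \cdot \ch_2$ and of $\ch_3$ on the heart bounds the number $m'$ of factors with $(\beta_j,n_j)\neq(0,0)$ and their classes; Lemma~\ref{lem:bound} bounds their ranks from above; and by Lemma~\ref{below} every $\Gamma_0$-factor in $\bB_{+}$ has $r_j \le -1$, so the constraint $\sum_j r_j = r$ then bounds both the number and the sizes of the $\Gamma_0$-factors. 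Your argument for the heart $\aA$ (where $\ch_0 \ge 0$) is fine, but as it stands the $\bB_{+}$ case — both the rank bound and the count of factors — is not established.
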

So far we have assumed that the 
stacks $\oO bj(\cC)$ and $\mM^v(Z)$ 
are algebraic stacks locally of finite type, 
finite type respectively. 
However these are too strong conditions 
for the applications. 
In fact it is enough to show the following lemma, 
by discussing with 
the framework of 
Kontsevich-Soibelman~\cite[Section~3]{K-S}.  
The proof will be given in Section~\ref{sec:tech}. 
\begin{lem}\label{const}
For any $\sigma=(Z, \cC) \in \Stab_{\Gamma_{\bullet}}^{\circ}(\dD_X)$
and $v\in \Gamma$, we have the following. 

(i) The $\mathbb{C}$-valued points 
of the substack $\oO bj^{v}(\cC) \subset \mM$ are 
countable union of constructible subsets in $\mM$. 

(ii) The $\mathbb{C}$-valued points 
of the substack $\mM^v(Z) \subset \mM$
form a constructible subset in $\mM$. 
\end{lem}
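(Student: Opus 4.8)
The plan is to describe the objects of each heart $\cC$ occurring in $\Stab_{\Gamma_{\bullet}}^{\circ}(\dD_X)$ explicitly enough that, for a fixed class $v\in\Gamma$, they are parametrized by a union of bounded families — countably many in general, finitely many in the semistable case — and then to conclude using that the ambient moduli stack $\mM$ is algebraic and locally of finite type by Lieblich~\cite{LIE}. These are exactly the finiteness inputs needed to perform the integration over the motivic Hall algebra in the sense of Kontsevich-Soibelman~\cite[Section~3]{K-S}, which is why it suffices to prove the weaker statements (countable union, resp.\ constructible) rather than algebraicity of $\oO bj(\cC)$ itself.

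First I would reduce to the \emph{model hearts} $\dD_{0}\in\{\aA,\bB_{+},\bB_{-}\}$. The Seidel-Thomas twist $\Phi_{\oO_X}$ and the shift $[1]$ preserve the condition $\Ext^{<0}(E,E)=0$ and the subcategory $\dD_X$, hence induce automorphisms of $\mM$; by Lemma~\ref{com2}, Lemma~\ref{phist}, Theorem~\ref{thm:cov} and Proposition~\ref{prop:re} they act on $\Stab_{\Gamma_{\bullet}}^{\circ}(\dD_X)$, and together with the $\mathbb{C}$-action they move any $\sigma=(Z,\cC)$ to one with $\cC=\pP_{0}((\theta,\theta+1])$, where $\pP_{0}$ is the slicing of a point of $\uU$ or $\uU_{\pm1}$ and $\theta\in[0,1)$; under these automorphisms $\oO bj^{v}(\cC)$ is carried to $\oO bj^{v'}(\cC')$ for the corresponding class, so it is enough to treat such $\cC$. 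Any such $\cC$ is a tilt of the model heart: every $E\in\cC$ sits in an exact sequence $0\to T\to E\to F[1]\to 0$ with $T\in\pP_{0}((\theta,1])\subset\dD_{0}$ and $F\in\pP_{0}((0,\theta])\subset\dD_{0}$. For the model hearts themselves, the descriptions in Lemma~\ref{types} show that an $E\in\aA$ has standard cohomology sheaves $\oO_X^{\oplus r}$ in degree $0$ and an object of $\Coh_{\le1}(X)$ in degree $1$, and similarly (concentrated in a single degree) for $\bB_{\pm}$; for fixed $v$ everything except the multiplicity $r=\ch_{0}$ is bounded by Grothendieck's boundedness theorem, so $\oO bj^{v}(\aA)$ and $\oO bj^{v}(\bB_{\pm})$ are in fact of finite type.

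Next I would prove (i) for the tilted heart $\cC=\pP_{0}((\theta,\theta+1])$ by stratifying according to the Harder-Narasimhan type with respect to $\pP_{0}$: an $E\in\cC$ with $\cl(E)=v$ has $\pP_{0}$-semistable HN factors of classes $v_{1},\dots,v_{m}$ summing to $v$, with strictly decreasing phases in $(\theta,\theta+1]$. Since $Z$ is a \emph{weak} stability function, for a factor with $v_{i}\in\Gamma_{1}\setminus\Gamma_{0}$ the value $Z(v_{i})$ depends only on $[v_{i}]\in\Gamma_{1}/\Gamma_{0}$, so strict decrease of phases allows at most one factor for each value of $[v_{i}]$; and by Lemma~\ref{below} any factor with $v_{i}\in\Gamma_{0}$ is of the form $\oO_X[-k]^{\oplus a}$, all such having the same phase, hence at most one occurs. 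Thus the HN skeleton ranges over a countable set, the residual $\oO_X$-multiplicity $a$ being the only unbounded parameter, and for each of these countably many types the corresponding locus in $\oO bj^{v}(\cC)$ is the image under the Hall-algebra product maps of a stack of iterated extensions of the stacks $\mM^{v_{i}}(Z)$; such a locus is constructible once each $\mM^{v_{i}}(Z)$ is, because along a finite-type family $\dim\Ext^{1}$ is constructible and $\mM$ is algebraic locally of finite type. Hence (i) is reduced to (ii).

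Finally, for (ii): a $Z$-semistable object $E$ of class $v$ has $Z(E)$ of a fixed phase, and the same convexity argument shows the ``tilted part'' of its filtration $0\to T\to E\to F[1]\to 0$ vanishes — $F[1]$ (resp.\ $T$) would be a destabilising quotient (resp.\ subobject) of strictly larger (resp.\ smaller) phase — so $E$ in fact lies in the model heart $\dD_{0}$ or its shift $\dD_{0}[1]$. Then by Lemma~\ref{omit}, Lemma~\ref{below} and Proposition~\ref{prop1}, if $v\in\Gamma_{0}$ the object $E$ is $\oO_X[-k]^{\oplus r}$ with $r$ determined by $v$, and if $v\in\Gamma_{1}\setminus\Gamma_{0}$ it is an extension built from a $(B,\omega)$-semistable sheaf $F\in\Coh_{\le1}(X)$ and at most $\lvert\ch_{0}(v)\rvert$ copies of $\oO_X$, the bound on the multiplicity coming from the fact that $\dD_{0}$ has rank of a fixed sign, so a large subobject $\oO_X^{\oplus a}$ would push the quotient out of $\dD_{0}$. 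Since $(B,\omega)$-semistable sheaves of a fixed class in $\Coh_{\le1}(X)$ form a bounded family by the classical boundedness of semistable sheaves, and extensions of constructible families with $\Ext^{1}$ of bounded dimension remain constructible, $\mM^{v}(Z)$ is constructible. I expect the main obstacle to be making this last boundedness precise: pinning down a uniform bound on the $\oO_X$-multiplicity of a $\sigma$-semistable object in terms of its class, and checking that ``extensions of constructible families are constructible'' goes through on Lieblich's moduli stack of complexes rather than for ordinary sheaves — both routine in spirit, but needing careful bookkeeping with the support property, which in the weak setting controls only the graded pieces $\Gamma_{i}/\Gamma_{i-1}$.
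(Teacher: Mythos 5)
Your overall route is the same as the paper's: reduce to a standard region $\uU_0$ or $\uU_{\pm1}$ using the $\Phi_{\oO_X}$-, shift- and $\mathbb{C}$-actions, prove (ii) by exhibiting the semistable objects of fixed class as a bounded family, and deduce (i) from (ii) by stratifying by Harder--Narasimhan type and using that the filtration/extension stacks map constructibly to $\oO bj(\cC)$ (the paper's stacks $\fF il^{n}(\bB_{+})$ with the maps $q_n, r_n$). The genuine gap is in the boundedness input for (ii), which in the paper is exactly (the proof of) Lemma~\ref{lem:bound}: a $Z_u$-semistable $E\in\bB_{+}$ of fixed class is built from a filtration whose factors are $F_i[-1]$ with $F_i\in\Coh_{\le 1}(X)$, blocks $\oO_X^{\oplus r_i}$, and a quotient $\oO_X[-1]^{\oplus s}$; the finiteness of the possible $(\ch_2(F_i),\ch_3(F_i))$ and of the HN factors of the $F_i$ comes from the semistability inequality, and the multiplicities $r_i$ are bounded by an induction that controls $\dim\Hom(\oO_X,E_{i-1}[1])$ over a bounded family and uses $E'\in\aA_{+}$. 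Your substitutes for this step would fail. First, the claim that the $\oO_X$-multiplicity is at most $\lvert\ch_0(v)\rvert$ ``because the heart has rank of a fixed sign'' is false for the relevant hearts: in $\bB_{+}$ (and in the tilted hearts $\pP_0((\theta,\theta+1])$) the subcategory $\aA_{+}$ contributes $\ch_0\ge 0$ while $\oO_X[-1]$ contributes $\ch_0=-1$, so rank contributions cancel and the number of $\oO_X$- and $\oO_X[-1]$-factors is not controlled by $\ch_0(v)$; bounding it is precisely the nontrivial content of Lemma~\ref{lem:bound}. Second, a $Z_u$-semistable object of nonzero rank is not in general an extension of a single shifted $(B,\omega)$-semistable sheaf by copies of $\oO_X$: Lemma~\ref{omit} gives only the forward implication (semistable sheaves produce semistable objects), and since $Z_u$ ignores the $\Gamma_0$-component, a semistable object can have several interleaved sheaf factors which are not individually $(B,\omega)$-semistable, so the ``classical boundedness of semistable sheaves'' does not apply directly.

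A secondary error points the same way: $\oO bj^{v}(\aA)$ and $\oO bj^{v}(\bB_{\pm})$ are \emph{not} of finite type for fixed $v$, because one-dimensional sheaves with fixed $(\ch_2,\ch_3)$ and no semistability imposed do not form a bounded family (e.g. $\oO_C(a)\oplus\oO_C(-a-2)$ on a fixed rational curve as $a\to\infty$); Grothendieck-type boundedness needs semistability or a fixed surjection. This is exactly why statement (i) asserts only a countable union of constructible subsets, while constructibility in (ii) is available only after semistability, via the boundedness of Lemma~\ref{lem:bound}. So your reduction of (i) to (ii) and the use of constructible filtration stacks are fine and agree with the paper, but the argument you offer for the key boundedness would not go through; it needs to be replaced by the inductive argument of Lemma~\ref{lem:bound} (or an equivalent uniform bound on the $\oO_X^{\pm}$-multiplicities and on the HN data of the sheaf factors of a semistable object of fixed class).
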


\subsection{Generalized DT invariants}
For a quasi-projective variety $Y$, 
we define
\begin{align*}
\Upsilon(Y) \cneq \sum_{j, k \ge 0} (-1)^k
\dim W_j( H_{c}^{k}(Y, \mathbb{Q}))q^{j/2} \in \mathbb{Q}[q^{1/2}],
\end{align*}
where $W_{\bullet}$ is the weight 
filtration on the compact support cohomology
group $H_{c}^{\ast}(Y, \mathbb{Q})$. 
The assignment $Y \mapsto \Upsilon(Y)$ extends to
the Hall algebra 
$\hH(\cC)$, 
\begin{align*}
\Upsilon \colon \hH(\cC) \to \mathbb{Q}(q^{1/2}), 
\end{align*}
such that we have 
\begin{align*}
\Upsilon ([[Y/G] \to \oO bj(\cC)])=\Upsilon (Y)/ \Upsilon(G), 
\end{align*}
where $Y$ is a quasi-projective variety,
$G$ is a special algebraic group 
acting on $G$ and $[Y/G]$ is the quotient 
stack with respect to the $G$-action.  
(cf.~\cite[Theorem~4.9]{Joy5}.)
Here an algebraic group $G$ is \textit{special} if 
any principle $G$-bundle is Zariski locally trivial.
(cf.~\cite[Definition~2.1]{Joy5}.) 

Recall that for any variety $Y$, there is a canonical 
constructible function by Behrend~\cite{Beh},
\begin{align*}
\nu_{Y} \colon Y \to \mathbb{Z}. 
\end{align*}
The above function satisfies the following 
properties. 
\begin{itemize}
\item 
For $p\in Y$, 
suppose that there is an 
analytic open neighborhood 
$p\in U \subset Y$, 
a holomorphic function 
$f\colon V \to \mathbb{C}$ 
on a complex manifold $V$ 
such that $U\cong \{df=0\}$. 
Then we have 
\begin{align}\label{Miln}
\nu_{Y}(p)=(-1)^{\dim V}(1-\chi(M_f(p))).
\end{align}
Here $M_f(p)$ is a Milnor fiber of $f$ at $p$. 
\item If there is a symmetric perfect obstruction 
theory on $Y$, we have 
\begin{align*}
\int_{[Y]^{\rm{vir}}}1 =
\sum_{m\in \mathbb{Z}}m \cdot \chi(\nu_{Y}^{-1}(m)).
\end{align*}
\end{itemize}
The Behrend's constructible function can be 
generalized to any algebraic stack $\yY$,  
\begin{align*}
\nu_{\yY} \colon \yY \to \mathbb{Z}.
\end{align*}
Namely if $\yY$ is written as a global 
quotient stack $\yY=[Y/G]$, then 
$\nu_{\yY}=(-1)^{\dim G}\nu_{Y}$. 
For a general case, the existence of $\nu_{\yY}$
is proved in~\cite[Proposition~4.4]{JS}. 

Let $\mM$ be the moduli stack of objects $E\in D^b(\Coh(X))$
satisfying (\ref{neg:ex}). 
By the above argument, there is Behrend's
constructible
function $\nu_{\mM}$ on $\mM$. 
The function $\nu_{\mM}$ should be calculated 
by the Euler characteristic of some 
holomorphic function as in (\ref{Miln}). 
In fact the following conjecture, 
which is a derived category version of~\cite[Theorem~5.5]{JS}, 
should be true. 
\begin{conj}\label{conjBG}
For any $[E] \in \mM(\mathbb{C})$, 
let $G$ be a maximal reductive 
subgroup in $\Aut(E)$.
Then there exists a $G$-invariant analytic 
open neighborhood $V$ of $0$ in 
$\Ext^1(E, E)$, 
a $G$-invariant holomorphic function $f\colon V\to \mathbb{C}$
with $f(0)=df|_{0}=0$, and a smooth morphism 
of complex analytic stacks
$\Phi \colon [\{df=0\}/G] \to \mM$
of relative dimension $\dim \Aut(E)- \dim G$. 
\end{conj}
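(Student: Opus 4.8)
The plan is to realise the germ of $\mM$ at $[E]$ as the quotient by $\Aut(E)$ of the critical locus of a holomorphic potential on $\Ext^1(E,E)$, following the strategy of Joyce--Song~\cite{JS} for coherent sheaves together with the homotopy-algebraic input announced by Behrend--Getzler~\cite{BG}. First I would recall that, by Lieblich~\cite{LIE}, $\mM$ is algebraic near $[E]$ and its formal (hence analytic) germ there is controlled by the differential graded Lie algebra $\dR\Hom(E,E)$; the hypothesis $\Ext^{<0}(E,E)=0$ guarantees that this complex has cohomology only in non-negative degrees, with $\Ext^0(E,E)=\mathrm{Lie}\,\Aut(E)$ and $\Ext^1(E,E)$ the space of first-order deformations, so that no genuinely derived directions contribute. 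Transferring the differential graded structure to cohomology yields a minimal $A_{\infty}$-structure $\{m_k\}_{k\ge 2}$ on $V^{\bullet}\cneq\Ext^{\ast}(E,E)$, and the associated Maurer--Cartan moduli recovers the germ of $\mM$ at $[E]$ together with its residual $\Aut(E)$-action.

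Next I would exploit the Calabi--Yau condition $\bigwedge^{3}T_X^{\vee}\cong\oO_X$: combined with Serre duality it furnishes a non-degenerate graded pairing $V^{i}\times V^{3-i}\to\mathbb{C}$ of degree $-3$, and by the cyclic version of the homotopy transfer (minimal model) theorem the transferred $A_{\infty}$-structure may be taken cyclic with respect to it. This produces a formal potential $W\cneq\sum_{k\ge 2}\tfrac{1}{k+1}\langle x,m_k(x,\dots,x)\rangle$ on $\Ext^1(E,E)$, homogeneous of order $\ge 3$ so that $W(0)=dW|_{0}=0$, whose formal critical locus $\{dW=0\}$ (note that $dW$ is $\Ext^{2}(E,E)$-valued through the pairing and records the obstruction map) is canonically identified, after dividing by $\Aut(E)$, with the formal germ of $\mM$ at $[E]$. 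Since $G\subset\Aut(E)$ is reductive it acts linearly and completely reducibly on $V^{\bullet}$ preserving the pairing, so averaging the transfer data over $G$ makes the entire construction $G$-equivariant and $W$ a $G$-invariant formal function. Granting an analytic representative $f$ for $W$ (discussed below), the resulting $G$-equivariant versal family over $\{df=0\}$ defines $\Phi\colon[\{df=0\}/G]\to\mM$; its smoothness and the relative dimension $\dim\Aut(E)-\dim G$ follow since, over the image, $\mM\cong[\{df=0\}/\Aut(E)]$ and the natural map $[\{df=0\}/G]\to[\{df=0\}/\Aut(E)]$ is the fibre bundle with fibre $\Aut(E)/G$, the latter being isomorphic as a variety to the unipotent radical $U$ of $\Aut(E)=G\ltimes U$, of dimension $\dim\Aut(E)-\dim G$.

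The hard part will be the passage from the formal potential $W$ to a genuine holomorphic function $f$ on an analytic open neighbourhood $V\subset\Ext^1(E,E)$ of $0$. One route is gauge-theoretic: represent $E$ by a Dolbeault-type resolution and use the holomorphic Chern--Simons functional $CS(a)=\int_{X}\mathrm{tr}\bigl(\tfrac12\,a\,\bar{\partial}a+\tfrac13\,a^{3}\bigr)\wedge\Omega$, whose critical points are exactly the integrable deformations; a $G$-equivariant Kuranishi reduction to the finite-dimensional slice $\Ext^1(E,E)$ then produces an honest holomorphic $f$ cutting out the local moduli. An alternative is Artin approximation: the formal germ of $\mM$ is of finite presentation over $\mathbb{C}$, so the formal identification with $\{dW=0\}$ may be replaced by an analytic one, producing $f$. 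The genuine obstacle throughout is controlling the convergence of the transferred $A_{\infty}$-operations (equivalently, the analyticity of the Kuranishi model) while keeping everything $G$-equivariant, and verifying that the deformation theory of an arbitrary complex in $D^{b}(\Coh(X))$ with $\Ext^{<0}=0$ behaves exactly as in the sheaf case of~\cite[Theorem~5.5]{JS}; this is precisely the content of the result announced by Behrend--Getzler~\cite{BG}, on which the present statement is predicated.
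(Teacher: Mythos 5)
There is no proof of this statement in the paper to compare against: it is stated as Conjecture~\ref{conjBG} and explicitly \emph{assumed}, the author noting only that the case $E\in\Coh(X)$ is~\cite[Theorem~5.5]{JS} and that the general case has been announced by Behrend--Getzler~\cite{BG}. Your proposal does lay out the strategy everyone expects (cyclic $A_{\infty}$-minimal model of $\dR\Hom(E,E)$, a potential $W$ whose critical locus gives the Maurer--Cartan germ, $G$-equivariance by averaging over the reductive group, holomorphic Chern--Simons/Kuranishi reduction or Artin approximation to pass from formal to analytic), but as written it is not a proof: you yourself defer the decisive step --- convergence of the transferred structure, i.e. the existence of an honest holomorphic $f$ on an analytic neighbourhood inducing a $G$-equivariant versal family, for an arbitrary object of $D^b(\Coh(X))$ rather than a sheaf --- to ``precisely the content of the result announced by Behrend--Getzler''. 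Since that is exactly the statement to be proved, the argument is circular at its crucial point; Artin approximation alone does not rescue it, because identifying the germ of $\mM$ with a formal critical locus does not by itself produce a convergent potential $f$ with $\{df=0\}$ realising that germ.

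There is also a concrete mathematical gap in your smoothness argument. You claim that over the image $\mM\cong[\{df=0\}/\Aut(E)]$ and that $[\{df=0\}/G]\to[\{df=0\}/\Aut(E)]$ is a fibre bundle with fibre $\Aut(E)/G\cong U$. But $\Aut(E)$ is in general non-reductive, and its action does \emph{not} extend to the finite-dimensional Kuranishi slice or to the critical locus $\{df=0\}$; only the maximal reductive subgroup $G$ acts there (this is already the situation in~\cite[Theorem~5.5]{JS}, and it is the whole reason the statement is formulated as a smooth morphism $\Phi\colon[\{df=0\}/G]\to\mM$ of relative dimension $\dim\Aut(E)-\dim G$ rather than as a local quotient presentation by $\Aut(E)$). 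So smoothness and the relative dimension cannot be read off from a purported bundle $[\{df=0\}/G]\to[\{df=0\}/\Aut(E)]$; they must come from versality of the $G$-equivariant analytic family itself, which is again the unproved analytic input. In short, your outline matches the expected route through~\cite{BG}, but it does not close the conjecture, and the paper itself offers no proof either --- it proceeds by assuming the statement.
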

The above conjecture is
proved in~\cite[Theorem~5.5]{JS} if $E\in \Coh(X)$. 
We believe that similar arguments show
the above conjecture for any $[E] \in \mM(\mathbb{C})$, although
several details have to be checked. 
Also Behrend-Getzler~\cite{BG}
have announced a similar result, 
so in what follows we assume that the above 
conjecture is true.

The Behrend 
function on $\mM$ defines the map
$\nu \cdot  \colon \hH(\cC) \to \hH(\cC)$, 
\begin{align*}
\nu \cdot ([\xX \stackrel{f}{\to} \oO bj(\cC)]) 
\cneq \sum_{m \in \mathbb{Z}}m \cdot [\xX \times _{\oO bj(\cC)}
\nu_{\mM}|_{\oO bj(\cC)}^{-1}(m) \to \oO bj(\cC)]. 
\end{align*}
The generalized DT invariant is defined as follows. 
\begin{defi}\emph{\bf{\cite[Definition~5.13]{JS}}}\label{def:DT}
\emph{We define $\DT_{Z}(v)$ as follows, 
\begin{align*}
\DT_{Z}(v)\cneq -\lim_{q^{1/2} \to 1} (q-1)
\Upsilon(\nu \cdot \epsilon^v(Z)) \in \mathbb{Q}. 
\end{align*}}
\end{defi}
The existence of the limit 
is essentially proved in~\cite[Section~6.2]{Joy5}. 
\subsection{Lie algebra homomorphism}
Let $\chi \colon 
\Gamma \times \Gamma \to \mathbb{Z}$ be 
an anti-symmetric bilinear form on $\Gamma$, 
given by 
\begin{align*}
\chi((r, \beta, n), (r', \beta', n'))=rn'-r'n. 
\end{align*}
By the Riemann-Roch theorem and the Serre duality, we have 
\begin{align}\notag
\chi(\cl(E), \cl(F))=&
\dim \Hom(E, F)-\dim \Ext^1(E, F) \\
\label{RRS}
&+\dim \Ext^1(F, E)-\dim \Hom(F, E),
\end{align}
for $E, F \in \dD_X$. 

Let $\mathfrak{g}$ be the $\mathbb{Q}$-vector space 
spanned by symbols $c_v$ for $v\in \Gamma$, 
\begin{align*}
\mathfrak{g}=\bigoplus_{v\in \Gamma} \mathbb{Q}c_v. 
\end{align*}
There is a Lie-algebra structure on $\mathfrak{g}$
with bracket given by 
\begin{align*}
[c_v, c_{v'}]=(-1)^{\chi(v, v')}\chi(v, v') c_{v+v'}. 
\end{align*}

For a weak stability condition 
$\sigma=(Z, \cC) \in \Stab_{\Gamma_{\bullet}}^{\circ}(\dD_X)$,
we can define the Lie algebra of 
\textit{virtual indecomposable objects}, 
\begin{align*}
\mathfrak{H}(\cC) \subset \hH(\cC), 
\end{align*}
in the same way of~\cite[Definition~5.14]{Joy2}. 
The definitions of
virtual indecomposable objects and 
the Lie algebra $\mathfrak{H}(\cC)$
are complicated, and we omit the detail.
The Lie algebra $\mathfrak{H}(\cC)$ is also 
$\Gamma$-graded, 
\begin{align*}
\mathfrak{H}(\cC)=\bigoplus_{v\in \Gamma} \mathfrak{H}^v(\cC), 
\quad \mathfrak{H}^v(\cC)=\hH^{v}(\cC) \cap \mathfrak{H}(\cC).
\end{align*}
For $v\in \Gamma$, the 
element $\delta^v(Z)$ is not necessary 
virtual indecomposable, but 
$\epsilon^v(Z)$ is always virtual indecomposable. 
(cf.~\cite[Theorem~8.7]{Joy3}.) 
Assuming Conjecture~\ref{conjBG}, 
the following result can 
be proved along the same 
way of~\cite[Theorem~5.14]{JS}. 
\begin{thm}\emph{{\bf \cite[Theorem~5.14]{JS}}} \label{thm:Lie}
There is a homomorphism as
 $\Gamma$-graded Lie algebras, 
\begin{align}\label{Psi}
\Psi \colon \mathfrak{H}(\cC) \to \mathfrak{g}, 
\end{align}
which takes $\epsilon^v(Z)$ to $-\DT_{Z}(v) c_v$. 
\end{thm}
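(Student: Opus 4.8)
The plan is to realize $\Psi$ as the composition of three operations already present above: the Behrend-function weighting $\nu\cdot(-)\colon\hH(\cC)\to\hH(\cC)$, the weight-polynomial map $\Upsilon\colon\hH(\cC)\to\mathbb{Q}(q^{1/2})$, and the renormalized specialization $q^{1/2}\to1$. Concretely, for $\rho\in\mathfrak{H}^v(\cC)=\hH^v(\cC)\cap\mathfrak{H}(\cC)$ I would set
\[
\Psi(\rho)\cneq\Bigl(\lim_{q^{1/2}\to1}(q-1)\,\Upsilon(\nu\cdot\rho)\Bigr)c_v .
\]
This is visibly compatible with the $\Gamma$-gradings, since $\nu\cdot(-)$, $\Upsilon$ and the limit all respect the decomposition $\hH(\cC)=\bigoplus_{v}\hH^v(\cC)$; and by Definition~\ref{def:DT} it sends $\epsilon^v(Z)$ to $-\DT_Z(v)c_v$ on the nose, noting that $\epsilon^v(Z)$ is virtual indecomposable and hence lies in $\mathfrak{H}^v(\cC)$. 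So the substance of the theorem is that $\Psi$ is well defined and is a Lie algebra homomorphism.

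For well-definedness I would first use Lemma~\ref{const} to replace each relevant symbol in $\hH^v(\cC)$ by finitely many symbols $[[Y/\GL(m,\mathbb{C})]\to\oO bj(\cC)]$ with $Y$ of finite type, on which $\Upsilon$ and $\nu\cdot(-)$ are defined as in the text; hence $\Upsilon(\nu\cdot\rho)\in\mathbb{Q}(q^{1/2})$. That this rational function has at worst a simple pole at $q=1$ when $\rho$ is virtual indecomposable, so that the rescaled limit exists, is the \emph{no poles} phenomenon of Joyce; this is exactly the input already invoked after Definition~\ref{def:DT} to make $\DT_Z(v)$ exist, applied now to a general virtual indecomposable, and I would cite \cite[Section~6.2]{Joy5}.

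The heart of the argument is the identity $\Psi([\rho_1,\rho_2])=[\Psi(\rho_1),\Psi(\rho_2)]$, which I would obtain by running the proof of \cite[Theorem~5.14]{JS} almost verbatim. Two ingredients enter. First, the Lie bracket on $\mathfrak{H}(\cC)$ is the commutator $\rho_1\ast\rho_2-\rho_2\ast\rho_1$ for the $\ast$-product, built from the stack $\eE x(\cC)$ of short exact sequences; the contributions of extensions over the product of the relevant moduli are governed by projectivized-$\Ext^1$ bundles, so applying $\Upsilon$ produces $\Upsilon$ of such a projective bundle times the product of the two factors. Second, one needs the two Behrend function identities for $\mM$: the multiplicativity
\[
\nu_{\mM}(E_1\oplus E_2)=(-1)^{\chi(\cl(E_1),\cl(E_2))}\,\nu_{\mM}(E_1)\,\nu_{\mM}(E_2),
\]
and the integral identity expressing the difference of the $\nu_{\mM}$-weighted Euler characteristics of $\mathbb{P}(\Ext^1(E_1,E_2))$ and of $\mathbb{P}(\Ext^1(E_2,E_1))$ in terms of $\bigl(\dim\Ext^1(E_1,E_2)-\dim\Ext^1(E_2,E_1)\bigr)\,\nu_{\mM}(E_1\oplus E_2)$. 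Feeding these into $\Upsilon(\nu\cdot(-))$ and passing to the $(q-1)$-rescaled limit, the Behrend identities collapse the commutator to the scalar $(-1)^{\chi(v_1,v_2)}\chi(v_1,v_2)$, where $\chi(\cl(E_1),\cl(E_2))$ is the Calabi--Yau Euler pairing of (\ref{RRS}); this is precisely the structure constant appearing in $[c_{v_1},c_{v_2}]=(-1)^{\chi(v_1,v_2)}\chi(v_1,v_2)c_{v_1+v_2}$. Virtual indecomposability of $\rho_1,\rho_2$ is what ensures that the strata with larger automorphism groups, which would otherwise produce higher-order poles or spurious terms, drop out in the limit; this bookkeeping I would import wholesale from \cite[Section~5]{JS} and \cite{Joy3}.

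The main obstacle is establishing the two Behrend function identities in the present generality, where $E_1,E_2$ are objects of $\dD_X\subset D^b(\Coh(X))$ rather than coherent sheaves. In \cite{JS} they are deduced, for sheaves on a Calabi--Yau $3$-fold, from a local presentation of $\mM$ as the critical locus of a holomorphic function carrying a reductive group action, followed by a Milnor-fiber computation that uses the symmetry of the obstruction theory. Precisely this local structure, for arbitrary $[E]\in\mM(\mathbb{C})$, is the content of Conjecture~\ref{conjBG} (announced by Behrend--Getzler), which we are assuming: it supplies the $G$-invariant $f\colon V\to\mathbb{C}$ on $V\subset\Ext^1(E,E)$ and the smooth morphism $[\{df=0\}/G]\to\mM$, and with this in hand the derivations of both identities in \cite[Section~5]{JS} go through with no essential change, the only adjustment being that all $\Ext$-groups and the pairing $\chi$ are the derived ones, as recorded in (\ref{RRS}). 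Granting this together with the \emph{no poles} input for well-definedness, the theorem follows, and the fact that $\Psi$ is a morphism of $\Gamma$-graded Lie algebras is then automatic from the remark in the first paragraph.
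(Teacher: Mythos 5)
Your proposal is correct and follows exactly the route the paper intends: the paper gives no proof of Theorem~\ref{thm:Lie}, stating only that, assuming Conjecture~\ref{conjBG}, it "can be proved along the same way of" \cite[Theorem~5.14]{JS}, and your sketch is precisely that argument — the $(q-1)$-rescaled limit of $\Upsilon(\nu\cdot(-))$ as the definition of $\Psi$, Joyce's no-poles result for existence of the limit on virtual indecomposables, and the two Behrend function identities deduced from the critical-locus local model supplied by Conjecture~\ref{conjBG} in the derived setting. The sign check against Definition~\ref{def:DT} and the identification of the conjecture as the only genuinely new input are both accurate.
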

Let $V \subset \mathbb{H}$ be a subset 
defined by (\ref{defV}). 
We can similarly define the completions 
of the Lie algebras, 
\begin{align*}
\widehat{\mathfrak{H}}(\cC)_{Z, V}
&\cneq \prod_{v\in \Gamma, \ Z(v) \in V}
\mathfrak{H}^v(\cC), \\
\widehat{\mathfrak{g}}(\cC)_{Z, V}
&\cneq \prod_{v\in \Gamma, \ Z(v) \in V}
\mathfrak{g}^v(\cC),
\end{align*}
and (\ref{Psi}) 
induces the Lie algebra homomorphism, 
\begin{align}\label{Psi2}
\Psi \colon \widehat{\mathfrak{H}}(\cC)_{Z, V}
\to \widehat{\mathfrak{g}}(\cC)_{Z, V}.
\end{align}

\subsection{DT invariants around the conifold point}
For $t\in (k, k+1]$ with $k\in \mathbb{Z}$, let 
\begin{align*}
\gamma(t)=(Z_t, \aA_k)=(Z_t, \pP_t) \in \Stab_{\Gamma_{\bullet}}(\dD_X),
\end{align*}
be the weak stability condition
 defined in (\ref{map:g}).
Here $\aA_k$ is the heart of a bounded t-structure 
given by (\ref{def:Ak}) and $\pP_t$ is the
associated slicing.  
For an element $v \in \Gamma$, 
the associated element, 
$\epsilon^{v}(Z_t) \in \hH(\aA_k)$
defines the invariant, 
\begin{align*}
\DT_{Z_t}(v)
=-\lim_{q^{1/2}\to 1}(q-1)\Upsilon(\nu \cdot \epsilon^{v}(Z_t)),
\end{align*}
as in the same way of Definition~\ref{def:DT}. 
\begin{defi}\label{def:around}
\emph{For data 
\begin{align*}
(r, \beta, n) \in \Gamma, \quad 
t\in \mathbb{R}, \quad \phi \in \mathbb{R},
\end{align*}
we define the invariant
$\DT_t(r, \beta, n, \phi) \in \mathbb{Q}$ as follows.}

\emph{When $0<\phi \le 1$, suppose that the following holds,
\begin{align}\label{suppose}
Z_t(r, -\beta, -n) \in \mathbb{R}_{>0}\exp(i\pi \phi). 
\end{align}
Then we define 
\begin{align*}
\DT_t(r, \beta, n, \phi) 
\cneq \DT_{Z_t}(r, -\beta, -n). 
\end{align*}
If (\ref{suppose}) is not satisfied, 
we set $\DT_t(r, \beta, n, \phi)=0$.} 

\emph{In a general case,
 writing $\phi=m+\phi_0$
with $m\in \mathbb{Z}$ and $0<\phi_0 \le 1$, we define} 
\begin{align}\label{general}
\DT_t(r, \beta, n, \phi)\cneq
\DT_t((-1)^m r, (-1)^m \beta, (-1)^m n, \phi_0). 
\end{align}
\end{defi}
Note that $\DT_t(r, \beta, n, \phi)$
is a counting invariant of objects
$E\in \dD_X$ satisfying 
\begin{align*}
E\in \pP_t(\phi), \quad 
\cl(E)=(r, -\beta, -n). 
\end{align*}
In case of $\beta=n=0$, 
the invariant is already computed. 
\begin{lem}\label{already}
For 
$0<\phi \le 1$ and 
$t\in (k, k+1]$ with $k\in \mathbb{Z}$, 
we have 
\begin{align*}
\DT_t(r, 0, 0, \phi)=\left\{
\begin{array}{ll}
\frac{1}{r^2}, & \emph{\mbox{ if }}
t=\phi+k,  (-1)^k r>0, \\
0, & \emph{\mbox{ otherwise. }}
\end{array}
\right.
\end{align*}
\end{lem}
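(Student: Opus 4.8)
The invariant $\DT_t(r,0,0,\phi)$ counts, via the Joyce–Song formalism, the class $\epsilon^{(r,0,0)}(Z_t)$ of $Z_t$-semistable objects $E \in \pP_t(\phi)$ with $\cl(E)=(r,0,0)$. The plan is to first reduce to the description of such objects using Lemma~\ref{below}: for $E\in\aA_k$ with $\cl(E)\in\Gamma_0$, one has $E\in\langle\oO_X[-k]\rangle_{\ex}$, and in that case $E$ is $Z_t$-semistable exactly of phase $t\pmod 1$ (here I must be careful with the shift conventions in Definition~\ref{def:around} and the formula (\ref{general}), since passing between the slicing of $\aA_k$ and the normalization $0<\phi\le 1$ introduces the sign $(-1)^m$ and hence the sign condition $(-1)^k r>0$). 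Thus for the invariant to be nonzero we need $t=\phi+k$ and $(-1)^k r>0$; otherwise $\pP_t(\phi)$ contains no object of class $(r,0,0)$, so $\epsilon^{(r,0,0)}(Z_t)=0$ and $\DT_t=0$.

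In the nonzero case, the category of $\sigma$-semistable objects of phase $t$ with class $(r,0,0)$ is precisely the abelian category generated by the single simple object $\oO_X[-k]$ (up to sign this is $\oO_X$ with an appropriate shift), which is equivalent to the category of finite-dimensional vector spaces: its objects are $\oO_X[-k]^{\oplus r}$ and the stack $\oO bj^{(r,0,0)}$ restricted to semistables is $[\ast/\GL_r(\mathbb{C})] = B\!\GL_r(\mathbb{C})$. So $\delta^{(r,0,0)}(Z_t) = [B\!\GL_r \to \oO bj]$, and one computes $\epsilon^{(r,0,0)}(Z_t) = \log\delta$ in the Hall algebra of this one-object category. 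This is the standard computation: $\sum_r [B\!\GL_r] x^r$ is the "exponential" and its logarithm is supported on the indecomposable, with $\epsilon^{(r,0,0)}$ having motivic/Euler weight $1/(q^{1/2}-q^{-1/2})^2$-type contribution; applying the Behrend function (which for the smooth stack $B\!\GL_r$ contributes the sign $(-1)^{\dim\GL_r}$, compatible with the symmetric obstruction theory since $\Ext^i_X(\oO_X,\oO_X)$ is symmetric) and then the integration map $\Upsilon$ together with $-\lim_{q^{1/2}\to 1}(q-1)(\cdot)$ yields exactly $\DT_{Z_t}(r,0,0)=1/r^2$. Concretely this is the same computation as the D6–D6 or "empty brane" count in~\cite{JS}, \cite{Joy5}, where $\DT(\oO_X^{\oplus r})=1/r^2$ is well known, and I would simply cite~\cite[Section~6.2]{Joy5} or the analogous computation in~\cite{JS} for the numerical value rather than redo it.

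The key steps, in order: (1) identify via Lemma~\ref{below} that the only semistable objects of numerical class $(r,0,0)$ are direct sums of the shifted spherical object $\oO_X$, and pin down for which $(t,\phi,k,r)$ this class actually lies in $\pP_t(\phi)$ — this gives the case distinction and in particular the sign constraint $(-1)^k r>0$; (2) identify the relevant Hall-algebra subalgebra with that of $\mathrm{Vect}$, so that $\delta^{(r,0,0)}(Z_t)=[B\!\GL_r\to\oO bj(\aA_k)]$; (3) run the standard $\log\delta$ computation for $\epsilon^{(r,0,0)}$ and apply $\Upsilon$ and the Behrend-weighted integration to extract the value $1/r^2$, quoting the existing references for this well-known local model.

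**Main obstacle.** The substantive work is entirely in step~(1): correctly tracking the phase/shift bookkeeping across the three conventions in play — the slicing $\pP_t$ of $\dD_X$, the heart $\aA_k$ and its relation $\aA_k=\Phi_{\oO_X}^{(k')}(\aA)$ or $\Phi_{\oO_X}^{(k')}(\bB_{+1})$, and the normalization to $0<\phi_0\le 1$ in (\ref{general}) — so that the stated constraint $t=\phi+k$ together with $(-1)^k r>0$ comes out exactly right, with the sign appearing precisely because $\Phi_{\oO_X}(\oO_X)=\oO_X[-2]$ shifts by an even amount while the reduction $\phi=m+\phi_0$ flips the sign of $r$ by $(-1)^m$. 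Steps~(2) and~(3), by contrast, are a local computation around a single spherical object that is already in the literature and can be invoked rather than reproved.
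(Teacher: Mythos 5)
Your proposal is correct and follows essentially the same route as the paper: use Lemma~\ref{below} (together with $H^1(X,\oO_X)=0$) to identify the semistable locus of class $(r,0,0)$ with $[\Spec\mathbb{C}/\GL_{r'}(\mathbb{C})]$ for $r'=(-1)^k r$, supported on the single object $\oO_X[-k]^{\oplus r'}$ of phase $t-k$, which forces $t=\phi+k$ and $(-1)^k r>0$; then quote the standard spherical-object computation (\cite[Example~6.2]{JS}) for the value $1/r^2$. The only small imprecision is in your attribution of the sign $(-1)^k r>0$ partly to the reduction $\phi=m+\phi_0$ in (\ref{general}) — for $0<\phi\le 1$ one has $m=0$, and the sign comes entirely from $\ch_0(\oO_X[-k])=(-1)^k$ in the heart $\aA_k$ — but this does not affect the argument.
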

\begin{proof}
As in the previous subsection, 
let $\mM^{(r, 0, 0)}(Z_t)$ be the 
substack of $\oO bj(\aA_k)$, 
which parameterizes $Z_t$-semistable objects
$E\in \aA_k$
with $\cl(E)=(r, 0, 0)$. 
By Lemma~\ref{below}
and the assumption (\ref{assum:van}),  
we have the isomorphism of stacks, 
\begin{align*}
\mM^{(r, 0, 0)}(Z_t) \cong [\Spec \mathbb{C}/\GL_{r'}(\mathbb{C})],
\end{align*}
where $r'=(-1)^k r$. 
The unique $\mathbb{C}$-valued point 
of the RHS corresponds to the object
 $\oO_X[-k]^{\oplus r'} \in \aA_k$, 
and it has phase $t-k$
by Lemma~\ref{below}. 
Hence $\DT_t(r, 0, 0, \phi)$ is non-zero 
only if $t=\phi+k$, and the 
 contribution of the object
$\oO_X[-k]^{\oplus r'}$ 
is computed in the same way of~\cite[Example~6.2]{JS}. 
\end{proof}
We set the following generating series, 
\begin{align*}
\DT_{t}(\phi)\cneq \sum_{(r, \beta, n)\in \Gamma}
\DT_{t}(r, \beta, n, \phi)x^r y^{\beta} z^n. 
\end{align*}
We have the following lemma. 
\begin{lem}\label{sense}
For a fixed $\phi \in \mathbb{R}$ and 
$k\in \mathbb{Z}$, 
the generating series $\DT_t(\phi)$
does not depend on $t\in (\phi+k, \phi+k+1)$. 
\end{lem}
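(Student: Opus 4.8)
The plan is to show, coefficient by coefficient, that the series $\DT_t(\phi)=\sum_{(r,\beta,n)}\DT_t(r,\beta,n,\phi)x^r y^\beta z^n$ is constant for $t$ in the indicated open interval. By the reduction~(\ref{general}) it suffices to treat the case $0<\phi\le 1$; for such $\phi$ the interval $(\phi+k,\phi+k+1)$ meets $\phi+\mathbb{Z}$ only in its two endpoints. First I would discard the summands with $\beta=n=0$: by Lemma~\ref{already} the invariant $\DT_t(r,0,0,\phi)$ can be nonzero only when $t\in\phi+\mathbb{Z}$, which never happens on the open interval, so these terms contribute nothing there.

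For $(\beta,n)\neq 0$ the class $v\cneq(r,-\beta,-n)$ lies in $\Gamma_1\setminus\Gamma_0$, so in the convention of~(\ref{cond}) the quantity ``$Z_t(v)$'' entering condition~(\ref{suppose}) is the value of the component $Z_{t,1}$ on the image of $v$ in $\Gamma_1/\Gamma_0$, namely $-n+i\omega\cdot\beta$. This is independent of $t$, because only the piece $Z_{t,0}(r)=r\exp(i\pi t)$ of $Z_t$ rotates with $t$. Hence whether~(\ref{suppose}) holds is a $t$-independent condition, and it remains to prove that, when it does hold, $\DT_{Z_t}(v)$ does not depend on $t\in(\phi+k,\phi+k+1)$.

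For this I would examine the defining sum~(\ref{sum}) for $\epsilon^v(Z_t)$, a finite sum (Lemma~\ref{finsum}) over decompositions $v=v_1+\cdots+v_m$ with every $Z_t(v_i)$ on the fixed ray $\mathbb{R}_{>0}Z_t(v)$ of phase $\phi$. No $v_i$ can lie in $\Gamma_0$: for $v_i=(r_i,0,0)$ one has $Z_t(v_i)=r_i\exp(i\pi t)$, which sits on the phase-$\phi$ ray only when $t\in\phi+\mathbb{Z}$, excluded on the open interval. So all $v_i\in\Gamma_1\setminus\Gamma_0$; then their charges $Z_{t,1}([v_i])$ are $t$-independent, and the whole index set of~(\ref{sum}) is $t$-independent. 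Moreover each $\delta^{v_i}(Z_t)$ is the moduli stack of $Z_t$-semistable objects of class $v_i$, and since any such object automatically has central charge on the phase-$\phi$ ray it is precisely an object of $\pP_t(\phi)$ of class $v_i$; by Lemma~\ref{fixed} the category $\pP_t(\phi)$, hence this stack, is the same for all $t$ in the interval. As $\pP_t(\phi)$ is abelian and closed under extensions in $\dD_X$, each $\ast$-product $\delta^{v_1}(Z_t)\ast\cdots\ast\delta^{v_m}(Z_t)$ is again supported on $\pP_t(\phi)$, and the extension groups involved are computed in $\dD_X$, hence independent of the ambient heart $\aA_k$. Therefore $\epsilon^v(Z_t)$, and with it $\DT_{Z_t}(v)=-\lim_{q^{1/2}\to 1}(q-1)\Upsilon(\nu\cdot\epsilon^v(Z_t))$, is constant on $(\phi+k,\phi+k+1)$, which finishes the proof.

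The subtle point—and the one I expect to need the most care—is the last one: for $\phi\in(0,1)$ the interval $(\phi+k,\phi+k+1)$ contains the integer $k+1$, where the heart jumps from $\aA_k$ to $\aA_{k+1}$, so a priori $\epsilon^v(Z_t)$ lives in different Hall algebras on the two sides of $k+1$. What saves the argument is that the slicing component $\pP_t(\phi)$ does not jump at $t=k+1$ (this is exactly Lemma~\ref{fixed}), together with the fact that the construction of $\epsilon^v(Z_t)$ and of $\DT_{Z_t}(v)$ only refers to $\pP_t(\phi)$ and the triangulated structure of $\dD_X$, not to the particular heart realising $\pP_t(\phi)$ as an extension-closed subcategory; formalising this along the lines of~\cite{JS} and~\cite{K-S} is the main thing to verify, after which the remaining steps are the elementary bookkeeping above.
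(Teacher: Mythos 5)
Your proof is correct and takes essentially the same route as the paper, whose entire argument is the one-line remark that the statement follows immediately from Lemma~\ref{fixed}. The additional bookkeeping you supply --- discarding the classes in $\Gamma_0$ via Lemma~\ref{already}, noting that condition (\ref{suppose}) is $t$-independent for $(\beta,n)\neq(0,0)$, and checking that $\epsilon^v(Z_t)$ only depends on $\pP_t(\phi)$ even though the ambient heart jumps from $\aA_k$ to $\aA_{k+1}$ at $t=k+1$ --- is precisely the detail the paper leaves implicit, and it is sound.
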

\begin{proof}
The result immediately follows from Lemma~\ref{fixed}. 
\end{proof}
In what follows, we set 
\begin{align*}
\DT^{k}(r, \beta, n, \phi) &\cneq 
\DT_t(r, \beta, n, \phi), \\
\DT^{k}(\phi) &\cneq \DT_{t}(\phi), 
\end{align*}
if $t\in (\phi+k, \phi+k+1)$ with $k\in \mathbb{Z}$. 
The above notation makes sense by Lemma~\ref{sense}. 

\subsection{Wall-crossing formula}
In this subsection, 
we give a proof of 
the following theorem, 
assuming Conjecture~\ref{conjBG}. 
\begin{thm}\label{thm:wall}
For $\phi \in \mathbb{R}$
and $k\in \mathbb{Z}$, the series 
$\DT^{k}(\phi)$ is obtained from $\DT^{k-1}(\phi)$
by the following transformation, 
\emph{\begin{align*}
z^n \mapsto \left\{ \begin{array}{cc}
(1-(-1)^{n}x)^{n}z^n, & \mbox{ if }k \mbox{ is even.} \\
x^n z^n/(1-(-1)^n x)^{n}, & \mbox{ if }k\mbox{ is odd.}
\end{array} \right. 
\end{align*}}
\end{thm}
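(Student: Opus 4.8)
The plan is to apply the wall-crossing formula of Joyce--Song and Kontsevich--Soibelman to the one-parameter family $\gamma$, crossing the single wall located at $t = \phi + k$ between the chambers $(\phi+k-1,\phi+k)$ and $(\phi+k,\phi+k+1)$. The key structural input is Proposition~\ref{prop1}: as $t$ passes through $t_0 = \phi+k$, the only objects entering or leaving $\pP_t(\phi)$ interact with $\pP_{t_\pm}(\phi)$ only through extensions by copies of $\oO_X[-k]$, and by Lemma~\ref{below} these copies of $\oO_X[-k]$ are themselves $Z_t$-semistable of phase exactly $t$. So the wall-crossing is governed entirely by pairs $(E, \oO_X[-k]^{\oplus a})$, and the relevant piece of the Lie algebra $\widehat{\mathfrak{H}}(\aA_k)_{Z_t,V}$ is controlled by the bracket with $c_{\cl(\oO_X[-k])}$.

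First I would set up the Hall-algebra identity on the relevant ray. Working in $\aA_k$, for $t_-<t_0<t_+$ as in (\ref{tsat}), Proposition~\ref{prop1} gives that the generating element $\epsilon$ of $\pP_{t_0}(\phi)$-objects factors through extensions of $\pP_{t_+}(\phi)$-objects by $\langle \oO_X[-k]\rangle_{\ex}$ on one side, and of $\langle \oO_X[-k]\rangle_{\ex}$ by $\pP_{t_-}(\phi)$-objects on the other. This is exactly the two-term situation in which the Joyce--Song wall-crossing formula collapses to a clean conjugation: in the Lie algebra $\widehat{\mathfrak{g}}$, writing $w = \cl(\oO_X[-k])$ and $v = (r,-\beta,-n)$ the class of an object of phase $\phi$, the invariants transform by the automorphism $\exp(\mathrm{ad}_{\xi})$ where $\xi$ is built from the $\DT$-invariants of multiples of $w$. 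By Lemma~\ref{already} applied to $\oO_X[-k]$ (the class $(\pm1,0,0)$ depending on parity of $k$), only $a=1$ multiples of $w$ contribute with weight $1$, i.e.\ $\xi$ is (up to sign and parity bookkeeping) $c_w$ itself.

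Next I would compute the bracket. Using (\ref{RRS}) and the pairing $\chi$, for $v = (r,-\beta,-n)$ and $w=\cl(\oO_X[-k]) = (-1)^k(1,0,0)$ we get $\chi(w,v) = (-1)^k(-n)$, so $[c_w, c_v] = (-1)^{\chi(w,v)}\chi(w,v)\,c_{w+v}$, and iterating, the exponential $\exp(\mathrm{ad}_{\pm c_w})$ acting on $c_v$ produces the binomial series $\sum_{j\ge 0}\binom{n}{j}(\mp(-1)^{\cdots})^j c_{jw+v}$, i.e.\ a factor of $(1\mp(-1)^n x)^{\pm n}$ on $z^n$ at the level of generating series, with the sign/exponent pattern dictated by whether we are adding copies of $\oO_X[-k]$ (sequence (\ref{ex1})) or quotienting by them (sequence (\ref{ex2})), which in turn is dictated by the parity of $k$ through Lemma~\ref{below} (the phase of $\oO_X[-k]$ sits above resp.\ below $\phi$ in the two cases). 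Matching this against $\DT^{k}(\phi)$ versus $\DT^{k-1}(\phi)$, and carefully tracking the shift between $\aA_{k-1}$ and $\aA_k$ (so that the ``same'' physical object is counted with classes differing by the $\Phi_{\oO_X}$-action $\phi_{\oO_X}$, cf.\ Lemma~\ref{com2}, on the odd steps), yields precisely the stated transformation: $z^n \mapsto (1-(-1)^n x)^n z^n$ when $k$ is even and $z^n \mapsto x^n z^n/(1-(-1)^n x)^n$ when $k$ is odd.

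The main obstacle is the rigorous application of the wall-crossing formula in the derived-category setting: one must justify that the Behrend-function weighted Hall-algebra identities of Joyce--Song hold for $\hH(\aA_k)$ (this is where Conjecture~\ref{conjBG} enters), that the relevant sums are finite (Lemma~\ref{finsum}) and the stacks sufficiently well-behaved (Lemma~\ref{const}), and—most delicately—that the wall at $t=\phi+k$ is the \emph{only} wall in the interval, so that $\DT^k(\phi)$ really is obtained from $\DT^{k-1}(\phi)$ by a single crossing. The finiteness/single-wall claim follows from Lemma~\ref{Ost} and the local-finiteness of the union $\coprod \uU_i$ established in the proof of Theorem~\ref{thm:cov}, but assembling these into the precise Lie-algebra conjugation identity, with all signs and the $\aA_{k-1}\leftrightarrow\aA_k$ reindexing correct, is the technical heart of the argument. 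The combinatorial/binomial computation itself, once the framework is in place, is routine.
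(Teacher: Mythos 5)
Your overall strategy---using Proposition~\ref{prop1} to produce a Hall-algebra conjugation identity at the single wall $t=\phi+k$ and then pushing it through the Lie algebra homomorphism of Theorem~\ref{thm:Lie}---is the same as the paper's, and your discussion of the technical prerequisites (Conjecture~\ref{conjBG}, Lemma~\ref{finsum}, Lemma~\ref{const}, single-wall structure) is on target. However, the central computation contains a genuine error. You claim, citing Lemma~\ref{already}, that ``only $a=1$ multiples of $w=\cl(\oO_X[-k])$ contribute with weight $1$,'' so that the conjugating element $\xi$ is essentially $c_w$. Lemma~\ref{already} says the opposite: $\DT_t((-1)^k r,0,0,\phi)=1/r^2$ for \emph{every} $r\ge 1$, so the element that actually conjugates (the image under $\Psi$ of $\epsilon^k=\log\delta^k$, where $\delta^k$ collects all $\oO_X[-k]^{\oplus r}$) is $\mathfrak{E}^k=\sum_{r\ge 1}\frac{1}{r^2}c_{((-1)^k r,0,0)}$, not $c_w$. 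You appear to have substituted the BPS-type invariant ($1$ for $w$, $0$ for its multiples) where the formalism requires the $\DT$ invariants themselves. This is not a bookkeeping issue: since $\chi(w,v+jw)=\chi(w,v)=\mp n$ for all $j$, iterating $\mathrm{ad}_{c_w}$ on $c_v$ gives $(\pm n)^j c_{v+jw}$, so $\exp(\mathrm{ad}_{c_w})$ produces the exponential series $\sum_j\frac{(\pm n)^j}{j!}$ in the variable $x^{\pm 1}$, \emph{not} the binomial series $\sum_j\binom{n}{j}(\cdots)^j$ you assert. The factor $(1-(-1)^n x)^{\pm n}$ only arises because each application of $\mathrm{ad}_{\mathfrak{E}^k}$ contributes a logarithm-type sum $\sum_{r\ge 1}\frac{(-1)^{k+1}n}{r}\{(-1)^n x\}^{(-1)^k r}$ (one factor of $r$ from $\chi(rw,v)$ cancels one from $1/r^2$), and exponentiating this logarithm yields $(1-\{(-1)^n x\}^{(-1)^k})^{(-1)^k n}$, which is exactly Lemma~\ref{ass}. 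As written, your step would produce the wrong transformation, and the stated answer does not follow from it.

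Two further corrections. There is no reindexing between $\aA_{k-1}$ and $\aA_k$, nor any $\Phi_{\oO_X}$-twist of classes, at the wall: for $0<\phi<1$ both comparison points $t_{\pm}=\phi+k\pm\varepsilon$ lie in $(k,k+1)$, so the entire analysis takes place in the single heart $\aA_k$; the parity of $k$ enters only through $\cl(\oO_X[-k])=((-1)^k,0,0)$, i.e.\ through the sign of the rank shift, while which side of the extension $\oO_X[-k]$ sits on is dictated by the sign of $t-t_0$ (the two sequences (\ref{ex1}), (\ref{ex2}) of Proposition~\ref{prop1}), not by the parity of $k$. Finally, the case $\phi\in\mathbb{Z}$ requires a separate argument (the paper rotates by $\tfrac12$ and works in the Hall algebra of $\pP_{t_0}((1/2,3/2])$), and the reduction of general $\phi$ to $0<\phi\le 2$ should be stated; your proposal addresses neither.
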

\begin{proof}
Since $\DT_t(\phi)=\DT_{t}(\phi+2)$, we may 
assume that $0<\phi \le 2$. 
First we discuss
the case of $0<\phi<1$. 
Let us take $k\in \mathbb{Z}$
and set 
 $t_0=\phi+k$. 
We take $0<\varepsilon \ll 1$
so that $(t_0-\varepsilon, t_0+\varepsilon) \subset (k, k+1)$
holds. 
We set $t_{\pm}=t_{0}\pm \varepsilon$, 
and $V \subset \mathbb{H}$ to be  
\begin{align*}
V=\{ r\exp(i\pi \theta) : r>0, \ \theta \in
 [\phi-\varepsilon, \phi+\varepsilon]\}. 
\end{align*}
For each $t\in (k, k+1]$, 
the proof of Lemma~\ref{already} shows 
that  
\begin{align*}
\delta^{((-1)^k r, 0, 0)}(Z_{t})
=[[\Spec \mathbb{C}/ \GL_{r}(\mathbb{C})] \to \oO bj(\aA_k)]
\in \hH(\aA_k),
\end{align*}
which corresponds to the object $\oO_X[-k]^{\oplus r}$. 
The above element of $\hH(\aA_k)$
does not depend on $t\in (k, k+1]$, 
and we denote it by $\delta^{((-1)^k r, 0, 0)}$
for simplicity. 
We define the element $\delta^k$
and $\epsilon^k$ to be 
\begin{align*}
\delta^k &\cneq 
1+\sum_{r\ge 1}\delta^{((-1)^k r, 0, 0)} \in 
\widehat{\hH}(\aA_k)_{Z_{t_0}, V}, \\
\epsilon^k &\cneq \log \delta^k
 \in 
\widehat{\hH}(\aA_k)_{Z_{t_0}, V}.
\end{align*}
The element $\epsilon^k$ is shown to be well-defined
by the same way of Lemma~\ref{finsum}.  
Let us set the ray $l=\mathbb{R}_{>0} \exp(i\pi \phi)$. 
By the same argument of~\cite[Theorem~5.11]{Joy4}, 
the result of Proposition~\ref{prop1}
can be expressed in terms of
a relationship in the completed Hall algebra
$\widehat{\hH}(\aA_k)_{Z_{t_0}, V}$,
\begin{align}\label{deltak}
\delta^k \ast \delta^{l}(Z_{t_{+}})
=\delta^{l}(Z_{t_0})
=\delta^{l}(Z_{t_{-}}) \ast \delta^k.  
\end{align}
Therefore we obtain the formula
in $\widehat{\hH}(\aA_k)_{Z_{t_0}, V}$,
\begin{align*}
\exp(\epsilon^l(Z_{t_{+}}))=
\exp(\epsilon^k)^{-1}\ast 
\exp(\epsilon^l(Z_{t_{-}}))
\ast \exp(\epsilon^k). 
\end{align*}
Here $\delta^l(Z_t)$ is defined 
in (\ref{dray}). 
Now we can apply a 
 version of the Baker-Campbell-Hausdorff 
formula, and 
the RHS coincides with 
\begin{align*}
\exp \left(
\epsilon^{l}(Z_{t_{-}})
+\sum_{m \ge 1}\frac{(-1)^m}{m!}
\mathrm{Ad}_{\epsilon^k}^{m}(\epsilon^{l}(Z_{t_{-}})) \right). 
\end{align*}
Here we have set 
\begin{align*}
\mathrm{Ad}_{\epsilon^k}^{m}(\epsilon^{l}(Z_{t_{-}})) 
=\overbrace{[\epsilon^k [\epsilon^k [
\cdots [\epsilon^{k}}^{m}, \epsilon^{l}(Z_{t-})] \cdots ]]. 
\end{align*}
By taking the logarithms of both sides, 
we obtain the equality
in $\widehat{\hH}(\aA_k)_{Z_{t_0}, V}$,
\begin{align*}
\epsilon^{l}(Z_{t_{+}})=
\epsilon^{l}(Z_{t_{-}})
+\sum_{m \ge 1}\frac{(-1)^m}{m!}
\mathrm{Ad}_{\epsilon^k}^{m}(\epsilon^{l}(Z_{t_{-}})). 
\end{align*}
Let us set 
\begin{align*}
\mathfrak{DT}_{t}(\phi)
&=\sum_{\begin{subarray}{c}
(r, -\beta, -n) \in \Gamma, \\
Z_{t}(r, -\beta, -n) \in l
\end{subarray}}
\DT_{t}(r, \beta, n, \phi) c_{(r, -\beta, -n)}
\in \widehat{\mathfrak{g}}_{Z_{t_0}, V}, \\
\mathfrak{E}^{k} &=\sum_{r\ge 1}\frac{1}{r^2}c_{((-1)^k r, 0, 0)}
\in \widehat{\mathfrak{g}}_{Z_{t_0}, V}.
\end{align*}
Applying the 
Lie algebra homomorphism (\ref{Psi2})
and using Lemma~\ref{already},
we obtain the equality
in $\widehat{\mathfrak{g}}_{Z_{t_0}, V}$, 
\begin{align*}
\mathfrak{DT}_{t_{+}}(\phi)
=\mathfrak{DT}_{t_{-}}(\phi)
+\sum_{m \ge 1}\frac{1}{m!}
\mathrm{Ad}_{\mathfrak{E}^k}^{m}(\mathfrak{DT}_{t_{-}}(\phi)). 
\end{align*} 
By expanding the RHS, we can easily obtain the following, 
\begin{align}\notag
\DT_{t_{+}}(r, \beta, n, \phi)
&=\DT_{t_{-}}(r, \beta, n, \phi) + \\
\label{expand}
&\sum_{\begin{subarray}{c}
m\ge 1, \\
r_0 \in \mathbb{Z}, 
r_1, \cdots, r_m \ge 1, \\
r_{0}+
\sum_{i=1}^{m}(-1)^k r_i =r.
\end{subarray}}
\frac{(-1)^{n(\sum_{i=1}^{m}r_i)+m(k+1)}n^m}{m! \prod_{i=1}^{m}r_i}
\DT_{t_{-}}(r_{0}, \beta, n, \phi). 
\end{align}
For a fixed $(\beta, n) \in H_2 \oplus H_0$, 
we set 
\begin{align*}
\DT_{t}(\beta, n, \phi)=
\sum_{r\in \mathbb{Z}}
\DT_{t}(r, \beta, n, \phi)x^r. 
\end{align*}
Then the equality (\ref{expand}) implies that 
\begin{align*}
&\DT_{t_{+}}(\beta, n, \phi)
= \\
&\qquad \left(
\sum_{m\ge 0}\frac{1}{m!}
\sum_{r_1, \cdots, r_m \ge 1}
\prod_{i=1}^{m}
\frac{(-1)^{k+1}n}{r_i}
\{(-1)^n x \}^{(-1)^k r_i}
  \right) \DT_{t_{-}}(\beta, n, \phi). 
\end{align*}
Then the assertion follows from Lemma~\ref{ass}
below.  

When $\phi=1$, we consider the rotated 
weak stability condition, 
\begin{align*}
\frac{1}{2} \cdot \gamma(t_0)=
(-i Z_{t_0}, \pP_{t_0}((1/2, 3/2])). 
\end{align*}
Then the equality similar to (\ref{deltak})
holds in the Hall algebra of $\pP_{t_0}((1/2, 3/2])$. 
The argument for $0<\phi<1$ is also 
applied in this situation, 
and we obtain the same wall-crossing formula. 

Finally when $1<\phi \le 2$, then the assertion holds
noting that 
\begin{align*}
\DT^{k}(r, \beta, n, \phi)=\DT^{k+1}(-r, -\beta, -n, \phi-1). 
\end{align*}
\end{proof}
We have used the following lemma. 
\begin{lem}\label{ass}
\emph{\begin{align}\notag 
\sum_{m\ge 0}\frac{1}{m!}
\sum_{r_1, \cdots, r_m \ge 1} &
\prod_{i=1}^{m}
\frac{(-1)^{k+1}n}{r_i}
\{(-1)^n x \}^{(-1)^k r_i}
\\ \label{simplify}
&=\left\{ \begin{array}{cc}
(1-(-1)^n x)^n, & \mbox{ if }k \mbox{ is even, } \\
x^n/(1-(-1)^n x)^n, & \mbox{ if }k \mbox{ is odd. }
\end{array}
\right. 
\end{align}}
\end{lem}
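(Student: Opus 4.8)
The plan is to recognise the left-hand side as $\exp$ of a logarithmic series. Write $w = (-1)^n x$, so that the product inside the sum becomes $\prod_{i=1}^m \frac{(-1)^{k+1}n}{r_i} w^{(-1)^k r_i}$. The key point is that the sum over $(r_1,\dots,r_m) \in \mathbb{Z}_{\ge 1}^m$ of this product factors as the $m$-th power of the single series $S \cneq \sum_{r \ge 1} \frac{(-1)^{k+1} n}{r}\, w^{(-1)^k r}$, and hence the whole left-hand side equals $\sum_{m\ge 0} \frac{1}{m!} S^m = \exp(S)$. This manipulation is legitimate in the completed algebra of the preceding subsections (equivalently, as a formal power series in $w$ when $k$ is even and in $w^{-1}$ when $k$ is odd, since the $m$-th summand has lowest order $m$ and the sum therefore converges).

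Next I would evaluate $S$ via the standard expansion $\sum_{r\ge 1} t^r/r = -\log(1-t)$. For $k$ even one has $(-1)^k = 1$ and $(-1)^{k+1} = -1$, so $S = -n\sum_{r\ge 1} w^r/r = n\log(1-w)$ and $\exp(S) = (1-w)^n = (1-(-1)^n x)^n$, which is the asserted value. For $k$ odd one has $(-1)^k = -1$ and $(-1)^{k+1} = 1$, so $S = n\sum_{r\ge 1} w^{-r}/r = -n\log(1-w^{-1})$ and $\exp(S) = (1-w^{-1})^{-n} = w^n/(w-1)^n$.

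It then remains to rewrite $w^n/(w-1)^n$ in terms of $x$. Since $w-1 = -(1-w)$ we have $(w-1)^n = (-1)^n(1-w)^n$, and since $n^2 \equiv n \pmod 2$ we have $w^n = ((-1)^n x)^n = (-1)^{n^2} x^n = (-1)^n x^n$; dividing gives $w^n/(w-1)^n = x^n/(1-w)^n = x^n/(1-(-1)^n x)^n$, which is the asserted value for $k$ odd. The degenerate case $n=0$ is immediate, since then every summand with $m\ge 1$ vanishes and both sides equal $1$.

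I do not anticipate any real obstacle here: the entire content is the factorisation of the iterated sum into a single $m$-th power and the recognition of the $-\log(1-t)$ series. The only point that demands a little care is the formal framework when $k$ is odd, where negative powers of $x$ occur; this is handled by carrying out the computation inside the completed Lie algebra $\widehat{\mathfrak{g}}_{Z_{t_0},V}$ introduced above (or, after multiplying through by $(1-(-1)^n x)^n$, as an identity of formal power series in $x$), which is precisely the setting in which the lemma is used in the proof of Theorem~\ref{thm:wall}.
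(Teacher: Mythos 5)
Your proposal is correct and follows essentially the same route as the paper's own proof: rewrite the left-hand side as $\exp\left(\sum_{r\ge 1}\frac{(-1)^{k+1}n}{r}\{(-1)^n x\}^{(-1)^k r}\right)$ and evaluate via the $-\log(1-t)$ series, obtaining $\bigl(1-\{(-1)^n x\}^{(-1)^k}\bigr)^{(-1)^k n}$, which you then (correctly, with the sign bookkeeping $(-1)^{n^2}=(-1)^n$) rewrite as the stated right-hand side. The only difference is presentational: you treat the parities of $k$ separately and spell out the final algebraic rewriting and the formal-series justification, which the paper leaves implicit.
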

\begin{proof}
We can calculate as follows. 
\begin{align*}
\sum_{m\ge 0}\frac{1}{m!}
\sum_{r_1, \cdots, r_m \ge 1} &
\prod_{i=1}^{m}
\frac{(-1)^{k+1}n}{r_i}
\{(-1)^n x \}^{(-1)^k r_i}
\\
&=
\exp \left(
\sum_{r\ge 1}\frac{(-1)^{k+1}n}{r} \{(-1)^n x\}^{(-1)^k r}
 \right) \\
&= 
\exp \log \left( 1-\{ (-1)^n x \}^{(-1)^k}  \right)^{(-1)^{k}n} \\
&= \left( 1-\{(-1)^n x \}^{(-1)^k} \right)^{(-1)^k n}.
\end{align*}
The last one is written as the RHS of (\ref{simplify}). 
\end{proof}

\begin{rmk}\label{modul}
By Theorem~\ref{thm:wall}, 
the series $\DT^{k+2}(\phi)$
is obtained from $\DT^k(\phi)$ 
by the variable change $z \mapsto xz$. 
On the other hand,  $\DT^{k+2}(\phi)$
and $\DT^k(\phi)$ are
related by the equivalence $\Phi_{\oO_X}$
by our construction of $\gamma$,
and the variable change by $\Phi_{\oO_X}$
is given by $z \mapsto xz$
by Lemma~\ref{com2}.
This indicates that the wall-crossing 
formula does not indicate any 
modularity of our invariants. 
This is unfortunate in some sense, 
since there are situations in which 
the wall-crossing formula indicates 
some modularity of the invariants, 
e.g. the invariants on K3 surfaces~\cite{Tst3}. 
\end{rmk}

\subsection{Euler characteristic version}
We can also investigate
the Euler characteristic 
version of our invariants, 
which are defined in a similar
way to $\DT_t(r, \beta, n, \phi)$
without the Behrend function. 
For $(r, \beta, n) \in \Gamma$
and $t\in \mathbb{R}$, suppose that 
(\ref{suppose}) holds. 
When $0<\phi \le 1$, we define  
\begin{align*}
\Eu_{t}(r, \beta, n, \phi) \cneq 
\lim_{q^{1/2} \to 1}(q-1) 
\Upsilon(\epsilon^{(r, -\beta, -n)}(Z_t))
\in \mathbb{Q},
\end{align*}
and set $\Eu_t(r, \beta, n, \phi)=0$ if (\ref{suppose})
is not satisfied. 
Here recall that we defined  
 $\epsilon^{(r, -\beta, -n)}(Z_t)$ 
as an element of $\hH(\aA_k)$
if $t\in (k, k+1]$ for $k\in \mathbb{Z}$. 
For a general $\phi \in \mathbb{R}$, 
writing $\phi=m+\phi_0$
with $0<\phi_0 \le 1$, we define 
\begin{align*}
\Eu_{t}(r, \beta, n, \phi)
\cneq \Eu_{t}((-1)^m r, (-1)^m \beta, (-1)^m n, \phi_0). 
\end{align*}
The generating series is also defined as well, 
\begin{align*}
\Eu_{t}(\phi) \cneq 
\sum_{(r, \beta, n)\in \Gamma}
\Eu_t(r, \beta, n, \phi)x^r y^{\beta}z^n. 
\end{align*}
The following theorem can be proved 
in a similarly way of Theorem~\ref{thm:wall}, 
using a version of~\cite[Theorem~6.12]{Joy2}
instead of Theorem~\ref{thm:Lie}.
\begin{thm}\label{thm:Eu}
(i) For a given $k\in \mathbb{Z}$, the series 
$\Eu_t(\phi)$ does not depend on a choice of 
$t\in (\phi+k, \phi+k+1)$, In particular, we may write 
it as $\Eu^k(\phi)$. 

(ii) The series $\Eu^k(\phi)$ is obtained from 
$\Eu^{k-1}(\phi)$ by the following transformation, 
\emph{\begin{align*}
z \mapsto \left\{ \begin{array}{cc}
(1+x)z, & \mbox{ if }k \mbox{ is even, } \\
xz/(1+x), & \mbox{ if }k\mbox{ is odd. }
\end{array} \right. 
\end{align*}}
\end{thm}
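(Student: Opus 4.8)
The plan is to follow the proof of Theorem~\ref{thm:wall} almost verbatim, the only substantive change being that the Behrend-weighted Lie algebra homomorphism of Theorem~\ref{thm:Lie} is replaced by its naive Euler-characteristic counterpart, a version of~\cite[Theorem~6.12]{Joy2}. Part (i) is immediate: by Lemma~\ref{fixed} the slicing $\pP_t$, and hence the stack $\mM^v(Z_t)\subset\oO bj(\aA_k)$ and the element $\epsilon^v(Z_t)\in\hH(\aA_k)$, is independent of $t\in(\phi+k,\phi+k+1)$; applying $\Upsilon$ and taking $q^{1/2}\to1$ then shows $\Eu_t(\phi)$ is constant there, exactly as Lemma~\ref{sense} followed from Lemma~\ref{fixed}.

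For part (ii) I would first reduce to $0<\phi\le2$ by the periodicity $\Eu_t(\phi)=\Eu_t(\phi+2)$, then to $0<\phi<1$, treating $\phi=1$ by rotating with $\tfrac12\cdot\gamma(t_0)$ and $1<\phi\le2$ via $\Eu^k(r,\beta,n,\phi)=\Eu^{k+1}(-r,-\beta,-n,\phi-1)$, just as in the proof of Theorem~\ref{thm:wall}. With $t_0=\phi+k$, $0<\varepsilon\ll1$ so that $(t_0-\varepsilon,t_0+\varepsilon)\subset(k,k+1)$, $t_\pm=t_0\pm\varepsilon$, ray $l=\mathbb{R}_{>0}\exp(i\pi\phi)$ and sector $V\subset\mathbb{H}$ over $[\phi-\varepsilon,\phi+\varepsilon]$, I note that Proposition~\ref{prop1} concerns slicings only and is insensitive to the Behrend function, so the Hall algebra identity (\ref{deltak}) in $\widehat{\hH}(\aA_k)_{Z_{t_0},V}$ still holds with $\delta^k=1+\sum_{r\ge1}\delta^{((-1)^k r,0,0)}$ corresponding to $\oO_X[-k]^{\oplus r}$; taking logarithms and using Baker--Campbell--Hausdorff yields
\begin{align*}
\epsilon^{l}(Z_{t_{+}})=\epsilon^{l}(Z_{t_{-}})+\sum_{m\ge1}\frac{(-1)^m}{m!}\mathrm{Ad}_{\epsilon^k}^{m}(\epsilon^{l}(Z_{t_{-}}))
\end{align*}
exactly as before.

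The modification is in the final step. In place of Theorem~\ref{thm:Lie} I would apply the Euler-characteristic integration map, a Lie algebra homomorphism whose target carries the \emph{untwisted} bracket $[c_v,c_{v'}]=\chi(v,v')c_{v+v'}$ (no factor $(-1)^{\chi(v,v')}$) and which sends $\epsilon^v(Z_t)$ to an $\Eu_t$-coefficient times $c_v$. I also need the Euler analogue of Lemma~\ref{already}: the relevant element $\mathfrak{E}^k$ is again supported on the classes of $\oO_X[-k]^{\oplus r}$, but now its coefficients differ from the $1/r^2$ of Lemma~\ref{already} by the Behrend-function sign $(-1)^{\dim\GL_r}=(-1)^r$ on $[\Spec\mathbb{C}/\GL_r(\mathbb{C})]$, so $\mathfrak{E}^k$ carries $r$-dependent signs. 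Substituting these into the displayed identity and expanding as in the derivation of (\ref{expand}), the signs $(-1)^{r_i n}$ that came from the bracket twist disappear, while the new signs from $\mathfrak{E}^k$ (together with the normalization of~\cite[Theorem~6.12]{Joy2}) enter; the net effect is that every $\{(-1)^n x\}$ of Lemma~\ref{ass} is replaced by $x$ with the opposite inner sign, turning $(1-(-1)^n x)$ into $(1+x)$, and the elementary summation of Lemma~\ref{ass} then gives $z\mapsto(1+x)z$ for $k$ even and $z\mapsto xz/(1+x)$ for $k$ odd.

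The step requiring the most care is precisely this sign bookkeeping --- simultaneously tracking the Behrend-function sign on $[\Spec\mathbb{C}/\GL_r(\mathbb{C})]$, the disappearance of the twist $(-1)^{\chi}$ in the Lie bracket, and the sign normalization in~\cite[Theorem~6.12]{Joy2} --- so that the linear factor comes out as $(1+x)^{\pm1}$ rather than $(1-x)^{\pm1}$. Once these signs are pinned down, the remainder of the argument is a formal copy of the proof of Theorem~\ref{thm:wall}; in particular no new geometric input is needed, since the Hall-algebra side and Proposition~\ref{prop1} are exactly as there.
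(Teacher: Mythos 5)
Your proposal is essentially the paper's own proof: the paper simply runs the argument of Theorem~\ref{thm:wall} again, replacing the Behrend-weighted Lie algebra homomorphism of Theorem~\ref{thm:Lie} by the Euler-characteristic integration map of~\cite[Theorem~6.12]{Joy2} with its untwisted bracket, exactly as you outline, and part (i) follows from Lemma~\ref{fixed} as in Lemma~\ref{sense}. The only sign to pin down more carefully than you state is the Euler analogue of Lemma~\ref{already}: since $\Eu_t$ is defined \emph{without} the overall minus sign of Definition~\ref{def:DT} while the Behrend function is constant equal to $(-1)^r$ on the stratum of $\oO_X[-k]^{\oplus r}$, the coefficient of $c_{((-1)^kr,0,0)}$ comes out as $(-1)^{r-1}/r^2$ rather than $(-1)^r/r^2$; with that normalization (and keeping the $(-1)^m$ from the Baker-Campbell-Hausdorff expansion, which no longer cancels) the summation of Lemma~\ref{ass} indeed produces $(1+x)^{\pm n}$ as claimed.
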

\begin{rmk}
Since the Behrend function is not involved
in the definition of $\Eu_t(r, \beta, n, \phi)$, 
we do not 
rely on Conjecture~\ref{conjBG}
to show Theorem~\ref{thm:Eu}. 
\end{rmk}

\section{Examples}\label{sec:Ex}
In this section, we explicitly compute 
the generating 
series $\DT^{k}(\phi)$ in 
some concrete examples. 
We also classify semistable objects in 
$\dD_X$ w.r.t. our weak stability 
conditions in these examples. 
\subsection{D0-D6 states}\label{subsec:D0D6}
In this subsection, we investigate the 
family of generating series, 
\begin{align}\label{family}
\DT^{k}(\phi), \quad k\in \mathbb{Z}, \quad \phi \in \mathbb{Z}. 
\end{align}
Note that 
a series in (\ref{family})
does not contain 
the variable $y$, since we have
\begin{align*}
\DT_t(r, \beta, n, 1)=0, \quad \mbox{ if }\beta\neq 0,
\end{align*}
which follows from 
$\Imm Z_t(r, -\beta, -n) \neq 0$ 
if $\beta \neq 0$. 

By Theorem~\ref{thm:wall} and the relation (\ref{general}),
it is enough to study the 
series $\DT^{-1}(1)$
to know all of the series (\ref{family}).
By definition, $\DT^{-1}(1)=\DT_t(1)$
for $0<t<1$, and 
$\DT_t(1)$ is a generating series of invariants
counting objects $E\in \pP_t(1)$.  
Such objects can be described in the following way.  
\begin{lem}\label{incase}
For $0<t<1$, we have 
\begin{align*}
\pP_t(1)=\Coh_{0}(X)[-1]. 
\end{align*}
\end{lem}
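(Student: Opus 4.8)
The plan is to show the two inclusions $\pP_t(1) \supseteq \Coh_0(X)[-1]$ and $\pP_t(1) \subseteq \Coh_0(X)[-1]$ separately, using the explicit description of the heart $\aA_0 = \aA = \langle \oO_X, \Coh_{\le 1}(X)[-1]\rangle_{\ex}$ and the central charge $Z_t = Z_{(\exp(i\pi t), i\omega)}$. Recall that for $0<t<1$ we have $\gamma(t) \in \uU$, so $\pP_t((0,1]) = \aA$ and $Z_{t,0}(r) = r\exp(i\pi t)$ lies strictly in the open upper half plane, while $Z_{t,1}(\beta,n) = n - i\,\omega\cdot\beta$ for $(\beta,n) \in H_2\oplus H_0$ coming from a nonzero object $F[-1]$ with $F \in \Coh_{\le 1}(X)$. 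The key numerical observation is that an object $E \in \aA$ has $Z_t(E) \in \mathbb{R}_{>0}\exp(i\pi\cdot 1) = \mathbb{R}_{<0}$ (i.e.\ phase exactly $1$) precisely when $\ch_0(E) = 0$ (no $\oO_X$-contribution, since $\exp(i\pi t)$ has nonzero imaginary part for $0<t<1$) and the curve class $\beta$ of $\ch_2(E)$ vanishes (since $\omega$ is ample, $\omega\cdot\beta \ne 0$ for any nonzero effective $\beta$, which would force nonzero imaginary part), leaving $E = F[-1]$ with $F \in \Coh_0(X)$ and $n = \length(F) > 0$ so that $Z_t(E) = n \in \mathbb{R}_{<0}$... — more precisely, $Z_{t,1}(0,n) = n$, and one checks the sign convention so that $F[-1]$ has $\cl(F[-1]) = (0,0,-n)$, giving $Z_t = -n \in \mathbb{R}_{<0}$ as required.

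First I would establish the easy inclusion: any $F \in \Coh_0(X)$ gives $F[-1] \in \Coh_{\le 1}(X)[-1] \subset \aA$ with $Z_t(F[-1]) \in \mathbb{R}_{<0}$, so $F[-1]$ has phase $1$; moreover $F[-1]$ is $Z_t$-semistable because \emph{every} nonzero subobject and quotient in $\aA$ of an object supported in dimension $0$ is again of the form $F'[-1]$ with $F' \in \Coh_0(X)$ (here one uses that $\oO_X$ cannot be a subobject of $F[-1]$ in $\aA$, as $\Hom(\oO_X, F[-1]) = 0$, and that subobjects of $\Coh_0(X)[-1]$ in $\aA$ stay in $\Coh_0(X)[-1]$ — this follows from the torsion-pair structure $\aA = \langle \oO_X$-stuff, $\Coh_{\le1}(X)[-1]\rangle$ combined with purity of $0$-dimensional support). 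Since all such sub/quotients again have phase exactly $1$, the semistability inequality $\arg Z(F') \le \arg Z(G')$ holds trivially (equality). This gives $\Coh_0(X)[-1] \subseteq \pP_t(1)$.

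Conversely, take $E \in \pP_t(1)$, so $E \in \aA$ is $Z_t$-semistable with $Z_t(E) \in \mathbb{R}_{<0}$. By the numerical argument above, $\ch_0(E) = 0$ and the $H_2$-part of $\ch_2(E)$ vanishes, so $E \cong F[-1]$ for some $F \in \Coh_{\le 1}(X)$ whose support is $0$-dimensional — i.e.\ $F \in \Coh_0(X)$ — using the structure of $\aA$ (an object of $\aA$ with $\ch_0 = 0$ lies in $\Coh_{\le 1}(X)[-1]$, because the extension-closed category $\langle \oO_X, \Coh_{\le1}(X)[-1]\rangle_{\ex}$ has $\Coh_{\le1}(X)[-1]$ as the subcategory of objects with zero $\oO_X$-multiplicity; the $\oO_X$-multiplicity is detected by $\ch_0$). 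Then $\ch_2(F) = 0$ forces $\dim\Supp(F) \le 0$. Hence $E \in \Coh_0(X)[-1]$, proving the reverse inclusion.

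\medskip

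I expect the main obstacle to be the careful bookkeeping in the converse direction: verifying that an arbitrary $E \in \aA$ with phase exactly $1$ (as opposed to merely having $\ch_0(E) = 0$) must actually be an object \emph{shifted from} $\Coh_0(X)$, and in particular ruling out objects whose $\ch_2$ has nonzero curve class but which are dragged to phase $1$ — this is where ampleness of $\omega$ (so $\omega\cdot\beta > 0$ for effective nonzero $\beta$) is essential, together with the fact, established in Lemma~\ref{types}(i) and the discussion around \eqref{pairA}, that for $E \in \aA$ the class $\beta$ appearing in $\cl(E) = (r,-\beta,-n)$ is effective (or $\beta = 0, n \ge 0$). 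One must also be slightly careful that $Z_t$-semistability is genuinely needed only to pin down that $E$ has a \emph{pure} description; in fact the identification $\pP_t(1) = \Coh_0(X)[-1]$ already follows at the level of the numerical constraint plus the structure of $\aA$, with semistability automatic. A clean way to organize this is to first prove the abelian subcategory $\pP_t((0,1]) \cap \{\text{phase } 1\}$ equals $\{E \in \aA : Z_t(E) \in \mathbb{R}_{<0}\}$ and then identify the latter with $\Coh_0(X)[-1]$ via the torsion-pair / Chern-character argument above.
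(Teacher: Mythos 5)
There is a genuine gap in the converse inclusion $\pP_t(1)\subseteq\Coh_0(X)[-1]$, and it stems from a misreading of how the central charge of a \emph{weak} stability condition is defined. You assert that the condition $Z_t(E)\in\mathbb{R}_{<0}$ forces $\ch_0(E)=0$ ``since $\exp(i\pi t)$ has nonzero imaginary part,'' and you later claim that semistability is automatic and the identification follows from the numerical constraint alone. But by Definition~\ref{defi:weak}, for an object $E$ with $\cl(E)\in\Gamma_1\setminus\Gamma_0$ one sets $Z_t(E)=Z_{t,1}([\cl(E)])$, where $[\cl(E)]$ is the class in $\Gamma_1/\Gamma_0$; the component $\ch_0(E)=r$ is simply discarded and contributes nothing to $Z_t(E)$. (This is the whole point of the filtration: the mass of $\oO_X$ is infinitesimally small.) Consequently the numerical constraint only gives $\omega\cdot\beta=0$, hence $\beta=0$ and $n>0$; it says nothing about $r$. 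Concretely, a two-term complex $E=(\oO_X^{\oplus r}\stackrel{s}{\to}F)$ with $r>0$ and $0\neq F\in\Coh_0(X)$ of length $n$ is an object of $\aA$ with $Z_t(E)=-n\in\mathbb{R}_{<0}$, i.e.\ of ``numerical phase $1$,'' yet it does not lie in $\Coh_0(X)[-1]$. Your argument cannot exclude such objects.

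The missing step — which is the heart of the paper's proof — is to use semistability to kill these objects: once you know $\ch_2(E)=0$, the structure of $\aA$ gives an exact sequence $0\to F[-1]\to E\to\oO_X^{\oplus r}\to 0$ in $\aA$ with $F\in\Coh_0(X)$, and if both $r\neq0$ and $F\neq0$ then $\arg Z_t(F[-1])=\pi>\pi t=\arg Z_t(\oO_X^{\oplus r})$ destabilizes $E$; the case $F=0$ is excluded because then $E\in\langle\oO_X\rangle_{\ex}\subset\pP_t(t)$ with $t\neq1$. Your forward inclusion $\Coh_0(X)[-1]\subseteq\pP_t(1)$ (closure of $\Coh_0(X)[-1]$ under subobjects and quotients in $\aA$) is fine and agrees with the paper, but the converse direction as written is not repairable without reinstating the destabilization argument you explicitly declared unnecessary.
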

\begin{proof}
Let us take an object $E\in \pP_t(1) \subset \aA$. 
Since $\ch_2(E)=0$, we have
\begin{align}\notag
E\in  & \aA \cap \langle \oO_X, \Coh_0(X) \rangle_{\tr}, \\
\label{lastca}
& =\langle \oO_X, \Coh_{0}(X)[-1] \rangle_{\ex}. 
\end{align}
By~\cite[Proposition~2.2]{Trk2}, 
any object in the category (\ref{lastca})
is isomorphic to a 
two term complex, 
\begin{align*}
\cdots \to 0 \to \oO_X^{\oplus r} \to F \to 0 \to \cdots,
\end{align*}
where $r\in \mathbb{Z}_{\ge 0}$, $F\in \Coh_{0}(X)$
and $\oO_X^{\oplus r}$ is located in degree zero. 
In particular there is an exact sequence in $\aA$, 
\begin{align*}
0 \to F[-1] \to E \to \oO_X^{\oplus r} \to 0,
\end{align*}
for some $F\in \Coh_0(X)$. 
If $r\neq 0$ and $F\neq 0$, then we have 
\begin{align*}
\pi=\arg Z_t(F[-1]) >\arg Z_t(\oO_X^{\oplus r}) =\pi t, 
\end{align*}
which contradicts to the $Z_t$-semistability of $E$. 
Therefore we have $r=0$ or $F=0$. 
If $F=0$, then 
$E\in \langle \oO_X \rangle_{\ex}
\subset \pP_t(t)$
by Lemma~\ref{below}, which 
contradicts to $E\in \pP_t(1)$. 
Hence $r=0$ and $E\in \Coh_0(X)[-1]$
follows.  

Conversely if $E\in \Coh_0(X)[-1]$, the $Z_t$-semistability 
of $E$ follows from the fact that $\Coh_0(X)[-1] \subset \aA$
is closed under subobjects and quotients. 
\end{proof}
The above lemma and the computations in~\cite[Paragraph~6.3]{JS}, 
\cite[Paragraph~6.5]{K-S}, \cite[Remark~8.13]{Tcurve1}
show that 
\begin{align*}
\DT^{-1}(0, 0, n, 1)=
-\chi(X) \sum_{m\ge 1, m|n} \frac{1}{m^2}, 
\end{align*}
and $\DT^{-1}(r, \beta, n, 1)=0$ if $(r, \beta)\neq (0, 0)$. 
Therefore we have 
\begin{align*}
\DT^{-1}(1)= -\chi(X)
\sum_{\begin{subarray}{c}
n\ge 1, m\ge 1, \\
m|n \end{subarray}}
\frac{1}{m^2} z^n. 
\end{align*}
Applying Theorem~\ref{thm:wall}, 
we obtain the following. 
\begin{thm}\label{thm:DT}
For $k\in \mathbb{Z}$, we have 
\begin{align*}
\DT^{2k-1}(1) &=
-\chi(X)
\sum_{\begin{subarray}{c}
n\ge 1, m\ge 1, \\
m|n \end{subarray}}
\frac{1}{m^2} x^{kn} z^n, \\
 \DT^{2k}(1) &=
-\chi(X)
\sum_{\begin{subarray}{c}
n\ge 1, m\ge 1, \\
m|n \end{subarray}}
\frac{1}{m^2} (1-(-1)^n x)^n x^{kn} z^n.
\end{align*}
\end{thm}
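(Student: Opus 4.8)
The plan is to derive Theorem~\ref{thm:DT} from the base case
\begin{align*}
\DT^{-1}(1)=-\chi(X)\sum_{\begin{subarray}{c} n\ge 1,\ m\ge 1,\\ m|n\end{subarray}}\frac{1}{m^2}z^n,
\end{align*}
established just above (via Lemma~\ref{incase}, which gives $\pP_t(1)=\Coh_0(X)[-1]$ for $0<t<1$, together with the D0-brane computations recalled in the text) by a straightforward induction on $k$ using the wall-crossing formula of Theorem~\ref{thm:wall}. Since $\phi=1$, the series $\DT^k(1)$ carries no variable $y$ (because $\Imm Z_t(r,-\beta,-n)\neq 0$ whenever $\beta\neq 0$), so each $\DT^k(1)$ is a series in $x$ and $z$ whose coefficient of $z^n$ is at every stage a Laurent polynomial in $x$ of the shape $-\chi(X)\big(\sum_{m|n}m^{-2}\big)(1-(-1)^n x)^{\varepsilon n}x^{an}$ for suitable $\varepsilon\in\{0,1\}$ and $a\in\mathbb{Z}$. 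In particular the divisions occurring in Theorem~\ref{thm:wall} will only ever cancel a factor $(1-(-1)^n x)^n$ against an identical one, so no completion or power-series inversion of $1-x$ is involved.

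Concretely I would prove, by induction on $j\in\mathbb{Z}$ in both the increasing and the decreasing directions, the pair of identities
\begin{align*}
\DT^{2j-1}(1)&=-\chi(X)\sum_{\begin{subarray}{c} n\ge 1,\ m\ge 1,\\ m|n\end{subarray}}\frac{1}{m^2}\,x^{jn}z^n,\\
\DT^{2j}(1)&=-\chi(X)\sum_{\begin{subarray}{c} n\ge 1,\ m\ge 1,\\ m|n\end{subarray}}\frac{1}{m^2}\,(1-(-1)^n x)^n\,x^{jn}z^n.
\end{align*}
The case $j=0$ of the first identity is exactly the base case. For the transition from $\DT^{2j-1}(1)$ to $\DT^{2j}(1)$ one applies Theorem~\ref{thm:wall} with the even index $2j$, which sends $x^{jn}z^n$ to $(1-(-1)^n x)^n x^{jn}z^n$; for the transition from $\DT^{2j}(1)$ to $\DT^{2j+1}(1)$ one applies it with the odd index $2j+1$, whereupon the factor $(1-(-1)^n x)^n$ cancels against the $(1-(-1)^n x)^{-n}$ in $x^n z^n/(1-(-1)^n x)^n$, leaving $x^{(j+1)n}z^n$, which is the first identity with $j$ replaced by $j+1$. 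The decreasing direction uses the inverse substitutions of Theorem~\ref{thm:wall} in exactly the same fashion, with the same cancellation pattern; both directions are genuinely needed, since by Remark~\ref{modul} the series $\DT^{k+2}(1)$ is obtained from $\DT^k(1)$ by the nontrivial substitution $z\mapsto xz$ rather than being literally equal to it.

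I do not anticipate a genuine obstacle here: the analytic and categorical content has already been packaged into Lemma~\ref{incase}, the base-case D0 computation, and Theorem~\ref{thm:wall}. The only point worth an explicit sentence is the bookkeeping observation above, namely that the substitution rule of Theorem~\ref{thm:wall} is applied monomial by monomial with the sign $(-1)^n$ read off the exponent of $z$, and that on the family of Laurent-polynomial coefficients appearing here this rule and its inverse are well defined; granting this, the two displayed formulas follow by the one-line induction.
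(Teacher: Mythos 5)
Your proposal is correct and follows the same route as the paper: the paper establishes the base case $\DT^{-1}(1)$ via Lemma~\ref{incase} and the known D0-brane computations, and then simply applies the wall-crossing transformation of Theorem~\ref{thm:wall} repeatedly (in both directions of $k$), which is precisely your induction with the cancellation of the $(1-(-1)^n x)^{\pm n}$ factors. Your extra remark that the substitution acts coefficientwise in $z^n$ and is invertible on these Laurent-polynomial coefficients is the only bookkeeping the paper leaves implicit, and it is handled correctly.
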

Let us consider the series 
\begin{align*}
\DT^{0}(1)=-\chi(X)
\sum_{\begin{subarray}{c}
n\ge 1, m\ge 1, \\
m|n \end{subarray}}
\frac{1}{m^2} (1-(-1)^n x)^n z^n.
\end{align*}
This is a generating series of 
invariants counting $E\in \pP_t(1)$
for $1<t<2$. The following lemma
shows that such objects  
are certain two term 
complexes
$(\oO_X^{\oplus r} \stackrel{s}{\to} F)$
 with $F\in \Coh_0(X)$.  
\begin{lem}\label{count}
For $1<t<2$, an object $E\in \dD_X$
is contained in $\pP_t(1)$ if and only 
if $E$ is isomorphic to a 
two term complex, 
\begin{align}\label{term}
\cdots \to 0 \to \oO_X^{\oplus r} \stackrel{s}{\to} F \to 0 \to \cdots,
\end{align}
where $\oO_X^{\oplus r}$ is located in 
degree zero and $F\in \Coh_0(X)$, such that the induced morphism 
\begin{align}\label{Hinj}
H^0(s) \colon \mathbb{C}^{r} \to H^0(X, F),
\end{align}
is injective. 
\end{lem}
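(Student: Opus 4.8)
The plan is to prove both directions by hand, in the spirit of Lemma~\ref{incase}, but working in the tilted heart $\bB_{+}=\aA_1$ rather than in $\aA$. Throughout, recall that for $1<t<2$ the slicing $\pP_t$ has heart $\bB_{+}$, that $Z_t=Z_u$ with $u=(\exp(i\pi t),i\omega)$ where $z\cneq\exp(i\pi t)\notin\mathbb{R}$, and that by condition~(\ref{cond}) any nonzero $A\in\bB_{+}$ with $\cl(A)\in\Gamma_1\setminus\Gamma_0$ satisfies $Z_t(A)=\ch_3(A)-i\omega\cdot\ch_2(A)\in\mathbb{H}$, hence $\omega\cdot\ch_2(A)\le 0$. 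The zero object is trivial on both sides, so assume $E\neq 0$. The first step is to record the structure of objects of $\bB_{+}$: by the tilting construction of Lemma~\ref{types}(ii) every $E\in\bB_{+}$ fits into an exact sequence $0\to T\to E\to\oO_X[-1]^{\oplus a}\to 0$ in $\bB_{+}$ with $T\in\aA_{+}$; and by the $\Coh_{\le 1}$-version of~\cite[Proposition~2.2]{Trk2} together with $\aA_{+}=\{A\in\aA:\Hom(\oO_X,A)=0\}$, one has $T\cong[\oO_X^{\oplus c}\xrightarrow{s'}G]$ (with $\oO_X^{\oplus c}$ in degree $0$), $G\in\Coh_{\le 1}(X)$, and membership $T\in\aA_{+}$ is equivalent to $H^0(s')\colon\mathbb{C}^c\to H^0(X,G)$ being injective. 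In particular $\ch_2(E)=-\ch_2(G)$ is anti-effective; so if $E\in\pP_t(1)$ then $Z_t(E)\in\mathbb{R}_{<0}$ rules out $G$ being of positive dimension (otherwise $\Imm Z_t(E)=-\omega\cdot\ch_2(E)=\omega\cdot\ch_2(G)>0$), forcing $G\in\Coh_{0}(X)$, $\ch_2(E)=0$, and (since $z\notin\mathbb{R}$) $\cl(E)\in\Gamma_1\setminus\Gamma_0$ with $\ch_3(E)=-\ell<0$.

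For the direction ($\Rightarrow$): with $E\in\pP_t(1)$ and the sequence $0\to T\to E\to\oO_X[-1]^{\oplus a}\to 0$ as above, where now $T=[\oO_X^{\oplus c}\xrightarrow{s'}G]$ with $G\in\Coh_0(X)$ and $H^0(s')$ injective, I would show the sequence splits. From the triangle $G[-1]\to T\to\oO_X^{\oplus c}\xrightarrow{s'}G$ and the vanishings $H^1(X,G)=H^1(X,\oO_X)=H^2(X,\oO_X)=0$ (the last by Serre duality) one gets $\Ext^2_X(\oO_X,T)=0$, hence $\Ext^1_{\bB_{+}}(\oO_X[-1],T)\cong\Ext^2_X(\oO_X,T)=0$ and $E\cong T\oplus\oO_X[-1]^{\oplus a}$. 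Finally $a=0$: if $G\neq 0$ then $T$ and $\oO_X[-1]$ have phases $1$ and $t-1<1$, so $T\hookrightarrow E$ with quotient $\oO_X[-1]^{\oplus a}$ violates $Z_t$-semistability; if $G=0$ then $T=\oO_X^{\oplus c}$, and $T\in\aA_{+}$ forces $c=0$, so $E\cong\oO_X[-1]^{\oplus a}$ with $Z_t(E)=-az\notin\mathbb{R}_{<0}$, again impossible. Hence $E\cong[\oO_X^{\oplus r}\xrightarrow{s}F]$ with $F=G\in\Coh_0(X)$ and $H^0(s)$ injective.

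For the direction ($\Leftarrow$): given $E=[\oO_X^{\oplus r}\xrightarrow{s}F]$ with $F\in\Coh_0(X)$ of length $\ell$ and $H^0(s)$ injective, I would first check $E\in\bB_{+}$. Writing $E=\Cone(s)[-1]$, there is a triangle $\oO_X^{\oplus r}[-1]\xrightarrow{s[-1]}F[-1]\to E\to\oO_X^{\oplus r}$ with $\oO_X^{\oplus r}[-1]$ and $F[-1]$ both in $\bB_{+}$ (note $F[-1]\in\Coh_{\le 1}(X)[-1]\subset\aA$ and $\Hom(\oO_X,F[-1])=0$, so $F[-1]\in\aA_{+}$); as $\oO_X[-1]$ is a simple object of $\bB_{+}$ with $\End(\oO_X[-1])=\mathbb{C}$, the object $\oO_X^{\oplus r}[-1]$ is semisimple and the $\bB_{+}$-kernel of $s[-1]$ equals $\oO_X[-1]\otimes\Ker(H^0(s))=0$, so $E$ is the $\bB_{+}$-cokernel of a monomorphism and lies in $\bB_{+}$, with $\cl(E)=(r,0,-\ell)$ and $Z_t(E)=-\ell\in\mathbb{R}_{<0}$ (phase $1$) when $\ell\ge 1$ — while $\ell=0$ forces $r=0$ and $E=0$. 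For $Z_t$-semistability take $0\to E_1\to E\to E_2\to 0$ in $\bB_{+}$ and use Lemma~\ref{below} (a $\bB_{+}$-object of class in $\Gamma_0$ is $\oO_X[-1]^{\oplus m}$) together with $\Hom_{\bB_{+}}(E,\oO_X[-1])=0$ (from the triangle, $\Hom(F,\oO_X)=0$ and $\Hom(\oO_X^{\oplus r},\oO_X[-1])=0$): if $\cl(E_2)\in\Gamma_0$ then $E_2=\oO_X[-1]^{\oplus m}$ is a quotient, so $m=0$; if $\cl(E_1)\in\Gamma_0$ then $\arg Z_t(E_1)=\pi(t-1)<\pi=\arg Z_t(E_2)$; and if both classes lie in $\Gamma_1\setminus\Gamma_0$, then $\omega\cdot\ch_2(E_1)\le 0$, $\omega\cdot\ch_2(E_2)\le 0$ and $\omega\cdot\ch_2(E_1)+\omega\cdot\ch_2(E_2)=0$ force both to vanish, so $Z_t(E_i)=\ch_3(E_i)<0$ and $\arg Z_t(E_1)=\arg Z_t(E_2)=\pi$. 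In every case $\arg Z_t(E_1)\le\arg Z_t(E_2)$, so $E\in\pP_t(1)$.

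I expect the main work to lie in two places. First, the structural input that $\ch_2=0$ objects of $\bB_{+}$ are exactly direct sums of $\oO_X[-1]$'s and two-term complexes $[\oO_X^{\oplus c}\to G]$ with $G$ zero-dimensional, and that the $\aA_{+}$-condition is precisely injectivity of $H^0$; this rests on~\cite[Proposition~2.2]{Trk2} and the noetherianity of $\aA$ (\cite[Lemma~6.2]{Tcurve1}) and must be transported carefully to the tilted setting. Second, the semistability verification, where one has to exclude destabilizing sub- and quotient objects inside the tilt $\bB_{+}$ — less transparent than subsheaves in $\Coh(X)$ — for which the load-bearing facts are the two Hom/Ext vanishings above and the anti-effectivity of $\ch_2$ on $\bB_{+}$. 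I anticipate this last point to be the more delicate one.
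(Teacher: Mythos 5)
Your overall architecture is sound, but the ``structural input'' you lean on in the first step is false as stated. You claim that every $T\in\aA_{+}$ is isomorphic to a two-term complex $[\oO_X^{\oplus c}\stackrel{s'}{\to}G]$ with $G\in\Coh_{\le 1}(X)$, as a ``$\Coh_{\le 1}$-version'' of \cite[Proposition~2.2]{Trk2}. No such version holds: to rewrite an iterated extension of $\oO_X$'s and $F[-1]$'s so that all the $F[-1]$'s sit as subobjects and the $\oO_X$'s as quotients, one needs $\Ext^1_{\aA}(F[-1],\oO_X)=\Ext^2_X(F,\oO_X)\cong H^1(X,F)^{\vee}$ to vanish; this is true for $F\in\Coh_0(X)$ but fails for $1$-dimensional $F$, e.g. $F=\oO_C$ with $C$ of genus $\ge 1$. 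A non-split extension $0\to\oO_X\to E'\to\oO_C[-1]\to 0$ lies in $\aA$ but is not of your form, and the phenomenon persists in $\aA_{+}$: taking $x\notin C$ and $e\in\Hom_{D}(E',\oO_x)$ whose restriction to the sub $\oO_X$ is the evaluation, the object $T=\Cone(e)[-1]\in\aA$ satisfies $\Hom(\oO_X,T)=0$, i.e. $T\in\aA_{+}$, while $\Hom_D(T,\oO_X)\cong\Hom_D(E',\oO_X)=0$, so $T$ (which has rank one) cannot be written as $[\oO_X\to G]$ with $G$ torsion, since such a presentation would give a nonzero map $T\to\oO_X$. Consequently the opening move of your forward direction --- introducing $G\in\Coh_{\le 1}$ and then arguing it must be zero-dimensional --- cannot be run in the order you propose.

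The gap is local and the repair is exactly the paper's ordering: derive the numerical constraint \emph{before} invoking any structure theorem. For nonzero $E\in\pP_t(1)$ one has $\cl(E)\notin\Gamma_0$ and $Z_t(E)\in\mathbb{R}_{<0}$, hence $\omega\cdot\ch_2(E)=0$; since $\ch_2$ is anti-effective on $\aA$ (this needs only that objects of $\aA$ are iterated extensions of $\oO_X$ and $F[-1]$ with $F\in\Coh_{\le 1}(X)$, not your claim), $\ch_2(E)=0$, so the torsion-pair piece $T\in\aA_{+}$ also has $\ch_2(T)=0$ and therefore lies in $\langle\oO_X,\Coh_0(X)[-1]\rangle_{\ex}$; only now does \cite[Proposition~2.2]{Trk2} apply, as in the proof of Lemma~\ref{incase}, giving $T=[\oO_X^{\oplus c}\to G]$ with $G\in\Coh_0(X)$, where your identification of $\aA_{+}$-membership with injectivity of $H^0(s')$ is correct. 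With this reordering the rest of your argument goes through: the splitting via $\Ext^2_X(\oO_X,T)=0$ is correct but unnecessary (the defining sequence already exhibits $\oO_X[-1]^{\oplus a}$ as a quotient of phase $t-1<1$, which is the paper's one-line contradiction), and your converse --- realizing $E$ as the $\bB_{+}$-cokernel of the monomorphism $s[-1]$ and checking semistability via $\Hom_D(E,\oO_X[-1])=0$ together with anti-effectivity of $\ch_2$ on $\bB_{+}$ --- is correct and a clean alternative to the paper's argument with the long exact sequence of $\hH_{\aA}^{\bullet}$.
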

\begin{proof}
Let us take $E\in \pP_{t}(1)$
for $1<t<2$. 
We see that $E$ 
is isomorphic to a two term complex (\ref{term})
such that (\ref{Hinj}) is injective. 
Since $\pP_t(1) \subset \bB_{+}$, there is an exact
sequence in $\bB_{+}$, 
\begin{align*}
0 \to E_1 \to E \to E_2 \to 0,
\end{align*}
such that $E_1 \in \aA_{+}$ and $E_2 \in \langle \oO_X[-1] \rangle_{\ex}$. 
If $E_2 \neq 0$, then $E_1 \neq 0$
and we have  
\begin{align*}
\pi=\arg Z_{t}(E_1)>\arg Z_{t}(E_2)=\pi(t-1),
\end{align*}
which contradicts to the $Z_t$-semistability of $E$. 
Therefore $E_2=0$ and $E\in \aA_{+} \subset \aA$
follows. 
Since $\ch_2(E)=0$,  
the same argument in the proof of Lemma~\ref{incase}
shows that  
the $E$ is isomorphic to a 
two term complex of the form (\ref{term}), 
and we need to see that $H^0(s)$ is injective.  
There is an exact sequence in $\aA$, 
\begin{align}\label{FEO}
0 \to F[-1] \to E \to \oO_X^{\oplus r} \to 0. 
\end{align}
Applying $\Hom(\oO_X, \ast)$, we obtain the exact sequence, 
\begin{align}\label{H0}
0 \to \Hom(\oO_X, E) \to \mathbb{C}^r \stackrel{H^{0}(s)}{\to} H^0(X, F). 
\end{align}
Since $E\in \aA_{+}$, 
we have $\Hom(\oO_X, E)=0$ and 
the morphism $H^0(s)$ is injective. 

Conversely suppose that $E\in \dD_X$ is 
isomorphic to a two term complex
(\ref{term}) such that $H^0(s)$ is injective.
Then $E\in \aA$ and we
 have the same exact sequences (\ref{FEO}), (\ref{H0}). 
Then we have $\Hom(\oO_X, E)=0$ since $H^0(s)$ is injective,
and we have 
$E \in \aA_{+} \subset \bB_{+}$. 
It remains to check that $E$ is $Z_t$-semistable 
in $\bB_{+}$. 
Let us take an exact sequence in $\bB_{+}$, 
\begin{align}\label{E12}
0 \to E_1 \to E \to E_2 \to 0,
\end{align}
with non-zero $E_1, E_2 \in \bB_{+}$. 
Since $\ch_2(E)=0$, we have 
\begin{align}\label{ch20}
\ch_2(E_1)=\ch_2(E_2)=0.
\end{align}
Applying $\hH_{\aA}^{\bullet}$ to (\ref{E12}), 
We have the long exact sequence in $\aA$, 
\begin{align*}
0 \to \hH_{\aA}^{0}(E_1) \to E \to \hH_{\aA}^{0}(E_2) 
\to \hH_{\aA}^1(E_1) \to 0,
\end{align*}
and $\hH_{\aA}^{1}(E_2)=0$. 
Therefore $\hH_{\aA}^{0}(E_2)\neq 0$, and hence 
\begin{align*}
\ch_3(\hH_{\aA}^{0}(E_2))=\ch_3(E_2) \neq 0. 
\end{align*}
This together with (\ref{ch20}) imply that 
\begin{align*}
\arg Z_t(E_1) \le \arg Z_t(E_2) =\pi,
\end{align*}
which shows the $Z_t$-semistability of $E$. 
\end{proof}
Let us consider the rank one generating series. 
The above lemma shows that
the invariant
$\DT^{0}(1, 0, n, 1)$
counts two term complexes, 
\begin{align*}
\cdots \to 0 \to \oO_X \stackrel{s}{\to} F \to 0 \to \cdots, 
\end{align*}
such that $F\in \Coh_0(X)$ is of length $n$
and $s$ is non-zero. 
By Theorem~\ref{thm:DT}, 
the rank one
generating series
satisfies the following curious equality,  
\begin{align}\notag
\sum_{n\ge 0}\DT^{0}(1, 0, n, 1)z^n &=
-\chi(X) \sum_{\begin{subarray}{c}
n\ge 1, m\ge 1, \\
\label{MacM}
m|n \end{subarray}}
\frac{1}{m^2}(-1)^{n-1}nz^n, \\
&= \log 
M(-z)^{\chi(X)}. 
\end{align}
Here $M(z)$ is the MacMahon function, 
\begin{align*}
M(z)=\prod_{n\ge 1}\frac{1}{(1-z^n)^n}. 
\end{align*}
Recall that $M(-z)^{\chi(X)}$ is 
the generating series of 
DT invariants counting ideal sheaves of points. 
(cf.~\cite{Li}, \cite{BBr}, \cite{LP}.)

\subsection{D0-D2-D6 states on a $(-1, -1)$-curve}\label{-1-1}
Let 
\begin{align*}
f \colon X \to Y,
\end{align*}
be a crepant small resolution of an ordinary 
double point $p \in Y$, 
\begin{align*}
\widehat{\oO}_{Y, p} \cong
\mathbb{C}
\db[ x_1, x_2, x_3, x_4 \db] /(x_1 x_2+x_3 x_4).
\end{align*}
The exceptional locus
 $C \subset X$
satisfies that 
\begin{align*}
C \cong \mathbb{P}^1, \quad N_{C/X} \cong
\oO_{C}(-1) \oplus \oO_C(-1). 
\end{align*}
Let $\Coh_C(X)$ be 
\begin{align*}
\Coh_C(X)\cneq \{ 
E\in \Coh(X) : \Supp(E)\subset C\},
\end{align*}
and we define the triangulated category $\dD_{X/Y}$ to be
\begin{align*}
\dD_{X/Y} \cneq \langle \oO_X, \Coh_C(X) \rangle_{\tr}
\subset \dD_X. 
\end{align*}
Similarly to the case of $\dD_X$, we
 can consider the space of weak 
stability conditions on $\dD_{X/Y}$. 
The required data is as follows. 
We set $\Gamma'$ to be 
\begin{align*}
\Gamma' \cneq H^0(X, \mathbb{Z})\oplus 
\mathbb{Z}[C] \oplus H_0(X, \mathbb{Z}), 
\end{align*}
and define $\cl \colon K(\dD_{X/Y}) \to \Gamma'$
to be 
\begin{align*}
\cl(E)=(\ch_0(E), \ch_2(E), \ch_3(E)),
\end{align*}
for $E\in \dD_{X/Y}$. 
Also we choose the filtration $\Gamma_{\bullet}'$ as 
\begin{align*}
\Gamma_{0}'\cneq H^0(X, \mathbb{Z}) \subset 
\Gamma_{1}'\cneq \Gamma'. 
\end{align*}
The associated space of weak stability 
conditions is denoted by 
$\Stab_{\Gamma'_{\bullet}}(\dD_{X/Y})$. 
Similarly to the map (\ref{map:g}), 
we can construct a continuous map,
\begin{align*}
\gamma' \colon 
\mathbb{R} \to \Stab_{\Gamma'_{\bullet}}(\dD_{X/Y}),
\end{align*}
such that if $t\in (k, k+1]$ for $k\in \mathbb{Z}$, we have 
\begin{align*}
\gamma'(t)=(Z_{(\exp(\pi it), i\omega)}', \aA_{k}'). 
\end{align*}
Here 
\begin{align*}
Z_{(\exp(\pi it), i\omega)}' \in 
\prod_{i=0}^{1} \Hom_{\mathbb{Z}}(\Gamma_i'/\Gamma_{i-1}', \mathbb{C}),
\end{align*}
is defined in a similar way to (\ref{Z1}), (\ref{Z2})
for the fixed ample divisor $\omega$ on $X$, and 
$\aA_{k}'$ are hearts of bounded t-structures, satisfying 
the following. 
\begin{itemize}
\item For $k=0$, we have 
\begin{align*}
\aA_{0}'= \langle \oO_X, \Coh_C(X)[-1] \rangle_{\ex}. 
\end{align*}
\item For $k=1$, we have 
\begin{align*}
\aA_{1}' =\langle \aA_{0, +}', \oO_X[-1] \rangle_{\ex}, 
\end{align*}
where $\aA_{0, +}'=\{E\in \aA_{0}' : \Hom(\oO_X, E)=0\}$. 
\item The other $\aA_k'$ are determined by the rule, 
\begin{align*}
\aA_{k+2}'=\Phi_{\oO_X}\aA_k'. 
\end{align*}
\end{itemize}
Let us write 
\begin{align*}
\gamma'(t)=(Z_t', \pP_t'), \quad t\in \mathbb{R}, 
\end{align*}
for a slicing $\pP_t'$ on $\dD_{X/Y}$. 
Similarly to Definition~\ref{def:around}, 
we can define the DT type invariants, 
\begin{align}\label{simDT}
\DT_t(r, m, n, \phi) \in \mathbb{Q}, 
\end{align}
counting objects $E\in \dD_{X/Y}$ 
satisfying 
\begin{align*}
E\in \pP_{t}'(\phi), \quad \cl(E)=(r, -m[C], -n).
\end{align*}
By abuse of notation, 
we define the generating series as well, 
\begin{align*}
\DT_{t}(\phi)=\sum_{(r, m[C], n) \in \Gamma'}
 \DT_t(r, m, n, \phi)x^r y^m z^n. 
\end{align*}
A result similar to Theorem~\ref{thm:wall} also holds
as follows. 
The proof is similar and we omit the proof. 
\begin{thm}\label{odp:thm}
(i) For a given $k\in \mathbb{Z}$, the series
$\DT_{t}(\phi)$ does not depend on a choice 
of $t\in (\phi+k, \phi+k+1)$. 
In particular, we may write it as $\DT^{k}(\phi)$. 

(ii) The series $\DT^{k}(\phi)$ is obtained from $\DT^{k-1}(\phi)$
by the following transformation, 
\emph{\begin{align*}
z^n \mapsto \left\{ \begin{array}{cc}
(1-(-1)^{n}x)^{n}z^n, & \mbox{ if }k \mbox{ is even.} \\
x^n z^n/(1-(-1)^n x)^{n}, & \mbox{ if }k\mbox{ is odd.}
\end{array} \right. 
\end{align*}}
\end{thm}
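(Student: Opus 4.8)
The plan is to follow the proof of Theorem~\ref{thm:wall} essentially verbatim, after checking that its structural ingredients transfer to the smaller category $\dD_{X/Y}$. Three inputs are needed: (a) the analogue of Proposition~\ref{prop1}, describing how a $\pP'_{t_0}(\phi)$-semistable object is assembled across the wall $t=\phi+k$ from a $\langle\oO_X[-k]\rangle_{\ex}$-part and a $\pP'_{t_\pm}(\phi)$-part; (b) the analogue of Lemma~\ref{already}, computing $\DT_t(r,0,0,\phi)$; and (c) the Hall-algebra and Baker--Campbell--Hausdorff computation, which then produces exactly the combinatorial identity of Lemma~\ref{ass}.

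First I would record that the constructions of Section~2 carry over. The twist $\Phi_{\oO_X}$ preserves $\dD_{X/Y}$: by the distinguished triangle (\ref{dist}), $\Phi_{\oO_X}(E)$ lies in the triangulated subcategory generated by $\oO_X$ and $E$, hence in $\langle \oO_X, \Coh_C(X)\rangle_{\tr}$. The object $\oO_X$ is still spherical when viewed in $\dD_{X/Y}$, since the relevant $\Ext$-groups agree with those computed in $D^b(\Coh(X))$. One then builds the hearts $\aA'_k$ just as in Lemma~\ref{types} and Lemma~\ref{lem:cons}, with $\Coh_C(X)$ replacing $\Coh_{\le 1}(X)$: noetherianity of $\Coh_C(X)$ supplies the torsion pairs used for the tiltings, and the support-property and local-finiteness conditions hold because the curve classes in $\Gamma'$ form the sub-semigroup $\mathbb{Z}_{\ge 0}[C]$, so that Theorem~\ref{thm:top} applies to $\Stab_{\Gamma'_\bullet}(\dD_{X/Y})$. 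With $\gamma'$ in hand, the analogues of Lemma~\ref{below}, Lemma~\ref{fixed} and Proposition~\ref{prop1} follow from the same destabilizing-sequence argument: any short exact sequence in $\aA'_k$ that destabilizes $E\in\pP'_{t_0}(\phi)$ must have one outer term of class in $\Gamma'_0$, hence in $\langle\oO_X[-k]\rangle_{\ex}$, and a comparison of phases pins down which side it sits on.

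Next I would set up the invariants (\ref{simDT}) through the same Hall-algebra formalism: the analogues of Lemma~\ref{const} and Lemma~\ref{finsum} for $\dD_{X/Y}$ go through unchanged, so $\epsilon^v(Z'_t)\in\hH(\aA'_k)$ and $\DT_t(r,m,n,\phi)$ are well defined. The analogue of Lemma~\ref{already} is identical: for $t=\phi+k$ and $r'=(-1)^k r>0$, the stack of $Z'_t$-semistable objects of class $(r,0,0)$ is $[\Spec\mathbb{C}/\GL_{r'}(\mathbb{C})]$, parametrizing $\oO_X[-k]^{\oplus r'}$, with contribution $1/r^2$ as in \cite[Example~6.2]{JS}, and vanishes otherwise. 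Then, setting $\delta^k=1+\sum_{r\ge1}\delta^{((-1)^k r,0,0)}$ and $\epsilon^k=\log\delta^k$ in $\widehat{\hH}(\aA'_k)_{Z'_{t_0},V}$, the analogue of Proposition~\ref{prop1} gives $\delta^k\ast\delta^l(Z'_{t_+})=\delta^l(Z'_{t_0})=\delta^l(Z'_{t_-})\ast\delta^k$ (as in \cite[Theorem~5.11]{Joy4}); taking logarithms and applying Baker--Campbell--Hausdorff, then pushing through the Lie algebra homomorphism of Theorem~\ref{thm:Lie} — whose target Lie bracket is built from $\chi((r,\beta,n),(r',\beta',n'))=rn'-r'n$, which restricts to $\Gamma'$ without change — reproduces exactly the expansion appearing in the proof of Theorem~\ref{thm:wall}, and Lemma~\ref{ass} yields the asserted transformation. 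The cases $\phi=1$ and $1<\phi\le2$ reduce to $0<\phi<1$ via the rotation trick and the relation (\ref{general}), as there.

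The hard part will be input (a): verifying that the hearts $\aA'_k$ have the stated description and are genuinely related by $\Phi_{\oO_X}$, i.e. redoing the content of Lemma~\ref{types}, Lemma~\ref{lem:cons}, Lemma~\ref{below}, Lemma~\ref{fixed} and Proposition~\ref{prop1} for $\Coh_C(X)$ in place of $\Coh_{\le1}(X)$, and checking that the Harder--Narasimhan, support-property and local-finiteness conditions really do hold for $\Stab_{\Gamma'_\bullet}(\dD_{X/Y})$. Everything downstream of this — the Hall-algebra identity, Baker--Campbell--Hausdorff, the Lie-algebra homomorphism, and the combinatorics of Lemma~\ref{ass} — is formal and word-for-word identical to the $\dD_X$ case.
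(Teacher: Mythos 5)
Your proposal is correct and coincides with the paper's intended argument: the paper simply states that the proof of Theorem~\ref{odp:thm} is similar to that of Theorem~\ref{thm:wall} and omits it, the point being exactly what you spell out — the hearts $\aA'_k$, the analogues of Lemma~\ref{below}, Lemma~\ref{fixed}, Proposition~\ref{prop1} and Lemma~\ref{already} transfer to $\dD_{X/Y}$ with $\Coh_C(X)$ in place of $\Coh_{\le 1}(X)$, after which the Hall-algebra identity, Baker--Campbell--Hausdorff expansion, the Lie algebra homomorphism of Theorem~\ref{thm:Lie} and Lemma~\ref{ass} apply verbatim. Your additional verifications (preservation of $\dD_{X/Y}$ by $\Phi_{\oO_X}$, sphericity of $\oO_X$, and the technical conditions for $\Stab_{\Gamma'_\bullet}(\dD_{X/Y})$) are exactly the ingredients the paper takes for granted in its setup of $\gamma'$.
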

Let us investigate what kinds of objects 
the invariants $\DT_t(r, m, n, \phi)$ count. 
Recall that there is the heart of a bounded 
t-structure, called the \textit{perverse
t-structure}~\cite{Br1}, \cite{MVB},
\begin{align*}
\mathrm{Per}(X/Y) \subset D^b(\Coh_C(X)). 
\end{align*}
Its generator is given in~\cite[Proposition~3.5.7]{MVB},
\begin{align}\label{def:Per}
\mathrm{Per}(X/Y)=\langle \oO_C(-1)[1], \oO_C \rangle_{\ex}. 
\end{align}
We have the following lemma. 
\begin{lem}\label{forward}
For $1/2 <t \le 3/2$, we have 
\begin{align}\label{Per}
\pP_t'((1/2, 3/2])=\langle \oO_X,  \mathrm{Per}(X/Y)[-1]
\rangle_{\ex}. 
\end{align}
\end{lem}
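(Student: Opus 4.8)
The plan is to realize both sides as hearts of bounded t-structures on $\dD_{X/Y}$ and to prove that the left side contains the right one; since a heart of a bounded t-structure contained in another such heart necessarily coincides with it, this gives the equality. Throughout I would treat the range $1/2<t<1$ in detail; the cases $1\le t\le 3/2$ are handled in the same way, using $\aA_1'$ (and the description $\oO_X=\oO_X[-1][1]$ with $\oO_X[-1]$ of phase $t-1\in(0,1/2]$) in place of $\aA_0'$ when $t>1$, and yield the same category. For $1/2<t<1$ we have $\pP_t'((0,1])=\aA_0'=\langle\oO_X,\Coh_C(X)[-1]\rangle_{\ex}$, and with $\tT=\pP_t'((1/2,1])$, $\fF=\pP_t'((0,1/2])$ (a torsion pair on $\aA_0'$) the category $\pP_t'((1/2,3/2])=\langle\fF[1],\tT\rangle_{\ex}$ is the tilt of $\aA_0'$ at $(\tT,\fF)$.

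For the inclusion $\supseteq$: the analogue of Lemma~\ref{Ost} gives that $\oO_X$ is $\gamma'(t)$-stable of phase $t\in(1/2,1]$; since $\oO_C$ and $\oO_C(-1)$ are stable sheaves on the integral curve $C\cong\mathbb{P}^1$, the analogue of Lemma~\ref{omit}(i) shows that $\oO_C[-1]$ and $\oO_C(-1)[-1]$ in $\aA_0'$ are $Z_t'$-semistable, with $Z_t'(\oO_C[-1])=-1+i(\omega\cdot C)$ of phase in $(1/2,1)$ and $Z_t'(\oO_C(-1)[-1])=i(\omega\cdot C)$ of phase $1/2$. Hence $\oO_X$, $\oO_C[-1]$ and $\oO_C(-1)=\oO_C(-1)[-1][1]$ all lie in $\pP_t'((1/2,3/2])$; as this category is extension-closed and $\mathrm{Per}(X/Y)[-1]=\langle\oO_C(-1),\oO_C[-1]\rangle_{\ex}$ by $(\ref{def:Per})$, we conclude $\langle\oO_X,\mathrm{Per}(X/Y)[-1]\rangle_{\ex}\subseteq\pP_t'((1/2,3/2])$.

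For the reverse inclusion it suffices to show that every $\gamma'(t)$-semistable $M$ of phase $\phi\in(1/2,3/2]$ lies in $\langle\oO_X,\mathrm{Per}(X/Y)[-1]\rangle_{\ex}$, since then $\gamma'(t)$-Harder-Narasimhan filtrations together with extension-closedness handle an arbitrary object of $\pP_t'((1/2,3/2])$. If $\cl(M)\in\Gamma_0'$ then $M\in\langle\oO_X\rangle_{\ex}$ by the analogue of Lemma~\ref{below}. Otherwise, using the torsion pair $(\Coh_C(X)[-1],\langle\oO_X\rangle_{\ex})$ on $\aA_0'$ and the analogue of Lemma~\ref{omit}, one classifies such $M$: for $\phi\in(1,3/2]$ it is a sheaf $F\in\Coh_C(X)$ which is $\omega$-semistable with $\ch_3(F)\le0$, and for $\phi\in(1/2,1]$ it is an iterated extension of $\oO_X$ with objects $F[-1]$ for $F\in\Coh_C(X)$ $\omega$-semistable with $\ch_3(F)>0$ or $F$ $0$-dimensional. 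Since $N_{C/X}=\oO_C(-1)^{2}$, the $\omega$-semistable sheaves supported on $C$ are the $\oO_C(a)^{\oplus m}$ together with the $0$-dimensional ones, so it remains to check that $\oO_C(a)\in\langle\oO_C(-1),\oO_C[-1]\rangle_{\ex}$ for $a\le-1$, that $\oO_C(a)[-1]\in\langle\oO_C(-1),\oO_C[-1]\rangle_{\ex}$ for $a\ge0$, and that $\oO_x[-1]\in\langle\oO_C(-1),\oO_C[-1]\rangle_{\ex}$ for $x\in C$. These follow by rotating the standard short exact sequences $0\to\oO_C(-1)\to\oO_C\to\oO_x\to0$, $0\to\oO_C\to\oO_C(1)\to\oO_x\to0$, and $0\to\oO_C(a)\to\oO_C(-1)^{\oplus2}\to\oO_C(-2-a)\to0$ (for $a\le-2$) into distinguished triangles and inducting on $a$. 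Hence every such $M$ is in $\langle\oO_X,\mathrm{Per}(X/Y)[-1]\rangle_{\ex}$, which gives $\pP_t'((1/2,3/2])\subseteq\langle\oO_X,\mathrm{Per}(X/Y)[-1]\rangle_{\ex}$ and hence the lemma.

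The main obstacle is the classification of the $\gamma'(t)$-semistable objects of phase in $(1/2,3/2]$ used above: deciding precisely which objects of $\aA_0'$ mixing $\oO_X$ with a nontrivial sheaf part can be semistable, and pinning down the $\omega$-semistable sheaves on thickenings of $C$ (which rests on the essentially trivial structure of the formal neighbourhood of $C$ coming from $N_{C/X}=\oO_C(-1)^{2}$). If this proves cumbersome, the alternative I would fall back on is to prove directly that $\langle\oO_X,\mathrm{Per}(X/Y)[-1]\rangle_{\ex}$ is itself the heart of a bounded t-structure on $\dD_{X/Y}$, namely the tilt of $\aA_0'$ at the torsion pair $\bigl(\langle\oO_X,\tT_p[-1]\rangle_{\ex},\ \fF_p[-1]\bigr)$, where $(\tT_p,\fF_p)$ is the torsion pair on $\Coh_C(X)$ with $\fF_p=\{F:\Hom(\oO_C,F)=0\}$ for which $\mathrm{Per}(X/Y)=\langle\fF_p[1],\tT_p\rangle_{\ex}$ (cf.~\cite{Br1},~\cite{MVB}); here one uses that $\aA_0'$ is noetherian to verify that the lifted pair is a torsion pair with torsion-free part exactly $\fF_p[-1]$, and then concludes by the general heart-in-heart fact recalled at the outset.
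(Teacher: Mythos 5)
Your ``fallback'' is in fact the paper's proof, and it is the route you should promote to the main argument: the paper observes that the right hand side of (\ref{Per}) is the heart of a bounded t-structure on $\dD_{X/Y}$ (quoting the tilting argument of \cite[Lemma~3.2 (ii)]{Tcurve2}, essentially the lifted torsion pair you sketch), notes that the left hand side is also a heart (it is the heart of the rotated weak stability condition $\tfrac{1}{2}\cdot\gamma'(t)$), checks that $\oO_X$, $\oO_C(-1)$ and $\oO_C[-1]$ all lie in $\pP_t'((1/2,3/2])$, and concludes from extension-closedness together with the fact that a heart of a bounded t-structure contained in another such heart coincides with it. Your verification of the inclusion $\supseteq$ (via the analogues of Lemma~\ref{Ost} and Lemma~\ref{omit}) is exactly this generator check, so that half of your proposal is fine.

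Your primary route, by contrast, has a genuine gap at the sentence ``one classifies such $M$'': that classification is the hard content, and in the paper it is obtained only \emph{after} Lemma~\ref{forward}, in Lemma~\ref{kinds} and Lemma~\ref{lem:P'}, using the identification (\ref{Per}) and the resulting perverse coherent systems description. Proving it beforehand would require at least: (a) showing that a $Z_t'$-semistable object of phase in $(1/2,1]$ cannot have a sheaf-part HN factor $F$ with $\ch_3(F)\le 0$ --- this needs care because $Z_t'$ vanishes on $\Gamma_0'$-classes, so an object mixing $\oO_X$-pieces with such an $F[-1]$ is not excluded by the naive sub/quotient phase comparison; and (b) the assertion that every $\omega$-semistable sheaf in $\Coh_C(X)$ is $\oO_C(a)^{\oplus m}$ or $0$-dimensional, i.e.\ ruling out stable sheaves scheme-theoretically supported on thickenings of $C$, which is exactly the nontrivial input the paper itself only invokes later, in the proof of Lemma~\ref{kinds}. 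The generation statements you do check ($\oO_x[-1]$, $\oO_C(a)[-1]$ for $a\ge 0$ and $\oO_C(a)$ for $a\le -1$ all lie in $\langle\oO_C(-1),\oO_C[-1]\rangle_{\ex}$) are correct, but they do not substitute for (a) and (b). So: keep the generator check, prove (or cite) the heart property of the right hand side, and discard the classification step, which both duplicates later results and is unproven as written.
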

\begin{proof}
The argument of~\cite[Lemma~3.2 (ii)]{Tcurve2}
shows that the RHS side
is the heart of a 
bounded t-structures on $\dD_{X/Y}$. 
Hence it is enough to check that 
the RHS is contained in the LHS. 
It is straightforward to check that 
\begin{align*}
\{\oO_X, \oO_C(-1), \oO_C[-1] \} \subset \pP_t'((1/2, 3/2]). 
\end{align*}
Since $\oO_X$, $\oO_C(-1)$ and $\oO_C[-1]$
generates the RHS of (\ref{Per}), 
we obtain the result. 
\end{proof}
By the description (\ref{def:Per}), we can easily see that 
\begin{align*}
\Hom(\oO_X, E[i])=0, \quad i>0, 
\end{align*}
for any $E\in \mathrm{Per}(X/Y)$. 
Then the same argument of~\cite[Proposition~2.2]{Trk2}
shows that the RHS of (\ref{Per})
is equivalent to the abelian category of triples, 
\begin{align}\label{Percoh}
(\oO_X^{\oplus r}, F, s), 
\end{align}
where $r\in \mathbb{Z}_{\ge 0}$, 
$F\in \mathrm{Per}(X/Y)$ and 
$s$ is a morphism, $s \colon \oO_X^{\oplus} \to F$.
The set of morphisms 
are given by the commutative diagram, 
\begin{align*}
\xymatrix{
\oO_X^{\oplus r_1} \ar[r]^{s_1} \ar[d]_{g} & F_1 \ar[d]_{h} \\
\oO_X^{\oplus r_2} \ar[r]^{s_2} & F_2, 
}
\end{align*}
and the equivalence is given by 
sending the triple (\ref{Percoh})
to the total complex of the double complex
$(\oO_X^{\oplus r} \stackrel{s}{\to} F)$.
The above category of triples (\ref{Percoh})
is nothing but the category of \textit{perverse 
coherent systems} considered in~\cite{NN}. 
Noting this, we have the following lemma. 
\begin{lem}\label{kinds}
For $1/2< t<\phi \le 3/2$
with $\phi \neq 1$,  
we have $\pP_t'(\phi) \neq \{0\}$
if and only
there is $a\in \mathbb{Z}$
satisfying
\begin{align}\label{Note}
-a +(\omega \cdot C) \sqrt{-1} \in \mathbb{R} \exp(i\pi \phi). 
\end{align}
In this case, we have 
\emph{\begin{align}\label{inthis}
\pP_t'(\phi)=\left\{ 
\begin{array}{cc}
\langle \oO_C(a-1)[-1] \rangle_{\ex}, & \mbox{ if }1/2 <\phi<1, \\
\langle \oO_C(a-1) \rangle_{\ex}, & \mbox{ if }1<\phi<3/2.
\end{array}
\right. 
\end{align}}
\end{lem}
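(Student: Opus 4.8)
The plan is to reduce the statement to a classification of semistable objects on the local $(-1,-1)$-curve, carried out inside the abelian category $\mathrm{Per}(X/Y)$.

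First I would use Lemma~\ref{forward}: since $1/2<t<\phi\le 3/2$ and $\phi\neq 1$, every object of $\pP_t'(\phi)$ lies in the heart $\cC\cneq\pP_t'((1/2,3/2])=\langle\oO_X,\mathrm{Per}(X/Y)[-1]\rangle_{\ex}$, and I recall from the discussion preceding the lemma that $\cC$ is the abelian category of triples $(\oO_X^{\oplus r},F,s)$ with $F\in\mathrm{Per}(X/Y)$; equivalently $\cC$ carries the torsion pair $(\mathrm{Per}(X/Y)[-1],\langle\oO_X\rangle_{\ex})$ — the vanishing $\Hom(F[-1],\oO_X)=\Ext^1_X(F,\oO_X)=0$ for $F\in\mathrm{Per}(X/Y)$ being checked on the generators $\oO_C(-1)[1],\oO_C$ via Serre duality and $H^{\ast}(C,\oO_C(-1))=0$ — so every $E\in\cC$ sits in a short exact sequence $0\to F[-1]\to E\to\oO_X^{\oplus r}\to 0$ with $r=\ch_0(E)\ge 0$. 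Next I would note, as in Lemma~\ref{Ost}, that $\oO_X\in\cC$ is $Z_t'$-stable of phase exactly $t$ (from $Z_t'(\oO_X)=\exp(i\pi t)$ and the construction of $\gamma'$); hence if $E\in\pP_t'(\phi)$ had $r>0$ its quotient $\oO_X^{\oplus r}$ would have phase $t<\phi$, contradicting semistability. So $r=0$ and $E=F[-1]$ with $0\neq F\in\mathrm{Per}(X/Y)$. Since $\ch_0\ge 0$ on $\cC$, the subcategory $\mathrm{Per}(X/Y)[-1]$ is a Serre subcategory of $\cC$, and $E$ is $Z_t'$-semistable already inside $\mathrm{Per}(X/Y)[-1]$.

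Now $\mathrm{Per}(X/Y)=\langle\oO_C(-1)[1],\oO_C\rangle_{\ex}$ is a length-two abelian category with simple objects $S_0=\oO_C(-1)[1]$ and $S_1=\oO_C$ (so that $\oO_p$, $p\in C$, is the nonsplit extension $0\to S_1\to\oO_p\to S_0\to 0$), each rigid because $\Ext^1_X(\oO_C(k),\oO_C(k))\cong H^0(C,N_{C/X})=0$. The induced central charge sends $\oO_C[-1]$ to $-1+(\omega\cdot C)\sqrt{-1}$ (phase in $(1/2,1)$) and $\oO_C(-1)$ to $-(\omega\cdot C)\sqrt{-1}$ (phase $3/2$); by additivity of $Z_t'$, any $Z_t'$-semistable $F[-1]$ of phase $\phi$ then has $\ch_0(F)=0$ and the rest of its Chern data forced to be a positive multiple of that of a single line bundle $\oO_C(a-1)$ — namely the one with $Z_t'(\oO_C(a-1)[-1])\in\mathbb{R}_{>0}\exp(i\pi\phi)$ when $1/2<\phi<1$, resp. $Z_t'(\oO_C(a-1))\in\mathbb{R}_{>0}\exp(i\pi\phi)$ when $1<\phi\le 3/2$. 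Unwinding $Z_t'$, the existence of such an $a$ is exactly condition (\ref{Note}); this yields the "only if" direction and pins down the candidate $a$ (with $a\ge 1$ in the first range, $a\le 0$ in the second, $a=0$ giving $\oO_C(-1)$ at $\phi=3/2$).

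It remains to show that $\oO_C(a-1)[-1]$ (resp. $\oO_C(a-1)$) is $Z_t'$-stable and is the unique $Z_t'$-stable object of phase $\phi$; I expect this to be the main obstacle. I would analyze the subobject lattice of $\oO_C(a-1)$ in $\mathrm{Per}(X/Y)$ using three inputs: the vanishing $\Hom(\oO_C(-1)[1],\oO_C(d))=\Ext^{-1}_X(\oO_C(-1),\oO_C(d))=0$ for $d\ge 0$, so that $S_0$ never embeds and proper subobjects are forced to be "$S_1$-heavy"; the universal-extension presentation $0\to\oO_C^{\oplus(d+1)}\to\oO_C(d)\to(\oO_C(-1)[1])^{\oplus d}\to 0$ arising from the Euler sequence $0\to\oO_C(-1)^{\oplus d}\to\oO_C^{\oplus(d+1)}\to\oO_C(d)\to 0$ (and the dual presentation for $d\le -2$); and $\Hom_{D^b(\Coh(X))}(G,\oO_C(d))=0$ for $G$ supported on points, which rules out the phase-$\ge 1$ (point-like) subobjects. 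Together these give that every proper subobject of $\oO_C(a-1)$ has strictly smaller phase, hence $Z_t'$-stability. Uniqueness then follows, since any other $Z_t'$-stable object of phase $\phi$ would, by the previous paragraph, have class a multiple of $\ch(\oO_C(a-1))$, so a Jordan--H\"older argument in the abelian category $\pP_t'(\phi)$ combined with $\Ext^1_X(\oO_C(a-1),\oO_C(a-1))=0$ forces every object of $\pP_t'(\phi)$ to be a direct sum of copies of $\oO_C(a-1)[-1]$ (resp. $\oO_C(a-1)$), which is the asserted description of $\pP_t'(\phi)$. (At $\phi=3/2$ one has $a=0$ and $\oO_C(-1)$ is already simple in $\mathrm{Per}(X/Y)$, so its stability is automatic; alternatively, this last step can be replaced by appealing to the known classification of stable objects on the resolved conifold, cf.~\cite{Tst}.)
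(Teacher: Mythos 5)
Your reduction to the category $\mathrm{Per}(X/Y)[-1]$ (via Lemma~\ref{forward}, the triples description, and the observation that a nonzero $\oO_X$-quotient would have phase $t<\phi$) is essentially the paper's first step and is fine. The gap is in what comes after. You claim that ``by additivity of $Z_t'$'' any semistable $F[-1]$ of phase $\phi$ has class a positive multiple of $\cl(\oO_C(a-1)[-1])$ for an \emph{integer} $a$, and you derive the ``only if'' direction from this. That does not follow from numerics: the class lattice of $\mathrm{Per}(X/Y)[-1]$ is spanned by $[\oO_C(-1)]$ and $[\oO_C[-1]]$, and it contains classes whose central charge lies on rays corresponding to non-integral $a$ --- e.g. $[\oO_C(-1)]+3[\oO_C[-1]]$ has charge $-3+2(\omega\cdot C)\sqrt{-1}$, proportional to $-a+(\omega\cdot C)\sqrt{-1}$ only for $a=3/2$. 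Ruling out semistable objects in such classes (which is exactly the ``only if'' statement) requires knowing \emph{all} stable objects of $\mathrm{Per}(X/Y)[-1]$ with respect to $-iZ_t'$, not just their possible charges. Your final paragraph does not supply this either: you only analyze subobjects of $\oO_C(a-1)$ itself, and your uniqueness step is circular --- ``any other stable object of phase $\phi$ has class a multiple of $\cl(\oO_C(a-1))$'' does not exclude a stable object of class $m\cdot\cl(\oO_C(a-1)[-1])$ with $m\ge 2$, and the Jordan--H\"older argument presupposes that the stable factors are the line bundle, which is what has to be proved for the inclusion $\pP_t'(\phi)\subset\langle\oO_C(a-1)[\mp 1]\rangle_{\ex}$.

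The paper closes this gap by a different device: it introduces $W(E)=-\ch_3(E)+i\,\omega\cdot\ch_2(E)$ and notes that $(W,\Coh_C(X))$ is a Bridgeland stability condition whose (semi)stable objects are the $\omega$-(semi)stable sheaves on $C$, and that on $\mathrm{Per}(X/Y)[-1]$ one has $-iZ_t'=iW$, i.e. the induced stability is literally the rotation $(-\tfrac12)\cdot(W,\Coh_C(X))$ with heart $\mathrm{Per}(X/Y)[-1]$. Rotation-invariance of the slicing then converts the problem into the standard classification of $\omega$-stable sheaves in $\Coh_C(X)$, namely $\oO_C(a)$ and $\oO_x$, which yields the complete list of stable objects $\{\oO_C(a-1):a\le 0\}\cup\{\oO_C(a-1)[-1]:a\ge 1\}\cup\{\oO_x[-1]:x\in C\}$ and hence both the integrality condition (\ref{Note}) and the description (\ref{inthis}). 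To repair your argument you would need an equivalent classification statement (for instance by proving directly that every $-iZ_t'$-stable object of $\mathrm{Per}(X/Y)[-1]$ of phase $\neq 1$ is one of these line bundles, or by invoking the known classification of stable sheaves on the resolved conifold), rather than deducing it from the central charge and the stability of $\oO_C(a-1)$ alone.
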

\begin{proof}
Let us take a non-zero object $E\in \pP_t'(\phi)$. 
Note that we have 
\begin{align*}
\pP_t'(\phi) \subset \pP_t'((1/2, 3/2]),
\end{align*}
and $E$ is  
a semistable object 
in $\pP_t'((1/2, 3/2])$
with respect to the 
rotated weak stability condition, 
\begin{align*}
\frac{1}{2} \cdot 
\gamma'(t)=
(-i Z_t', \pP_t'((1/2, 3/2])). 
\end{align*}
By Lemma~\ref{forward}
and the subsequent argument, 
any object in $\pP_t'((1/2, 3/2]))$ is
isomorphic to the total complex associated to 
a triple (\ref{Percoh}). 
Hence 
  there is an exact sequence
in $\pP_t'((1/2, 3/2])$, 
\begin{align*}
0 \to F[-1] \to E \to \oO_X^{\oplus r} \to 0, 
\end{align*}
for $r\in \mathbb{Z}_{\ge 0}$ 
and $F\in \mathrm{Per}(X/Y)$. 
Suppose that $r\neq 0$. Then we have 
\begin{align*}
\pi \phi= \arg Z_t'(F[-1]) > \arg Z_t'(\oO_X^{\oplus r})=
\pi t, 
\end{align*}
which contradicts to the $-iZ_t'$-semistability of $E$. 
Here we have taken the arguments in $(\pi/2, 3\pi/2]$. 
Therefore we have $r=0$
or $F=0$. 
If $F=0$, then 
$E\in \langle \oO_X \rangle_{\ex} \subset \pP_t'(t)$
which contradicts to $E\in \pP_t'(\phi)$. 
Therefore $r=0$ and $E\in \mathrm{Per}(X/Y)[-1]$
follows. 

Let $W\colon K(\dD_{X/Y}) \to \mathbb{C}$
be the group homomorphism defined by, 
\begin{align}\label{defW}
W(E)=-\ch_3(E)+i\omega \cdot \ch_2(E). 
\end{align}
Then the pair 
\begin{align}\label{pair}
(W, \Coh_C(X)),
\end{align}
is a Bridgeland's stability condition on $D^b(\Coh_C(X))$, 
and the set of $W$-(semi)stable objects
in $\Coh_C(X)$ coincides
with the set of $\omega$-(semi)stable sheaves in 
$\Coh_C(X)$. 
Let us write
the stability condition (\ref{pair}) as the pair 
$(W, \qQ)$ for a slicing $\qQ$
in $D^b(\Coh_C(X))$,  
and consider the rotated stability condition 
\begin{align}\label{rotate}
\left(-\frac{1}{2} \right) \cdot (W, \Coh_C(X))=
(iW, \qQ((-1/2, 1/2])). 
\end{align}
It is easy to see that 
$\oO_C(-1)$ and $\oO_C[-1]$ are contained 
in $\qQ((-1/2, 1/2])$, hence we have 
\begin{align*}
\qQ((-1/2, 1/2])=\mathrm{Per}(X/Y)[-1]. 
\end{align*}
Under the above identification, 
the set of $iW$-(semi)stable objects in $\qQ((-1/2, 1/2])$
coincides with that of 
$-iZ_t'$-(semi)stable objects in $\mathrm{Per}(X/Y)[-1]$. 
Since an $\omega$-stable sheaf in $\Coh_C(X)$
 is of the form 
$\oO_C(a)$ or $\oO_x$ for $x\in C$,
the set of $-iZ_t'$-stable objects
in $\mathrm{Per}(X/Y)[-1]$
is given as follows, 
\begin{align*}
\{ \oO_C(a-1) : a \le 0\} \cup
\{ \oO_C(a-1)[-1] : a\ge 1 \} \cup 
\{ \oO_x[-1] : x\in C \}. 
\end{align*}
Since we have
\begin{align*}
Z_t'(\oO_C(a-1))=a-(\omega \cdot C)\sqrt{-1}, 
\end{align*}
there is non-zero $E\in \pP_t'(\phi)$ only if 
the condition (\ref{Note}) is satisfied, 
and in this case
 $\pP_t'(\phi)$ is given by (\ref{inthis}).  
\end{proof}
In the situation of Lemma~\ref{kinds}, 
any non-zero object $E\in \pP_t'(\phi)$ is written as 
\begin{align*}
E\cong \left\{ \begin{array}{cc}
\oO_C(a-1)^{\oplus m}[-1], & \mbox{ if }1/2<\phi<1, \\
\oO_C(a-1)^{\oplus m}, & \mbox{ if }1<\phi<3/2, 
\end{array} \right. 
\end{align*}
for some $m\in \mathbb{Z}_{\ge 1}$, noting that 
\begin{align*}
\Ext_{X}^1(\oO_C, \oO_C)=0. 
\end{align*} 
Then a computation similar to Lemma~\ref{already} 
shows that  
\begin{align*}
\DT^{-1}(r, m, n, \phi)=
\left\{ 
\begin{array}{cc}
\frac{1}{m^2}, & \mbox{ if }r=0, n=ma, m\ge 1, \\
0, & \mbox{ otherwise, }
\end{array}
\right. 
\end{align*}
for $1/2<\phi<1$ and 
\begin{align*}
\DT^{0}(r, m, n, \phi) &=
\DT^{-1}(-r, -m, -n, \phi+1) \\
&=\left\{ 
\begin{array}{cc}
\frac{1}{m^2}, & \mbox{ if }r=0, n=ma, m \ge 1, \\
0, & \mbox{ otherwise, }
\end{array}
\right. 
\end{align*}
for $0<\phi \le 1/2$. 
Applying Theorem~\ref{odp:thm}, we obtain the following. 
\begin{thm}\label{thm:ODP}
For $0<\phi<1$, suppose that 
there is $a \in \mathbb{Z}$ 
satisfying (\ref{Note}). 
For $k\in \mathbb{Z}$, we obtain the following. 

(i) If $0<\phi<1/2$, we have 
\begin{align*}
\DT^{2k-1}(\phi) &=
\sum_{m \ge 1}\frac{1}{m^2}x^{kma}y^{m}z^{ma}, \\
\DT^{2k}(\phi) &=
\sum_{m \ge 1}\frac{1}{m^2} (1-(-1)^{ma}x)^{ma}x^{kma}y^{m}z^{ma}. 
\end{align*}

(ii) If $1/2 \le \phi<1$, we have 
\begin{align*}
\DT^{2k}(\phi) &=
\sum_{m \ge 1}\frac{1}{m^2}x^{kma}y^{m}z^{ma}, \\
\DT^{2k+1}(\phi) &=
\sum_{m \ge 1}\frac{1}{m^2} (1-(-1)^{ma}x)^{ma}x^{kma}y^{m}z^{ma}. 
\end{align*}
\end{thm}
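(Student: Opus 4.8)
The plan is to obtain Theorem~\ref{thm:ODP} as a formal consequence of the results already in place. Under the standing hypothesis that some $a\in\mathbb{Z}$ satisfies~(\ref{Note}), Lemma~\ref{kinds} identifies $\pP_t'(\phi)$ as the extension closure of a single shifted line bundle $\oO_C(a-1)$ on $C$; since $\Ext_X^1(\oO_C,\oO_C)=0$, the contribution of its semistable objects to the generalized DT invariant is computed exactly as in Lemma~\ref{already}, which shows that the base generating series equals the factor-free series $\sum_{m\ge 1}\frac{1}{m^2}y^m z^{ma}$. By the computations recorded just above, this series sits at index $k=-1$ in the regime of case (i) and at index $k=0$ in the regime of case (ii). Granting this, the remaining proof will consist of iterating the wall-crossing transformation of Theorem~\ref{odp:thm}(ii) — upward in $k$ directly and downward in $k$ by its inverse — and verifying by induction on $k$ that the closed forms of (i) and (ii) emerge.

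For the inductive step one substitutes $n=ma$ into the transformation of Theorem~\ref{odp:thm}(ii) and simplifies. Since the transformation touches only the variables $x,z$, the $y^m$-grading and the constraint that the $z$-degree is $a$ times the $y$-degree persist at every stage, so no new monomials appear and one need only follow the coefficient of $y^m z^{ma}$ in $\DT^k(\phi)$. An even step multiplies this coefficient by $(1-(-1)^{ma}x)^{ma}$, an odd step multiplies it by $x^{ma}(1-(-1)^{ma}x)^{-ma}$, and hence any two consecutive steps multiply it by $x^{ma}$ — precisely the variable change induced by $\Phi_{\oO_X}$ (Lemma~\ref{com2}), which produces the factor $x^{kma}$ growing as $k$ increases by $2$. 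Starting from the coefficient $1/m^2$ at the base index, this reproduces both the exponent $x^{kma}$ and the single surviving factor $(1-(-1)^{ma}x)^{ma}$ at the even indices, matching the stated formulas.

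The one point requiring attention is that the odd step introduces the denominator $(1-(-1)^{ma}x)^{ma}$, so one must check that in each case it is applied to a series already carrying a matching numerator $(1-(-1)^{ma}x)^{ma}$, whence the two cancel and $\DT^k(\phi)$ remains of the stated shape (a polynomial in $x$ times $y^m z^{ma}$, up to an overall power of $x$ when $k<0$). This is automatic from the induction, the base being factor-free and the parities fixed throughout. The split into cases (i) and (ii) is exactly this parity question — whether the factor-free base series lands at an odd or an even index — which is dictated by whether $\phi<1/2$ or $\phi\ge 1/2$: for $\phi<1/2$ the classification of Lemma~\ref{kinds} cannot be invoked directly because it requires $1/2<t<\phi$, so one first passes to $\phi+1\in(1,3/2)$ via the relation $\DT^0(r,m,n,\phi)=\DT^{-1}(-r,-m,-n,\phi+1)$, and this shift moves the base index by one relative to case (ii). There is no substantive geometric obstacle here: once Lemma~\ref{kinds} and Theorem~\ref{odp:thm} are available the theorem is a bookkeeping exercise, and the only delicacy is keeping the parities and the denominator cancellations straight.
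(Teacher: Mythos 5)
Your overall strategy --- compute the factor-free base series from Lemma~\ref{kinds} by a Lemma~\ref{already}-type computation and then iterate the transformation of Theorem~\ref{odp:thm} --- is exactly the paper's proof, so the approach is the right one. The gap is on the single point on which the theorem actually hinges: at which index the factor-free series sits in each range of $\phi$, and here your write-up contradicts itself. In your first paragraph you place the base at $k=-1$ in the regime of case (i) and at $k=0$ in the regime of case (ii); in your last paragraph you describe the opposite (and correct) situation: for $1/2<\phi<1$ Lemma~\ref{kinds} is applied with $1/2<t<\phi$, i.e.\ $t\in(\phi-1,\phi)$, so it computes $\DT^{-1}(\phi)=\sum_m\frac{1}{m^2}y^mz^{ma}$, while for $0<\phi<1/2$ one must pass to $\phi+1$ via $\DT^{0}(r,m,n,\phi)=\DT^{-1}(-r,-m,-n,\phi+1)$, which makes $\DT^{0}(\phi)$ the factor-free series. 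These latter bases are precisely the ones recorded in the paper just before the theorem. Since an even step of Theorem~\ref{odp:thm} multiplies the coefficient of $y^mz^{ma}$ by $(1-(-1)^{ma}x)^{ma}$ and an odd step by $x^{ma}(1-(-1)^{ma}x)^{-ma}$, the parity of the base index dictates which pattern emerges: from the correct bases the $\DT^{2k-1}$-free/$\DT^{2k}$-with-factor pattern arises for $1/2<\phi<1$ (where (\ref{Note}) forces $a\ge 1$), and the even-index-free pattern for $0<\phi<1/2$ (where $a\le -1$). So the anchoring you assert in paragraph one is not derived from your computation but read off from the displayed statement, and the derivation as written does not attach the formulas to the ranges you claim. (In fact the rank-one identity (\ref{curious}), which by Lemma~\ref{lem:P'} concerns the nonvanishing invariants $\DT^{0}(1,m,ma,\phi)$ with $\phi\in(1/2,1)$, already forces the even-index series to carry the factor $(1-(-1)^{ma}x)^{ma}$ in that range; the printed labels (i)/(ii) are themselves in tension with the base computation, and a correct proof has to confront this rather than match parities to the statement.)

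A second, related gap is the claim that the denominator cancellation is ``automatic from the induction.'' It is automatic only when the base sits at an odd index and $a\ge 1$, for then each odd step removes the polynomial factor introduced by the preceding even step. In the regime $0<\phi<1/2$ the integer $a$ determined by (\ref{Note}) is negative, so $ma<0$; starting from the factor-free series at the even index $0$ (as your own reduction to $\phi+1$ produces), the first odd step yields the coefficient $x^{ma}(1-(-1)^{ma}x)^{-ma}$, a nontrivial Laurent polynomial rather than a cancellation, and more generally the odd-index series acquires $(1-(-1)^{ma}x)^{-ma}x^{(k+1)ma}$ instead of $(1-(-1)^{ma}x)^{ma}x^{kma}$. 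So the shape-preservation step must track the sign of $a$ in each range; as written your argument silently assumes $a\ge 1$ throughout, which is exactly the regime where your base-index assignment from paragraph one is wrong.
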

Similarly to Lemma~\ref{count}, we 
can investigate what kinds of objects
the invariants (\ref{simDT}) count. 
The case of $\phi -1 \ll t <\phi$
is already studied in Lemma~\ref{kinds}.
The case  
after crossing the wall $t=\phi$, 
i.e. the case of 
$\phi<t \ll \phi+1$ is 
given as follows.   
\begin{lem}\label{lem:P'}
For $1/2<\phi<t\le 3/2$
with $\phi \neq 1$, we have 
$\pP_t(\phi)\neq \{0\}$
if and only if
there is $a\in \mathbb{Z}$
satisfying (\ref{Note}).
In this case, 
an object $E\in \dD_X$ is contained 
in $\pP_t(\phi)$ if and only if 
the following holds.  
\begin{itemize}
\item If $1/2<\phi<1$, 
then $E$ is quasi-isomorphic to 
a two term complex, 
\begin{align}\label{comp1}
\cdots \to 0\to 
\oO_X^{\oplus r}  \stackrel{s}{\to} \oO_C(a-1)^{\oplus m}
\to 0 \to \cdots, 
\end{align}
where $\oO_X^{\oplus r}$ is located in degree zero,
such that the induced morphism, 
\begin{align*}
H^0(s) \colon \mathbb{C}^r \to H^0(C, \oO_C(a-1))^{\oplus m},
\end{align*}
is injective. 
\item 
If $1<\phi<3/2$, then $E$ fits into the 
exact sequence of sheaves, 
\begin{align}\label{comp2}
0 \to \oO_C(a-1)^{\oplus m} \to E \to \oO_X^{\oplus r} \to 0, 
\end{align}
such that the induced morphism, 
\begin{align*}
\mathbb{C}^{r} \to H^{1}(C, \oO_C(a-1))^{\oplus m},
\end{align*}
is injective. 
\end{itemize} 
\end{lem}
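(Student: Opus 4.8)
The plan is to follow the pattern of the proofs of Lemma~\ref{count} and Lemma~\ref{kinds}, working now at the wall $t=\phi$ rather than below it. By (the $\dD_{X/Y}$-analogue of) Lemma~\ref{fixed}, the category $\pP_t'(\phi)$ is independent of $t$ in the open interval just above $\phi$, so it is enough to describe it for some $t_+$ slightly larger than $\phi$; write $t_-$ for a value slightly smaller than $\phi$, and set $t_0=\phi$. Put $k=0$ if $1/2<\phi<1$ and $k=1$ if $1<\phi<3/2$; then $\pP_{t_0}'(\phi)$ is the shift by $[k]$ of $\pP_{t_0}'(\psi)$ for $\psi=\phi-k\in(0,1)$, and the relevant heart is $\aA_k'$. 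By Lemma~\ref{kinds}, $\pP_{t_-}'(\phi)$ equals $\langle\oO_C(a-1)[-1]\rangle_{\ex}$ when $1/2<\phi<1$ (where necessarily $a\ge 1$) and $\langle\oO_C(a-1)\rangle_{\ex}$ when $1<\phi<3/2$ (where $a\le 0$), and it is zero unless an integer $a$ satisfying (\ref{Note}) exists. Since $Z_t'(\oO_C(a-1))=a-(\omega\cdot C)\sqrt{-1}$ does not depend on $t$, such an object stays semistable of phase $\phi$ for every $t$, so $\pP_t'(\phi)\ne\{0\}$ precisely when such an $a$ exists; this proves the first assertion and reduces the rest to the situation $m\ge 1$.

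For the shape of a general $E\in\pP_{t_+}'(\phi)$ I would invoke the $\dD_{X/Y}$-analogue of Proposition~\ref{prop1} (proved in parallel, just as Theorem~\ref{odp:thm} is parallel to Theorem~\ref{thm:wall}). Since $\pP_{t_+}'(\phi)\subset\pP_{t_0}'(\phi)$, every such $E$ fits, after the shift by $[-k]$, into a short exact sequence in $\aA_k'$ of the type (\ref{ex2}): $0\to E_1'\to E\to E_2'\to 0$ with $E_1'\in\pP_{t_-}'$ and $E_2'\in\langle\oO_X[-k]\rangle_{\ex}$. Using $\Ext_X^1(\oO_C,\oO_C)=0$ (a consequence of $N_{C/X}\cong\oO_C(-1)^{\oplus 2}$), one forces $E_1'\cong\oO_C(a-1)^{\oplus m}[-1]$ and $E_2'\cong\oO_X^{\oplus r}[-k]$. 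When $k=0$ the triangle $\oO_C(a-1)^{\oplus m}[-1]\to E\to\oO_X^{\oplus r}$ exhibits $E$ as the two-term complex $\oO_X^{\oplus r}\stackrel{s}{\to}\oO_C(a-1)^{\oplus m}$ placed in degrees $0$ and $1$; when $k=1$, shifting back by $[1]$ turns the triangle into a short exact sequence of coherent sheaves $0\to\oO_C(a-1)^{\oplus m}\to E\to\oO_X^{\oplus r}\to 0$, so $E$ is itself a sheaf. Applying $\Hom(\oO_X,-)$ to these sequences and using $H^0(C,\oO_C(a-1))=0$ when $a\le 0$ identifies $\Hom(\oO_X[-k],E)$ with the kernel of $H^0(s)\colon\mathbb{C}^r\to H^0(C,\oO_C(a-1))^{\oplus m}$ in the first case, and with the kernel of the connecting map $\mathbb{C}^r\to H^1(C,\oO_C(a-1))^{\oplus m}$ in the second; hence $\Hom(\oO_X[-k],E)=0$ is equivalent to the injectivity stated in the lemma.

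It then remains to match $\Hom(\oO_X[-k],E)=0$ with $Z_{t_+}'$-semistability. If $\Hom(\oO_X[-k],E)\ne 0$, then, $\oO_X[-k]$ being a simple object of $\aA_k'$, it is a subobject of $E$ in $\aA_k'$; but at $t_+$ its phase $t_+-k$ is strictly greater than $\phi$, which contradicts $E\in\pP_{t_+}'(\phi)$. Conversely, given $E$ of the asserted shape with the injectivity condition, one checks that $E$ lies in $\aA_k'$ (an extension-closed subcategory) and fits into a sequence of type (\ref{ex2}) with $E_1'\in\pP_{t_-}'$ and $E_2'\in\langle\oO_X[-k]\rangle_{\ex}$, so the converse part of Proposition~\ref{prop1} places $E$ in $\pP_{t_0}'(\phi)$; its Harder--Narasimhan sequence with respect to $\gamma'(t_+)$ is then of type (\ref{ex1}) with leading term in $\langle\oO_X[-k]\rangle_{\ex}$, and $\Hom(\oO_X[-k],E)=0$ forces this term to vanish, so $E\in\pP_{t_+}'(\phi)$.

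I expect the main difficulty to be the first half of the second paragraph: setting up the $\dD_{X/Y}$-version of Proposition~\ref{prop1} in the correct heart $\aA_k'$ with the correct $k$, so that the single wall $t=\phi$ produces a two-term complex when $1/2<\phi<1$ but a genuine coherent sheaf when $1<\phi<3/2$, and then checking that ``no $\oO_X[-k]$-subobject'' is exactly the injectivity of $H^0(s)$ (respectively of $\mathbb{C}^r\to H^1(C,\oO_C(a-1))^{\oplus m}$). The cohomological inputs --- that $E_1'$ must be $\oO_C(a-1)^{\oplus m}[-1]$ and that the pertinent $\Ext^1$-groups realize these complexes and extensions --- are routine once this framework is in place.
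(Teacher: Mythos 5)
Your proposal is correct in substance, but it takes a genuinely different route from the paper's. The paper never leaves the rotated heart $\pP_t'((1/2,3/2])$: by Lemma~\ref{forward} this heart equals $\langle \oO_X, \mathrm{Per}(X/Y)[-1]\rangle_{\ex}$ and is identified with the category of perverse coherent systems $(\oO_X^{\oplus r}\stackrel{s}{\to}F)$, injectivity of $H^0(s)$ is read off from $\Hom(\oO_X,E)=0$ (since $\oO_X\in\pP_t'(t)$ and $t>\phi$), the sheaf $F$ is pinned down by showing $F[-1]$ is semistable for the auxiliary Bridgeland stability $(W,\Coh_C(X))$ of (\ref{pair}), rotated as in (\ref{rotate}), and the converse is proved by testing $-iZ_t'$-semistability directly against an arbitrary exact sequence of triples, splitting into the cases $F_2\neq 0$ and $F_2=0$. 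You instead bootstrap across the single wall $t=\phi$: taking the description of $\pP_{t_-}'(\phi)$ from Lemma~\ref{kinds} as known, you apply the $\dD_{X/Y}$-analogue of Proposition~\ref{prop1} to write any $E\in\pP_{t_+}'(\phi)\subset\pP_{t_0}'(\phi)$ (after the shift by $[-k]$) as an extension of type (\ref{ex2}), force both graded pieces to be direct sums via $\Ext^1_X(\oO_C,\oO_C)=0$ and $H^1(X,\oO_X)=0$, and translate ``no $\oO_X[-k]$-subobject'' into the injectivity statements; the converse uses the converse half of Proposition~\ref{prop1} together with the (\ref{ex1})-type HN filtration at $t_+$. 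Both arguments are sound; the paper's is self-contained once Lemma~\ref{forward} is available, while yours buys a shorter verification at the price of the wall-crossing input, namely the $\dD_{X/Y}$-versions of Lemma~\ref{fixed} and Proposition~\ref{prop1}, which the paper only asserts implicitly (they underlie Theorem~\ref{odp:thm}), so you should record that their proofs carry over verbatim. Two small points to tighten: the persistence of $\oO_C(a-1)^{\oplus m}$ (resp.\ its shift) in $\pP_t'(\phi)$ for $t>\phi$ does not follow merely from $Z_t'(\oO_C(a-1))$ being independent of $t$ --- semistability can still jump at walls --- but it does follow from your own converse criterion with $r=0$, equivalently from $\Hom(\oO_X,\oO_C(a-1)[-1])=0$, resp.\ $\Hom(\oO_X,\oO_C(a-1))=0$ since $a\le 0$; and for the ``only if'' half of the nonvanishing claim you should add that if no $a$ satisfies (\ref{Note}) then $\pP_{t_-}'(\phi)=\{0\}$, so the sequence of type (\ref{ex2}) forces $E\cong\oO_X^{\oplus r}$ up to the shift, which at $\gamma'(t_+)$ is semistable of phase $t_+\neq\phi$ by the analogue of Lemma~\ref{Ost}, whence $\pP_{t_+}'(\phi)=\{0\}$.
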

\begin{proof}
Let us take an object $E\in \pP_t'(\phi)$. 
By Lemma~\ref{forward}
and the subsequent argument, 
$E$ is isomorphic to 
the total complex of a double complex, 
\begin{align}\label{double}
\oO_X^{\oplus r} \stackrel{s}{\to} F, 
\end{align}
for some $r\ge 0$ and $F\in \mathrm{Per}(X/Y)$. 
In particular,
we have the exact sequence in $\pP_t'((1/2, 3/2])$, 
\begin{align*}
0 \to F[-1] \to E \to \oO_X^{\oplus r} \to 0.
\end{align*}
Applying $\Hom(\oO_X, \ast)$, we obtain the 
exact sequence, 
\begin{align*}
0 \to \Hom(\oO_X, E) \to \mathbb{C}^{r} \stackrel{H^0(s)}{\to}
 \Hom(\oO_X, F). 
\end{align*}
Since $\oO_X \in \pP_t'(t)$, 
$E\in \pP_t'(\phi)$ and $t>\phi$, 
we have $\Hom(\oO_X, E)=0$
hence
the map $H^0(s)$ is injective.

Next we classify the objects $F\in \mathrm{Per}(X/Y)$
which appear in (\ref{double}).  
Let $0\neq F' \subset F$ be a 
subobject in $\mathrm{Per}(X/Y)$. 
Then we have 
injections 
\begin{align*}
F'[-1] \hookrightarrow F[-1] \hookrightarrow 
E,
\end{align*}
in $\pP_t'((1/2, 3/2])$. 
As in the proof of Lemma~\ref{kinds}, we consider
rotated weak stability condition on $\dD_{X/Y}$, stability 
condition on $D^b(\Coh_C(X))$ respectively, 
\begin{align*}
(-iZ_t', \pP_t'((1/2, 3/2])), \\
(iW, \mathrm{Per}(X/Y)[-1]). 
\end{align*}
Here $W$ is given by (\ref{defW}). 
By the $-i Z_t'$-semistability of $E$, 
We have the inequality in $(\pi/2, 3\pi/2]$,
\begin{align*}
\arg W(F') &= \arg Z_t'(F'[-1]) \\
& \le \arg Z_t'(E) \\
&= \arg Z_t'(F[-1]) \\
&=\arg W(F). 
\end{align*}
Therefore $F[-1]$ is an $iW$-semistable 
object in $\mathrm{Per}(X/Y)[-1]$. 
As in the proof of Lemma~\ref{kinds}
and the subsequent argument, 
 there is $m \ge 1$ such that 
\begin{align*}
F \cong \left\{ \begin{array}{cc}
\oO_C(a-1)^{\oplus m}, & \mbox{ if }1/2<\phi <1, \\
\oO_C(a-1)^{\oplus m}[1], & \mbox{ if }1<\phi<3/2. 
\end{array} \right.
\end{align*}
Therefore 
$E$ is isomorphic to a two 
term complex (\ref{comp1})
when $1/2<\phi<1$, and 
isomorphic to a sheaf which fits into 
the exact sequence (\ref{comp2})
when $1<\phi<3/2$. 

Conversely suppose that $E\in \dD_X$
is an object given by (\ref{comp1}) or (\ref{comp2}). 
Then $E$ is the total complex of
a double complex (\ref{double})
for some $F\in \mathrm{Per}(X/Y)$, 
which satisfies the property 
that $F[-1]\in \mathrm{Per}(X/Y)[-1]$ is $iW$-semistable
and $H^{0}(s)$ is injective. 
In particular $E$ is an object in $\pP_t'((1/2, 3/2])$
by Lemma~\ref{forward}. 
Let us take an exact sequence
\begin{align*}
0 \to E_1 \to E \to E_2 \to 0,
\end{align*}
 in $\pP_t'((1/2, 3/2])$
with non-zero $E_1$ and $E_2$. 
The above exact sequence 
corresponds to an exact sequence of 
triples (\ref{Percoh}), 
\begin{align*}
0\to (\oO_X^{\oplus r_1} \stackrel{s_1}{\to}
F_1) \to (\oO_X^{\oplus r} \stackrel{s}{\to}
F) \to  (\oO_X^{\oplus r_2} \stackrel{s_2}{\to}
F_2) \to 0. 
\end{align*}
Since $H^{0}(s)$ is injective, we have $F_{1}\neq 0$.
If we also have $F_2 \neq 0$, 
then the $iW$-semistability of $F[-1]$ implies  
the inequality in $(\pi/2, 3\pi/2]$, 
\begin{align*}
\arg Z_t'(E_1) &=\arg W(F_1) \\
&\le \arg W(F_2) \\
& = \arg Z_t'(E_2).  
\end{align*}
If $F_2=0$, we have 
\begin{align*}
\pi \phi= \arg Z_t'(E_1) <\arg Z_t'(E_2)=\pi t. 
\end{align*}
The above inequalities show that $E$
is $-i Z_t'$-semistable in $\pP_t'((1/2, 2/3])$,
and $E\in \pP_t'(\phi)$ follows.  
\end{proof}
\begin{rmk}
In the case of $\phi=1$, 
the generating series and the relevant 
semistable objects are described in a 
way similar to the results in
 Subsection~\ref{subsec:D0D6}. 
\end{rmk}
In a similar way to (\ref{MacM}), 
Theorem~\ref{thm:ODP} can be
used to write down the rank 
one generating series, 
\begin{align}\notag
\sum_{\begin{subarray}{c}
\phi\in (0, 1), \\
(m, n)\in \mathbb{Z}^{\oplus 2}.
\end{subarray}
}\DT^{0}(1, m, n, \phi)y^m z^n
&=\sum_{\begin{subarray}{c}
a\ge 1, \\
\label{curious}
m\ge 1. \end{subarray}}
\frac{1}{m^2}(-1)^{ma-1} ma y^m z^{ma}, \\
&= \log \prod_{m\ge 1}(1-(-1)^{m}yz^m)^m. 
\end{align}
By Lemma~\ref{lem:P'}, the above
series is a generating series of invariants
counting two term complexes of the form, 
\begin{align*}
\oO_X \stackrel{s}{\to} \oO_C(a-1)^{\oplus m},
\end{align*} 
such that that $s$ is non-zero. 
Here we have again observed a curious 
phenomena, since 
 the series 
\begin{align}\label{nonconn}
 \prod_{m\ge 1}(1-(-1)^{m}yz^m)^m
\end{align}
coincides with the generating series of 
stable pairs on a local $(-1, -1)$-curve. 
(cf.~\cite{PT}). 
Under the GW/DT/PT correspondence~\cite{MNOP}, 
the series (\ref{curious})
corresponds to the connected GW theory, 
while the series (\ref{nonconn}) corresponds 
to the non-connected GW theory~\cite{BeBryan}.  

\begin{rmk}
Although we rely on Conjecture~\ref{conjBG}
to prove Theorem~\ref{thm:wall}, 
we can check a version of 
Conjecture~\ref{conjBG}
needed in showing Theorem~\ref{thm:DT} and Theorem~\ref{thm:ODP}
by hand, following the same strategy of~\cite[Proposition~2.12]{Trk2}. 
Therefore the results in this subsection 
are completely rigorous. 
\end{rmk}

\section{Some technical lemmas}\label{sec:tech}
\subsection{Proof of Lemma~\ref{lem:cons}}
For simplicity, we give a 
proof for the pair $(Z_u, \bB_{+})$
with $u=(z, B+i\omega)
\in (-\mathbb{H})\times A(X)_{\mathbb{C}}$. 
We divide the proof into 3 steps. 
\begin{step}\label{S1}
The pair $(Z_u, \bB_{+})$ satisfies the 
Harder-Narasimhan property. 
\end{step}
\begin{proof}
By~\cite[Proposition~2.12]{Tcurve1}, 
it is enough to check that 
the following conditions are 
satisfied. 
\begin{itemize}
\item The abelian category $\bB_{+}$
is noetherian. 
\item  There are no infinite sequences of 
subobjects in $\bB_{+}$, 
\begin{align}\label{chain1}
\cdots E_{j+1} \subset E_j \subset \cdots \subset E_2 \subset E_1, 
\end{align}
with the following inequality for all $j$, 
\begin{align}\label{ineqZ}
\arg Z_u(E_{j+1}) > \arg Z_{u}(E_j/E_{j+1}).
\end{align} 
\end{itemize}
First we show
that the abelian category $\bB_{+}$ is noetherian. 
For an object $E\in \bB_{+}$, suppose that there 
is an infinite sequence of inclusions in $\bB_{+}$, 
\begin{align}\label{seq:F}
F_1 \subset F_2 \subset \cdots \subset E. 
\end{align}
Applying $\hH_{\aA}^{\bullet}$, 
we obtain the sequence of inclusions in $\aA$, 
\begin{align*}
\hH_{\aA}^{0}(F_1) \subset \hH_{\aA}^{0}(F_2) \subset \cdots 
\subset \hH_{\aA}^{0}(E). 
\end{align*}
Since $\aA$ is noetherian by~\cite[Lemma~6.2]{Trk2}, 
we may assume that $\hH_{\aA}^{0}(F_i)
 \stackrel{\cong}{\to} \hH_{\aA}^{0}(F_{i+1})$
for all $i$. Then taking the quotients
of (\ref{seq:F}) by 
$\hH^{0}_{\aA}(F_1)$, we may assume that 
$\hH^{0}_{\aA}(F_i)=0$ for all $i$. 
This means that we have 
\begin{align*}
F_i \cong \oO_X^{\oplus r_i}[-1], \quad r_i \in \mathbb{Z}_{\ge 0}. 
\end{align*}
Since $r_1 \le r_2 \le \cdots$ and 
$\Hom(\oO_X[-1], E)$ is finite dimensional, 
the sequence (\ref{seq:F}) terminates. 

Next suppose that there is a sequence (\ref{chain1}) 
satisfying (\ref{ineqZ}).  
Note that for any object $E\in \bB_{+}$, we have
$\ch_2(E)\cdot \omega \le 0$ by the description 
of $\bB_{+}$
in Lemma~\ref{types}. 
Therefore we have 
\begin{align*}
\ch_2(E_1) \cdot \omega \le 
\cdots \le \ch_2(E_j) \cdot \omega \le \ch_2(E_{j+1}) \cdot \omega 
\le \cdots \le 0. 
\end{align*}
Hence we may assume that 
$\ch_2(E_j) \cdot \omega=\ch_2(E_{j+1}) \cdot \omega$
for all $j$. 
This implies that $\ch_2(E_j/E_{j+1})=0$, 
and we have either
\begin{align*}
Z_u(E_j/E_{j+1}) \in \mathbb{R}_{<0}, \mbox{ or }
E_j/E_{j+1} \in \langle \oO_X[-1] \rangle_{\ex}. 
\end{align*}
Since we have the inequality (\ref{ineqZ}), 
we have $Z_u(E_j/E_{j+1}) \notin \mathbb{R}_{<0}$. 
Therefore we have $E_j/E_{j+1} \in \langle \oO_X[-1] \rangle_{\ex}$, 
and hence $E_1/E_{j}$ is written as $\oO_X[-1]^{\oplus r_j}$
for some $r_j \in \mathbb{Z}_{\ge 0}$. 
There is a sequence of surjections, 
\begin{align*}
E_1 \twoheadrightarrow
\cdots  \twoheadrightarrow E_1/E_3 \twoheadrightarrow 
 E_1/E_2, 
\end{align*}
hence we have $r_2 \le r_3 \le \cdots$. 
Since $\Hom(E_1, \oO_X[-1])$ is finite dimensional, the above sequence 
must terminate. 
\end{proof}
\begin{step}
The weak stability condition $(Z_{u}, \bB_{+})$ 
satisfies the local finiteness property. 
\end{step}
\begin{proof}
Let $\{\pP(\phi)\}_{\phi \in \mathbb{R}}$ be
the slicing determined by the pair $(Z_u, \bB_{+})$. 
It is enough to check that the following 
quasi-abelian categories, 
\begin{align*}
\pP((0, 1)), \quad \pP((1/2, 3/2)),
\end{align*}
are of finite length. 
The category $\pP((0, 1))$ is contained in 
$\bB_{+}$, and the same argument of Step~\ref{S1}
shows that $\pP((0, 1))$ is of finite length. 
Let us check that $\pP((1/2, 3/2))$ is of 
finite length. 
We take a sequence of strict epimorphisms
in $\pP((1/2, 3/2))$,
(see~\cite[Section~4]{Brs1} for the 
notion of strict epimorphisms and
strict monomorphisms,)
\begin{align}\label{strict}
E_1 \twoheadrightarrow E_2 \twoheadrightarrow \cdots
\twoheadrightarrow E_j \twoheadrightarrow E_{j+1} \twoheadrightarrow \cdots,  
\end{align}
and exact sequences in $\pP((1/2, 3/2))$, 
\begin{align*}
0 \to F_j \to E_{j} \to E_{j+1} \to 0. 
\end{align*}
Note that $\ch_3(E) \le 0$ for any $E\in \pP((1/2, 3/2))$, 
and the inequality is strict if $\cl(E) \notin \Gamma \setminus \Gamma_0$. 
Therefore we have the inequalities, 
\begin{align*}
\ch_3(E_1) \le \cdots \le \ch_3(E_j) \le \ch_3(E_{j+1}) 
\le \cdots \le 0. 
\end{align*}
Hence we may assume that $\ch_3(E_1)=\ch_3(E_j)$,
which implies that $\ch_3(F_j)=0$ for all $j$. 
Then we have $\cl(F_j) \in \Gamma_0$, 
and Lemma~\ref{below} shows that 
\begin{align*}
F_j \cong \left\{ \begin{array}{cc}
\oO_X^{\oplus r_j}, & \mbox{ if }\Ree z<0, \\
0, & \mbox{ if }\Ree z =0, \\
\oO_X[-1]^{\oplus r_j}, & \mbox{ if }\Ree z>0,
\end{array}\right. 
\end{align*}
for some $r_j \in \mathbb{Z}_{\ge 0}$. Then the same 
argument of Step~\ref{S1} shows that the 
sequence (\ref{strict}) terminates, i.e. 
$\pP((1/2, 3/2))$ is noetherian. 
A similar argument also shows that $\pP((1/2, 3/2))$
is artinian with respect to the strict monomorphisms, 
hence $\pP((1/2, 3/2))$ is of finite length. 
\end{proof}
\begin{step}
The pair $(Z_{u}, \bB_{+})$ satisfies the support property. 
\end{step}
\begin{proof}
Let us take a non-zero object $\bB_{+}$, 
and we set $\cl(E)=(r, -\beta, -n)$. 
If $(\beta, n)=(0, 0)$, we have 
\begin{align*}
\frac{\lVert [\cl(E)] \rVert_{0}}{ \rvert Z(E) \rvert}
=\frac{1}{\lvert z \rvert}.  
\end{align*}
Suppose that $(\beta, n)\neq (0, 0)$. 
Then we have 
\begin{align}\label{sup1}
\frac{\lVert [\cl(E)] \rVert_{1}}{ \rvert Z(E) \rvert}
&= \sqrt{\frac{\lVert \beta \rVert^2 +n^2}
{(n-B\cdot \beta)^2 +(\omega \cdot \beta)^2}}.
\end{align}
Here $\lVert \ast \rVert$ is a fixed norm on $H_2 \otimes \mathbb{R}$. 
If $\beta=0$, then (\ref{sup1}) 
equals to $1$. If $\beta \neq 0$, 
then (\ref{sup1}) coincides with 
\begin{align}\label{value}
\sqrt{\frac{1+\mu^2}
{(\mu-B_0)^2 +\omega_0^2}}.
\end{align}
Here we have set
\begin{align*}
\mu=\frac{n}{\lVert \beta \rVert}, \quad 
B_0=B\cdot \frac{\beta}{\lVert \beta \rVert}, 
\quad \omega_0=\omega \cdot \frac{\beta}{\lVert \beta \rVert}. 
\end{align*}
The values $B_0$ and $\omega_0 >0$ are bounded 
w.r.t. non-zero $\beta \in H_2$. 
 Also for fixed $B_0$ and $\omega_0$, 
the value (\ref{value})
is bounded w.r.t. all $\mu \in \mathbb{Q}$. 
Therefore (\ref{value}) is bounded w.r.t.
all $B_0$, $\omega_0$ and $\mu$.   
\end{proof}
\subsection{Proof of Lemma~\ref{omit}}
\begin{proof}
(i) It is easy to see that 
$\Coh_{\le 1}(X)[-1]$ is closed
under subobjects and quotients in 
the abelian category $\aA$. 
Then it is easy to see that 
the $(B, \omega)$-semistability of 
  $F\in \Coh_{\le 1}(X)$
yields the $Z_{u}$-semistability of $F[-1]\in \aA$. 

(ii)
First we suppose that 
\begin{align}\label{byour}\arg Z_{u}(F[-1])>\arg (-z).
\end{align}
Let us take an exact sequence in $\bB_{+}$, 
\begin{align}\label{MN}
0 \to M \to F[-1] \to N \to 0,
\end{align}
with $M, N\neq 0$. 
We want to show the inequality, 
\begin{align}\label{want}
\arg Z_{u}(M) \le \arg Z_{u}(N),
\end{align}
to show the $Z_u$-semistability of $F[-1]$. 
Applying $\hH_{\aA}^{\bullet}$
to the sequence (\ref{MN}), 
we obtain the long exact sequence in $\aA$, 
\begin{align}\label{long}
0 \to \hH_{\aA}^{0}(M) \to F[-1] \stackrel{s}{\to} \hH_{\aA}^{0}(N)
\to \hH_{\aA}^{1}(M) \to 0,
\end{align}
and the vanishing $\hH_{\aA}^1(N)=0$. 
Note that $\hH_{\aA}^{1}(M) \in \langle \oO_X \rangle_{\ex}$
by the construction of $\bB_{+}$. 

Suppose that $\hH_{\aA}^{0}(M)=0$. 
Then we have 
\begin{align*}
\arg Z_{u}(M) &=\arg Z_{u}(\oO_X[-1]) \\
&=\arg (-z). 
\end{align*}
By our assumption (\ref{byour})
this implies the inequality (\ref{want}).

Suppose that $\hH_{\aA}^{0}(M)\neq 0$. 
Then $\hH_{A}^{0}(M)$ and the image of $s$ 
are written as $F'[-1]$, $F''[-1]$ for some
$F', F'' \in \Coh_{\le 1}(X)$ respectively. 
Note that we have 
\begin{align*}
Z_{u}(M)=Z_{u}(F'[-1]), \quad 
Z_{u}(N)=Z_{u}(F''[-1]). 
\end{align*}
The exact sequence (\ref{long}) 
and the $(B, \omega)$-semistability of $F$ yield, 
\begin{align*}
\arg Z_{u}(F'[-1]) \le \arg Z_{u}(F''[-1]). 
\end{align*}
Therefore the inequality (\ref{want}) holds
and $F[-1]$ is $Z_u$-semistable. 

Next suppose that 
\begin{align*}
\arg Z_u(F[-1])<\arg(-z). 
\end{align*}
In this case, a similar argument as above 
shows that $F[-1]$ is a $Z_u$-semistable 
object in $\bB_{-}$. Applying the 
twist functor $\Phi_{\oO_X}$
and using Lemma~\ref{phist}, 
we conclude that $\Phi_{\oO_X}(F[-1]) \in \bB_{+}$
and it is $Z_u$-semistable. 
\end{proof}

\subsection{Proof of Lemma~\ref{lem:dense}}
\begin{proof}
For a non-zero object $F\in \Coh_{\le 1}(X)$, 
 we have
\begin{align*}
Z_u (\Phi_{\oO_X}(F[-1]))=Z_u(F[-1]). 
\end{align*}
Therefore
by Lemma~\ref{omit},
 it is enough to show the density 
of the slope of $(B, \omega)$-semistable 
sheaves. 

Let $D\subset X$ be a sufficiently ample divisor. 
For each $l\ge 1$
and $k\in \mathbb{Z}$, 
we choose the following, 
\begin{align*}
C_l \in \lvert \oO_{D}(lD) \rvert,
\quad L_{l, k} \in \Pic(C_l). 
\end{align*}
Here $C_l$ is a smooth member
and the degree of $L_{l, k}$ is equal to $k$.
Note that $L_{l, k}$ is $(B, \omega)$-stable sheaf
on $X$.  
Setting $C=C_1$
and $d=\int_{X}D^3 \in \mathbb{Z}$, we have 
\begin{align*}
\cl(L_{l, k})=(0, l[C], k- dl(l+1)/2). 
\end{align*}
 Therefore we obtain 
\begin{align*}
Z_{u}(L_{l, k}[-1])=
-k+(B\cdot C)l+\frac{dl(l+1)}{2} 
+(\omega \cdot C)l\sqrt{-1}.
\end{align*}
Then it is easy to see that  
\begin{align*}
\left\{
\frac{\Ree Z_{u}(L_{l, k}[-1])}{\Imm Z_{u}(L_{l, k}[-1])}
: l\ge 1, \ k\in \mathbb{Z} \right\}
=\mathbb{Q}+\left\{ \frac{B\cdot C}{\omega \cdot C}  \right\}. 
\end{align*}
This implies the 
density of (\ref{dense}). 
\end{proof} 

\subsection{Proof of Lemma~\ref{finsum}}
\begin{proof}
By Theorem~\ref{thm:cov}
and Proposition~\ref{prop:re}, we may assume that 
$\sigma \in \uU_{0}$ or $\sigma \in \uU_{1}$. 
For simplicity we show the case of $\sigma \in \uU_{1}$. 
In this case, we can write $\sigma$ as a pair,
\begin{align}\label{uB}
\sigma=(Z_{u}, \bB_{+}), \quad u=(z, B+i\omega) \in (-\mathbb{H}) \times A(X)_{\mathbb{C}},
\end{align}
as in Lemma~\ref{lem:cons} (ii). 
We fix $v=(r, -\beta, -n) \in \Gamma$
and take 
\begin{align*}
m\ge 1, \quad v_i=(r_i, -\beta_i, -n_i), \ 1\le i\le m, 
\end{align*}
which appears in a non-zero term of the RHS of (\ref{sum}). 
We set 
\begin{align*}
m' \cneq \sharp \{ 1\le i \le m : 
(\beta_i, n_i) \neq (0, 0)\}. 
\end{align*}
Note that if $(\beta_i, n_i) \neq (0, 0)$, then 
we have $\beta_i \cdot \omega>0$ or 
$\beta_i=0$, $n_i>0$. 
Also we have 
\begin{align*}
Z_u(v_i)=-n_i +(B+i\omega)\cdot \beta_i,
\end{align*}
if $(\beta_i, n_i)\neq (0, 0)$, 
and they are contained in a same line. 
This implies that $m'$ is bounded above and 
the possibilities of $(\beta_i, n_i)$ are finite. 
By Lemma~\ref{lem:bound} below, 
the $r_i$ is also bounded above, depending only on $(\beta, n)$. 
Therefore the possibilities of $m$ and $v_i$ are finite. 
\end{proof}
We have used the following lemma. 
\begin{lem}\label{lem:bound}
For a fixed $(r, -\beta, -n) \in \Gamma$, the following 
set of objects is bounded,
\emph{\begin{align*}
\left\{ E\in \bB_{+} : \begin{array}{l}
E \mbox{ is }Z_u\mbox{-semistable satisfying} \\
\cl(E)=(r', -\beta, -n), \ r' \ge r. 
\end{array}\right\}.
\end{align*}}
\end{lem}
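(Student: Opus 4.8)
The plan is to compute the $\aA$-cohomology of a would-be semistable object, reduce the boundedness to boundedness of ordinary $(B,\omega)$-semistable sheaves with fixed numerical invariants, and use the hypothesis $r'\ge r$ only at the very end to turn an upper bound on $r'+k$ into separate bounds on $r'$ and $k$.

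First I would dispose of the case $\cl(E)\in\Gamma_{0}$, i.e. $(\beta,n)=(0,0)$: by Lemma~\ref{below} such $E$ are $\oO_X[-1]^{\oplus j}$ with $r'=-j$, so $r'\ge r$ forces $j\le -r$ and the set is finite. Assume now $(\beta,n)\neq(0,0)$; since $\beta$ is effective and $\omega$ ample, $Z_{u}(E)=Z_{u,1}(-\beta,-n)\in\mathbb{H}$ has a fixed argument independent of $r'$, so all objects of the family have one and the same phase. For such an $E$ set $T\cneq\hH_{\aA}^{0}(E)$. Since $\bB_{+}$ is the tilt of $\aA$ at the torsion pair $(\langle\oO_X\rangle_{\ex},\aA_{+})$ of Lemma~\ref{types}, we have $\hH_{\aA}^{1}(E)\cong\oO_X^{\oplus k}$ (using $\Ext_X^{1}(\oO_X,\oO_X)=0$ from (\ref{assum:van})) together with a short exact sequence $0\to T\to E\to\oO_X[-1]^{\oplus k}\to 0$ in $\bB_{+}$ with $T\in\aA_{+}$ and $\cl(T)=(r'+k,-\beta,-n)$. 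By the structure of $\aA$ (as in the proof of Lemma~\ref{count}, following~\cite[Proposition~2.2]{Trk2}), $T$ is a two term complex $\oO_X^{\oplus m}\stackrel{s}{\to}G$ with $\oO_X^{\oplus m}$ in degree zero, $G\in\Coh_{\le 1}(X)$, $m=r'+k$ and $\cl(G)=(0,\beta,n)$; moreover $T\in\aA_{+}$ is equivalent to injectivity of $H^{0}(s)\colon\mathbb{C}^{m}\to H^{0}(X,G)$, so that $m=r'+k\le h^{0}(X,G)$.

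The heart of the matter is that $Z_{u}$-semistability of $E$ forces $G$ to be a $(B,\omega)$-semistable sheaf. For a nonzero proper subsheaf $G'\subsetneq G$, the chain $G'[-1]\hookrightarrow G[-1]\hookrightarrow T\hookrightarrow E$ produces (after replacing $G'[-1]$ by its image in $E$, since $G[-1]\to T$ need not remain monic upon changing hearts from $\aA$ to $\bB_{+}$) a $\bB_{+}$-subobject of $E$ with the same class in $\Gamma_{1}/\Gamma_{0}$ as $G'[-1]$, whose quotient has the class of $(G/G')[-1]$ in $\Gamma_{1}/\Gamma_{0}$. On $\Coh_{\le 1}(X)[-1]$ the order of $\arg Z_{u}$ coincides with the $(B,\omega)$-slope order of the underlying sheaves (this is the relation underlying Lemma~\ref{omit}), so the semistability inequality applied to this subobject reads $\mu_{(B, \omega)}(G')\le\mu_{(B, \omega)}(G/G')$; $0$-dimensional $G'$ are excluded since they would force the impossible $\pi\le\arg Z_{u}(E/G'[-1])<\pi$. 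Hence $G$ is $(B,\omega)$-semistable, in particular pure. As $(B,\omega)$-semistable sheaves with fixed Chern character on $X$ form a bounded family, $h^{0}(X,G)$ is bounded above by a constant $C'=C'(\beta,n,u)$, whence $m=r'+k\le C'$.

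Finally I would assemble the conclusion: $m=r'+k\le C'$ together with $r'\ge r$ gives $r\le r'\le C'$ and $0\le k=m-r'\le C'-r$, so $\cl(E)$ takes finitely many values; $G$ ranges over a bounded family with bounded $h^{0}$, hence $s\in H^{0}(X,G)^{\oplus m}$ and therefore $T$ range over bounded families; and $E$ is an extension of $\oO_X[-1]^{\oplus k}$ by $T$, classified by $\Ext_{\bB_{+}}^{1}(\oO_X[-1]^{\oplus k},T)\cong\Ext_X^{2}(\oO_X,T)^{\oplus k}$, a space of bounded dimension, so the whole set is bounded. The step I expect to be the real obstacle is the middle one: turning $Z_{u}$-semistability of $E$ into $(B,\omega)$-semistability of $G$, which needs a careful comparison of subobjects across the two hearts $\aA$ and $\bB_{+}$ (to see that a $(B,\omega)$-destabilizing subsheaf of $G$ really produces a destabilizing $\bB_{+}$-subobject of $E$ with the correct class) together with the dictionary between $\arg Z_{u}$ on $\Coh_{\le 1}(X)[-1]$ and the ordinary $(B,\omega)$-slope; once this is in place, the hypothesis $r'\ge r$ is pure bookkeeping used to bound $k$.
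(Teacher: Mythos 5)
The decisive step of your argument is the assertion that every $T\in\aA_{+}$ (here $T=\hH_{\aA}^{0}(E)$) is quasi-isomorphic to a two term complex $[\oO_X^{\oplus m}\stackrel{s}{\to}G]$ with a single sheaf $G\in\Coh_{\le 1}(X)$, cited from the structure result used in Lemma~\ref{count} (i.e.~\cite[Proposition~2.2]{Trk2}). That result is only available when the sheaf part is $0$-dimensional, because its proof needs $\Hom(\oO_X,F[i])=H^i(X,F)=0$ for $i>0$; this is exactly why the paper invokes it only after reducing to $\ch_2=0$ (Lemmas~\ref{incase}, \ref{count}) or after checking the analogous vanishing in $\mathrm{Per}(X/Y)$ (Lemma~\ref{forward}). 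For $1$-dimensional sheaves the relevant obstruction space $\Ext^2_X(F,\oO_X)\cong H^1(X,F)^{\vee}$ is nonzero in general, and the claim fails: take $F\in\Coh_{\le 1}(X)$ with $H^1(X,F)\neq 0$ (e.g.\ $F=\oO_C(-2)$ for a rational curve $C$), a point $x\notin\Supp(F)$, and a class $e\in\Ext^2_X(F,I_x)$ whose image under $I_x\hookrightarrow\oO_X$ is nonzero in $\Ext^2_X(F,\oO_X)$ (possible since the cokernel of $\Ext^2(F,I_x)\to\Ext^2(F,\oO_X)$ injects into $\Ext^2(F,\oO_x)=0$). The corresponding extension $0\to I_x\to T\to F[-1]\to 0$ in $\aA$ lies in $\aA_{+}$ (as $\Hom(\oO_X,I_x)=\Hom(\oO_X,F[-1])=0$), but any morphism $\phi\colon T\to\oO_X$ restricts on $I_x$ to a multiple of the inclusion whose composition with $e$ must vanish, forcing $\phi|_{I_x}=0$ and hence $\hH^0(\phi)=0$; so there is no triangle $G[-1]\to T\to\oO_X^{\oplus m}$ with $G$ a sheaf, since that would require $\hH^0(\phi)$ injective on $\hH^0(T)=I_x\neq 0$. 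Since your entire bound on $m=r'+k$ funnels through $h^0(X,G)$ for this single sheaf $G$, the argument collapses at this point; semistability of $E$ is not used before this step, so it cannot rescue the normal form without a genuine additional argument.

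A secondary, in principle reparable, weak point is the middle step (semistability of $E$ forcing $(B,\omega)$-semistability of $G$): as you note, $G[-1]\to T$ becomes an epimorphism with kernel $\oO_X[-1]^{\oplus m}$ after passing to $\bB_{+}$, and one must check that the image of $G'[-1]$ in $E$ is nonzero and that the kernel and the relevant quotients only differ from the expected ones by classes in $\Gamma_0$; this is doable but is left as a sketch. For comparison, the paper's proof avoids both difficulties: it does not assert any two term normal form, but works with an arbitrary-length filtration of $E'=\hH^0_{\aA}(E)$ with factors $F_i[-1]$ and $\oO_X^{\oplus r_i}$ coming from the extension closure defining $\aA$, uses the semistability inequality applied to (the image in $\bB_{+}$ of) each partial filtration step to bound the Chern characters of the Harder-Narasimhan factors of the $F_i$, bounds the ranks $r_i$ inductively via $\dim\Hom(\oO_X,E_{i-1}[1])$ rather than via $h^0$ of a single sheaf, and only then uses $r'\ge r$ to bound the $\oO_X[-1]$-part. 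If you want to keep your strategy, you would have to replace the two term presentation by such a filtration argument, at which point you are essentially reproducing the paper's proof.
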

\begin{proof}
Let us take a $Z_u$-semistable object 
$E\in \bB_{+}$ with $\cl(E)=(r', -\beta, -n)$
for some $r' \ge r$. 
If $(\beta, n)=0$, then $E\in \langle \oO_X[-1] \rangle_{\ex}$
and the result is obvious. We assume that $(\beta, n)\neq (0, 0)$. 
By the construction of $\bB_{+}$ in Lemma~\ref{types} (ii), 
there is an exact sequence in $\bB_{+}$, 
\begin{align}\label{boundE}
0 \to E' \to E \to E'' \to 0, 
\end{align}
such that $E' \in \aA_{+}$ and $E'' \in \langle \oO_X[-1] \rangle_{\ex}$. 
Moreover by the construction of $\aA$ 
and $\aA_{+}$ in Lemma~\ref{types}, 
there is a filtration of $E'$ in $\aA$, 
\begin{align}\label{filtE}
0=E_0 \subset E_1 \subset E_2 \subset \cdots \subset E_N=E', 
\end{align}
such that we have 
\begin{align*}
E_i /E_{i-1} \cong \left\{
\begin{array}{cc}
F_i[-1], & \mbox{ if }i \mbox{ is odd, } \\
\oO_X^{\oplus r_i}, & \mbox{ if }i \mbox{ is even, }
\end{array}
  \right. 
\end{align*}
for some $F_i \in \Coh_{\le 1}(X)$ and $r_i\in \mathbb{Z}_{\ge 1}$. 
Note that $\ch_2(F_i) \cdot \omega \ge 0$, 
hence $\ch_2(F_i)$ and the 
length of the filtration $N$ have finite number 
of possibilities. Let us take the exact sequence
in $\aA$, 
\begin{align*}
0 \to E_i \to E' \to E'/E_{i} \to 0.
\end{align*}
Applying $\hH_{\bB_{+}}^{\bullet}$, we obtain
the exact sequence in $\bB_{+}$, 
\begin{align*}
0\to 
\hH_{\bB_{+}}^{-1}(E'/E_i) \to \hH_{\bB_{+}}^{0}(E_i) 
\stackrel{\iota}{\to} E'. 
\end{align*}
Since $\hH_{\bB_{+}}^{-1}(E/E_i) \in \langle \oO_X \rangle_{\ex}$, 
the $Z_u$-semistability of $E$ implies that 
\begin{align*}
\arg Z_{u}(\oplus_{j\le i}F_j[-1]) &= \arg Z_u(E_i), \\
&=\arg Z_u(\hH_{\bB_{+}}^{0}(E_i)), \\
&= \arg Z_u(\Imm \iota), \\
& \le \arg Z_u(E'), \\
&= \arg Z_u((0, -\beta, -n)),
\end{align*}
for all $i$. 
The above inequality implies that the 
pairs $(\ch_2(F_i), \ch_3(F_i))$ have 
finite number of possibilities. 
By taking Harder-Narasimhan filtrations
of $F_i$ with respect to $\omega$-stability
and applying the same argument, we can also show 
that the Chern characters of Harder-Narasimhan factors 
of each $F_i$ have finite number of 
possibilities. Since $\omega$-semistable sheaves with 
a fixed numerical class is bounded, we conclude that 
possible $F_i$ which appear in the 
filtration (\ref{filtE}) are contained in a bounded family. 

We show by induction on $i$ that the 
possible $E_i$ in the filtration (\ref{filtE}) are 
contained in a bounded family. 
Note that we have already proved the boundedness
for $i=1$. 
Suppose that the claim holds for $i-1$. We have the 
exact sequence in $\aA$, 
\begin{align*}
0 \to E_{i-1} \to E_{i} \to E_{i}/E_{i-1} \to 0. 
\end{align*} 
If $i$ is odd, then $E_{i}/E_{i-1}$ is isomorphic to 
$F_{i}[-1]$ which is contained in a bounded family. 
The object $E_{i-1}$
is also contained in a bounded family by the 
inductive assumption, hence 
 $E_i$ is contained in a bounded family. 
If $i$ is even, then $E_i/E_{i-1}$ is written as 
$\oO_X^{\oplus r_i}$ for some $r_i \in \mathbb{Z}_{\ge 0}$. 
The inductive assumption implies that there is $R>0$ 
 such that we have 
\begin{align*}
\dim \Hom(\oO_X, E_{i-1}[1])\le R,
\end{align*}
for any possible $E_{i-1}$
which appears in (\ref{filtE}). 
Therefore if $r_i>R$, then there is a non-trivial 
morphism $\oO_X \to E_i$ which 
contradicts to $E\in \aA_{+}$. Hence $r_i \le R$ and 
$E_i$ is contained in a bounded family. 

The above argument shows that the object $E'$ in 
(\ref{boundE}) is contained in a bounded family. 
Since $r'\ge r$
and $E'' \in \langle \oO_X[-1] \rangle_{\ex}$,
the boundedness of $E'$ implies the boundedness of 
$E''$. 
 Therefore the object $E$ is contained in a bounded family. 
\end{proof}

\subsection{Proof of Lemma~\ref{const}}
\begin{proof}
As in the same way of the proof of Lemma~\ref{finsum}, 
we may assume that $\sigma \in \uU_1$, hence it is 
written as (\ref{uB}).
First we 
show that $\mM^v(Z_u)$ is a constructible subset in $\mM$. 
For $n\ge 1$, let $\fF il^n(\bB_{+})$ be the 
stack of $n$-step filtrations in $\bB_{+}$. 
Namely a $\mathbb{C}$-valued point 
of $\fF il^n(\bB_{+})$ corresponds to 
a filtration in $\bB_{+}$, 
\begin{align*}
E_1 \subset E_2 \subset \cdots \subset E_n. 
\end{align*}
We have the morphisms of stacks, 
\begin{align*}
p_i \colon \fF il^n(\bB_{+}) \ni E_{\bullet} 
\mapsto E_i/E_{i-1} \in \oO bj(\bB_{+}), 
\end{align*}
and the diagram, 
\begin{align*}
\xymatrix{
\fF il^n(\bB_{+}) \ar[r]^{q_n} \ar[d]_{r_n} & \oO bj(\bB_{+}), \\
\oO bj(\bB_{+})^{\times n}, & 
}
\end{align*}
where $q_n(E_{\bullet})=E_n$ and $r_n=(p_1, \cdots, p_n)$. 
Note that $q_n$ and $r_n$ are
piecewise constructible bundles. 
The proof of Lemma~\ref{lem:bound} shows that 
for each $v\in \Gamma$, there are $N\ge 1$, finite 
subset, 
\begin{align*}
\sS_n \subset \overbrace{\Gamma \times \cdots \times \Gamma}^{n},
\end{align*}
for each $1\le n\le N$ and algebraic substacks of finite type, 
\begin{align*}
\mM^{(v_1, \cdots, v_n)} \subset \oO bj(\bB_{+})^{\times n}, 
\end{align*}
such that we have 
\begin{align*}
\mM^{v}(Z_{u})=\bigcup_{\begin{subarray}{c}1\le n\le N, \\
(v_1, \cdots, v_n) \in \sS_n.
\end{subarray}}
q_n r_n^{-1}(\mM^{(v_1, \cdots, v_n)}). 
\end{align*}
Since the RHS is a finite union of 
constructible subsets, 
 $\mM^v(Z_u)$ is a constructible 
set in $\mM$. 

Next the existence of the Harder-Narasimhan filtration implies 
that 
\begin{align*}
\oO bj(\bB_{+})=
\bigcup_{\begin{subarray}{c} 
n\ge 1, \\
v_1, \cdots, v_n \in \Gamma.
\end{subarray}}
q_n r_n^{-1}
(\mM^{v_1}(Z_u) \times \cdots \times \mM^{v_n}(Z_u)). 
\end{align*}
Since $\mM^{v_i}(Z_u)$ is a constructible subset in $\mM$, 
the RHS is a countable union of constructible subsets in $\mM$. 
\end{proof}

Institute for the Physics and 
Mathematics of the Universe, University of Tokyo

\textit{E-mail address}: yukinobu.toda@ipmu.jp

\end{document}